\documentclass[11pt]{amsart}

%\addtolength{\hoffset}{-1.5cm}
%\addtolength{\textwidth}{1.5cm}

\usepackage[a4paper,inner=3.6cm,outer=3.6cm,top=3.7cm,bottom=3.7cm]{geometry}
\usepackage{amsfonts}
\usepackage{amssymb}
\usepackage{amsmath}
\usepackage[]{graphicx}
\usepackage[all,cmtip,2cell]{xy}
\usepackage{comment}
\usepackage{array}
\usepackage{enumitem}
\usepackage{mathtools}
\usepackage{multirow}
\usepackage{rotating,tabularx}

\setlist[itemize]{leftmargin=*}
\setlist[enumerate]{leftmargin=*}

\usepackage[usenames,dvipsnames]{xcolor}
%\usepackage{url}
%\definecolor{DeepBlue}{rgb}{0,0.05,.4}
%\usepackage[setpagesize=false]{hyperref}
%\hypersetup{colorlinks=true, citecolor=DeepBlue, linkcolor=DeepBlue, urlcolor=DeepBlue, pdfborder= 0 0 0}
%\hypersetup{setpagesize=false}
%\usepackage[all]{hypcap}

\def\co{\colon\thinspace}
\newcommand{\C}{\mathbb{C}}
\newcommand{\Q}{\mathbb{Q}}
\newcommand{\Z}{\mathbb{Z}}
\newcommand{\N}{\mathbb{N}}
\newcommand{\rk}{\mathrm{rk}\,}

\newcommand{\R}{\mathbb{R}}
\newcommand{\bd}{\partial}
\newcommand{\ai}{$A_\infty$\ }
\newcommand{\F}{\mathcal{F}}

\newcommand{\cL}{\mathcal{L}}

\newcolumntype{C}[1]{>{\centering\arraybackslash$}p{#1}<{$}}

\newcommand{\x}{*}

\renewcommand{\F}{\mathcal{F}}

\DeclareMathOperator{\Spec}{Spec}

\renewcommand{\hom}{\mathit{hom}}
\DeclareMathOperator{\Sym}{\mathrm{Sym}}
\newcommand{\Bl}{\mathit{Bl}}

\newcommand{\Gr}{\mathrm{Gr}}

\newcommand{\bL}{\mathbf{L}}

\newcommand{\cM}{\mathcal{M}}

\newcommand{\Fc}{\mathcal{F}_\mathrm{c}}

\DeclareMathOperator{\interior}{int}
\DeclareMathOperator{\Ob}{Ob}
\DeclareMathOperator{\Id}{Id}
\DeclareMathOperator{\Hom}{Hom}
\DeclareMathOperator{\supp}{supp}

\newtheorem{theorem}{Theorem}[section]
\newtheorem{proposition}[theorem]{Proposition}
\newtheorem{lemma}[theorem]{Lemma}
\newtheorem{corollary}[theorem]{Corollary}
\newtheorem{thm}[theorem]{Theorem}
\newtheorem{lem}[theorem]{Lemma}

\newtheorem{prop}[theorem]{Proposition}

\theoremstyle{definition}
\newtheorem{definition}[theorem]{Definition}
\newtheorem{hypothesis}[theorem]{Hypothesis}
\newtheorem{defn}[theorem]{Definition}
\theoremstyle{remark}
\newtheorem{remark}{Remark}[section]
\newtheorem{example}[remark]{Example}
\newtheorem{rem}[remark]{Remark}

\numberwithin{equation}{section}

\setcounter{secnumdepth}{2}

\newcommand{\red}{\color{red}}

\begin{document}

\title[Wall-crossing formula and Lagrangian mutations]{The wall-crossing formula \\ and Lagrangian mutations}

\author{James Pascaleff}
\address[JP]{University of Illinois at Urbana-Champaign, 1409 W. Green St., Urbana, IL 61081, United States}
\email{jpascale@illinois.edu}

\author{Dmitry Tonkonog}
\address[DT]{UC Berkeley, Berkeley CA 94720, United States}
\address[DT]{
Uppsala University, 751~06 Uppsala, Sweden}
\email{dtonkonog@berkeley.edu}
\thanks{
	JP was partially supported by NSF grant DMS-1522670.
	DT was partially supported by the Simons
	Foundation (grant \#385573, Simons Collaboration on Homological Mirror Symmetry), and by the Knut and Alice Wallenberg Foundation (project grant Geometry and Physics).}
\maketitle

\begin{abstract}
We prove a general form of the wall-crossing formula which relates the disk potentials of monotone Lagrangian submanifolds with their Floer-theoretic behavior away from a Donaldson divisor.
  We define geometric operations called mutations of Lagrangian tori in del Pezzo surfaces and in toric Fano varieties of higher dimension, and study the corresponding wall-crossing formulas that compute the disk potential of a mutated torus from that of the original one.

   In the case of del Pezzo surfaces, this justifies the connection between Vianna's tori and the theory of mutations of Landau-Ginzburg seeds. In higher dimension, this provides new Lagrangian tori in toric Fanos corresponding to different chambers of the mirror variety, including ones which are conjecturally separated by infinitely many walls from the chamber containing the standard toric fibre.
%  We apply these results to monotone del Pezzo surfaces, and show in particular that $\Bl_2 \C P^2$ contains infinitely many distinct monotone tori, complementary to the results of Vianna for other del Pezzo surfaces. We also study higher-dimensional mutations and obtain in some cases wall-crossing transformations that are not binomial. 
\end{abstract}

\section{Introduction}

Consider a monotone symplectic manifold $X$ (for example, a Fano K\"{a}hler manifold), and a monotone Lagrangian submanifold $L$ of $X$. An important enumerative invariant of $L$ is the \emph{disk potential} (also called Landau-Ginzburg potential) that counts Maslov index 2 disks in $X$ with boundary on $L$ weighted by the holonomy of a local system on $L$:
\begin{align}
  W_L : \Hom(H_1(L,\Z),\C^*) \to \C\\
  W_L(\rho) = \sum_{\beta} n_\beta \rho(\partial \beta)
\end{align}
where the sum is over classes $\beta \in H_2(X,L)$ with Maslov index 2, $n_\beta$ is the count of disks in class $\beta$, and $\rho(\partial \beta)$ is the holonomy of the local system determined by $\rho$ along the boundary loop of the disk. (For concreteness, we use the coefficient field $\C$ throughout the article, although all results are valid for any coefficient field without restrictions on its characteristic.)

The disk potential $W_L$ is an invariant of $L$ up to symplectomorphism, so it can be used to distinguish monotone Lagrangian embeddings. On the other hand, this invariant is often difficult to compute by direct enumeration of disks. This article contains results that allow one to relate the disk potentials $W_{L_1}$ and $W_{L_2}$ of two monotone Lagrangians $L_1$ and $L_2$ when these Lagrangians are related by certain \emph{non-Hamiltonian} transformations; these transformations are called \emph{Lagrangian mutations}. When $L_1$ and $L_2$ are related by such a Lagrangian mutation, our results allow us to determine the disk potential $W_{L_2}$ from $W_{L_1}$ (or vice versa). These functions are related by an \emph{algebraic mutation}, a transformation that appears in the theory of cluster algebras. Such a result, that Lagrangian mutations correspond to algebraic mutations of the disk potentials, is called a \emph{wall-crossing formula}.

Setting aside the explicit description of Lagrangian mutations for the moment, our first main result is a theorem that shows that a relation between the disk potentials can be deduced from the nonvanishing of a Floer cohomology group computed in a Liouville subdomain containing the given Lagrangians. We denote by $K_U$ the canonical bundle of $U$.

\begin{theorem}[General wall-crossing formula]
	\label{th:wall_crossing}
Let $U$ be a Liouville domain with $c_1(U)=0\in H^2(U,\Z)$. Let 
$$\bL_i=(L_i,\rho_i)\subset U,\quad i=1,2,$$ 
be two exact Lagrangian submanifolds equipped with $\C^*$-local systems, such that the Floer cohomology of this pair computed in $U$ is non-vanishing:
$$
HF^*_U(\bL_1,\bL_2)\neq 0.
$$
Let $X$ be a compact monotone symplectic manifold and $D$ a smooth symplectic divisor which is Poincar\'e dual to a multiple of the anticanonical class: $PD[D]=dc_1(X)$, $d\in \Z_{>0}$. 

Suppose that there exists a Liouville embedding $U\subset X\setminus D$ such that $L_i$ becomes monotone in $X$, and the images of $H_1(L_i)$ in $H_1(X)$ are zero. Suppose either
\begin{enumerate}
\item $d = 1$ or $2$,
\item 
	the map $\bigoplus_{i} H_1(L_i,\Z)\to H_1(U,\Z)$ is surjective,

\item or more generally
there exists a trivialization $\zeta$ of $K_U$ such that $\zeta^d$ is homotopic to the natural trivialization of $K_{X \setminus D}^{\otimes d}$.
\end{enumerate}
If $W_{L_i}(\rho_i)\in \C$ is the value of the disk potential of $\bL_i$ computed in $X$ using the embedding $L_i\subset U\subset X$, then 
$$
W_{L_1}(\rho_1)=W_{L_2}(\rho_2).
$$
\end{theorem}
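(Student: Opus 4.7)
The key identity I aim to establish is
\begin{equation*}
  d_U \circ d_d + d_d \circ d_U = (W_{L_1}(\rho_1) - W_{L_2}(\rho_2)) \cdot \Id
\end{equation*}
on the Floer complex $CF^*_U(\bL_1, \bL_2)$, where $d_U$ is the Floer differential computed in the Liouville domain $U$ and $d_d$ is a secondary operator counting pseudo-holomorphic strips in $X$ that intersect $D$ exactly $d$ times. This exhibits $d_d$ as a chain null-homotopy of scalar multiplication by $W_{L_1}(\rho_1) - W_{L_2}(\rho_2)$ on $(CF^*_U,d_U)$, so the scalar acts as zero on $HF^*_U(\bL_1, \bL_2)$. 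Combined with the hypothesis $HF^*_U(\bL_1, \bL_2) \neq 0$, this forces $W_{L_1}(\rho_1) = W_{L_2}(\rho_2)$ over $\C$.

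To set up the Floer data compatibly, I would choose an $\omega$-tame almost complex structure $J$ on $X$ such that $D$ is $J$-holomorphic and $J|_U$ is of contact type near $\partial U$ via the Liouville embedding. Positivity of intersection gives $u \cdot D \geq 0$ for every $J$-holomorphic strip $u$ between $\bL_1$ and $\bL_2$; exactness of the $\bL_i$ in $U$ together with the integrated maximum principle confine the strips with $u \cdot D = 0$ inside $U$, matching them with those defining $d_U$. Monotonicity and the relation $PD[D] = d \cdot c_1(X)$ force the $D$-intersection of any contributing strip to be a nonnegative multiple of $d$, yielding
\begin{equation*}
  d_X = d_U + d_d + d_{2d} + \cdots.
\end{equation*}
The three alternative hypotheses (1)--(3) of the theorem enter here to ensure that the $\Z$-grading on $CF^*_U$ supplied by $\zeta$ extends across $D$ in a way compatible with the Floer grading in $X$; concretely, this makes each $d_{kd}$ homogeneous in the $\zeta$-grading of degree $1 - 2k$ (so $d_d$ has degree $-1$, as demanded by the target identity). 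Condition (3), the homotopy $\zeta^d \simeq $ canonical trivialization of $K^{\otimes d}_{X \setminus D}$, is the most general and geometric form of this requirement, while (1) and (2) are settings in which the compatibility is automatic.

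The identity itself is then extracted from the standard disk-bubbling analysis. Compactifying the one-dimensional moduli of Maslov-index-one strips in $X$, the boundary consists of strip breakings (contributing $d_X \circ d_X$) and Maslov-index-two disk bubbles on $L_1$ or $L_2$ (contributing $(W_{L_1}(\rho_1) - W_{L_2}(\rho_2)) \cdot \Id$ with the appropriate signs), giving the Fukaya-category relation $d_X^2 = (W_{L_1}(\rho_1) - W_{L_2}(\rho_2)) \cdot \Id$. Since every Maslov-index-two disk has $D$-intersection exactly $d$, this identity decomposes homogeneously in the $\zeta$-grading: the $\zeta$-degree $+2$ component recovers $d_U^2 = 0$ (consistent with exactness of the $\bL_i$ in $U$), while the $\zeta$-degree $0$ component is precisely $d_U \circ d_d + d_d \circ d_U = (W_{L_1}(\rho_1) - W_{L_2}(\rho_2)) \cdot \Id$. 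The main expected obstacle is the first step---arranging the trivializations, orientations, and Maslov indices so that the $\zeta$-graded decomposition of $d_X$ is well-defined and the disk-bubbling terms split homogeneously as stated; this is exactly what conditions (1)--(3) are designed to guarantee. Once the algebraic accounting is in place, the conclusion is essentially formal.
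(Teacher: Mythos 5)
Your proposal follows essentially the same route as the paper's proof: decompose the Floer differential by intersection number with $D$, use the grading supplied by a compatible trivialization to control the decomposition, and extract a chain homotopy from the curved $A_\infty$ relation. The paper packages this using the relative Fukaya category over $\C[r]$: Theorem~\ref{thm:wall_cross_P2} proves the conclusion under the condition $P^2 \equiv 0 \pmod{r^{d+1}}$ on the higher-order part $P$ of $\mu^1$, Lemma~\ref{lem:Z_graded_implies_P2} shows the alternative hypotheses (1)--(3) guarantee this, and Theorem~\ref{thm:nonvanishing} provides the Liouville locality that you invoke via the integrated maximum principle; Proposition~\ref{prop:monotone-to-d-graded} handles $d$-gradability.

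One step is logically misplaced, however, and it is the crux of the matter. You assert that monotonicity and $PD[D]=d\,c_1(X)$ by themselves force every contributing strip to have $D$-intersection a nonnegative multiple of $d$, ``yielding'' the decomposition $d_X = d_U + d_d + d_{2d} + \cdots$, and only afterwards invoke hypotheses (1)--(3) for homogeneity of the $d_{kd}$. This attribution is incorrect. Monotonicity constrains \emph{disks}: a Maslov-$2$ disk has $D$-intersection exactly $d$ (Proposition~\ref{prop:monotone-to-d-graded}), which is why the curvature term sits at $r^d$ (Lemma~\ref{lem:muzero}). But a rigid \emph{strip} connecting intersection points can a priori have any nonnegative $D$-intersection. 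The divisibility is exactly what the relative $\Z$-grading supplied by the compatible trivialization $\zeta$ delivers: Lemma~\ref{lem:mu-k-degree} gives $\deg \mu^1_i = 1 - (2/d)\,i$, and the relative $\Z$-grading combined with orientability forces this to be an odd integer, i.e.\ $d \mid i$ (Lemma~\ref{lem:Z_graded_implies_P2}, case (2), or the $d\le 2$ special cases). Without this, the cross terms $d_i\circ d_{d-i}$ with $0<i<d$ would contribute to the degree-zero component of $d_X^2$ alongside $d_U\circ d_d + d_d\circ d_U$, and your null-homotopy identity would not follow. Since you do invoke the hypotheses and the grading anyway, your argument is recoverable once the causality is straightened out; but the hypotheses are doing more work than your write-up credits them with -- they are what makes the decomposition into multiples of $d$ legitimate in the first place.
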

The hypotheses of this theorem may seem complicated at first glance because we have striven for the greatest generality that our method of proof allows, but the purpose of the theorem is to reduce to the question of matching disk potentials to the question of nonvanishing of $HF^*_U(\bL_1,\bL_2)$, which is a question that does not involve $X$. This latter question can often be answered using a technique due to Seidel \cite[Section~11]{SeiBook13}. Other variations of the hypotheses are possible; see Theorem \ref{thm:wall_cross_P2} and Lemma \ref{lem:Z_graded_implies_P2}.

The key tools in the proof are the \emph{relative Floer theory} of the pair $(X,D)$ which relates Floer theory in $X$ to Floer theory in $X \setminus D$, described in section \ref{sec:relative-floer}, and the locality properties of the Floer cohomology of compact Lagrangians, described in section \ref{sec:locality}. An alternative approach to this result that uses symplectic cohomology of $X \setminus D$ and a stretching argument is presented in the paper \cite{To17} by the second author.

With this result in hand, we proceed to study the geometry of Lagrangian mutations and apply Theorem \ref{th:wall_crossing} to them. The first part of this study concerns complex dimension two, which is the most studied case in the literature, while the second part considers higer-dimensional cases where new phenomena appear.

In complex dimension two, the examples are the del Pezzo surfaces, regarded as monotone symplectic manifolds. There is a notion of \emph{mutation configuration}, which is a pair $(L,D)$ of a Lagrangian torus $L$ and a Lagrangian disk $D$ attached to it; from such a configuration one can construct a mutated torus that we denote $\mu_D L$. There is a corresponding algebraic mutation defined on functions that is denoted $\mu_{[\bd D]^\perp}$. We show the following.

\begin{theorem}[cf.~Theorem \ref{th:wall_cross_4d}]
  	Let $X$ be a monotone del~Pezzo surface, and $(L,D)\subset X$ be a mutation configuration. Let $L'=\mu_D L$ be the mutated torus. Then
	\begin{equation}
	W_{L'}=\mu_{[\bd D]^\perp}W_{L}.
	\end{equation}
\end{theorem}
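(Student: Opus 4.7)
The plan is to apply the general wall-crossing formula (Theorem \ref{th:wall_crossing}) to the pair $(L, L')$, with $L' = \mu_D L$, embedded in a common Liouville neighborhood.

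First, I would construct a Liouville domain $U$ that is a Weinstein thickening of $L \cup D$. By the definition of Lagrangian mutation, both $L$ and the mutated torus $L'$ sit inside $U$ as exact Lagrangians, and one can arrange $c_1(U) = 0 \in H^2(U, \Z)$; this is straightforward because $L$ is a torus, $D$ is a disk, and the Weinstein $2$-handle attached along $\partial D$ preserves the triviality of the canonical bundle. Next, I would pick a smooth anticanonical divisor $\Sigma \subset X$ disjoint from $U$; such a $\Sigma$ exists because $L \cup D$ is a codimension-two CW subcomplex of the real four-manifold $X$, so a generic anticanonical curve avoids a small neighborhood of it. This yields a Liouville embedding $U \hookrightarrow X \setminus \Sigma$, placing us in case $(1)$ of Theorem \ref{th:wall_crossing} with $d = 1$. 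The remaining hypotheses hold automatically: $H_1(L_i) \to H_1(X) = 0$ because del Pezzo surfaces are simply connected, and $L'$ is monotone in $X$ by general properties of the Lagrangian mutation to be established earlier in the paper.

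The central step is to verify the Floer-theoretic hypothesis: I would show that for pairs of local systems $(\rho, \rho')$ satisfying the algebraic relation $\rho' = \mu_{[\bd D]^\perp}(\rho)$, the local Floer cohomology $HF^*_U(\bL, \bL')$ is non-vanishing on a Zariski open subset of the character variety of $L$. The key input is Seidel's mutation formalism \cite[Section~11]{SeiBook13}: the mutated torus $L'$ admits an algebraic description as a Lagrangian cone (or twisted complex) in the Fukaya category of $U$, built from $L$ and the disk $D$, so that the Floer complex between $(L, \rho)$ and $(L', \rho')$ can be analyzed explicitly. The dependence of its cohomology on the two local systems forces the nonvanishing locus to be cut out precisely by the algebraic mutation relation, which is where the specific combinatorial formula $\mu_{[\bd D]^\perp}$ emerges from the geometry.

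With the nonvanishing established, Theorem \ref{th:wall_crossing} yields the pointwise identity $W_L(\rho) = W_{L'}(\mu_{[\bd D]^\perp}(\rho))$ on a Zariski open subset of $\Hom(H_1(L, \Z), \C^*)$. Since both sides are Laurent polynomials, this extends by Zariski density to the identity $W_{L'} = \mu_{[\bd D]^\perp} W_L$ as functions on the character variety of $L'$. The main obstacle is the Floer computation of the preceding paragraph: extracting the specific algebraic form of $\mu_{[\bd D]^\perp}$ from the Lagrangian mutation requires a careful application of Seidel's framework in the explicit local Weinstein model of $L \cup D$, and this is the step where the geometric operation is translated into its algebraic counterpart.
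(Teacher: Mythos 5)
Your high-level strategy matches the paper's: thicken $L\cup D$ to a Liouville domain $U$, locate a compatible Donaldson divisor $\Sigma$, apply the general wall-crossing formula (Theorem \ref{th:wall_crossing}) to $L,L'\subset U\subset X\setminus\Sigma$, and feed in Seidel's local Floer computation to identify the wall-crossing map as $\mu_{[\partial D]^\perp}$. However, there are two places where your sketch has a genuine gap or misdescribes the mechanism.

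First, your existence argument for the divisor $\Sigma$ is wrong as stated. You claim a generic anticanonical curve misses a neighborhood of $L\cup D$ because the latter is ``codimension two,'' but in a real $4$-manifold two $2$-dimensional objects generically intersect in a finite \emph{nonempty} set of points; transversality does not give disjointness. Moreover, disjointness alone is not enough: you also need $L$ to be \emph{exact} in $X\setminus\Sigma$ for the chosen Liouville form, which is an extra constraint and does not follow from mere avoidance. The paper handles both issues via Corollary \ref{cor:donaldon_mutation_config}, which uses the Donaldson/Auroux--Gayet--Mohsen asymptotically holomorphic section technique (and its Charest--Woodward refinement) to produce a Donaldson divisor, possibly of degree $d>1$, disjoint from $L\cup D$ and making $L$ exact in its complement. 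This is why the hypotheses of Theorem \ref{th:wall_crossing} include options for $d>1$ and why Proposition \ref{prop:U-to-d-graded} is invoked in the proof of Theorem \ref{th:wall_cross_4d}: in general one cannot assume $d=1$.

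Second, the Floer-theoretic input is Seidel's explicit computation (Lemma \ref{lem:seidel_local}, i.e.\ \cite[Proposition~11.8]{SeiBook13}), which determines the nonvanishing locus of $HF^*_U(\bL,\bL')$ by directly counting the three holomorphic strips in the Lefschetz-fibration local model $M = \C^2\setminus\{xy=1\}$ (two lifts of the strip through $0$ and one lift of the other). It is not obtained by realizing $L'$ as an iterated cone or twisted complex over $L$ and $D$ in the Fukaya category of $U$; that is a different (and in this setting not actually carried out) argument. With the strip computation in hand, the wall-crossing map emerges as a closed formula, and the rest of your conclusion (equality of potentials at each pair of local systems related by $\mu_{[\partial D]^\perp}$, then passage to a Laurent-polynomial identity) is correct.
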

We actually prove a more refined result, Theorem \ref{th:lag_mut}. This involves a \emph{Lagrangian seed}, which is a Lagrangian torus with a collection of Lagrangian disks attached. Using results of \cite{STW15}, one finds that, when one mutates using one disk, one can follow the other disks through the process. This is very important because it shows that mutations can be \emph{iterated}. As a consequence we can show the existence of infinitely many essentially distinct monotone Lagrangian tori in all del~Pezzo surfaces.
\begin{theorem}[Vianna \cite{Vi16} + Corollary \ref{cor:bl2cp2}]
  If $X$ is a monotone del~Pezzo surface, then $X$ contains infinitely many monotone Lagrangian tori which are pairwise not Hamiltonian isotopic.
\end{theorem}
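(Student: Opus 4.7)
The plan is to combine Vianna's construction \cite{Vi16}, which exhibits infinitely many pairwise non-Hamiltonian-isotopic monotone Lagrangian tori in most monotone del~Pezzo surfaces, with a new construction covering the remaining case (the two-point blowup $\Bl_2 \mathbb{CP}^2$), which is the content of Corollary \ref{cor:bl2cp2}. The unifying principle is that the disk potential, viewed as a Laurent polynomial modulo $SL(2,\Z)$-change of variables arising from automorphisms of $H_1(L,\Z)$, is a Hamiltonian isotopy invariant of a monotone Lagrangian torus; it therefore suffices to produce infinitely many tori whose disk potentials are pairwise inequivalent in this sense.

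For the new case, I would first exhibit an initial Lagrangian seed $(L_0,\{D_1,\ldots,D_k\}) \subset \Bl_2\mathbb{CP}^2$: a monotone Lagrangian torus together with a compatible collection of Lagrangian disks satisfying the hypotheses of Theorem \ref{th:lag_mut}. Iterating the Lagrangian seed mutation provided by that theorem produces a family $\{L_w\}$ of monotone Lagrangian tori indexed by words $w$ in the mutation generators, and the theorem identifies $W_{L_w}$ with the Laurent polynomial obtained from $W_{L_0}$ by applying the corresponding sequence of algebraic cluster mutations. This reduces the geometric problem to a purely cluster-combinatorial one.

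The concluding step is to show that among the $W_{L_w}$ infinitely many are pairwise distinct up to $SL(2,\Z)$. I would do this by selecting a suitable infinite sub-tree of the cluster exchange graph and tracking the Newton polytopes of the iterated potentials along it, showing that their vertex sets grow without bound and fall into infinitely many $SL(2,\Z)$-orbits. This mirrors, in the cluster algebra attached to $\Bl_2\mathbb{CP}^2$, the Markov-tree analysis that Vianna uses in $\mathbb{CP}^2$: the initial seed's exchange matrix produces a Markov-like recursion whose solutions grow monotonically along a distinguished path.

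The main obstacle is this last combinatorial step. Cluster exchange graphs in general contain cycles and can support periodic orbits of Newton polytopes, so one must choose a specific mutation path on which monotone growth can actually be verified. This requires a careful analysis of the initial quiver attached to the chosen seed in $\Bl_2\mathbb{CP}^2$ and a check that along the selected path the iterated mutation formulas yield strictly enlarging polytopes. Once this combinatorial fact is established, the corollary is an immediate consequence of Theorem \ref{th:lag_mut} and the Hamiltonian invariance of the disk potential, combined with Vianna's treatment of the remaining del~Pezzo surfaces.
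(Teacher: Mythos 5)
Your overall architecture matches the paper: combine Vianna's result for the remaining del Pezzos with Theorem \ref{th:lag_mut} plus a Newton-polytope invariant of the disk potential to settle $\Bl_2\C P^2$. The place where you diverge, and where you would be doing unnecessary work, is the combinatorial step. You propose to select a sub-tree of the exchange graph, track Newton polytopes along it, and verify unbounded growth by hand. The paper instead exploits an identity that you do not mention: the Newton polytope of $W_L$ equals the (scaled) affine dual of the base polygon of the almost toric fibration whose fibre is $L$, and this identity propagates under mutation because the wall-crossing transformation of the potential acts on its Newton polytope exactly as nodal mutation acts on the affine base (citing \cite{ACGK12}). Once this is in place, the combinatorial input you planned to redevelop is already available: Vianna exhibited infinitely many pairwise inequivalent affine bases for $\Bl_2\C P^2$ indexed by Markov triples of the form $(1,a,b)$; the missing piece before this paper was only the identification of those bases with the Newton polytopes of the tori's potentials, which is exactly what the wall-crossing formula supplies. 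So your plan is correct, but the ``careful analysis of the initial quiver'' and the monotone-growth verification along a distinguished path are already done in the literature; the paper's contribution to this step is the Newton polytope/affine base bridge, not a new growth estimate.
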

In all cases except $X = \Bl_2\C P^2$, the two-point blow up of the projective plane, this result was proved by Vianna \cite{Vi16}. The method of \cite{Vi16} uses the Newton polytope of the disk potential to distinguish tori, but this does not suffice for the case of $\Bl_2\C P^2$. The more refined information about disk potentials that we obtain allows us to reprove Vianna's theorem and extend it to this last remaining case.

Another application is to homological mirror symmetry of del Pezzo surfaces. We consider the cases $X = \Bl_k \C P^2$ with $k = 6,7,8$, note that $k =6$ is the cubic surface. We find a generator for a certain summand of the monotone Fukaya category, and we show that one of the other summands contains an infinite family of pairwise non-quasi-isomorphic objects supported on tori; see Propositions \ref{prop:hms_dp} and \ref{prop:pairwise_disjoint_family} in section \ref{sec:application_hms}. This answers \cite[Conjecture~B.2]{She15}. We note that the potential for the cubic surface was also computed by Fukaya-Oh-Ohta-Ono \cite[section 24]{FO311b}.

We then turn to complex dimension greater than two. In higher dimensions there is more than one type of mutation. There is a local model for the two dimensional mutation, which is a configuration in $\C^2 \setminus \{z_1z_2 = \epsilon\}$, and one can obtain an $n$-dimensional local model by taking the product with $(\C^*)^{n-2}$. However, one may also consider local models of the form
\begin{equation}
  \left(\C^k\setminus \{z_1z_2\cdots z_k = \epsilon\}\right)\times (\C^*)^{n-k}
\end{equation}
for $2 \leq k \leq n$. We find such configurations in toric manifolds of higher dimension, and prove the wall-crossing formula, see Theorem \ref{th:mut_higher_dim_toric}. With respect to an appropriately chosen coordinate system $(y_i)_{i=1}^n$, we find that the algebraic mutation takes the form (Equation \eqref{eq:wall_cross_simple})
\begin{equation}
\begin{array}{ll}
y_i\mapsto y_i,\quad  i=1,\ldots, k-1,\\
y_k\mapsto y_k(1+y_1+\ldots+y_{k-1}),\\
y_i\mapsto y_i,\quad i=k+1,\ldots,n,
\end{array}
\end{equation}
cf.~\cite{KP11,ACGK12}.
The complete statement of the wall-crossing formula in section~\ref{sec:higher_dim} also specifies how to choose the appropriate basis $(y_i)_{i=1}^n$.
Applying these results to $\C P^n$, we prove
\begin{theorem}[Corollary \ref{cor:cpn_tori}]
  For each $1\le k\le n$, $\C P^n$ contains a monotone Lagrangian torus whose potential is given by
  \begin{equation}
    W=\sum_{i=k}^n x_i^{-1}+(x_k)^k\cdot \left(\sum_{i=1}^k x_i^{-1}\right)^k\cdot \prod_{i=1}^n x_i.
  \end{equation}
\end{theorem}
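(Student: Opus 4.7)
The plan is to apply Theorem~\ref{th:mut_higher_dim_toric} to a $k$-dimensional mutation configuration built on the monotone Clifford torus in $\C P^n$, then do the algebraic substitution to identify the resulting potential with the claimed formula.

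First, I would exhibit the mutation configuration. The moment polytope of $\C P^n$ is the standard $n$-simplex with $n+1$ facets, and the Clifford torus $L$ sits over its barycenter. Fix a vertex $v$, at which exactly $n$ toric facets meet, and choose $k$ of them together with the $k$ toric edges joining $v$ to the barycenter. Along each chosen edge one can lift the straight segment to a Lagrangian disk in $\C P^n$ with boundary on $L$ in the usual way (radial shrinking to the divisor at $v$). Near $v$ a neighbourhood is symplectomorphic to $\C^n$ with the chosen facets as $\{z_i=0\}_{i=1}^k$, and the union of the $k$ disks matches the local model from section~\ref{sec:higher_dim} in $\C^k\setminus\{z_1\cdots z_k=\epsilon\}\times(\C^*)^{n-k}$. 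This is what is required to invoke Theorem~\ref{th:mut_higher_dim_toric}.

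Second, I would unwind the basis $(y_i)_{i=1}^n$ prescribed by Theorem~\ref{th:mut_higher_dim_toric} and write down $W_L$ in it. The Clifford potential is a sum of $n+1$ monomials, one per facet of the simplex: the $n-k$ unchosen facets through $v$ and the single facet opposite $v$ contribute, after relabelling, the $n-k+1$ terms $x_k^{-1},x_{k+1}^{-1},\ldots,x_n^{-1}$, while the $k$ chosen facets contribute the $k$ remaining monomials, assembled so that the substitution rule of Theorem~\ref{th:mut_higher_dim_toric} produces a Laurent polynomial. Tracking boundary classes through the local model fixes the coordinates $(y_i)$ uniquely, and the contribution of the $k$ chosen facets assembles into a single term of the form $y_k^{-k}\cdot(\text{monomial factor})$ whose transform under the mutation rule $y_k\mapsto y_k(1+y_1+\cdots+y_{k-1})$ absorbs a factor $(1+y_1+\cdots+y_{k-1})^{-k}$ and yields a Laurent polynomial.

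Third, I would apply the substitution $y_k\mapsto y_k(1+y_1+\cdots+y_{k-1})$ (with other $y_i$ fixed) to $W_L$ and expand. The terms $y_i^{-1}$ for $i\ge k$ are unchanged, and the single combined term $y_k^{-k}\prod y_i$ transforms to
\[
y_k^{-k}(1+y_1+\cdots+y_{k-1})^{-k}\cdot\!\!\prod_{i=1}^n y_i,
\]
which, after the coordinate relabelling $x_i=y_i$ and the $\mathrm{SL}_n(\Z)$ change prescribed by Theorem~\ref{th:mut_higher_dim_toric} to put the answer into the claimed coordinates, becomes $(x_k)^k\left(\sum_{i=1}^k x_i^{-1}\right)^k\prod_{i=1}^n x_i$. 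The sum gives the stated formula.

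The main obstacle I expect is step two, namely matching the basis of Theorem~\ref{th:mut_higher_dim_toric} to the toric basis on the Clifford torus so that the Clifford potential takes the form in which the mutation rule acts as a Laurent substitution and the transformed answer lands in the stated normal form. Once this identification of disk boundaries through the local model is pinned down, the final multinomial expansion is a direct computation.
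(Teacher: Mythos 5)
Your high-level plan---apply Theorem~\ref{th:mut_higher_dim_toric} to a suitable mutation configuration and compute the wall-crossing on the toric potential---is the same strategy the paper uses. However, both the geometric setup and the algebra contain genuine errors.

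On the geometric side: for $1\le k<n$, the mutation configuration in Theorem~\ref{th:mut_higher_dim_toric} is \emph{not} built at a vertex $v$. It consists of a face $F$ of codimension $k$ (hence dimension $n-k\ge 1$) together with an \emph{interior} lattice point $w$ of $F$, and for $\C P^n$ the paper takes $F=\Delta\cap\{x_1=\ldots=x_k=-1\}$ and $w=(-1,\ldots,-1,0,\ldots,0)$ (with $k$ entries $-1$). The vertex $(-1,\ldots,-1)$ equals $w$ only when $k=n$; for $k<n$ it lies on the boundary of $F$, and the paper's ``non-mutability'' subsection explains that boundary points of $F$ do \emph{not} produce Laurent polynomials. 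Relatedly, the local Lagrangian object attached to the monotone torus is a single singular Lagrangian cone over $T^{k-1}$ projecting to the segment from $w$ to the origin (see Lemma~\ref{lem:toric_T_D}), not ``a union of $k$ disks.'' Your description of $k$ disks lifted along $k$ edges has no correct geometric meaning here.

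On the algebraic side, your steps two and three misattribute the contributions of the facets. In the $(y_i)$ basis of Lemma~\ref{lem:one_basis_enough} one has $y_i=x_k/x_i$ for $i<k$, $y_k=x_k$, $y_i=x_i$ for $i>k$, so the $k$ ``chosen'' facets through $F$ contribute
\[
\sum_{i=1}^{k}x_i^{-1}=y_k^{-1}\bigl(1+y_1+\cdots+y_{k-1}\bigr),
\]
which under $y_k\mapsto y_k(1+y_1+\cdots+y_{k-1})$ simply becomes $y_k^{-1}$. In particular it is \emph{not} of the form $y_k^{-k}\cdot(\text{monomial})$. The nontrivial term in the answer comes from the single opposite facet, whose monomial is
\[
\prod_{i=1}^n x_i=y_k^{\,k}\,\prod_{i<k}y_i^{-1}\prod_{i>k}y_i,
\]
with $y_k$-exponent $+k$ (not $-k$), and under mutation it acquires the factor $(1+y_1+\cdots+y_{k-1})^{k}$, producing the term $x_k^{\,k}\bigl(\sum_{i=1}^k x_i^{-1}\bigr)^k\prod_{i=1}^n x_i$. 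A monomial acquiring a factor $(1+y_1+\cdots+y_{k-1})^{-k}$, as you claim, would \emph{not} be a Laurent polynomial, which would contradict the conclusion you are trying to reach. The paper's own derivation uses the standard-form wall-crossing~\eqref{eq:wc_std_corner}, under which the sum of the $k$ chosen-facet monomials collapses to $x_k^{-1}$, the $n-k$ unchosen facets are unchanged, and the opposite-facet monomial becomes the polynomial term; these three pieces together give the stated formula.
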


A particularly interesting feature of the algebraic mutation for $k > 2$ is that it is \emph{not binomial}, whereas all mutations in complex dimension two are binomial. Recall that in the pictures of SYZ mirror symmetry developed by Gross-Siebert and Kontsevich-Soibelman, one envisions that the space of Lagrangian submanifolds is divided into \emph{chambers} by \emph{walls}, the latter being the loci where the Lagrangian torus bounds a Maslov index zero disk. Now take a monotone torus (say a toric fiber), and apply a higher-dimensional mutation with $k > 2$ to it. These two tori lie in two different chambers, and we expect that the two chambers are separated by infinitely many walls, that is, there is no path connecting them that crosses finitely many walls. This is because the non-binomial mutation is not expected to be expressible as a composition of finitely many binomial mutations. 

Having outlined our results, we shall present some further remarks to place these results in a broader context and connect to related work, and then we give a detailed outline of the contents of the paper.
 
\subsection{Remarks on Theorem \ref{th:wall_crossing}}

\begin{remark}
  We will recall later that $X\setminus D$ has a canonical Liouville structure which is implied in the requirement that $U\subset X\setminus D$ be a Liouville embedding. 
  We recall that an Liouville embedding $(U,\theta_U)\subset (M,\theta_M)$ is an inclusion for which  $\theta_M|_U$ differs from $\theta_U$ by an exact 1-form.
  Second, we explain later that the $L_i\subset X$ are automatically monotone, therefore the potentials of the $L_i$ are meaningful invariants; they are recalled in section~\ref{subsec:potential}.
\end{remark}

\begin{remark}
	\label{rmk:wall_cross_symp}A divisor $D$ appearing in the statement of Theorem~\ref{th:wall_crossing} is called a Donaldson divisor.  The role played by $D$ is auxiliary: it aids the proof. To apply Theorem~\ref{th:wall_crossing}, one may start with a given symplectic embedding of a Liouville domain $U$ in a compact Fano (or monotone symplectic) manifold $X$ and try to find a divisor $D\subset X$ away from $U$ which turns $U\subset X\setminus D$ into an inclusion of Liouville domains. In some cases this is possible, and we again refer to Subsection~\ref{subsec:donaldson} for further discussion.
\end{remark}

\subsection{Context: wall-crossing}
\label{subsec:intro_wc}
Let $U$ be a Liouville domain and 
$L_1,L_2\subset U$  exact Lagrangian submanifolds.
Denote $m_i=\dim H_1(L_i,\R)$ and fix integral bases for these homology groups. The spaces of local systems $\Hom(H_1(L_i,\Z),\C^*)$ may then be identified with $(\C^*)^{m_i}$. Suppose we are able to determine those pairs of local systems for which the Floer cohomology does not vanish: 
\begin{equation}
\label{eq:rho_pairs_hf_nonzero}
\left\{(\rho_1,\rho_2) \in (\C^*)^{m_1}\times (\C^*)^{m_2} \mid
HF^*_U((L_1,\rho_1),(L_2,\rho_2))\neq 0\right\}.
\end{equation}
For simplicity, assume that $m_1=m_2=m$ and that the above set is determined by a birational map
$$
\phi\co (\C^*)^{m}\dashrightarrow (\C^*)^{m},
$$
that is, $$HF^*((L_1,\rho),(L_2,\phi(\rho)))\neq 0\textit { for all }\rho\in (\C^*)^m\textit{ such that }\phi(\rho)\textit{ is defined}.$$
In this case we call $\phi$ the \emph{wall-crossing map}. This map, if it exists, is determined by $L_1,L_2$ and $U$.

Now suppose $U\subset X$ is an inclusion as in the setup of Theorem~\ref{th:wall_crossing}, so that $L_1,L_2\subset X$ become monotone.
%Instead of considering the potential $W_X(\bL_i)=W_X((L_i,\rho_i))\is, this function is necessarily a Laurent polynomial called the \emph{Landau-Ginzburg potential} of $L_i$; see Section~\ref{subsec:potential} for details. 
Theorem~\ref{th:wall_crossing} can be rephrased to say that the Landau-Ginzburg potentials of the $L_i$ differ by action of  $\phi$:
\begin{equation}
\label{eq:W_coord_change}
W_{L_1}=W_{L_2}\circ \phi.
\end{equation}
Because $\phi$ is a birational map, the composition $W_{L_2}\circ \phi$ does not \emph{a priori} have to be a Laurent polynomial, but (\ref{eq:W_coord_change}) ensures that it actually is. This is a symplectic-geometric manifestation of the \emph{Laurent phenomenon} known in cluster algebra.

Now suppose that $L_1,L_2$ are Lagrangian (but not necessarily Hamiltonian) isotopic via an isotopy $L_t$. In this case the wall-crossing map $\phi$ is determined by Maslov index~0 holomorphic disks bounded by the $L_t$. See Auroux \cite{Au07,Au09} and Kontsevich and Soibelman \cite{KS06} for a discussion of the  basic 4-dimensional example, and Fukaya, Oh, Ohta and Ono \cite{FO3Book} for a general statement, briefly recollected in \cite[(5c)]{Sei08}. In main examples, there is a discrete set of parameter values $t$ such that $L_t$ bounds a holomorphic Maslov index~0 disk (in dimension~4, this is guaranteed by the virtual dimension of the moduli space), and the original term \emph{wall-crossing} refers to a moment $t_0$ when $L_{t_0}$ bounds such a disk.
 Identifying the precise way in which the holomorphic Maslov index~0 disks contribute to $\phi$ is quite tricky, due to the fact that such disks automatically come with all their multiple covers which are not transversally cut out by the $J$-holomorphic curve equation.

To avoid this difficulty, in concrete examples it is often easier to determine $\phi$ by computing the Floer cohomologies (\ref{eq:rho_pairs_hf_nonzero}) directly, rather than by studying  holomorphic Maslov index~0 disks.
This idea is due to Seidel \cite[Section~11]{SeiBook13}, and it served as an inspiration for Theorem~\ref{th:wall_crossing}.

\subsection{Context: Lagrangian mutations}
Constructing exact Lagrangian submanifolds in a Liouville domain $U$ is hard in general.
Because much of the geometry of $U$ is encoded in its Lagrangian skeleton, it is natural to look for exact Lagrangian submanifolds within a skeleton, and to seek for possible modifications of the skeleton which would reveal new Lagrangian submanifolds unseen by the previous skeleton. The ultimate hope is that there is a combinatorial way of encoding and book-keeping such modifications, preferably with interesting algebra behind it.

Now, whenever $U\subset X\setminus D$ is a Liouville inclusion, one obtains a monotone Lagrangian submanifold in $X$ for each exact one in $U$, and Theorem~\ref{th:wall_crossing} provides a way of computing their potentials, which can be used to prove that the obtained monotone Lagrangians in $X$ are not Hamiltonian isotopic.

This relatively new idea, called \emph{mutation of Lagrangian skeleta}, is by no means easy to realise in practice; so far it has been made precise for skeleta of certain four-dimensional Liouville domains by Shende, Treumann and Williams \cite{STW15}. The skeleta they considered consist of a 2-torus with a collection of 2-disks attached along their boundary to the torus. It is proved in \cite{STW15} that the considered mutations can be iterated indefinitely. The underlying combinatorics is of cluster-algebraic nature and was explored in a series of papers by Cruz Morales, Galkin and Usnich \cite{CM13,CMG13,GU12}. This story exhibits a beautiful interplay between Lagrangian mutations and mutations of the corresponding Landau-Ginzburg potentials. This interplay is made rigorous by Theorem~\ref{th:wall_crossing}, and in section~\ref{sec:mut_4d} we explore it in full detail.

Further in section~\ref{sec:higher_dim}, we provide examples of higher-dimensional mutations arising from toric geometry. To our knowledge, they are the first higher-dimensional examples of mutations. Unlike the 4-dimensional story, in higher dimensions we will discuss our examples of mutation only as a one-off procedure, meaning that we will not address the possibility of iterating it.

\subsection{Outline of the paper}
\label{sec:outline}

In section \ref{sec:relative-floer} we introduce relative Floer theory of a pair $(X,D)$. This theory relates Floer theory in $X$ to Floer theory in $X \setminus D$, and the disk potential appears as a curvature term. Since we desire to treat the case where $D$ represents a positive multiple of the anticanonical class rather than the anticanonical class itself, this section begins with an exposition of the theory of graded Lagrangians in manifolds with torsion $c_1$ (such as $X \setminus D$ when $[D] = dc_1(X)$). We then introduce the disk potential, and related it to relative Floer theory. The main results of this section are Theorem \ref{thm:wall_cross_P2} and Lemma \ref{lem:Z_graded_implies_P2} that allow one to match the potentials of different Lagrangian branes.

Section \ref{sec:locality} begins with what is essentially an observation, that when $j : U \to V$ is a Liouville embedding, the compact Fukaya category of $U$ embeds fully faithfully into the Fukaya category of $V$. This allows us to amplify the results of section \ref{sec:relative-floer} and prove Theorem \ref{th:wall_crossing}. This section ends with a discussion of the question of finding a Donaldson divisor in the complement of a given Lagrangian skeleton.

Section \ref{sec:mut_4d} is our study of mutations in complex dimension two. This section contains the definitions of Lagrangian and algebraic seeds and mutations, and reviews the algebraic theory of mutations. It shows that seeds can be mutated, and hence that the mutation process can be iterated. We show that the general wall-crossing formula applies in this case and allows us to relate the potentials across a mutation. The main results are encapsulated in Theorem \ref{th:lag_mut}. There is an exposition of how Lagrangian seeds arise from almost toric diagrams for del Pezzo surfaces, and a precise calculation for del Pezzo surfaces that results from this. This section ends with the applications to infinitely monotone tori and homological mirror symmetry described above.

Section \ref{sec:higher_dim} studies higher dimensional mutations. In the context of toric manifolds, we show that such mutations can be applied to the Lagrangian torus that corresponds to the barycentre of the moment polytope. As mentioned above, we obtain a non-binomial wall-crossing formula, see Theorem \ref{th:mut_higher_dim_toric}.

\subsection*{Acknowledgements}
We thank Paul Seidel for a suggestion that led to the proof using relative Floer theory; this greatly simplified the proof over a previous version. We thank Nick Sheridan for his explanations regarding gradings in the relative Fukaya category. 

We also thank the Mittag-Leffler Institute and the organizers of the Fall 2015 special semester on ``Symplectic Geometry and Topology'' where this project was initiated, and the Institute for Advanced Study where part of this work was completed.

JP was partially supported by NSF grant DMS-1522670.

DT was partially supported by the Simons
Foundation (grant \#385573, Simons Collaboration on Homological Mirror Symmetry), and by the Knut and Alice Wallenberg Foundation (project grant Geometry and Physics).

\section{Relative Floer theory and curvature}
\label{sec:relative-floer}

Our exposition draws heavily on \cite{She15,She13}, which serve as excellent references for this material.

\subsection{Manifolds with torsion $c_1$ and fractional gradings}
\label{sec:frac-gradings}

In this subsection we present a theory of gradings that works in the situation of manifolds with torsion first Chern class. This will furnish a natural setting in which Floer cohomology chain complexes admit fractional gradings. This is in a sense partially orthogonal to the theory in \cite{Sei00}: we have a \emph{torsion} $c_1$ and Floer cohomology admits a \emph{fractional} grading, whereas in Seidel's case $c_1$ is a \emph{divisible} class and Floer cohomology admits a grading by a \emph{torsion} group. The theory presented here is an elaboration of standard techniques, but we could not find a reference in the form we desire. See \cite{She15,She13} for the closely related theory of anchored Lagrangian submanifolds. 

\subsubsection{$d$-graded Lagrangian submanifolds}

Let $d$ be a positive integer, and let $U$ be an almost complex manifold of dimension $2n$ such that $d c_1(TU) = 0 \in H^2(U,\Z)$. Denote by $K_U = \det_\C T^*U$ the canonical bundle. Then $K_U^{\otimes d}$ is trivial, and we pick a nowhere vanishing section $\eta_U^d$. 

Let $L \subset U$ be a totally real submanifold of dimension $n$, and consider the orientiation line bundle $\det_\R TL$. There is a sequence of bundle maps
\begin{equation*}
  \xymatrix{
    (\det_\R TL)^{\otimes d} \ar[r] & (\det_\C TU)^{\otimes d} \ar[r]^-{\eta_U^d} & \C  \\
  }
\end{equation*}  
where the first map comes from the isomorphism $(\det_\R TL) \otimes_\R \C \cong \det_\C TU$ (here we use that $L$ is totally real), and the second map is pairing with $\eta_U^d$. Since $\eta_U^d$ is nowhere vanishing, the composition takes nonzero vectors to nonzero vectors. Thus by taking phases, we obtain a function $\varphi^d_L : (\det_\R TL)^{\otimes d} \setminus 0_L \to S^1$, which we call the \emph{$d$-phase function} of $L$. If $L$ is oriented, then $(\det_\R TL)^{\otimes d}$ comes with a preferred class of nonvanishing sections; the same is true if $d$ is even. In either case, by composing $\varphi^d_L$ with such a section we obtain a map $L \to S^1$. Hencefore we shall assume that either $L$ is oriented or $d$ is even, and we shall denote again by $\varphi_L^d : L \to S^1$ the resulting $d$-phase map.

Let $\exp(2\pi i \cdot) : \R \to S^1$ be the universal covering homomorphism with kernel $\Z$. 

\begin{defn}
\label{def:d-graded}
  Let $U$ be an almost complex manifold with $dc_1(TU) = 0$ and $\eta_U^d$ a trivialization of $K_U^{\otimes d}$. Let $L \subset U$ be a totally real submanifold. 
  \begin{itemize}
  \item If $d$ is even, a \emph{$d$-grading} of $L$ consists of a lift $\tilde{\varphi}_L^d\co L \to \R$ of the $d$-phase map $\varphi_L^d \co L \to S^1$ defined using $\eta_U^d$. That is, we require $\exp(2\pi i \tilde{\varphi}_L^d) = \varphi_L^d$.
  \item If $d$ is odd, a \emph{$d$-grading} of $L$ consists of a choice of orientation of $L$, together with a lift $\tilde{\varphi}_L^d\co L \to \R$ of the $d$-phase map $\varphi_L^d \co L \to S^1$ defined using $\eta_U^d$ and the chosen orientation of $L$.
  \end{itemize}
\end{defn}

A $d$-grading of $L$, if it exists, is not unique, but any two $d$-gradings differ by addition of a locally constant function $L \to \Z$.

We remark that the standard notion of grading on a Lagrangian submanifold (e.g., \cite[Section 11]{SeiBook08}) is the case $d = 2$ of this definition. 

\subsubsection{Fractional indices}

The purpose of the notion of $d$-gradings is that, if $L_0$ and $L_1$ are $d$-graded submanifolds of $U$, then to any transverse intersection point $x \in L_0 \cap L_1$ we can associate an absolute index $i(x)$, which is a rational number. More precisely, let $Z_d \subset \Q$ be the additive subgroup generated by $1$ and $2/d$: if $d$ is even then $Z_d = (2/d)\Z$, while if $d$ is odd then $Z_d = (1/d)\Z$; note that for $d = 1$ or $d = 2$, $Z_d = \Z$. We seek to define $i(x) \in Z_d$.

We first develop the linear theory (which we will later apply to the tangent space at $x$). To start with, we recall the linear theory of $2$-graded Lagrangians developed in \cite[Section 11]{SeiBook08}. Let $\Gr(V) \cong U(n)/O(n)$ denote the Lagrangian Grassmannian in the Hermitian vector space $V$. Pick an isomorphism $\eta^2_V \co \det_\C(V)^{\otimes 2} \to \C$, and denote by $\alpha_V \co \Gr(V) \to S^1$ the associated 2-phase function. A \emph{2-graded Lagrangian plane} is a pair $\Lambda^\# = (\Lambda, \alpha^\#)$, where $\Lambda \in \Gr(V)$ and $\alpha^\# \in \R$ satisfies $\exp(2\pi i \alpha^\#) = \alpha_V(\Lambda)$. To any pair $\Lambda_0^\#, \Lambda_1^\#$ of 2-graded Lagrangian planes there is an associated \emph{absolute index} $i(\Lambda_0^\#, \Lambda_1^\#) \in \Z$, defined at \cite[Eq.~(11.25)]{SeiBook08}.

There is a shift operation on 2-graded Lagrangian planes: for $\sigma \in \Z$, one defines
\begin{equation}
  S^\sigma\Lambda^\# = S^\sigma(\Lambda,\alpha^\#) = (\Lambda,\alpha^\# - \sigma). 
\end{equation}
The crucial identity is (cf. \cite[Eq.~(11.37)]{SeiBook08}):
\begin{equation}
\label{eq:crucial}
  i(S^{\sigma_0}\Lambda_0^\#,S^{\sigma_1}\Lambda_1^\#) = i(\Lambda_0^\#, \Lambda_1^\#) + \sigma_0 - \sigma_1.
\end{equation}

We now develop the parallel $d$-graded theory in the case where $d$ is even. Pick an isomorphism $\eta^d_V : \det_\C(V)^{\otimes d} \to \C$, and denote by $\beta_V : \Gr(V) \to S^1$ the associated $d$-phase function. Also choose an isomorphism $\eta^2_V : \det_\C(V)^{\otimes 2} \to \C$ such that $\eta^d_V = (\eta^2_V)^{d/2}$. A \emph{$d$-graded Lagrangian plane} is $\Lambda^\flat = (\Lambda, \beta^\flat)$ where $\Lambda \in \Gr(V)$ and $\beta^\flat \in \R$ satisfies $\exp(2\pi i \beta^\flat) = \beta_V(\Lambda)$. There is a shift operation on $d$-graded Lagrangian planes, defined for $\sigma \in (2/d) \Z$ by 
\begin{equation}
  S^\sigma\Lambda^\flat = S^\sigma(\Lambda,\beta^\flat) = (\Lambda, \beta^\flat - (d/2)\sigma) 
\end{equation}
This definition is consistent with the previous one, in the following sense. There is a function from $2$-graded Lagrangians to $d$-graded Lagrangians that takes $\Lambda^\# = (\Lambda, \alpha^\#)$ to $\Lambda^\flat = (\Lambda, (d/2)\alpha^\#)$; call this function $\tau_{d/2}$. Then for any $\sigma\in \Z$, we have
\begin{equation}
\label{eq:consistent}
  S^\sigma\tau_{d/2}\Lambda^\# = \tau_{d/2}S^\sigma \Lambda^\#.
\end{equation}

Going further, we can write any $d$-graded Lagrangian plane $\Lambda^\flat$ in the form
\begin{equation}
  \Lambda^\flat = S^\sigma\tau_{d/2}\Lambda^\#
\end{equation}
for some $2$-graded Lagrangian plane $\Lambda^\#$ and some $\sigma \in (2/d)\Z$, for indeed any two $d$-gradings on the same underlying Lagrangian plane differ by a shift. This fact, combined with \eqref{eq:crucial}, motivates the following definition.

\begin{defn}
  Let $\Lambda_0^\flat$ and $\Lambda_1^\flat$ be $d$-graded Lagrangian planes. For $i = 0,1$, write  $\Lambda_i^\flat = S^{\sigma_i}\tau_{d/2}\Lambda_i^\#$ for some $2$-graded Lagrangian $\Lambda_i^\#$ and $\sigma_i \in (2/d)\Z$. Define the \emph{absolute index} to be
  \begin{equation}
    \label{eq:frac-index-def}
    i(\Lambda_0^\flat,\Lambda_1^\flat) := i(\Lambda_0^\#,\Lambda_1^\#) + \sigma_0 - \sigma_1 \in Z_d,
  \end{equation}
  where $i(\Lambda_0^\#,\Lambda_1^\#)$ is the previously defined absolute index of $2$-graded Lagrangian planes. 
\end{defn}

It is precisely \eqref{eq:crucial} and \eqref{eq:consistent} that guarantee $i(\Lambda_0^\flat,\Lambda_1^\flat)$ does not depend how $\Lambda_i^\flat$ is represented as a shift of a $2$-graded Lagrangian. We must also address the dependence on the choice of $\eta^2_V$ such that $\eta^d_V = (\eta^2_V)^{d/2}$. The ambiguity in the choice of $\eta^2_V$ is multiplication by a $(d/2)$-th root of unity. Such a multiplication changes the notion of $2$-graded planes by a fractional shift. In terms of \eqref{eq:frac-index-def}, the effect is to add the same quantity to $\sigma_0$ and $\sigma_1$, while leaving $i(\Lambda_0^\#,\Lambda_1^\#)$ unchanged. Thus the absolute index is independent of the choice of $\eta^2_V$.

The theory in the case where $d$ is odd is similar to the case of $d$ even described above, with two difference: first, in addition to $\eta^d_V$, we choose an isomorphism $\eta_V : \det_\C(V) \to \C$ such that $\eta^d_V = (\eta_V)^d$, and second that we work with the oriented Lagrangian Grassmannian $\Gr^{+}(V) = U(n)/SO(n)$ throughout. Then we use $(\eta_V)^2$ to determine the notion of $2$-graded planes in the construction.

With the linear theory in hand, we can define the \emph{absolute index} of a pair of $d$-graded Lagrangian submanifolds. Namely let $U$ be a symplectic manifold with $dc_1(TU) = 0$ and a chosen trivialization $\eta_U^d$, and suppose $L_1$ and $L_2$ are $d$-graded in the sense of Definition \ref{def:d-graded}. Then at any transverse intersection point $x \in L_1\cap L_2$, we find that $T_xL_1$ and $T_xL_2$ are two Lagrangian subspaces of $T_xU$, carriyng precisely the data required to define the absolute index as above, and we define $i(x) \in Z_d \subset \Q$ accordingly.

\subsection{From monotone to $d$-graded}

Now we study how monotone Lagrangians in a monotone symplectic manifold $X$ become $d$-graded in the complement of a divisor $D$. Given a compact symplectic manifold $(X,\omega)$ such that $[\omega] = 2\tau c_1(X)$, $\tau > 0$, we consider a smooth divisor $D$ such that $[D] = dc_1(X)$ for some positive integer $d$. We note that then $[\omega] = \frac{2\tau}{d}[D]$.

  We equip $X \setminus D$ with a Liouville one-form $\alpha$ such that $d\alpha = \omega|_{X\setminus D}$. Recall that there is a notion of \emph{linking number} of $\alpha$ with $D$ \cite[Section 3.5]{She15}; it is computed as
  \begin{equation}
    \ell = - \lim_{\delta \to 0} \int_{S^1}  \gamma_\delta^*y^*\alpha
  \end{equation}
  where $y : D^2 \to X$ is an embedding of a disc that intersects $D$ positively transversally at the center of the disc and nowhere else, and $\gamma_\delta : S^1 \to D^2$ is a positively oriented circle of radius $\delta$. By pairing both sides of the equation $[\omega] = \frac{2\tau}{d}[D]$ with a surface $\Sigma \subset X$, we find that $\int_\Sigma \omega = \frac{2\tau}{d}[D]\cdot [\Sigma]$. On the other hand we can use Stokes' theorem to see $\int_\Sigma \omega = \ell [D] \cdot [\Sigma]$. Hence necessarily $\ell = \frac{2\tau}{d}$.

  In the case where $D$ has multiple components, there is a linking number for each of them; the results below remain valid as long as we assume this number is the same for each component.

\begin{proposition}
  \label{prop:monotone-to-d-graded}
  Suppose that $X$ is a monotone symplectic manifold and $L$ is a monotone Lagrangian submanifold such that the image of $H_1(L) \to H_1(X)$ is zero. Let $D$ be a divisor representing $dc_1(X)$ disjoint from $L$, and let $\eta^d$ be the natural trivialization of $K_{X\setminus D}^d$.  If $L$ is exact in $X \setminus D$, then $L$ is $d$-gradable with respect to $\eta^d$.
\end{proposition}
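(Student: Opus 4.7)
The plan is to reduce $d$-gradability to the vanishing of a Maslov-type class on $L$, and then to exhibit this vanishing as a direct consequence of monotonicity combined with exactness in $X\setminus D$. By definition, a $d$-grading of $L$ exists (for $d$ even) if and only if the $d$-phase map $\varphi_L^d : L\to S^1$ lifts to $\R$; equivalently, the winding $\mu_L^d(\gamma)\in\Z$ of $\varphi_L^d\circ\gamma$ must vanish for every loop $\gamma : S^1\to L$ (for $d$ odd the same criterion applies once an orientation of $L$ is fixed, which is implicit in being a Lagrangian brane here). The hypothesis that $H_1(L)\to H_1(X)$ is zero means every loop $\gamma\subset L$ bounds a smooth disc $u:(D^2,\bd D^2)\to(X,L)$, so it suffices to show $\mu_L^d(\bd u)=0$ for every such $u$.

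First I would establish the index formula
\begin{equation}\label{eq:plan_formula}
\mu_L^d(\bd u) \;=\; \tfrac{d}{2}\mu(u) - u\cdot D,
\end{equation}
where $\mu(u)\in\Z$ is the usual Maslov index and $u\cdot D$ is the (necessarily positive) intersection number. The content of \eqref{eq:plan_formula} is that $\eta^d$ arises from the canonical isomorphism $K_X^{\otimes d}\cong\mathcal{O}_X(-D)$: the defining section of $\mathcal{O}_X(D)$ dualises to a meromorphic trivialization of $K_X^{\otimes d}$ with a simple pole along $D$. Pulling back by $u$ and writing $u^*\eta^d = f\cdot\tau$ for a smooth nonvanishing trivialization $\tau$ of $u^*K_X^{\otimes d}$ over the contractible disc, the argument of $f$ has winding $-u\cdot D$ along $\bd D^2$ (using positivity of intersections with the symplectic, hence pseudoholomorphic, divisor $D$), while the contribution of $\tau$ to the $d$-phase of $L$ is $\tfrac{d}{2}\mu(u)$, after extracting a square root (resp.\ $d$-th root) of $\tau$ according to the parity of $d$. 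Adding the two contributions yields \eqref{eq:plan_formula}.

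Second, I would evaluate the two terms of \eqref{eq:plan_formula} using the remaining hypotheses. Monotonicity of $L$ in $X$ gives $\omega(u) = \tau\mu(u)$. Exactness of $L$ in $X\setminus D$ gives $\int_\gamma\alpha=0$, and Stokes' theorem applied to $u$ with small neighbourhoods of $u^{-1}(D)$ excised, combined with the identification $\ell = 2\tau/d$ from the paragraph preceding the proposition, yields
\begin{equation}
\omega(u) \;=\; \ell\cdot(u\cdot D) \;=\; \tfrac{2\tau}{d}\,(u\cdot D).
\end{equation}
Equating the two expressions for $\omega(u)$ produces $\tfrac{d}{2}\mu(u) = u\cdot D$; substituting into \eqref{eq:plan_formula} gives $\mu_L^d(\bd u)=0$, completing the argument.

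The main obstacle is the bookkeeping in \eqref{eq:plan_formula}: one has to verify that the ``natural'' meaning of $\eta^d$ is precisely the pole-one section supplied by $D$, and reconcile the various grading conventions---the use of square roots for $d$ even versus the oriented Lagrangian Grassmannian for $d$ odd---so that both sides live in the same group and the calculation really produces an integer. Once \eqref{eq:plan_formula} is in hand, the combination of monotonicity and exactness makes the vanishing essentially automatic.
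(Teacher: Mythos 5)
Your proposal is essentially the paper's own argument: the index identity $\mu_L^d(\partial u)=\tfrac{d}{2}\mu(u)-u\cdot D$, combined with monotonicity and exactness (via Stokes and $\ell=2\tau/d$), is exactly the computation the paper carries out. Two small imprecisions do not affect the argument but should be fixed: the hypothesis that $H_1(L)\to H_1(X)$ vanishes gives a bounding \emph{surface} for each loop, not necessarily a disc, so $u$ should be a surface with boundary $\gamma$ (your winding/degree argument still works, since a complex line bundle over any surface with boundary is trivial); and there is no reason for $u\cdot D$ to be positive or computed via pseudoholomorphicity, since $u$ is only a smooth map---but the formula only needs the algebraic intersection number, so the positivity claim is unnecessary and should be dropped.
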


\begin{proof}
  Take any loop $\gamma$ in $L$, and choose a surface $\beta$ in $X$ with $\partial \beta = \gamma$. Then using Stokes' theorem applied to $d\alpha = \omega_{X\setminus D}$, we find that $\int_\beta \omega = \ell [D]\cdot \beta - \int_{\partial \beta} \alpha$. The second term vanishes because $L$ is exact, so $\int_\beta \omega = \ell [D]\cdot \beta = \frac{2\tau}{d} [D]\cdot \beta$. Now, by monotonicity, $\int_\beta \omega = \tau \mu(\beta)$. Thus $\mu(\beta) = \frac{2}{d}[D]\cdot \beta$. On the other hand, by our choice of $\eta^d$, the quantity $\frac{d}{2} \mu(\beta)$ is the sum of the degree of the $d$-phase function $\phi_L^d : L \to S^1$ on $\gamma$ and the intersection number $[D]\cdot \beta$. Thus the degree of $\phi_L^d$ on $\gamma$ is zero. Since this is true for all loops, $L$ is $d$-gradable.
\end{proof}

\subsubsection{The case $d=1$}
Having developed the theory of $d$-graded Lagrangian submanifolds, we now specialize to the most important special case used in this paper, which is $d = 1$. When $d = 1$, the torsion condition is $c_1(U) = 0$, and a Lagrangian $L$ admits a $1$-grading if and only if it is orientable and has vanishing Maslov class. The absolute index of an intersection point of two $1$-graded Lagrangians lies in $Z_1 = \Z$.

\begin{remark}
  The case $d = 1$ suffices for the main applications in this paper, where $U$ will be (a subdomain of) $X \setminus D$ where $D \subset X$ is an anticanonical divisor. However, the more general setting of $d$-graded Lagrangians seems to be the right one for the proof of the general wall-crossing formula using relative Floer theory, and we believe it will be useful in future applications.
\end{remark}

\subsubsection{Compatible trivializations}
We end this section with a definition that will be useful in our later arguments.

\begin{definition}
\label{def:compatible-triv}
  Let $j : U \to V$ be a an open embedding. Suppose $c_1(U) = 0$ and $dc_1(V) = 0$, and let $\zeta$ be a trivialization of $K_U$, and $\eta^d$ a trivialization of $K_V^{\otimes d}$. We say that $\zeta$ and $\eta^d$ are \emph{compatible trivializations} if the trivialization $\zeta^d$ of $K_U^{\otimes d}= K_V^{\otimes d}|_U$ obtained from $\zeta$ is homotopic to $\eta^d$. 
\end{definition}

%For instance, in the case where $c_1(V) = 0 = c_1(U)$, if we trivialize the canonical bundle of $V$ and restrict this trivialization to $U$, we will find that $j_*$ preserves the grading.
%One can ignore this issue by reducing to a $\Z/2$ grading.

  Let us identify the obstruction  to finding a compatible trivialization. Take trivializations $\zeta$ of $K_U$ and $\eta^d$ of $K_V^{\otimes d}$. Then $\zeta^d$ and $\eta^d|_U$ are two trivializations of the same line bundle, and so differ by a class $\alpha \in H^1(U,\Z)$. Changing $\zeta$ by $\beta \in H^1(U,\Z)$ changes this difference from $\alpha$ to $\alpha + d\cdot \beta$. Thus the obstruction lies in $H^1(U,\Z)/(d\cdot H^1(U,\Z))$. Observe that, by the long exact sequence in cohomology associated to the short exact sequence $0 \to \Z \to \Z \to \Z/d\Z \to 0$ of coefficient groups, the latter group embeds in $H^1(U,\Z/d\Z)$.  There is an analogue of Proposition~\ref{prop:monotone-to-d-graded} that gives a sufficient condition for vanishing of the obstruction that is easy to check in the applications.

  \begin{proposition}
  \label{prop:U-to-d-graded}
 Suppose that $X$ is a monotone symplectic manifold,  $D$ a divisor representing $dc_1(X)$, and $U\subset X\setminus D$ a Liouville subdomain such that $c_1(U)=0$ and the inclusion $H_1(U,\Z)\to H_1(X,\Z)$ is zero. Assume that $L_i \subset U$ are Lagrangian submanifolds that are exact in $U$ and monotone in $X$, and such that the map $\bigoplus_{i} H_1(L_i,\Z) \to H_1(U,\Z)$ is surjective.
 
 Let $\eta^d$ be the natural trivialization of $K_{X\setminus D}^d$. Then there is a trivialization $\zeta$ of $K_U$ is compatible with $\eta^d$ in the sense of Definition~\ref{def:compatible-triv}. Furthermore, any Lagrangian submanifold $L' \subset U$ that is exact in $U$ and monotone in $X$ has vanishing Maslov class with respect to $\zeta$.
\end{proposition}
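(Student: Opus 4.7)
The plan is to first establish the existence of a compatible trivialization $\zeta$, and then to deduce the Maslov-class vanishing for $L'$ as an easy consequence. Fix any trivialization $\zeta_0$ of $K_U$; then $\zeta_0^d$ and $\eta^d|_U$ are two trivializations of $K_U^{\otimes d}$, differing by a class $\alpha \in H^1(U,\Z)$. By the discussion preceding the proposition, the question of adjusting $\zeta_0$ to make it compatible with $\eta^d$ reduces to showing that $\alpha$ lies in $d\cdot H^1(U,\Z)$. By universal coefficients, $H^1(U,\Z)\cong\Hom(H_1(U,\Z),\Z)$, so the condition is equivalent to $\alpha(\gamma)\in d\Z$ for every $\gamma\in H_1(U,\Z)$. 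Using the hypothesized surjectivity of $\bigoplus_i H_1(L_i,\Z)\to H_1(U,\Z)$, it suffices to verify this for $\gamma$ represented by a loop in some $L_i$.

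For such a $\gamma\subset L_i$, I apply Proposition~\ref{prop:monotone-to-d-graded}. The hypothesis $H_1(U,\Z)\to H_1(X,\Z)=0$ implies that $H_1(L_i)\to H_1(X)$ also vanishes, and because $U\subset X\setminus D$ is a Liouville embedding, exactness of $L_i$ in $U$ implies exactness in $X\setminus D$. The proposition therefore says that $L_i$ is $d$-gradable with respect to $\eta^d$, i.e., the $d$-phase map $\varphi^d_{L_i,\eta^d|_U}\co L_i\to S^1$ has winding zero on $\gamma$. On the other hand, the $d$-phase computed from $\zeta_0^d$ is the $d$-th power of the $1$-phase from $\zeta_0$, so its winding on $\gamma$ is a multiple of $d$. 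Since the two $d$-phases come from trivializations differing by $\alpha$, their windings on $\gamma$ differ by $\alpha(\gamma)$; combining these two statements yields $\alpha(\gamma)\in d\Z$, as required. Choosing $\beta\in H^1(U,\Z)$ with $\alpha=d\cdot\beta$ and twisting $\zeta_0$ by $\beta$ then produces the desired $\zeta$.

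For the Maslov-class statement, let $L'\subset U$ be exact in $U$ and monotone in $X$. The same verification as for the $L_i$ shows that Proposition~\ref{prop:monotone-to-d-graded} applies to $L'$, so $L'$ is $d$-gradable with respect to $\eta^d|_U$. Since $\zeta^d$ is homotopic to $\eta^d|_U$, the $d$-phase of $L'$ computed from $\zeta^d$ also has winding zero on every loop; but this map is the $d$-th power of the $1$-phase from $\zeta$, so the winding of the latter on any loop is a $d$-torsion element of $\Z$ and hence zero, which is the vanishing of the Maslov class of $L'$ with respect to $\zeta$. The main obstacle is the divisibility step for $\alpha$; once the surjectivity hypothesis reduces the question to loops lying in the $L_i$, Proposition~\ref{prop:monotone-to-d-graded} supplies the needed integer computation and the rest is bookkeeping between $1$-, $2$-, and $d$-phase functions.
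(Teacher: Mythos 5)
Your proof is correct and follows essentially the same route as the paper's: reduce via the surjectivity hypothesis to loops lying on the $L_i$, invoke Proposition~\ref{prop:monotone-to-d-graded} to get that the $d$-phase from $\eta^d$ has zero winding, compare with the $d$-phase from $\zeta_0^d$ whose winding is automatically divisible by $d$, and conclude the obstruction vanishes; the Maslov statement for $L'$ then follows by the same $d$-th root argument. The only cosmetic difference is that you work directly with divisibility of $\alpha(\gamma)$ in $\Z$ via universal coefficients, whereas the paper phrases the vanishing in $H^1(U,\Z/d\Z)$; these are equivalent, and you have also helpfully spelled out the verification that the $L_i$ satisfy the hypotheses of Proposition~\ref{prop:monotone-to-d-graded}.
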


  \begin{proof}
    Let $\gamma \in H_1(U,\Z)$ be the class of a loop. The hypothesis that $\bigoplus_{i} H_1(L_i,\Z) \to H_1(U,\Z)$ is surjective means we can regard $\gamma$ as a loop on $L_i$ for some $i$. We know from Proposition~\ref{prop:monotone-to-d-graded} that the degree on $\gamma$ of the $d$-phase function $\phi_{L_i}^d$ computed using $\eta^d$ is zero.

    Suppose now that $\zeta$ is a trivialization of $K_U$ and let $\psi_{L_i}$ be the phase function for $L_i$ computed using $\zeta$. Then the degree of $\psi_{L_i}$ on $\gamma$ is an integer, and the degree of $\psi_{L_i}^d$ on $\gamma$ is divisble by $d$. The value of the obstruction $b \in H^1(U,\Z/d\Z)$ on the loop $\gamma$ is the reduction modulo $d$ of the difference of degree of $\psi_{L_i}^d$ on $\gamma$ (which is divisible by $d$) and the degree of $\phi_{L_i}^d$ on $\gamma$ (which is zero). Thus the obstruction vanishes on $\gamma$. Since this is true for every $\gamma$, the obstruction vanishes, and we conclude that there is a compatible trivialization $\zeta$ of $K_U$. 

    Now let $L'\subset U$ be another Lagrangian that is exact in $U$ and monotone in $X$. Then the phase function for $L'$ computed with $\zeta$ is a $d$-th root of the $d$-phase function for $L'$ computed with $\eta^d$. Since the latter has degree zero for every loop $\gamma \subset L'$ by Proposition \ref{prop:monotone-to-d-graded}, the former does as well, and so $L'$ has vanishing Maslov class.
  \end{proof}

\subsection{Potentials}
\label{subsec:potential}
We shall now take a first pass at the disk potential;
% without discussing the relation to K\"{a}hler pairs;
this is a well-known circle of ideas, see for instance \cite{Au07}. Let $X$ be a positively monotone symplectic manifold with $[\omega] = 2\tau c_1(TX)$, and let $L$ be an oriented spin monotone Lagrangian with $\tau \mu(u) = \int_u \omega$ for all $u \in H_2(X,L)$. We equip $L$ with $\C^*$-local system; because $\C^*$ is abelian, this can be encoded by a homomorphism 
$$\rho \co H_1(L,\Z) \to \C^*.$$ Let $\cM(L,\beta)$ denote the moduli space of disks with boundary on $L$ of Maslov index 2 with a single marked point on the boundary of the disk representing the relative homology class $\beta \in H_2(X,L)$. The virtual dimension of $\cM(L,\beta)$ is $\mu(\beta) + n - 2 = n$, and the moduli space is regular for generic choice of almost-complex structure $J$. There is a map $\mathrm{ev}\co \cM(L,\beta) \to L$ that evaluates at the boundary marked point. We denote by $n_\beta$ the degree of this map. 

\begin{defn}
The \emph{disk potential} of $\bL = (L,\rho)$ is the number:
\begin{equation}
\label{eq:W_def_number}
  W(\bL) = W(L,\rho) = \sum_{\beta \mid \mu(\beta) = 2} n_\beta\cdot  \rho(\partial \beta).
\end{equation}
\end{defn}

In the statement of Theorem~\ref{th:wall_crossing} we have used the subscript $X$ to stress that the the potential is computed using disks inside $X$, but we are omitting it now.
In view of Subsection~\ref{subsec:intro_wc}, let us provide an equivalent viewpoint on the potential.
Denote $m=\rk H_1(L;\Z)/Torsion$ and choose a basis of this group:
\begin{equation}
\label{eq:basis_H}
\Z^m\xrightarrow{\cong} H_1(L,\Z)/\mathit{Torsion}.
\end{equation}

\begin{defn}
	\label{def:pot_LG}
The \emph{Landau-Ginzburg potential} of $L$ with respect to the basis (\ref{eq:basis_H}) is a Laurent polynomial
$$
W_L\co (\C^*)^m\to\C,\quad W_L\in\C[x_1^{\pm 1},\ldots,x_m^{\pm 1}]
$$
in formal variables $x_1,\ldots,x_m$  defined as follows:
\begin{equation}
\label{eq:W_def_poly}
 W_L=\sum_{\beta \mid \mu(\beta) = 2} n_\beta\cdot  \mathbf{x}^{\bd \beta}
 \end{equation}
where we consider $\bd\beta$ as an element of $\Z^m$ using (\ref{eq:basis_H}), and  denote 
$\mathbf{x}^l=x_1^{l_1}\ldots x_m^{l_m}$.
By Gromov compactness and the fact that $L$ is monotone, the above sum is finite, which implies that $W_L$ is a Laurent polynomial.
\end{defn}

The two definitions of the potential agree in the following sense. Let $\rho$ be a local system on $L$; using (\ref{eq:basis_H}), let us consider $\rho$ as a point in $(\C^*)^m$ by evaluating it on the basis elements of $H_1(L,Z)/\mathit{Torsion}$. Then
$$
W_L(\rho)=W(L,\rho)\in\C
$$
where the two sides are taken from (\ref{eq:W_def_poly}) and (\ref{eq:W_def_number}), respectively.

Recall that $GL(m;\Z)$ consists of integral matrices with determinant $\pm 1$.
If one changes the basis (\ref{eq:basis_H}) by a matrix
$
(a_{ij})\in GL(m;\Z),
$
the corresponding potentials differ by a change of co-ordinates given by the multiplicative action of $GL(m;\Z)$ on $(\C^*)^m$:
\begin{equation}
\label{eq:change_coord_W}
\begin{array}{c}
x_i\mapsto x_i'=\prod_{j=1}^m x_j^{a_{ij}}, \quad \textit{ so that}\\
W_L'(x_1,\ldots,x_m)=W_L(x_1',\ldots, x_m').
\end{array}
\end{equation}

 The proposition below is classical.
The reason is that, by monotonicity, $2$ is the lowest possible Maslov index a non-constant holomorphic disk with boundary on $L$ can have, and so bubbling is excluded during a deformation of the almost complex structure or a Hamiltonian motion of $L$.

\begin{prop}[Invariance of potential]
	\label{prop:potential_invariance}
	For an oriented spin monotone Lagrangian submanifold with a $\C^*$-local system  $\bL=(L,\rho)\subset X$, its disk potential $W(\bL)\in \C$ is invariant under the choice of $J$ and Hamiltonian isotopies of $L$.  
	
	Likewise the Landau-Ginzburg potential
	$W_L\in \C[x_1^{\pm 1},\ldots, x_m^{\pm 1}]$ up to the change of co-ordinates (\ref{eq:change_coord_W}) is invariant under Hamiltonian isotopies of $L$, and more generally of symplectomorphisms of $X$ applied to $L$.
\end{prop}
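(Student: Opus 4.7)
The plan is the standard monotone-Lagrangian bubbling argument. The key observation is that, for an oriented spin monotone Lagrangian, Maslov index $2$ is the smallest positive Maslov index attained by a non-constant holomorphic disc, and holomorphic spheres in $X$ have Chern number at least one; these facts will force all degenerations in a one-parameter family of Maslov-$2$ moduli spaces to occur in real codimension at least two, so that the parametric moduli space gives a genuine cobordism between one-pointed evaluation maps into $L$.

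First I would fix a generic path $\{J_s\}_{s\in[0,1]}$ of $\omega$-tame almost complex structures and form the parametric moduli space $\cM(L,\beta;\{J_s\})$ of pairs $(s,u)$ with $u$ a one-pointed $J_s$-holomorphic disc of Maslov index $2$ representing $\beta$. For generic paths this is a smooth $(n+1)$-manifold with boundary $\cM(L,\beta;J_0)\sqcup \cM(L,\beta;J_1)$, and the boundary evaluation extends to a proper map to $L$. I would then check that no further boundary strata appear: disc bubbling splits $\beta=\beta_1+\beta_2$ with $\mu(\beta_i)\ge 0$ summing to $2$, and monotonicity forces the Maslov-$0$ summand to have zero symplectic area, hence to be a constant disc; sphere bubbling in a class $A$ with $c_1(A)\ge 1$ leaves a disc of Maslov index $2-2c_1(A)\le 0$, which is again constant by monotonicity, while the configuration has expected codimension $2c_1(A)\ge 2$ in the parametric moduli space. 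It follows that $n_\beta$ is well-defined and independent of $J$, so $W(\bL)\in\C$ and $W_L$ are unambiguous invariants.

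Next I would repeat the argument for a Hamiltonian isotopy $\{L_t\}_{t\in[0,1]}$ of $L$, with $J$ fixed generically, using the identification of relative homology along the isotopy. Monotonicity, orientation and spin structure are preserved, the bubbling analysis of the previous paragraph applies verbatim, and the boundary evaluations assemble into a cobordism of proper maps to $\bigsqcup_t L_t\simeq L\times[0,1]$; composing with the projection to $L$ gives invariance of $n_\beta$ under Hamiltonian isotopy. For a symplectomorphism $\psi\co X\to X$, pushforward is a bijection $\cM(L,\beta;J)\to\cM(\psi(L),\psi_*\beta;\psi_*J)$ preserving boundary marked points, so $n_{\beta}(L)=n_{\psi_*\beta}(\psi(L))$; writing $\psi_*\co H_1(L)/\mathit{Torsion}\to H_1(\psi(L))/\mathit{Torsion}$ in the two chosen bases produces the matrix in $GL(m;\Z)$ realising the coordinate change \eqref{eq:change_coord_W}.

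The main technical obstacle will be coherence of orientations on the parametric moduli spaces, so that the cobordism of evaluation maps actually produces equality of signed degrees and not merely of unsigned counts; this requires propagating the orientation/spin data on $L$ through the parametric family and verifying that the codimension-$\ge 2$ sphere-bubble strata cannot contribute to the boundary of the evaluation cobordism. These are by now standard issues in monotone Fukaya theory, and I would invoke the treatment in \cite{She15} or \cite{FO3Book} rather than reproduce it here.
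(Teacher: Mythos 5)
Your proposal is the standard parametric-cobordism argument, and it is precisely the ``classical'' route the paper invokes in the sentence immediately preceding the proposition (monotonicity forces any Maslov-$0$ disc component to have zero area and hence be constant, and excludes sphere bubbles in a generic one-parameter family since they carry $c_1\ge 1$ and so appear in codimension $\ge 2$). The paper offers no proof beyond this sketch, so your expanded treatment — including the pushforward argument for symplectomorphisms producing the $GL(m,\Z)$ change of basis in \eqref{eq:change_coord_W}, and the remark on orientation coherence — is a faithful fleshing-out of the same approach.
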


\begin{remark}
  %{\red Need references for this remark, or remove it}
	\label{rem:potential}
	The potential also determines part of the \ai endomorphism algebra of $L$, if we look at  $L$ as an object of the monotone Fukaya category. This is proved for the case of toric fibers in toric manifolds \cite{FO310b}, and is expected to hold in general.

        Equip $L$ with a local system $\rho$ and consider the \ai structure maps $\mu^k$ on the self-Floer complex of $\bL=(L,\rho)$ $$CF^*(\bL,\bL)=\hom^*(\bL,\bL)$$ with its Morse grading. As an \ai algebra, this complex is only canonically $\Z/2$ graded, but it still  useful to remember the Morse grading keeping in mind that it is not preserved by the structure maps. Restricting to the degree 1 part of the complex, the contribution from Maslov index 2 disks to the \ai structure gives maps
	\begin{equation}
	\label{eq:mu_k_maslov_2}
	\mu^k\co (\hom^1(\bL,\bL))^{\otimes k}\to\hom^0(\bL,\bL),
	\end{equation}
	and there are no contributions from higher index disks in these degrees.
	Assuming $H_1(L,\Z)$ is torsion-free, the symmetrisations of these operations are equal to the corresponding iterated partial derivatives of $W_L$:
	$$
	\mu^k(\Sym(x_1\otimes \ldots\otimes x_k ))=\bd_{x_1\ldots x_k}|_{\rho}W_L
	$$
	where $x_i$ are elements (in any order, possibly with repetitions) of the chosen basis of $H_1(L,\Z)$.
	
	Moreover, a homological algebra argument shows that when $L$ is an $m$-torus, the maps (\ref{eq:mu_k_maslov_2}) determine the whole \ai algebra of $\bL$. Summing up, the potential of a Lagrangian torus remembers  the whole endomorphism \ai algebra $\hom^*(\bL,\bL)$ up to quasi-isomorphism, i.e.~is the only enumerative geometry invariant of $L$ relevant for the Fukaya category.
	
	When $L$ is a torus, a particularly basic  manifestation of the above discussion is the following fact: the self-Floer cohomology of $\bL$ is non-zero if and only if $\rho\in (\C^*)^m$ is a critical point of $W_L$. In the latter case, the critical value $W_L(\rho)$ must be an eigenvalue of the quantum multiplication by $c_1(X)$ in the small quantum cohomology $QH^*(X,\C)$. 
	
	The potential itself is a much finer invariant than its Fukaya-categorical shadow (\ref{eq:mu_k_maslov_2}). For example, if $\rho$ is a Morse critical point of $W_L$, then the quasi-isomorphism type of $\hom^*(\bL,\bL)$ is uniquely determined, by intrinsic formality of the Clifford algebra. Due to this, for instance, Vianna's monotone Lagrangian tori in $\C P^2$ are not distinguished as objects of the Fukaya category; but Proposition~\ref{prop:potential_invariance} can be used to prove that   they are not Hamiltonian isotopic. 
\end{remark}

\subsection{Disk potential and relative Floer theory}
Now given two monotone Lagrangians with local systems $\bL_i = (L_i, \rho_i), i = 1, 2$, there is a Floer cochain complex $CF(\bL_1,\bL_2)$. This carries a differential $\mu^1$ that counts strips that are rigid modulo translation, weighted by parallel transport of the local system along the boundary arcs. Due to the presence of disks with boundary on $L_i$, $\mu^1$ is not strictly speaking a differential, but rather satisfies the equation
\begin{equation}
  \mu^1(\mu^1(x)) = (W(\bL_2)-W(\bL_1))x, \quad \forall x \in CF(\bL_1,\bL_2).
\end{equation}
It is precisely a refinement of this equation to the relative setting that will establish the general form of the wall-crossing formula.

The precise context in which we will apply relative Floer theory is as follows:

\begin{hypothesis}
\label{hyp:main}
  We consider $(X,D)$ having the following properties:
\begin{itemize}
\item $X$ is a monotone symplectic manifold: $[\omega] = 2\tau c_1(X)$.
\item $D \subset X$ is a smooth symplectic divisor that represents $dc_1(X)$ cut out by a section $\eta^d \in \Gamma(X,K_X^{-d})$.
\end{itemize}
We consider Lagrangians having the following properties:
\begin{itemize}
\item $L$ is a monotone Lagrangian: $\int_\beta \omega = \tau \mu(\beta)$ for any $\beta \in H_2(X,L)$.
\item $L$ is contained in $X\setminus D$, which carries a Liouville structure making $L$ exact.
\item $L$ is orientable and $d$-graded with respect to $\eta^d|_{X\setminus D}$ (see section \ref{sec:frac-gradings}).
\end{itemize}
\end{hypothesis}

Recall that in the case $d = 1$, one has $c_1(X\setminus D) = 0$, and the condition that $L$ be $d$-gradable reduces to the condition that $L$ has vanishing Maslov class.

\begin{rem}
  An important case where these hypotheses are satisfied is the case of \emph{K\"{a}hler pairs} and \emph{anchored Lagrangian submanifolds} as studied by Sheridan \cite{She13}. 
\end{rem}

In this context of Hypothesis \ref{hyp:main}, there is a version of the monotone relative Fukaya category \cite[Definition 3.2]{She13}. The coefficient ring is the polynomial ring $\C[r]$ in one variable. The objects are $d$-graded exact Lagrangians in $X\setminus D$ that are monotone in $X$ as above. The morphism spaces and structure maps are the same as in the usual monotone Fukaya category, with the exception that each holomorphic disk is counted with an extra coefficient $r^{(u\cdot D)}$, where $u\cdot D$ is the intersection number of $u$ with the divisor $D$. There is also a technical condition on the almost complex structure that is not important for the present discussion.\footnote{A small difference between our monotone relative Fukaya category and Sheridan's version is that we take the monotone relative Fukaya category as a curved $A_\infty$ category, whereas Sheridan avoids the curvature by fixing a value of $W$ in advance.}

Now we come to the entire purpose of section \ref{sec:frac-gradings} and the assumption that our Lagrangians are $d$-graded, which is to control the degrees of the structure maps $\mu^k$. We denote by $CF_{X\setminus D}(L_1,L_2)$ the Floer cochains of the exact Lagrangians $L_1,L_2$ in the exact symplectic manifold $X\setminus D$, and we denote by $CF(L_1,L_2) = CF_{X\setminus D}(L_1,L_2) \otimes \C[r]$ the Floer cochains for the monotone relative theory.

\begin{lem}
  \label{lem:mu-k-degree}
  Let $X,D, L_i, i =0,\dots,k$ satisfy hypothesis \ref{hyp:main}. Give $CF_{X\setminus D}(L_i,L_j)$ the grading by $Z_d \subset \Q$ coming from the $d$-gradings of $L_i, L_j$. Give $CF(L_1,L_2) = CF_{X\setminus D}(L_1,L_2) \otimes \C[r]$ a $Z_d$-grading by declaring that $r$ has degree $2/d$. Then the structure map
  \begin{equation}
    \mu^k \co CF(L_{k-1},L_k) \otimes \cdots \otimes CF(L_0,L_1) \to CF(L_0,L_k)
  \end{equation}
  has degree $2- k$.
\end{lem}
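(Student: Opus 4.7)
My plan is to reduce the statement to the virtual dimension formula for the Floer moduli spaces defining $\mu^k$, once one accounts for the $d$-graded structure and the $r$-weighting. The point is that a Floer disk in $X$ can intersect $D$ nontrivially and therefore need not have vanishing topological Maslov index, but each such intersection contributes $2/d$ to the ``effective Maslov'' term in the index formula, which exactly matches the degree of the compensating factor $r^{[u]\cdot[D]}$.

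Concretely, for any Floer disk $u$ in $X$ with inputs $x_1,\dots,x_k$ at corners between consecutive Lagrangians, output $y$, and relative homotopy class $[u]$, I would establish the formula
\begin{equation*}
\operatorname{vdim} \;=\; i(y) \;-\; \sum_{j=1}^{k} i(x_j) \;+\; k - 2 \;+\; \tfrac{2}{d}\,\bigl([u]\cdot [D]\bigr),
\end{equation*}
where $i(\cdot)\in Z_d$ is the absolute index of Section~\ref{sec:frac-gradings} defined with respect to $\eta^d$. The first three terms are the standard contributions in the $Z_d$-graded exact setting: the Fredholm index of the $\bar\partial$-operator with Lagrangian boundary conditions on a disc with $k+1$ boundary punctures differs from $k-2$ by the sum of the corner index jumps, exactly as in the usual $\Z$-graded case. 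The final term is the correction coming from $u$ crossing $D$, and it is essentially the calculation appearing in the proof of Proposition~\ref{prop:monotone-to-d-graded}: the section $\eta^d$ of $K_X^{-d}$ vanishes transversally on $D$, so pulling back by $u$ and restricting to $\partial D^2$ adds $[u]\cdot[D]$ full windings to the $d$-phase along the boundary, contributing $(2/d)([u]\cdot[D])$ to the effective Maslov index.

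Given this index formula, the lemma is immediate: for a rigid disk contributing to $\mu^k$ we have $\operatorname{vdim}=0$, so
\begin{equation*}
i(y) \;+\; \tfrac{2}{d}\,\bigl([u]\cdot [D]\bigr) \;=\; \sum_{j=1}^{k} i(x_j) \;+\; (2-k),
\end{equation*}
which says precisely that the output $r^{[u]\cdot[D]}\cdot y$ has degree $\sum_j i(x_j) + (2-k)$, i.e., $\mu^k$ shifts degree by $2-k$.

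The main technical point is justifying the correction term in the index formula. I would argue it by comparing the Fredholm index of the $\bar\partial$-operator on $u^{*}TX$ with Lagrangian boundary conditions to the analogous index computed over $X\setminus D$: near each transverse intersection of $u$ with $D$, one chooses a local holomorphic slice in a tubular neighborhood of $D$ and computes the change in the $d$-graded Maslov-type contribution, which is exactly $2/d$ per intersection, reflecting the order-one vanishing of $\eta^d$ along $D$. Additivity of the Fredholm index then assembles these local contributions into the stated global formula.
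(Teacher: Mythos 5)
Your proposal is correct and follows essentially the same strategy as the paper's proof: both deduce the claim by comparing the $d$-graded index computed with $\eta^d$ to the index computed with an auxiliary trivialization of $u^*K_X^{-2}$ over the contractible domain (which gives the Fredholm/virtual dimension constraint $2-k$), and identifying the discrepancy as $(2/d)(u\cdot D)$ coming from the zeros of $u^*\eta^d$. The paper packages this discrepancy globally as a class in $H^1(S\setminus u^{-1}(D),\Z)$ evaluated on the boundary loop, whereas you localize it to each transverse intersection with $D$; these are two views of the same computation.
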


\begin{proof}
  The basic idea is to work one disk at a time. Let us assume first that $d$ is even. Let $u \co S \to X$ be a map contributing to the coefficient of $x_0$ in $\mu^k(x_k,x_{k-1},\dots,x_1)$, where $S$ is a disk with $k+1$ boundary points removed. Let $P = u^{-1}(D)$ be the divisor in the domain where the map touches $D$ (this is a formal linear combination of interior points since the boundary of $S$ maps to $\cup L_i \subset X\setminus D$). Then over $S \setminus \supp P$, we have already chosen a section $\eta^d$ trivializing $u^*K_X^{-d}$. On the other hand, since $S$ is contractible it is possible to trivialize $u^*K_X^{-2}$ by some section $\zeta^2$. Then $(\zeta^2)^{d/2}$ is a trivialization of $u^*K_X^{-d}$, which differs from $\eta^d$ over $S\setminus \supp P$. The difference between these two homotopy classes of trivializations of $u^*K_X^{-d}$ is measured by a class in $H^1(S \setminus \supp P, \Z)$. An inspection shows that the class is one whose value on the loop parallel to the boundary of $S$ is $\deg P = (u \cdot D)$. 
%Thus, roughly speaking, each intersection point of $u$ with $D$ contributes 1 to the ``degree'' of $u^*K_X^{-d}$, and so contributes $(2/d)$ to the ``degree'' of $u^*K_X^{-2}$. 
Letting $i(x)$ denote the absolute indices computed with respect to $\eta^d$ and the given $d$-gradings, and letting $i'(x)$ denote the indices computed with respect to the auxiliary choice of $\zeta^2$, we then find
  \begin{equation}
    i(x_0) - \sum_{s = 1}^k i(x_s) + (2/d)(u\cdot D)= i'(x_0) - \sum_{s = 1}^k i'(x_s)    
  \end{equation}
Since the only maps that contribute to the $\mu^k$ operation are the ones where the right-hand side equals $2-k$, the result follows.
\end{proof}

%The monotone relative Fukaya category \cite[Definition 3.2]{She13} $\Fm(X,D)$ of a monotone K\"{a}hler pair $(X,D)$ has coefficient ring $\C[r_1,\dots,r_k]$, where $k$ is the number of components of $D$. We use the notation $r_1 = r$ if $k = 1$. The objects are anchored Lagrangian branes in $X \setminus D$, and the morphism spaces and structure maps are the same as in the usual monotone Fukaya category, with the exception that each holomorphic disk is counted with a coefficient $r^{(u\cdot D)} = \prod_{i=1}^k r_i^{(u\cdot D_i)}$. 

We will treat the monotone relative Fukaya category as a curved $A_\infty$ category. This means that, in addition to the usual $A_\infty$ operations $\mu^1, \mu^2, \dots$, there is a curvature term $\mu^0 \in CF(L,L)$ for each object $L$. This operation is nothing but the count of Maslov index 2 disks with boundary on $L$, so it is another avatar of the disk potential. 
\begin{lem}
\label{lem:muzero}
  %In the case where $D$ is smooth, it holds that 
  For an object $\bL = (L,\rho)$ of the monotone relative Fukaya category, 
  \begin{equation}
    \mu^0_\bL = W(\bL)1_\bL r^d.
  \end{equation}
  (Recall that $W(\bL) \in \C$ is the disk potential, $1_L \in CF(\bL,\bL)$ is the unit, $d$ is the number for which $[D] = dc_1(TX)$.)
\end{lem}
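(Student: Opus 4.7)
The plan is to expand $\mu^0_\bL$ using the definition of the curvature term in the curved $A_\infty$ structure of the monotone relative Fukaya category, and then use grading and dimension constraints to collapse the sum to Maslov index $2$ disks. By construction,
\begin{equation*}
\mu^0_\bL \;=\; \sum_{\beta}\, \rho(\partial\beta)\, r^{u\cdot D}\, \ev_*[\cM(L,\beta)],
\end{equation*}
where $\beta$ ranges over classes in $H_2(X,L)$ realised by non-constant pseudoholomorphic disks, $\cM(L,\beta)$ is the moduli of such disks with one boundary marked point (of virtual dimension $\mu(\beta)+n-2$), and $\ev_*$ pushes the fundamental class forward along evaluation $\cM(L,\beta)\to L$.

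First I would restrict the sum to Maslov index $2$ classes. The pushforward $\ev_*[\cM(L,\beta)]$ is a chain in $L$ of dimension $\mu(\beta)+n-2$, and corresponds Poincar\'e-dually to a cochain in $CF(\bL,\bL)$ of degree $2-\mu(\beta)$. Since $CF(\bL,\bL)$ is a model for $H^*(L;\C)$ and hence supported in cochain degrees in $[0,n]$, any $\mu(\beta)\ge 3$ contributes zero; monotonicity forces $\mu(\beta)>0$ for non-constant disks, and orientability of $L$ (part of Hypothesis~\ref{hyp:main}) ensures $\mu(\beta)$ is even, ruling out $\mu(\beta)=1$. So only $\mu(\beta)=2$ survives, consistent with Lemma~\ref{lem:mu-k-degree} forcing $\mu^0_\bL$ to sit in total grading $2$.

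Second I would pin down the uniform $r$-weight. The monotonicity calculation appearing in the proof of Proposition~\ref{prop:monotone-to-d-graded} gives $[D]\cdot\beta=(d/2)\mu(\beta)$, which at $\mu(\beta)=2$ specialises to $u\cdot D=d$; thus $r^{u\cdot D}=r^d$ uniformly across the surviving disks.

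Finally, for each Maslov index $2$ class $\beta$ the source and target of $\ev$ are both real $n$-manifolds, so $\ev_*[\cM(L,\beta)]=n_\beta\cdot[L]$ and corresponds to $n_\beta\cdot 1_\bL$ cohomologically. Pulling the common $r^d$ out of the sum and matching with~\eqref{eq:W_def_number} yields
\begin{equation*}
\mu^0_\bL\;=\;r^d\cdot 1_\bL\cdot\!\!\!\sum_{\mu(\beta)=2}\!\!\!n_\beta\,\rho(\partial\beta)\;=\;W(\bL)\cdot 1_\bL\cdot r^d.
\end{equation*}
The main subtle step is the cochain-degree vanishing that rules out higher Maslov contributions; this relies on using a model of $CF(\bL,\bL)$ (Morse or pearl) where negative cochain groups are zero, which is standard in the monotone setting and is precisely the reason the gradings of Section~\ref{sec:frac-gradings} were set up.
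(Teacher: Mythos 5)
Your proof is correct and reaches the same conclusion using the same three facts — monotonicity excludes $\mu\le 0$, orientability excludes odd Maslov index, and the relation $u\cdot D=(d/2)\mu(\beta)$ fixes the $r$-exponent — but it organizes the argument differently and does not actually rely on Lemma~\ref{lem:mu-k-degree}. The paper expands $\mu^0_\bL=\sum_i a_i r^i$ by $D$-intersection number $i$, invokes Lemma~\ref{lem:mu-k-degree} to force the total $Z_d$-degree to be $2$, and hence $\deg(a_i)=2-(2/d)i$; requiring this to be a non-negative integer immediately restricts $i\in\{0,d/2,d\}$, and exactness (ruling out $i=0$, i.e.\ disks in $X\setminus D$) and orientability (ruling out $i=d/2$, i.e.\ Maslov index $1$) leave only $i=d$. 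You instead expand by Maslov index $\mu(\beta)$, appeal directly to the virtual-dimension formula $\dim\cM(L,\beta)=\mu(\beta)+n-2$ together with the classical fact that cochains on $L$ sit in degrees $[0,n]$ to get $\mu(\beta)\le 2$, eliminate $\mu(\beta)\le 1$ as above, and then deduce $u\cdot D=d$ for the survivors from the monotonicity identity. The two decompositions are exchanged by $u\cdot D=(d/2)\mu(\beta)$; yours is slightly more self-contained because it never needs the $Z_d$-grading package, which you cite only as a consistency check. One small correction to your closing remark: the $[0,n]$-degree constraint you invoke is entirely classical and does not depend on the gradings of Section~\ref{sec:frac-gradings}; the $d$-graded theory is needed chiefly to give $CF_{X\setminus D}(\bL_1,\bL_2)$ for \emph{distinct} $\bL_1\ne\bL_2$ an honest $Z_d$-grading (used in Lemma~\ref{lem:mu-k-degree} and downstream in Theorem~\ref{thm:wall_cross_P2}), and can be bypassed here where $\bL_1=\bL_2=\bL$.
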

\begin{proof}
  By Lemma \ref{lem:mu-k-degree}, we know that $\mu^0_\bL \in CF(\bL,\bL)$ has degree $2$. Hence, in the expansion
  \begin{equation}
    \mu^0_\bL = \sum_i a_i r^i,
  \end{equation}
  %This is clear in light of Corollary \ref{cor:maslov-intersection}.
  $a_i \in CF_{X\setminus D}(\bL,\bL)$ represents the disks $u$ with boundary on $L$ such that $(u\cdot D) = i$, and $2 = \deg(a_i) + (2/d)i$. So $\deg(a_i) = 2 - (2/d)i$. Since $\deg(a_i)$ is a non-negative integer, we know $a_i = 0$ except for possibly $i = 0, d/2, d$, and $d/2$ is only possible if $d$ is even. Since $L$ is exact in $X\setminus D$, we have $a_0 = 0$. Since $L$ is orientable, $a_{d/2} = 0$, for any disk that contributed to $a_{d/2}$ would have Maslov index $1$. Thus in all cases $\mu^0_\bL = a_d r^d$, where $a_d$ is the class counting Maslov index 2 disks.
\end{proof}

Generally speaking, the other operations $\mu^s, s \geq 1$ have no particular constraint on what powers of $r$ can appear, but we can always write them as polynomials in $r$,
\begin{equation}
  \mu^s = \mu^s_0 + \sum_i \mu^s_i r^i,
\end{equation}
%where the sum is over multi-indices $I$, or just a single index in the case $D$ is smooth. 
We have separated out the constant term $\mu^s_0$; this terms counts holomorphic curves whose intersection number with $D$ is zero, and that therefore are contained in $X\setminus D$.

The key point is to expand the curvature equation
\begin{equation}
\label{eq:curvature}
  \mu^1(\mu^1(x)) = \mu^2(\mu^0_{\bL_2},x) - \mu^2(x,\mu^0_{\bL_1}), \quad x \in CF(\bL_1,\bL_2),
\end{equation}
in powers of $r$. 
%We assume $D$ is smooth henceforth. 
By Lemma \ref{lem:muzero}, the right-hand side of Equation \eqref{eq:curvature} simplifies to 
\begin{equation}
  \mu^2(\mu^0_{\bL_2},x) - \mu^2(x,\mu^0_{\bL_1}) = (W(\bL_2)-W(\bL_1))xr^d
\end{equation}
Taking Equation \eqref{eq:curvature} modulo $r$ says that 
\begin{equation}
  \mu^1_0(\mu^1_0(x)) = 0,
\end{equation}
and indeed, we know $\mu^1_0$ is the differential whose cohomology computes $HF_{X\setminus D}(\bL_1,\bL_2)$. 

Now let us take Equation \eqref{eq:curvature} modulo $r^{d+1}$. Define $P = \sum_{i=1}^d \mu^1_i r^i$, so that
\begin{equation}
  \mu^1 \equiv \mu^1_0 + P \pmod{r^{d+1}}
\end{equation}
Since $(\mu^1_0)^2 = 0$, we find that 
\begin{equation}
  \mu^1_0(P(x)) + P(\mu^1_0(x)) + P(P(x)) \equiv (W(\bL_2)-W(\bL_1))xr^d \pmod{r^{d+1}}
\end{equation}
We interpret this equation as follows: we have a chain complex $(CF(\bL_1,\bL_2)/(r^{d+1}), \mu^1_0)$. In this chain complex, $P$ is a homotopy operator showing that the map
\begin{equation}
  (W(\bL_2)-W(\bL_1))r^d\cdot\Id
\end{equation}
is chain homotopic to $P^2$. If we assume that $P^2 \equiv 0 \pmod{r^{d+1}}$, we then get that a scalar multiplication is chain homotopic to zero. This implies that either the scalar is zero or the cohomology is zero.

\begin{thm}[Preliminary wall-crossing formula]
	\label{thm:wall_cross_P2}
  Assume that $P^2 \equiv 0 \pmod{r^{d+1}}$.
  %(This assumption is automatically satisfied if $d = 1$, that is, if $D$ is anticanonical.) 
  Then
  \begin{equation}
    HF_{X \setminus D}(\bL_1,\bL_2) \neq 0 \implies W(\bL_1) = W(\bL_2).    
  \end{equation}
\end{thm}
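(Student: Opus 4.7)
\medskip

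\noindent\textbf{Proof proposal.} The plan is to read off the conclusion directly from the curvature identity derived immediately before the theorem. Recall the identity
\begin{equation*}
  \mu^1_0(P(x)) + P(\mu^1_0(x)) + P(P(x)) \equiv (W(\bL_2)-W(\bL_1))\,x\,r^d \pmod{r^{d+1}},
\end{equation*}
valid for every $x \in CF(\bL_1,\bL_2)$. Under the standing hypothesis $P^2 \equiv 0 \pmod{r^{d+1}}$, the middle term drops out, and the identity becomes precisely the statement that $P$, viewed as a degree $-1$ operator on the truncated complex
\begin{equation*}
  C := \bigl(CF(\bL_1,\bL_2)/(r^{d+1}),\, \mu^1_0\bigr),
\end{equation*}
is a chain homotopy from the zero map to the operator $m_{\Delta} := (W(\bL_2)-W(\bL_1))\,r^d\cdot \Id$.

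Next I would identify the cohomology of $C$. Since $\mu^1_0$ is $\C[r]$-linear and in fact $r$-independent (it counts holomorphic strips lying entirely in $X\setminus D$), there is a canonical isomorphism
\begin{equation*}
  H^*(C) \;\cong\; HF^*_{X\setminus D}(\bL_1,\bL_2) \otimes_{\C} \C[r]/(r^{d+1}).
\end{equation*}
The assumption $HF^*_{X\setminus D}(\bL_1,\bL_2)\neq 0$ therefore guarantees that $H^*(C)$ contains a nonzero element of the form $[\alpha]\otimes 1$ for some closed $\alpha \in CF(\bL_1,\bL_2)$, and moreover that $[\alpha]\otimes r^d$ is still nonzero, since multiplication by $r^d$ is injective on $\C[r]/(r^{d+1})$.

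Finally, by the preceding step, $m_{\Delta}$ is nullhomotopic on $C$ and therefore induces the zero endomorphism on $H^*(C)$. Applying this to $[\alpha]\otimes 1$ yields
\begin{equation*}
  0 \;=\; m_{\Delta}\bigl([\alpha]\otimes 1\bigr) \;=\; (W(\bL_2) - W(\bL_1))\cdot \bigl([\alpha]\otimes r^d\bigr),
\end{equation*}
and since $[\alpha]\otimes r^d \neq 0$ in $H^*(C)$, the scalar must vanish: $W(\bL_1)=W(\bL_2)$. There is no real obstacle here once the preceding algebraic setup is in place; the only subtle point is making sure that the truncation mod $r^{d+1}$ is taken high enough so that both the curvature contribution $W(\bL_2)-W(\bL_1)$ (which lives in the $r^d$-component by Lemma~\ref{lem:muzero}) and the error term $P^2$ occupy the same filtration level, which is precisely why the truncation level $d+1$ was chosen.
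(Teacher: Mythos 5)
Your proof is correct and follows essentially the same line as the paper: use the curvature identity mod $r^{d+1}$ together with $P^2 \equiv 0$ to exhibit $P$ as a nullhomotopy for $(W(\bL_2)-W(\bL_1))r^d\cdot\Id$, identify the cohomology of the truncated complex with $HF_{X\setminus D}(\bL_1,\bL_2)\otimes\C[r]/(r^{d+1})$, and conclude by picking a class on which $r^d$ acts nontrivially. The only difference is cosmetic — you make the element $[\alpha]\otimes 1$ explicit and spell out the injectivity of multiplication by $r^d$, which the paper leaves implicit.
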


\begin{proof}
  The discussion preceding the theorem shows that under the hypothesis $P^2 = 0$, the map $(W(\bL_2)-W(\bL_1))r^d\cdot\Id$ is homotopic to zero, and hence it induces the zero map on the cohomology of the complex
  \begin{equation}
    (CF(\bL_1,\bL_2)/(r^{d+1}), \mu^1_0)
  \end{equation}
  where the coefficient ring is $\C[r]/(r^{d+1})$. But this complex is isomorphic to
  \begin{equation}
    (CF_{X\setminus D}(\bL_1,\bL_2)\otimes \C[r]/(r^{d+1}),\mu^1_0).
  \end{equation}
  where $CF_{X\setminus D}(\bL_1,\bL_2)$ has coefficient ring $\C$. Its cohomology is therefore nothing but
  \begin{equation}
    HF_{X\setminus D}(\bL_1,\bL_2) \otimes \C[r]/(r^{d+1}).
  \end{equation}
  Thus if $HF_{X\setminus D}(\bL_1,\bL_2) \neq 0$, there is a class $[x]$ such that $r^d [x] \neq 0$, but such that $(W(\bL_2)-W(\bL_1))r^d [x] = 0$. This shows $W(\bL_1) = W(\bL_2)$.
\end{proof}

Outside of the anticanonical case where the hypothesis $P^2\equiv 0 \pmod{r^{d+1}}$ holds automatically, the most natural way to verify this hypothesis is to show that $P$ is divisible by a sufficiently high power of $r$. In fact, there are general grading conditions under $P$ has only the top term proportional to $r^d$.

\begin{lem}
  \label{lem:Z_graded_implies_P2}
  The condition $P^2 \equiv 0 \pmod{r^{d+1}}$ is satisfied under any of the following hypotheses.
  \begin{enumerate}
  \item $d=1$, that is, $D$ is anticanonical.
  \item $CF_{X\setminus D}(\bL_1,\bL_2)$ is relatively $\Z$-graded, that is, the difference of degrees of any nonzero homogeneous elements lies in $\Z$.
  \item $\bL_1$ is isomorphic to a shift of $\bL_2$ as objects of the Fukaya category of $X \setminus D$.
  \item There is an open submanifold $U \subset X \setminus D$ containing $L_1$ and $L_2$ such that $c_1(U) = 0$, and $U$ and $X \setminus D$ admit compatible trivializations in the sense of Definition \ref{def:compatible-triv}.
  \end{enumerate}
\end{lem}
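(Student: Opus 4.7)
The strategy is uniformly to verify that enough of the intermediate $\mu^1_i$ vanish so that $P^2 = \sum_{i,j \geq 1} \mu^1_i \mu^1_j r^{i+j}$ acquires no term of $r$-degree $\leq d$. The principal tool is the degree count of Lemma~\ref{lem:mu-k-degree}: since $\mu^1$ has total degree $1$ on $CF(\bL_1,\bL_2)$ and $r$ contributes $2/d$, the operator $\mu^1_i \co CF_{X \setminus D}(\bL_1, \bL_2) \to CF_{X \setminus D}(\bL_1, \bL_2)$ carries $Z_d$-degree $1 - 2i/d$. Case~(1) is then immediate: when $d = 1$, the sum defining $P$ has only one term, $P = \mu^1_1 r$, and $P^2 = (\mu^1_1)^2 r^2$ is divisible by $r^{d+1} = r^2$.

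For case~(2), suppose $CF_{X \setminus D}(\bL_1, \bL_2)$ is $\Z$-graded. Then every nonzero operator between homogeneous subspaces must shift degrees by an integer, forcing $\mu^1_i = 0$ unless $1 - 2i/d \in \Z$, i.e.\ $d \mid 2i$. For $1 \leq i \leq d$, the only admissible index is $i = d$ (with the single extra possibility $i = d/2$ when $d$ is even). In the generic situation one obtains $P = \mu^1_d r^d$ and hence $P^2 \in r^{2d}\C[r]$, as required. Case~(3) reduces to case~(2): a shift isomorphism $\bL_1 \simeq \bL_2[k]$ in the Fukaya category of $X \setminus D$ yields a quasi-isomorphism $CF_{X \setminus D}(\bL_1, \bL_2) \simeq CF_{X \setminus D}(\bL_2, \bL_2)[k]$, and the self-Floer complex of a single $d$-graded object is canonically $\Z$-graded. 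Case~(4) also reduces to case~(2): by Proposition~\ref{prop:U-to-d-graded}, each $L_i$ admits a $1$-grading induced by a trivialization $\zeta$ of $K_U$, and the compatibility $\zeta^d \sim \eta^d$ of Definition~\ref{def:compatible-triv} ensures that this $1$-grading refines the $d$-grading used in relative Floer theory, endowing $CF_{X \setminus D}(\bL_1, \bL_2)$ with an integer grading.

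The principal technical subtlety lies in the even-$d$ subcase of (2), where the degree count alone permits a $\mu^1_{d/2}$ term whose square would contribute $(\mu^1_{d/2})^2 r^d$ to $P^2$. Eliminating this contribution calls for an orientability-style input (reminiscent of the argument in Lemma~\ref{lem:muzero}, which excludes half-Maslov disk contributions to the curvature on a single orientable Lagrangian) or an acknowledgement that the applications pursued in this paper ultimately factor through the anticanonical case $d = 1$ via the reductions in (3) and (4), bypassing the issue. This is the one place where a purely formal degree argument is insufficient and an additional geometric input is needed.
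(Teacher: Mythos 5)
Your overall strategy is the same as the paper's: use the degree count from Lemma~\ref{lem:mu-k-degree}, observe that $\mu^1_k$ has $Z_d$-degree $1-2k/d$, and reduce cases (3) and (4) to case (2). Cases (1), (3), and (4) in your writeup are correct and match the paper.

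However, your treatment of case (2) has a genuine gap, and you yourself flag it without closing it. For even $d$, the constraint that $1 - 2k/d \in \Z$ admits $k = d/2$, which would produce a dangerous $(\mu^1_{d/2})^2 r^d$ term in $P^2$. You correctly note that some orientability input is needed, but you then offer, as an alternative, the claim that the paper's applications ``ultimately factor through the anticanonical case $d=1$ via the reductions in (3) and (4).'' This is incorrect: by your own account, (3) and (4) reduce to (2), not to (1), so this does not bypass anything; case (2) with general $d$ must be resolved on its own terms. The paper's actual argument, which you should supply, is short: because $L_1$ and $L_2$ are orientable, Floer strips connect intersection points of opposite parity, so the degree of $\mu^1_k$ (when nonzero) must be an \emph{odd} integer, not merely an integer. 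The quantity $1 - 2k/d$ is an odd integer precisely when $d \mid k$. Hence every power of $r$ appearing in $P$ is a multiple of $d$, and $P^2$ is divisible by $r^{2d}$, strictly better than what is needed. This is the same parity mechanism you cite from Lemma~\ref{lem:muzero}; it should be stated rather than merely gestured at.
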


\begin{proof} 
  To see that $P^2 \equiv 0 \pmod{r^{d+1}}$ holds if $d = 1$, simply observe that $P$ is divisible by $r$, so $P^2$ is divisible by $r^2$.

  To see sufficiency of the second condtion, recall that $\mu^1$ has total degree 1 when $r$ is given degree $2/d$. 
%If we call the amount by which $\mu^1_k$ changes the index of a generator its \emph{bare degree}, then the bare 
  Thus the degree of $\mu^1_k$ is $1 - k(2/d)$. Because $CF_{X\setminus D}(\bL_1,\bL_2)$ is relatively $\Z$-graded, this number must be an integer. Because $L_1$ and $L_2$ are assumed orientable, and strips can only connect intersection points of opposite pairity, it must in fact be an odd integer. Clearly, $1-k(2/d)$ is an odd integer if and only if $k$ is divisible by $d$. Thus all powers of $r$ appearing in $P$ are divisible by $d$, and so $P^2$ is divisible by $r^{2d}$. 
  
  We now show the third condtion implies the second. Suppose $\bL_2 \cong \bL_1[\alpha]$, where $\alpha \in Z_d \subset \Q$. Then $CF_{X\setminus D}(\bL_1,\bL_2) \cong CF_{X\setminus D}(\bL_1,\bL_1)[\alpha]$, which is relatively $\Z$-graded because $CF_{X\setminus D}(\bL_1,\bL_1)$ is.

  We show that the fourth condition implies the second. Suppose $\zeta$ is a trivializatino of $K_U$ such that $\zeta_U^d$ and $\eta^d|_U$ are homotopic trivializations of $K_U^{\otimes d}$. Because $L_i$ is $d$-graded with respect to $\eta^d$ in $X\setminus D$, it is has Maslov class zero with respect to $\zeta$. Thus $CF_U(\bL_1,\bL_2)$ is $\Z$-graded. The absolute indices are the same when computed with respect to $\zeta_U$ and with respect to $\eta^d$, so $CF_{X\setminus D}(\bL_1,\bL_2)$ is $\Z$-graded as well.
\end{proof}

\section{Locality of compact branes}
\label{sec:locality}

\subsection{Locality of compact branes}
We now present a basic result that expresses a sense in which the Floer theory of compact branes is local with respect to inclusions of Liouville manifolds. The proof is a straightforward application of the maximum principle.

Given a Liouville domain $U$, we denote by $\Fc(U)$ the Fukaya category consisting of branes supported on compact Lagrangian submanifolds of the interior of $U$ (as opposed to infinitesimal or wrapped versions of the Fukaya category that admit certain noncompact branes). Given two Liouville manifolds $U, V$ and a Liouville embedding $j\co U \to V$, there is a pushforward functor 
\begin{equation}
  j_* \co \Fc(U) \to \Fc(V).
\end{equation}
This functor takes a brane supported on a Lagrangian $L \subset U$ to one supported on $j(L) \subset V$. There are also auxiliary brane structures, namely a local system and a spin structure on $L$, as well as a grading. The local system and spin structure are simply transferred by the map $j$. As explained in section \ref{sec:frac-gradings} above, the choice of grading on a Lagrangian is always relative to a choice of grading on the ambient symplectic manifold, so one needs to assume that the embedding $j\co U \to V$ admits compatible trivializations in the sense of Definition \ref{def:compatible-triv}. However, we can avoid this issue by reducing the grading to $\Z/2\Z$.

To define $j_*$ on morphism complexes, we use a particular type of complex structure, namely one which is cylindrical near the real hypersurface $Y = j(\partial U) \subset V$. Having chosen such a complex structure, consider the differential, or indeed any $A_\infty$ operation, where the boundary conditions are drawn from $\Fc(U)$. We claim that any holomorphic curve $C$ contributing to this operation in $V$ must in fact be contained in $U$. Indeed, by considering $C' = C \cap (V \setminus j(\interior(U)))$, we obtain a compact holomorphic curve $C'$ in $V \setminus \interior(U)$ whose boundary is contained in $Y$. By our exactness assumptions and choice of cylindrical complex structure near $Y$, the integrated maximum principle \cite[Lemma 7.2]{ASei10} then implies that $C'$ has non-positive area, so it must be constant, and hence contained in $Y$. Thus the original $C$ is contained in $U$.

Thus, with this choice of almost complex structure, we can arrange that the $A_\infty$ operations involving objects drawn from $\Fc(U)$ are the same whether they are computed in $U$ or in $V$. So we can take $j_*$ to be the identity on morphism complexes, and we can furthermore take all higher components of this $A_\infty$ functor to be trivial. 

\begin{thm}
  \label{thm:nonvanishing}
  Suppose $j\co U \to V$ is an embedding of Liouville manifolds, and either work with $\Z/2\Z$-gradings or suppse $U$ and $V$ carry compatible trivializations. Then the functor
  \begin{equation}
    j_* \co \Fc(U) \to \Fc(V)
  \end{equation}
  is cohomologically full and faithful. In fact, if $\bL_0, \bL_1 \in \Ob(\Fc(U))$ are two branes, one can arrange that $CF^*_U(\bL_0,\bL_1)$ is chain isomorphic to  $CF^*_V(j_*\bL_0,j_*\bL_1)$.
\end{thm}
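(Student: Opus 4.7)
The plan is essentially to make rigorous the sketch given in the preceding paragraphs, which constructs $j_*$ as the ``identity on the nose'' after a careful choice of almost complex structure. I would proceed in four steps.

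First, I would set up the geometric data. Extend the Liouville embedding $j\co U \to V$ to the completion by attaching the symplectization cylinder $\partial U \times [1,\infty)$ to $j(U)$ inside $V$, and let $Y = j(\partial U)$. Choose a time-dependent almost complex structure $J$ on $V$ which is cylindrical in a neighborhood of $Y$, that is, of contact type with respect to the Liouville form $\theta_V$, and such that $j^*J$ restricts to the chosen cylindrical almost complex structure on $U$ used to define $\Fc(U)$. Auxiliary perturbation data (Hamiltonian perturbations, etc.) can be chosen with support inside $j(U)$, away from $Y$.

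Second, I would apply the integrated maximum principle. Let $C$ be any holomorphic polygon (strip, disk, or higher-arity $A_\infty$ disk) in $V$ whose boundary conditions are Lagrangians drawn from $\Fc(U)$, which are compactly contained in $\interior(U)$. The intersection $C' = C \cap (V\setminus j(\interior U))$ is then a compact $J$-holomorphic surface whose boundary lies in $Y \cup \partial C$, but the $\partial C$ part is pushed into $\interior(U)$ by the hypothesis on the Lagrangians. By the integrated maximum principle \cite[Lemma 7.2]{ASei10}, applied using that $\theta_V$ is a primitive of $\omega_V$ near $Y$ of contact type, the area of $C'$ is non-positive, forcing $C'$ to be constant and hence the original $C$ to be contained in $j(U)$. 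The key point here is that the exact Lagrangians are compact inside $U$, so no boundary portion of $C$ can lie on $Y$; this is the only subtlety and the heart of the argument.

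Third, I would define the functor. On objects, set $j_*\bL = (j(L),\rho,\mathit{Spin}\text{ structure})$, with gradings handled by either reducing to $\Z/2\Z$ or transporting a $d$-grading using the compatible trivializations (Definition \ref{def:compatible-triv}), which allows the $d$-phase function on $L$ to be pushed forward consistently. On morphism complexes, define $j_*$ to be the identity map of $\C$-vector spaces under the natural bijection between generators of $CF^*_U(\bL_0,\bL_1)$ and $CF^*_V(j_*\bL_0, j_*\bL_1)$ (transverse intersection points of the Lagrangians, which all lie inside $\interior U$). Set all higher components $j_*^k$ for $k \geq 2$ to zero. By step two, each $A_\infty$ structure constant $\mu^k$ computed in $V$ on objects of $\Fc(U)$ equals the corresponding structure constant in $U$, so the $A_\infty$ functor equations are satisfied trivially, and $j_*$ induces a chain isomorphism on $\hom$-complexes.

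Fourth, cohomological fully faithfulness follows immediately from the chain isomorphism $CF^*_U(\bL_0,\bL_1) \cong CF^*_V(j_*\bL_0,j_*\bL_1)$. The main obstacle is really just ensuring the maximum principle applies cleanly: one must verify that the cylindrical structure near $Y$ is compatible with all perturbation data used to achieve transversality, and confirm that no generator of the Floer complex escapes into $V\setminus j(U)$. Since the Lagrangians are assumed compact in $\interior U$ and the perturbations can be localized there, this poses no real difficulty.
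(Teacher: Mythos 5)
Your proposal is correct and follows essentially the same argument as the paper: choose an almost complex structure cylindrical near $Y = j(\partial U)$, apply the integrated maximum principle of \cite[Lemma 7.2]{ASei10} to $C' = C \cap (V \setminus j(\interior U))$ to confine holomorphic curves to $j(U)$, and then take $j_*$ to be the identity on morphism complexes with all higher $A_\infty$ functor components vanishing. The additional detail you provide on localizing perturbation data and on transporting gradings via compatible trivializations is the right way to make the sketch rigorous, but the core argument is the same.
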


% \begin{proof}
% 	{\red I changed to "chain-isomorphic" in the statement, and suggest to remove the proof.}
%   This is clear from the set-up of the functor, since we defined it so that it would be an isomorphism on chain-level morphism spaces. (Even though this is not a homotopically-invariant notion.)
% \end{proof}

In light of Theorem \ref{thm:wall_cross_P2}, one can derive relationships between potentials whenever we have a pair $(\bL_0,\bL_1)$ such that $HF_{X\setminus D}(\bL_1,\bL_2)$ is non-vanishing. By Theorem \ref{thm:nonvanishing}, it suffices to check this non-vanishing in an arbitrarily small Liouville subdomain $U$ of $X\setminus D$ containing both $\bL_0$ and $\bL_1$. This is what lets us derive consequences from analysis of local models.

\subsection{Proof of wall-crossing}
We are now ready to finish the proof of the wall-crossing formula.

\begin{proof}[Proof of  Theorem~\ref{th:wall_crossing}] Let $\bL_i = (L_i,\rho_i)$, $i=1,2$, $U, X, D$ be as in the statement of the theorem. By Proposition \ref{prop:monotone-to-d-graded}, the $L_i$ are $d$-gradable. From the hypothesis $HF^*_U(\bL_1,\bL_2) \neq 0$ and Theorem \ref{thm:nonvanishing}, we find that $HF^*_{X \setminus D}(\bL_1,\bL_2) \neq 0$. Then the result follows from the combination of Theorem \ref{thm:wall_cross_P2} and Lemma \ref{lem:Z_graded_implies_P2}: the case $d = 1$ uses part (1), the case $d = 2$ uses part (2), the condition that $H^1(U,\Z) = 0$ implies that $U$ and $X\setminus D$ carry compatible trivializations, and in that case we can apply part (4).
%  By the hypothesis that $c_1(U)$ vanishes integrally and $L_1,L_2\subset U$ are anchored, the Floer complex $CF^*_U(\bL_1,\bL_2)$ is $\Z$-graded. By Theorem~\ref{thm:nonvanishing}, $CF^*_{X\setminus D}(\bL_1,\bL_2)$ is also $\Z$-graded. Theorem~\ref{th:wall_crossing} now follows from Theorem~\ref{thm:wall_cross_P2} and Lemma~\ref{lem:Z_graded_implies_P2}.
\end{proof}

\subsection{Divisors in the complement of a Lagrangian skeleton}
\label{subsec:donaldson}

%{\red We should write a couple of sentences to link this with the Kahler case}
Let $(X,\omega)$  be a monotone symplectic manifold. Recall that a Donaldson divisor $D\subset X$ is a real codimension 2 smooth symplectic submanifold whose homology class is Poincar\'e dual to $dc_1(X)\in H^2(X,\Z)$ for a positive integer $d\in\N$ called the degree. They are precisely the divisors appearing in Theorem~\ref{th:wall_crossing}. Donaldson proved \cite{Do96} that such hypersurfaces always exist, for large enough $d$, and his construction has several upgrades showing that $D$ can be chosen so as to satisfy various additional properties. Auroux, Gayet and Mohsen \cite{AGM01} showed that one can find $D$ disjoint from any (closed) Lagrangian submanifold; more generally, the Lagrangian submanifold may be immersed and have boundary; see \cite{Mo13,CW13}. Furthermore, Charest and Woodward \cite{CW13} proved that one can find a Donaldson divisor making a given {\it strongly rational} Lagrangian submanifold exact in its complement.

\begin{theorem}[Version of the Auroux-Gayet-Mohsen theorem, {\cite[Lemma 4.11]{CW13}}]
	\label{th:donaldson_hyp}
	Let $L\subset X$ be a monotone Lagrangian submanifold such that  $H_1(L,\R)\to H_1(X,\R)$ vanishes. Then there exists a Donaldson hypersurface $D$ of sufficiently high degree which disjoint from $L$, and such that  $L\subset X\setminus D$ is exact for a Liouville 1-form on $X\setminus D$.\qed
\end{theorem}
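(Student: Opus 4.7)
The plan is to first invoke the Auroux-Gayet-Mohsen existence theorem to obtain a Donaldson hypersurface $D$ disjoint from $L$; then identify the obstruction to $L$ being exact in $X\setminus D$ as the failure of $L$ to be $d$-gradable; and finally use the flexibility in the Donaldson construction to kill this obstruction. Concretely, by \cite{AGM01} there is, for some sufficiently high degree $d$, a smooth symplectic hypersurface $D$ with $PD[D]=dc_1(X)$ disjoint from $L$, cut out by an asymptotically holomorphic section $s$ of $\cL^{\otimes d}$, where $\cL$ is a prequantum line bundle with curvature proportional to $\omega$. The section $s$ trivializes $K_X^{-d}$ on $X\setminus D$ and defines a Liouville primitive $\theta_s$ of $\omega|_{X\setminus D}$ with linking number $\ell=2\tau/d$ around $D$, as recalled in Section~\ref{sec:relative-floer}.

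Given a loop $\gamma\subset L$, since $H_1(L,\R)\to H_1(X,\R)$ vanishes there is a real 2-chain $\beta$ in $X$ with $\bd\beta=\gamma$, which we take transverse to $D$. Applying Stokes' theorem to $\beta$ after excising small disks around $\beta\cap D$ and passing to the limit, and using monotonicity $\int_\beta\omega=\tau\mu(\beta)$, I obtain
\begin{equation}
\int_\gamma\theta_s \;=\; \ell(\beta\cdot D)-\int_\beta\omega \;=\; -\frac{2\tau}{d}\Bigl(\tfrac{d}{2}\mu(\beta)-\beta\cdot D\Bigr) \;=\; -\frac{2\tau}{d}\deg\varphi_L^d|_\gamma,
\end{equation}
where $\varphi_L^d\co L\to S^1$ is the $d$-phase function associated to the trivialization $\eta^d$ defined by $s$ (cf.~Proposition~\ref{prop:monotone-to-d-graded}). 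Hence $\theta_s|_L$ is exact precisely when the class $\deg\varphi_L^d\in H^1(L,\Z)$ vanishes rationally, that is, when $L$ is $d$-gradable in $(X\setminus D,\eta^d)$.

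To arrange this vanishing, note that on closed 2-chains the identity $\tfrac{d}{2}\mu(\beta)=\beta\cdot D$ holds automatically, since both sides equal $d\langle c_1(X),\beta\rangle$; the obstruction $\deg\varphi_L^d$ therefore depends only on how $D$ intersects a family of 2-chains bounding a basis of $H_1(L,\Q)$. For $d$ sufficiently large, the Donaldson-Auroux construction is flexible enough to prescribe these intersection numbers by local modifications of $s$ while maintaining smoothness, symplecticity of $D$, and disjointness $D\cap L=\emptyset$. This flexibility is the content of \cite[Lemma~4.11]{CW13} (with the refinements of \cite{Mo13}), and is the principal technical obstacle; once $s$ is so chosen, $[\theta_s|_L]=0$ and $L$ is exact in $(X\setminus D,\theta_s)$, as required.
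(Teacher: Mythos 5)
The paper offers no proof of this statement beyond the citation to \cite{CW13} and the one-line observation that the hypothesis $H_1(L,\R)\to H_1(X,\R)=0$ together with monotonicity implies $L$ is \emph{strongly rational} (i.e.~$\cL^{\otimes k}|_L$ is a trivial flat line bundle for some $k$), which is precisely the hypothesis of \cite[Lemma 4.11]{CW13}. Your proposal does something different: you first invoke plain Auroux--Gayet--Mohsen to get \emph{some} disjoint Donaldson divisor, then compute the obstruction to exactness of $\theta_s|_L$ and identify it with $\deg\varphi_L^d$, and finally argue that this obstruction can be killed by flexibility of the Donaldson construction. The computation in your middle paragraph is correct: it is essentially the Stokes argument from the proof of Proposition~\ref{prop:monotone-to-d-graded} run in reverse, and the observation that the obstruction depends only on the class of $D$ in $H_{2n-2}(X\setminus L;\Z)$ (equivalently, on the intersection numbers $\beta\cdot D$) is a nice way to organize things.

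The gap is in the last step. You assert that ``the Donaldson--Auroux construction is flexible enough to prescribe these intersection numbers by local modifications of $s$'' and attribute this to \cite[Lemma 4.11]{CW13}, but that is not what Charest--Woodward prove, and it is not clear that such local modifications exist while preserving asymptotic holomorphicity, symplecticity of $D$, and $D\cap L=\emptyset$. The actual mechanism in \cite{CW13} (which the paper recalls in the proof of Corollary~\ref{cor:donaldon_mutation_config}) does not attempt to adjust $D$ after the fact: strong rationality is used to choose, from the outset, a covariantly constant unit-norm section of $\cL^{\otimes k}|_L$, and the peaked-section/Donaldson process is run starting from that section. This makes $|s|$ asymptotically constant on $L$ directly, so the Liouville primitive associated to $s$ restricts to an exact form on $L$ by construction, without ever needing to identify and subsequently cancel an obstruction class. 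So the obstruction you compute is not ``prescribed'' by local surgery; it is forced to vanish by the choice of input data. If you want to keep your obstruction-theoretic framing, what you actually need to argue is that the class $[D]\in H_{2n-2}(X\setminus L;\Z)$ realized by the \cite{CW13} construction is the one making $\deg\varphi_L^d=0$, and that reduces back to the covariantly-constant-section argument; the ``local modifications'' route remains unjustified as stated.
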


Note that \cite[Lemma 4.11]{CW13} speaks of  strongly rational Lagrangians; a monotone Lagrangian $L$ with the property that $H_1(L,\R)\to H_1(X,\R)$ vanishes is strongly rational. The statement carries over to the immersed case; the proof presented of the following extension of \cite{AGM01} is due to Denis Auroux.
% Ultimately, let us fix the following example for later use.

\begin{theorem}
Let $N_1,N_2\subset X$ be two cleanly intersecting isotropic submanifolds of a symplectic manifold $X$. Then there is a Donaldson divisor $D\subset X$ disjoint from $N_1\cup N_2$.
\end{theorem}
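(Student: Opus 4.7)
The plan is to adapt the Auroux--Gayet--Mohsen scheme. Recall that AGM constructs a Donaldson divisor avoiding a single isotropic submanifold $N \subset X$ by starting from a Donaldson sequence $s_k \in \Gamma(L^{\otimes k})$ of approximately holomorphic sections of the prequantum line bundle, perturbing $s_k$ by peak sections concentrated near $N$ that vanish along $N$, and then invoking Donaldson's quantitative transversality theorem to ensure the perturbed sections are uniformly transverse to zero with the modified values on $N$ uniformly bounded below.

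The key new observation is that cleanness of the intersection implies $N_0 := N_1 \cap N_2$ is itself a smooth isotropic submanifold, with $T_pN_0 = T_pN_1 \cap T_pN_2$ at every $p \in N_0$. This lets me stratify $N_1 \cup N_2$ into three pieces of smooth isotropics: $N_0$, $N_1 \setminus N_0$, and $N_2 \setminus N_0$. Near a point of $N_0$, a Weinstein-type neighborhood argument gives Darboux coordinates in which $N_1$ and $N_2$ are simultaneously straightened to two linear isotropic subspaces meeting transversally along the linear subspace modelling $N_0$; near a point of $N_i \setminus N_0$ one uses the standard AGM chart adapted to $N_i$ alone.

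In each such chart I would build approximately holomorphic peak sections of $L^{\otimes k}$ of the form $e^{-k|z|^2/4}$ times an affine factor vanishing on the local linear model of $N_1 \cup N_2$. Because the two local branches are linear isotropic subspaces, after a unitary change of frame one can take the vanishing factor to be a polynomial in antiholomorphic coordinates, which guarantees the same decay and $\overline{\partial}$-smallness estimates enjoyed by AGM's peak sections. Finitely many of these peak sections suffice to cover the compact set $N_1 \cup N_2$, and feeding them into Donaldson's globalization procedure produces a modified sequence $\tilde s_k$ whose restriction to $N_1 \cup N_2$ is uniformly bounded below on the rescaled scale, while $|\tilde s_k|$ remains uniformly transverse to $0$ on $X$. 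The divisor $D_k = \tilde s_k^{-1}(0)$ is then the desired Donaldson divisor disjoint from $N_1 \cup N_2$.

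The hardest step will be the construction of peak sections near $N_0$: one needs a single approximately holomorphic section vanishing on both branches of $N_1 \cup N_2$ simultaneously while retaining the sharp $(k^{-1/2})$ decay and $\overline{\partial}$-bounds required to plug into Donaldson's transversality machine. Clean intersection of \emph{isotropic} subspaces is precisely what makes this possible, because it allows both branches to be written as zero loci of linear functions depending only on antiholomorphic coordinates in the same unitary Darboux frame; once this linear-algebraic model is set up, the rest of the argument runs in parallel with \cite{AGM01} and \cite[Lemma~4.11]{CW13}.
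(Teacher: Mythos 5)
Your description of the AGM mechanism is backwards, and this is not a cosmetic slip: AGM builds a candidate section $\sigma$ that is uniformly \emph{bounded away from zero} on the isotropic $N$, by summing peaked sections whose restrictions to $N$ are nearly covariantly constant and have nearly aligned phases (this is \cite[Lemma 4]{AGM01}); Donaldson's transversalization is then applied in a way that does not destroy this lower bound. Sections that \emph{vanish} along $N$, and an affine factor vanishing on the local linear model of $N_1\cup N_2$, are exactly the wrong thing to add if you want $\tilde s_k$ to end up bounded below on $N_1\cup N_2$. So the construction you sketch near $N_0$ does not produce what you claim it produces.

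More importantly, you do not engage with the real difficulty, which is not the local linear algebra near $N_0$ (Darboux straightening, unitary frames) but the global \emph{non-cancellation}: when you sum peaked sections along $N_1$ and along $N_2$, you must rule out destructive interference near $N_{12}=N_1\cap N_2$, and this is exactly where cleanness enters quantitatively. The paper's proof handles this by a magnitude comparison rather than a local normal form. It builds a "large" section $s_{12}$ by running AGM along the isotropic $N_{12}$ alone, then two "small" rescaled sections $s_1,s_2$ along $N_1,N_2$, and shows $s=s_{12}+s_1+s_2$ is bounded below on $N_1\cup N_2$: within distance $O(k^{-1/2})$ of $N_{12}$, $s_{12}$ dominates; farther away, cleanness gives a lower bound on the transversality angle, hence a lower bound $c_2 d$ on the distance between points of $N_1$ and $N_2$ at distance $d$ from $N_{12}$, which forces $s_2$ to have decayed enough along $N_1$ (and vice versa) that $s_{12}+s_i$ still dominates. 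Your proposal uses cleanness only to set up a local model; it never extracts the quantitative separation estimate, and without it the phases/magnitudes of the contributions from the two branches could in principle cancel. As written, the proposal has a genuine gap at this step.
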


%%%%% BEGIN NEW PROOF

  \begin{proof}
    First we recall the steps in the construction of a Donaldson divisor disjoint from a single smooth isotropic submanifold $N \subset X$. The starting point is a Hermitian line bundle $\cL \to X$ with curvature $-i \omega$. Because $N$ is isotropic, the Hermitian connection is flat restricted to $N$, and so some power $\cL^{\otimes k}|_N$ is topologically trivial. Then \cite[Lemma 2]{AGM01} we can find a nowhere vanishing section of $\cL^{\otimes k}|_N$ which has norm $1$ and is nearly covariantly constant. Then we choose a mesh of points in $N$, and sum together a collection sections peaked at those points.
    %This produces sections that are everywhere bounded above by $1$, bounded below on $N$ by $c_1$, and which at small distance $d$ from $N$ are bounded above by $\exp(-r_0 k d^2)$ and below by $c_1 \exp(-r_1 k d^2)$, for positive constants $c_1, r_0, r_1$ satisfying bounds that, for large $k$, depend only on the dimension and the process used to construct the mesh.
    A key step is \cite[Lemma 4]{AGM01}, which says that the peaked sections at nearby points of the mesh have similar arguments, and so cannot cancel completely. This sum of peaked sections is then used as the input to Donaldson's transversalization process.

    In the case where $N = N_1 \cup N_2$ consists of two cleanly intersecting Lagrangian submanifolds, the proof that the sum of peaked sections is bounded away from $0$ along $N$ is different. It involves using the previous construction along the three isotropic submanifolds, $N_1$, $N_2$, and the intersection $N_{12} \coloneqq N_1\cap N_2$. First apply the previous argument to $N_{12}$. This produces a section $s_{12}$ that is everywhere bounded above by $1$, bounded below on $N_{12}$ by $c_1$, and which at small distance $d$ from $N_{12}$ is bounded above by $\exp(-r_0 k d^2)$ and below by $c_1 \exp(-r_1 k d^2)$, for positive constants $c_1, r_0, r_1$ satisfying bounds that, for large $k$, depend only on the dimension and the process used to construct the mesh. Because $N_1$ and $N_2$ intersect cleanly along the compact manifold $N_{12}$, there is a lower bound on their transversality angle, which means that there is a constant $c_2 > 0$ so that a pair points $p \in N_1$ and $q \in N_2$ at distance $d$ from $N_{12}$ are at least distance $c_2d$ apart. Pick $\delta > 0$ such that
    \begin{equation*}
      \exp(-r_0 c_2^2 \delta^2) < c_1 /10,
    \end{equation*}
    and define
    \begin{equation*}
      c_{3} = c_{1} \exp(-r_{1} \delta^{2})/10.
    \end{equation*}
    Next, sum together peaked sections, rescaled by $c_{3}$, along $N_{1}$ and $N_{2}$ to build sections $s_{1}$ and $s_{2}$, respectively, which are bounded above by $c_{3}$ and below by $c_{1}c_{3}$. We consider $s = s_{12} + s_{1} + s_{2}$ as our candidate section.

    Now \cite[Lemma 4]{AGM01} implies that the phases of $s_{12}$ and $s_{1}$ are similar along $N_{1}$, so $s_{12}$ and $s_{1}$ do not cancel out along $N_{1}$. We claim that $s_{12} + s_{1}$ and $s_{2}$ do not cancel out along $N_{1}$. Within distance $k^{-1/2}\delta$ of $N_{12}$, $s_{12}$ is bounded below by $c_{1}\exp(-r_{1} \delta^{2})$, while $s_{1}$ and $s_{2}$ are bounded above by one-tenth of this quantity by our choice of $c_{3}$, so the sum is bounded away from zero. At distances greater than $k^{-1/2}\delta$ from $N_{12}$, the two submanifolds are at least $c_{2}k^{-1/2}\delta$ apart from each other, so on $N_{1}$ we have that $s_{12}+s_{1}$ is bounded below by $c_{1}c_{3}$, while $s_{2}$ is bounded above by $c_{3}\exp(-r_{0}c_{2}^{2}\delta^{2})$; by our choice of $\delta$, the latter is at most one-tenth the former. Thus $s$ is indeed bounded away from zero on $N_{1}$. A similar argument shows that $s$ is bounded away from zero on $N_{2}$. Thus $s$ can indeed be used as the input to Donaldson's transveralization process to produced the desired divisor.
  \end{proof}

%%%%% END NEW PROOF

\begin{corollary}
	\label{cor:donaldon_mutation_config}
	Let $\cL\subset X$ be the union of a monotone Lagrangian two-torus $L$ and a Lagrangian disk attached to $L$ cleanly along its boundary. Then there exists a Donaldson divisor $D\subset X$ disjoint from $\cL$, and such that $L\subset X\setminus D$ is exact for the standard Liouville structure on $X\setminus D$.
\end{corollary}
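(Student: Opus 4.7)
The plan is to combine the extended Auroux--Gayet--Mohsen theorem proved immediately above with the exactness refinement of Charest--Woodward recalled as Theorem~\ref{th:donaldson_hyp}. The singular Lagrangian $\cL$ is the union of two cleanly intersecting compact isotropic submanifolds: the closed torus $L$ and the Lagrangian disc $\Delta$, meeting along the circle $\partial \Delta \subset L$. That $\Delta$ has boundary should cause no essential difficulty in the peaked-section construction used in the preceding theorem: one places the mesh of points defining the peaked sections at interior points of $\Delta$ (at controlled distance from $\partial \Delta$), and the collar near $\partial \Delta$ is already covered by the peaked sections placed along $L$ and along $L \cap \Delta = \partial \Delta$. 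Applying the preceding theorem with $N_1 = L$ and $N_2 = \Delta$ produces, for $k$ sufficiently large, a Donaldson hypersurface $D \subset X$ disjoint from $\cL$.

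To upgrade the resulting disjoint divisor so that $L$ is additionally exact in $X \setminus D$, I would incorporate the period-killing refinement from \cite[Lemma~4.11]{CW13} which underlies Theorem~\ref{th:donaldson_hyp}. A monotone Lagrangian torus $L$ whose image in $H_1(X,\R)$ vanishes---a hypothesis which is automatic in the del~Pezzo applications of this corollary, since $H_1(X,\R)=0$ there---is strongly rational. Charest--Woodward's argument then produces a further correction to the asymptotically holomorphic section so that the zero locus $D$ satisfies $\int_\gamma \alpha = 0$ for $\gamma$ ranging over a basis of $H_1(L;\Z)$, where $\alpha$ is the Liouville primitive on $X\setminus D$.

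The main obstacle is to check that these two refinements of Donaldson's construction can be imposed simultaneously: the peaked-section construction along $L$, $\Delta$, and $\partial \Delta$ that ensures disjointness from $\cL$, and the Charest--Woodward correction that kills the periods on $L$. I expect the verification to proceed exactly as in the proof of the preceding theorem. Namely, the peaked contributions along $\cL$ give a uniform lower bound $|s|\ge c > 0$ on a neighbourhood of $\cL$, while the Charest--Woodward correction modifies $s$ by a term of strictly smaller order in $k$ whose support can be taken away from $\cL$; hence the combined section still admits the asymptotic holomorphicity and lower bounds needed to feed into Donaldson's transversalisation procedure, producing a Donaldson divisor $D$ that is simultaneously disjoint from $\cL$ and renders $L$ exact.
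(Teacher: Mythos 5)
Your overall strategy is the right one---combine the preceding clean-intersection version of Auroux--Gayet--Mohsen applied to $N_1=L$, $N_2=$ the disk, with the Charest--Woodward refinement that secures exactness of $L$---and it matches the paper's approach. However, the way you describe the Charest--Woodward refinement is not how it works, and this is the crux of the argument. You present it as a \emph{post hoc} correction to the already-constructed section $s$, of ``strictly smaller order in $k$'' and ``whose support can be taken away from $\cL$''. That is not the mechanism. The Charest--Woodward exactness argument does not add a small correction after the fact; it modifies the \emph{input} to the peaked-section construction \emph{over $L$ itself}. Because $L$ is monotone (hence strongly rational), the flat Hermitian line bundle $\cL^{\otimes k}|_L$ is actually trivial as a bundle-with-connection for $k$ divisible by a fixed constant, so one may choose the reference section over $L$ to be \emph{strictly} covariantly constant rather than merely nearly covariantly constant (which is all AGM's \cite[Lemma 2]{AGM01} provides in general). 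It is this choice, made at the outset and inserted into the peaked-section construction of the preceding theorem for the component $N_1=L$, that forces $L$ to be exact in $X\setminus D$; the verification is then as in \cite[Theorem 3.6]{CW13}. A correction supported away from $\cL$ could not possibly control the periods of the Liouville primitive on $L$, so that part of your argument would fail.

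A secondary remark: your concern about the boundary of the disk is already handled by the preceding theorem. There one works with the three strata $N_1=L$, $N_2=$ disk, and $N_{12}=\partial(\text{disk})=N_1\cap N_2$, and the clean-intersection hypothesis is precisely what organizes the peaked sections near $\partial(\text{disk})$; you do not need a separate ad hoc arrangement of the mesh away from the boundary. With the correct description of the Charest--Woodward input---strictly covariantly constant peaked sections over $L$---fed into that same three-stratum construction, both conclusions (disjointness from $\cL$ and exactness of $L$) come out simultaneously with no separate ``upgrade'' step.
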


\begin{proof}
	The monotonicity (and more generally, strong rationality) condition on $L$ implies that $\cL^{\otimes k}$ restricts to a trivial line-bundle-with-connection \cite{CW13} when $k$ is divisible by a fixed constant, so one can start the previous proof with a section of that line bundle which is strictly covariantly constant over $L$. If one proceeds as in the previous proof taking $N_1=L$ and $N_2$ to be the disk, the outcome is that $L$ becomes exact in the complement of the obtained divisor: this is checked analogously to \cite[Theorem 3.6]{CW13}.
\end{proof}

\section{Mutations of two-tori and Landau-Ginzburg seeds}
\label{sec:mut_4d}	
In this section we discuss the algebra and geometry behind the simplest type of wall-crossing: the one associated with mutations of two-dimensional Lagrangian tori.
We begin the section by reviewing the algebraic part of the story, which was explored in the papers of Cruz~Morales, Galkin and Usnich \cite{GU12,CMG13,CM13} and is a special case of the general theory of cluster algebra \cite{FZ02}. The algebraic package provides the notion of a Landau-Ginzburg seed and its mutations, and proves the Laurent phenomenon which guarantees that seeds can be mutated indefinitely.

Further in this section, we describe a geometric counterpart of the mentioned package which involves geometric mutations of Lagrangian tori, and is inspired by the recent work of Shende, Treumann and Williams \cite{STW15}. We explain that the wall-crossing formula establishes the link between the algebraic and geometric stories, and discuss some new applications of this newly established link.

\subsection{Mutations and seeds}
All integral vectors below are assumed to be primitive.
Let $(x,y)\in(\C^\x)^2$ be the standard co-ordinates. By a Laurent polynomial, we mean a finite sum of monomials $cx^{a}y^{b}$, $a,b\in\Z$, $c\in\C$.

\begin{definition}
Let $v=(v_1,v_2)\in \Z^2$ be an integral vector. In this section, we call  the following birational map  the \emph{wall-crossing map}, or the \emph{cluster transformation}:
\begin{equation}
	\label{eq:wall_cross_map}
	\mu_{v}\co (\C^\x)^2\dashrightarrow (\C^\x)^2,\quad
	\mu_{v}(x,y)= \left(x\left(1+x^{v_2}y^{-v_1}\right)^{-v_1},\   y\left(1+x^{v_2}y^{-v_1})\right)^{-v_2}\right).
\end{equation}
\end{definition}

\begin{definition}
	\label{def:mut_function}
	Let $W(x,y)\co (\C^\x)^2\to\C$ be a Laurent polynomial and $v\in\Z^2$. The \emph{mutation} of $W$ in the direction of $v$, or along $v$, is the function $\mu_vW$ (which not necessarily a Laurent polynomial) given by:
	\begin{equation}
		\label{eq:mut_functn}
		(\mu_vW)(x,y)=W(\mu_v(x,y))
	\end{equation}
\end{definition}

\begin{remark}
To mutate a Laurent polynomial $W$, one can replace each monomial of $W$ by the following expression:
\begin{equation}
	\label{eq:mutation_monomial}
	x^{u_1}y^{u_2}\mapsto x^{u_1}y^{u_2}\left(1+x^{v_2}y^{-v_1}\right)^{-u_1v_1-u_2v_2}
\end{equation}
Taking single monomial $x^{u_1}y^{v_2}$, we see that its mutation is  a Laurent polynomial if and only if the number $-u_1v_1-u_2v_2$ is non-negative. For more complicated Laurent polynomials, it is more tricky to understand under which mutations it stays Laurent.
\end{remark}

\begin{remark}
	\label{rem:sl2_acts}
There is a multiplicative action of $GL(2,\Z)$ on $(\C^\x)^2$, where a matrix 
$\left(
\begin{smallmatrix}
a&b\\
c&d
\end{smallmatrix}
\right)$
with determinant $\pm 1$ acts by the following automorphism,
compare with (\ref{eq:change_coord_W}):
\begin{equation}
\label{eq:sl2_acts}
(x,y)\mapsto(x^ay^b,x^cy^d)
\end{equation}
The wall-crossing maps respect this action. For example, the wall-crossing map along the vector $(0,1)$ is:
\begin{equation}
\label{eq:wall_cross_map_stand}
(x,y)\mapsto\left(x,y(1+x)^{-1}\right)
\end{equation}
For a $GL(2,\Z)$ transformation taking a given vector $v$ to $(0,1)$, the wall-crossing map (\ref{eq:wall_cross_map}) will be the composition of (\ref{eq:wall_cross_map_stand}) and (\ref{eq:sl2_acts}). 
\end{remark}

\begin{definition}
	\label{def:mut_trop}
	Let $u=(u_1,u_2)$ and $v=(v_1,v_2)$ be  vectors in $\Z^2$, where $v$ is primitive. The  \emph{(tropical) mutation} of $u$ along $v$ is given by
	\begin{equation}
		\label{eq:mut_trop}
		\mu_{v}u=u+\max(0,
			u_1v_2-u_2v_1
	)\cdot v\in\Z^2.
	\end{equation}
\end{definition}

\begin{remark}
Formula (\ref{eq:mut_trop}) is a tropicalisation of (\ref{eq:mutation_monomial}), provided that in (\ref{eq:mutation_monomial}) one uses $v^\perp=(-v_2,v_1)$ instead of $v$.
\end{remark}

\begin{definition}
	\label{def:LG_seed}
	A \emph{Landau-Ginzburg  seed} $(W,\{v_1,\ldots, v_k\})$
	is a tuple consisting of a Laurent polynomial $W(x,y)\in \C[x^{\pm1},y^{\pm 1}]$,
	and a collection of primitive integral vectors $v_i\in\Z^2$ called directions, such that  each mutation
	$
	\mu_{v_i}W
	$
	is also a Laurent polynomial. 
	
	The vectors $v_i$ are allowed to repeat, and in the case when a vector $v$ is found $p$ times in the collection $\{v_1,\ldots,v_k\}$, we require that
	$$
	\mu_v(\ldots(\mu_v W)))\quad\text{($p$ times)}
	$$
	is a Laurent polynomial.
\end{definition}

\begin{definition}
	\label{def:mut_LG}
	Let $(W,\{v_1,\ldots,v_k\})$ be an LG seed. Its \emph{mutation} in the $j$'th direction is the following tuple:
	\begin{equation}
		\label{eq:mutation_seed}
		\left(\mu_{v_j} W, \{\mu_{v_j} v_1,\ldots, \mu_{v_j} v_{j-1},\ -v_j,\  \mu_{v_j}v_{j+1},\ldots, \mu_{v_j}v_k\}\right).
	\end{equation}
We will denote the mutated tuple by $\mu_j(W,\{v_i\})$. 
See Section~\ref{subsec:proof_mut} for examples.	
\end{definition}

Cruz~Morales and Galkin \cite{CMG13} (see also the thesis of Cruz~Morales \cite{CM13}) proved the theorem below, which is
a version of the Laurent phenomenon known in cluster algebra.

\begin{theorem}[Laurent phenomenon for LG models, Galkin and Cruz~Morales]
	\label{th:Laurent_phen}
	Any mutation  of an LG seed is an LG seed on its own right.\qed 
\end{theorem}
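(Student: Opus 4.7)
The plan is to check that the mutated tuple
\[
\mu_j(W,\{v_i\}) = \bigl(\mu_{v_j}W,\ \{\mu_{v_j}v_1,\ldots, -v_j,\ldots,\mu_{v_j}v_k\}\bigr)
\]
is again an LG seed by verifying, direction-by-direction, that mutating $\mu_{v_j}W$ along each new direction lands in $\C[x^{\pm 1},y^{\pm 1}]$. The two kinds of directions in the new tuple, namely $-v_j$ and the $\mu_{v_j}v_i$ for $i\neq j$, will be handled by quite different arguments.

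For the replaced direction $-v_j$, the starting observation is that, from the explicit formula (\ref{eq:wall_cross_map}), the monomial $M := x^{v_2}y^{-v_1}$ is invariant under $\mu_v$. Using this, a short calculation shows that the composition $\mu_{-v}\circ\mu_v$ is a toric (monomial) transformation of $(\C^*)^2$, explicitly $(x,y)\mapsto(x M^{-v_1},\, y M^{-v_2})$. Since pulling back a Laurent polynomial along a monomial substitution yields a Laurent polynomial, $\mu_{-v_j}(\mu_{v_j}W) = W\circ(\mu_{-v_j}\circ\mu_{v_j})$ is automatically Laurent. Note that $\mu_v$ is not itself an involution, so the nontriviality of this step is precisely hidden in the fact that its ``failure of involutivity'' is monomial.

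For the remaining directions $\mu_{v_j}v_i$, the $GL(2,\Z)$-equivariance of the wall-crossing maps (Remark \ref{rem:sl2_acts}) reduces us to the normalized case $v_j=(0,1)$, where $\mu_{v_j}(x,y)=(x,y/(1+x))$. In these coordinates, writing $W=\sum_k W_k(x)y^k$, the hypothesis that $\mu_{v_j}W$ is Laurent is equivalent to the concrete divisibility $(1+x)^k\mid W_k(x)$ in $\C[x^{\pm 1}]$ for every $k>0$; the hypothesis that $\mu_{v_i}W$ is Laurent imposes a second divisibility condition on $W$ aligned with $v_i$. The task is then to expand $\mu_{\mu_{v_j}v_i}(\mu_{v_j}W)$ by grouping terms according to the degree in $(1+\widetilde{M})$, where $\widetilde M$ is the monomial invariant under $\mu_{\mu_{v_j}v_i}$, and to verify that the potentially-polar contributions cancel against each other using the two divisibility conditions on $W$.

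The main obstacle is precisely this last verification, which is a genuine instance of the Laurent phenomenon. One might hope for a formal reduction to a single mutation via an identity of the shape $\mu_{\mu_{v_j}v_i}\circ\mu_{v_j} = \mu_{v_j}\circ\mu_{v_i}$, but a direct check (e.g.\ with $v_j=(0,1)$, $v_i=(1,0)$, $\mu_{v_j}v_i=(1,1)$) shows that the corresponding compositions of birational maps of $(\C^*)^2$ are \emph{not} equal, so the two single-mutation hypotheses really must be combined, rather than iterated. Executing the combined cancellation is the rank-two specialization of the cluster-algebra Laurent phenomenon of Fomin--Zelevinsky, carried out in the LG-seed formalism in the cited references \cite{CMG13,CM13}; from our point of view it can be treated as a direct, if somewhat delicate, algebraic verification using the divisibility reformulation above.
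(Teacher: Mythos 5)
The paper does not actually prove Theorem~\ref{th:Laurent_phen}: it is stated with a \qed and attributed to Cruz~Morales and Galkin, with the argument delegated to \cite{CMG13,CM13}. So there is no in-paper proof for your proposal to match. Your treatment of the replaced direction $-v_j$ is correct and complete: the identity that $\mu_{-v}\circ\mu_v$ is the monomial map $(x,y)\mapsto(xM^{-v_1},yM^{-v_2})$, derived from the $\mu_v$-invariance of $M=x^{v_2}y^{-v_1}$, is exactly the content of Remark~\ref{rem:one_dir}. Your observation that $\mu_{\mu_{v_j}v_i}\circ\mu_{v_j}\neq\mu_{v_j}\circ\mu_{v_i}$ as birational maps of $(\C^\x)^2$ is also correct and is a useful warning against a two-line formal shortcut; the naive commutation really does fail.

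However, there is a genuine gap at the step that carries the mathematical content, namely the directions $\mu_{v_j}v_i$ with $i\neq j$. You reduce Laurentness of $\mu_{\mu_{v_j}v_i}(\mu_{v_j}W)$ to two divisibility conditions on the coefficient Laurent polynomials $W_k(x)$ (one coming from $\mu_{v_j}W$ being Laurent, one from $\mu_{v_i}W$ being Laurent), which is the correct starting data, but you do not carry out the combined cancellation; you defer it to \cite{CMG13,CM13}. Since the paper also cites these references and gives no proof, your proposal is not a misreading, but as a self-contained argument it stops at precisely the hard point. To actually close it one needs either a Fomin--Zelevinsky-style caterpillar-lemma induction (the rank-two Laurent phenomenon argument) or the upper-bound/upper-cluster-algebra formalism of Berenstein--Fomin--Zelevinsky, which is the same machinery the paper later invokes for Proposition~\ref{prop:upper_bound}. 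A secondary point worth a sentence in any writeup: because directions may repeat, Definition~\ref{def:LG_seed} also imposes Laurentness of \emph{iterated} mutations, so one must check that the iterated conditions for the mutated seed follow from those for the original one (for the copies of $v_j$ that remain this is immediate, since $\mu_{v_j}$ fixes $v_j$ under the tropical mutation; for $-v_j$ it uses that $\mu_{\pm v_j}$ commutes with the monomial map $\mu_{-v_j}\circ\mu_{v_j}$).
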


The content of the theorem is that if
$\mu_{v_j}W$ is a Laurent polynomial
for all $j$, then $W$ will forever stay a Laurent polynomial under iterated mutations, provided that the list of vectors that one uses for mutation is being modified according to (\ref{eq:mutation_seed}).

Starting with a LG seed, we can therefore  mutate it indefinitely, having $k$ choices of mutation on each step. This way, one gets infinite collections of Laurent polynomials, and in many cases these collections will contain infinitely many Laurent polynomials up to the  $SL(2,\Z)$-action (\ref{eq:sl2_acts}). 

\begin{remark}
	\label{rem:one_dir}
	Mutating a seed $(W,\{v\})$ that has a single direction does not produce infinitely many Laurent polynomials up to the $GL(2,\Z)$ action. To see this, first apply mutation once to` get the seed $(\mu_vW,\{-v\})$. The composition of the wall-crossing maps $\mu_{-v}\circ \mu_v$ equals to the action (\ref{eq:sl2_acts}) of a $GL(2,\Z)$ matrix which is conjugate to:
	$$
	\left(
	\begin{matrix}
	1&0\\
	1&1
	\end{matrix}
	\right).
	$$
So twice-repeated mutation   brings the seed back to $(W,\{v\})$ up to the  $GL(2,\Z)$ action.
\end{remark}

\begin{remark}
LG seeds and their mutations are an instance of the general notions of cluster algebra \cite{FZ02}. At a first glance, there seems to be a slight difference since in cluster algebra, the number of directions used for mutation must coincide with the number of variables, while this is not a requirement for an LG seed. However, Gross, Hacking and Keel 
explain that LG seeds can modelled in the language of classical cluster algebra by introducing auxiliary variables. \end{remark}

\setlength{\extrarowheight}{0.3em}
\begin{table}[h]
	\begin{tabularx}{\textwidth}{c l Xl l}
		Del Pezzo surface & Potential & Directions\\	
		$\C P^2$& $x+y+\frac 1 {xy}$&$(1,1)$, $(-2, 1)$, $(1, -2)$
		\\
		
		$\C P^1\times \C P^1$&$x+y+\frac 1 x+\frac 1 y$ &$(1,1)$, $(1,-1)$,
		\newline  $(-1,1)$, $(-1,-1)$\\
		
		$\Bl_1\C P^2$&$x+ y +\frac 1 {xy}+ xy$ &
		$(-2,1)$, $(1,-2)$,
		\newline
		$(1,0)$, $(0,1)$ \\
		
		$\Bl_2\C P^2$&
		$(1+x+y)(1+\frac 1 {xy})-1$ &
		$(1,-1)$, $(-1,1)$,
		\newline
		$(-1,0)$, $(0,-1)$,
		\newline
		$(1,1)$
		\\
		$\Bl_3\C P^2$&$ (1+x)(1+y)(1+\frac 1 {xy})-2$ &$\pm (1,-1)$, $\pm(1,0)$ $\pm(0,1)$\\
		
		$\Bl_4\C P^2$&$(1+x+y)(1+\frac 1 x)(1+\frac 1 y)-3
		$ &$(-1,0)\times 2$, $(0,-1)\times 2$ \newline $(1,0)$,
		$(0,1)$,
		$(1,1)$ \\
		
		$\Bl_5\C P^2$& $(1+x)^2(1+y)^2/xy-4$ &$(1,0)\times 2$, $(0,1)\times 2$,
		\newline
		$(-1,0)\times 2$, $(0,-1)\times 2$
		\\
		
		$\Bl_6\C P^2$&$(1+x+y)^3/xy-6$ &$(1,0)\times 3$, $(0,1)\times 3$,
		\newline $(-1,-1)\times 3$\\
		
		$\Bl_7\C P^2$&$(1+x+y)^4/xy-12$ &
		$(-1,0)\times 4$, $(0,-1)\times 4$,
		\newline $(1,1)\times 2$
		\\
		
		$\Bl_8\C P^2$&$(1+x+y)^6/
		xy^2-60$ &
		$(-1,0)\times 6$, $(0,-1)\times 3$,
		\newline
		$(1,1)\times 2$
		\\
	\end{tabularx}
	\caption{Del Pezzo seeds.}
	\label{tab:dp}
\end{table}	

\subsection{Del Pezzo seeds}
Recall that the list of del Pezzo surfaces consists of blowups $\Bl_k\C P^2$ at $0\le k\le 8$ points, and additionally $\C P^1\times\C P^1$.
Galkin and Usnich \cite{GU12} have written down LG potentials associated to del Pezzo surfaces. In Table~\ref{tab:dp}, we reproduce those potentials and  complement them by collections of integral vectors $v_i$ that, together with those potentials, comprise LG seeds. We call them  del~Pezzo seeds.

The first four del~Pezzo surfaces in the table are toric, and the function is the classical toric potential in those cases. In the non-toric cases, the associated potentials can be traced back to the work of Hori and Vafa \cite{HV00}.

A remark about notation in Table~\ref{tab:dp} is due: when the list of directions contains a repeated vector, e.g.~$(-1,0)$ twice, we denote this by $(-1,0)\times 2$. The list of directions for the seed associated with $\Bl_k \C P^2$ contains $k+3$ directions. 

\subsection{Lagrangian seeds and the mutation theorem}
We  move on to the geometric part of the story.

\begin{definition}
\label{def:Lag_seed}
A \emph{Lagrangian seed} $(L,\{D_i\})$ in a symplectic 4-manifold $X$ consists of
a monotone Lagrangian torus $L\subset X$, and
a collection of embedded Lagrangian disks $D_i\subset X$ with boundary on $L$, which satisfy the following conditions. Here we denote $D_i^\mathrm{o}=D_i\setminus\bd D_i$.

\begin{itemize}
\item each $D_i$ is attached to $L$ cleanly along its boundary, i.e.~transversely in the directions complementary to the tangent lines $T(\bd D_i)$,
\item $D_i^\mathrm{o}\cap L=\emptyset$,
\item 
$D_i^\mathrm{o}\cap D_j^\mathrm{o}=\emptyset$, $i\neq j$,
\item the curves $\bd D_i\subset L$ have minimal pairwise intersections, i.e.~ there is a diffeomorphism $L\to T^2$ taking each $\bd D_i$ to a geodesic of the flat metric. 
\end{itemize}
\end{definition}

\begin{figure}[h]
	\includegraphics[]{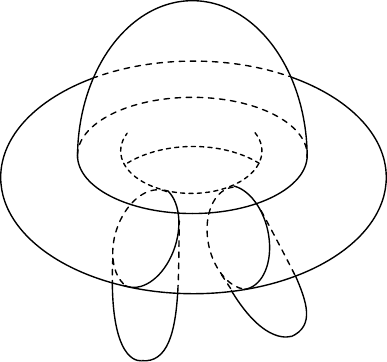}
\caption{An example of a Lagrangian seed consisting of a Lagrangian torus and three attached Lagrangian disks.}
\label{fig:Lag_skeleton}
\end{figure}	

\begin{remark}
One can think of a Lagrangian seed as a Lagrangian embedding of the $CW$-complex shown obtained in the following way, see Figure~\ref{fig:Lag_skeleton}. Fix a 2-torus,  pick a collection of flat geodesic curves in some homology classes $v_i$ (in case when $v_i=v_j$, we pick distinct parallel geodesics), and attach a 2-disk along each curve.
The only data needed here is the collection of homology classes $v_i$, which obviously become $[\bd D_i]$ after the construction is performed.
Definition~\ref{def:Lag_seed} describes a nice Lagrangian embedding of this $CW$-complex into $X$.
Such skeleta have been studied by Shende, Treumann and Williams \cite{STW15}, and we will soon turn to their results. 
\end{remark}

Recall that since $L$ is monotone, its potential $W_L$  is a Laurent polynomial, and is an invariant of $L$. Therefore, fixing a basis for $H_1(L,\Z)$, one can associate to any Lagrangian seed the following tuple:
$$
(L,\{D_i\}) \leadsto (W_L,\{[\bd D_i]^\perp\})
$$
which has the right format for being an LG~seed (see Definition~\ref{def:LG_seed}) in the sense that it consists of a Laurent polynomial and a list of vectors in $\Z^2$. 
Our next claim will say that this is indeed an LG~seed, meaning that the mutations $\mu_{[\bd D_i]^\perp }W_L$ are also Laurent  polynomials. Recall that
$$
(v_1,v_2)^\perp=(-v_2,v_1).
$$
We point out that the classes $[\bd D_i]^\perp$  \emph{not} $[\bd D_i]$ produce what turns out to be an LG~seed, in the algebraic conventions that we set up above.

\begin{theorem}[Mutation theorem]
	\label{th:lag_mut}
Let $(L,\{D_i\}_{i=1}^k)$ be a Lagrangian seed in a monotone symplectic manifold $X$. Then $(W_L,\{[\bd D_i]^\perp\})$ is an LG~seed.

Moreover, for any $j\in\{1,\ldots, k\}$ there is another Lagrangian seed $(L',\{D_i'\})$ in $X$ whose associated LG seed is the mutation, see Definition~\ref{def:mut_LG}, of the former LG seed in the $j$th direction:
$$
(W_{L'},\{[\bd D_i']^\perp\})=\mu_j(W_{L},\{[\bd D_i]^\perp\})
$$ 
for some choice of bases for $H_1(L,\Z)$ and $H_1(L',\Z)$. 
\end{theorem}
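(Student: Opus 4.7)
The overall strategy is to produce $L'$ by a local Lagrangian mutation along the disk $D_j$, apply the general wall-crossing formula (Theorem~\ref{th:wall_crossing}) to show $W_{L'}$ is the algebraic mutation of $W_L$, and then invoke the Shende--Treumann--Williams machinery \cite{STW15} to carry the remaining disks $D_i$ ($i\ne j$) along the mutation to obtain the disks $D_i'$. With $L'$ monotone, the resulting $W_{L'}$ is automatically Laurent by Gromov compactness, which simultaneously shows that $(W_L,\{[\bd D_i]^\perp\})$ is an LG~seed.

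First, I would define $L'=\mu_{D_j}L$ by performing a Lagrangian antisurgery in a Weinstein neighbourhood of $D_j$; the local model is a pair (torus-with-attached-disk, mutated torus) sitting inside the conic complement $\C^2\setminus\{z_1z_2=\epsilon\}$. Checking monotonicity of $L'$ is a direct Maslov/area computation on the surgery piece, using that $D_j$ is Lagrangian so it contributes trivially to areas. Next, apply Corollary~\ref{cor:donaldon_mutation_config} (and the analogous clean-intersection statement for several attached disks plus the mutated torus $L'$) to produce a Donaldson divisor $D\subset X$ disjoint from $L\cup L'\cup\bigcup_i D_i$, together with a Liouville subdomain $U\subset X\setminus D$ containing the whole configuration and in which both $L$ and $L'$ are exact. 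Since the skeleton retracts to $L$ (up to the attached disks, which are contractible), we may take $U$ so that $H_1(L,\Z)\to H_1(U,\Z)$ is surjective and $c_1(U)=0$, placing us in case (2) of Theorem~\ref{th:wall_crossing}.

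Second, I would carry out the Floer computation in $U$ following Seidel \cite[Section~11]{SeiBook13}. In the conic-complement local model the Floer cochain complex $CF_U^*(\bL_1,\bL_2)$ for $\bL_i=(L_i,\rho_i)$ consists of two generators connected by two strips whose relative area is governed by the single nonconstant disk that $L$ bounds in the local model. A direct count shows that the differential vanishes, hence $HF_U^*(\bL_1,\bL_2)\ne 0$, precisely when
\[
\rho_2=\mu_{[\bd D_j]^\perp}(\rho_1),
\]
after choosing bases for $H_1(L,\Z)$ and $H_1(L',\Z)$ that agree on cycles away from the surgery region. Plugging this non-vanishing statement into Theorem~\ref{th:wall_crossing} yields $W_L(\rho)=W_{L'}(\mu_{[\bd D_j]^\perp}(\rho))$, i.e. $W_{L'}$ is the mutation of $W_L$ along $[\bd D_j]^\perp$ in the sense of Definition~\ref{def:mut_function}; monotonicity of $L'$ then forces this mutation to be a Laurent polynomial.

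Third, I would transport the remaining disks. Apply the skeleton mutation of \cite{STW15} to $L\cup\bigcup_i D_i$ to produce a new skeleton $L'\cup\bigcup_i D_i'$, where $D_j'$ is the reversed disk and each $D_i'$, $i\ne j$, is obtained from $D_i$ by the explicit geometric modification near $\bd D_j$ described in \cite{STW15}. Cleanness and minimality of boundary intersections are preserved by this construction, and the boundary classes transform by the tropical mutation of Definition~\ref{def:mut_trop}, matching the combinatorial recipe for mutation of an LG~seed in Definition~\ref{def:mut_LG}. Thus $(L',\{D_i'\})$ is a Lagrangian seed whose associated LG~seed is $\mu_j(W_L,\{[\bd D_i]^\perp\})$, which proves the second assertion; the first assertion, that each $\mu_{[\bd D_i]^\perp}W_L$ is Laurent, is the content of the identity $W_{L'}=\mu_{[\bd D_j]^\perp}W_L$ established above, applied once for each choice of $j$.

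The main obstacle is the third step's Floer calculation: one must set up the $d$-grading, spin structure, and orientation conventions in the local model carefully enough that the wall-crossing map comes out to be exactly $\mu_{[\bd D_j]^\perp}$ with the correct signs and the correct exponent vector, rather than some other cluster transformation. Matching the STW surgery combinatorics of the fourth step to the algebraic formula of Definition~\ref{def:mut_LG} is the other piece requiring delicate bookkeeping, but it is essentially formal once the sign conventions in the Floer computation are pinned down.
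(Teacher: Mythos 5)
Your proposal is correct and follows essentially the same route as the paper: local mutation $\mu_{D_j}L$ via the conic-complement model, Donaldson divisor from Corollary~\ref{cor:donaldon_mutation_config}, Seidel's local Floer computation giving the wall-crossing map, Theorem~\ref{th:wall_crossing} to match potentials, Shende--Treumann--Williams to transport the other disks, and monotonicity of $L'$ to get Laurent-ness. Two small points of difference worth noting, neither a gap: (i) the paper applies Corollary~\ref{cor:donaldon_mutation_config} only to the pair $L\cup D_j$ (the other disks need not avoid the divisor for the potential calculation; compatibility of the divisor with the mutated seed is instead handled by the last clause of Theorem~\ref{th:stw}), whereas you ask for the divisor to avoid the whole configuration, which is a harmless over-requirement; (ii) the details of your description of Seidel's local Floer complex are not quite those of Lemma~\ref{lem:seidel_local} (the projected circles meet in two points, and one of the two planar strips lifts to two holomorphic strips), but since you cite the computation to \cite[Section~11]{SeiBook13} this does not affect the argument.
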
 
 
A proof of this theorem will be given later. While the proof  uses Corollary~\ref{cor:donaldon_mutation_config},
in all existing examples the Lagrangian seed naturally sits in the complement of an anticanonical divisor, and the proof can be carried through without referring to Corollary~\ref{cor:donaldon_mutation_config}.

\begin{definition}
	\label{def:mut_lag_seed}	
	The new Lagrangian seed $(L',\{D_i'\})$ appearing in Theorem~\ref{th:lag_mut} has an explicit geometric construction explained below, and is called the (geometric) \emph{mutation} of the Lagrangian seed $(L,\{D_i\})$ along the disk $D_j$; the seed 
$(L',\{D_i'\})$ can also be denoted by 
$$
\mu_j(L,\{D_i\}) \text{, \quad or\quad } (\mu_{D_j}L,\{\mu_{D_j}D_i\}).
$$
The definition of the mutated Lagrangian torus $\mu_{D_j}L$ is given below in Definition~\ref{def:mutation}; the construction of the mutated Lagrangian disks $\mu_{D_j}D_i$ with boundary on $\mu_{D_j}L$ is provided by \cite{STW15} and is discussed later.
\end{definition}

In the rest of the section, we will explain Definition~\ref{def:mut_lag_seed} and prove Theorem~\ref{th:lag_mut} following the outline below:
\begin{itemize}
	\item We explain how to mutate a torus $L$ along a single Lagrangian disk $D$ keeping it monotone. 
	The new torus is denoted by $\mu_DL$.
	
	\item We review the result of \cite{STW15} which essentially proves that any additional Lagrangian disks on $L$ can be carried over to the mutated torus, and provide an alternative proof. This will complete Definition~\ref{def:mut_lag_seed}.
	
	\item We prove that the potentials of $L$ and $\mu_D L$ differ by mutation along $[\bd D]$ in the sense of Definition~\ref{def:mut_function}, provided that a compatible Donaldson divisor exists.
\end{itemize}

The statement from the last item 
is the wall-crossing formula 
which has been predicted by Auroux~\cite{Au07}, and in a slightly different context by Kontsevich and Soibelman~\cite{KS06}.
We will derive it from our general wall-crossing formula---Theorem~\ref{th:wall_crossing}---using a \emph{local wall-crossing} result of Seidel~\cite{SeiBook13}.

\subsection{Mutation configurations}

Let $X$ be a symplectic  4-manifold. 

\begin{definition}
A \emph{mutation configuration} is a Lagrangian seed (Definition~\ref{def:Lag_seed}) containing a single Lagrangian disk.
In other words, a mutation configuration $(L,D)\subset X$ consists of
an Lagrangian torus $L\subset X$ and an embedded Lagrangian disk $D\subset X$ with boundary on $L$. We require that $D$ attaches to $L$ cleanly, its boundary is non-contractible in $L$, and $D$ does not intersect $L$ away from its boundary. 
\end{definition}

We shall use the notation $Op$ for a small open neighbourhood of a set.

\begin{lemma}[Weinstein neighbourhood theorem for mutation configurations]
\label{lem:nbhood_for_T_D}
Let $(L,D)\subset X$ and $(L_0,D_0)\subset X_0$ be two mutation configurations in symplectic 4-manifolds $X,X_0$. Then there exist neighbourhoods $Op(L\cup D)$ and $Op(L_0\cup D_0)$ such there is a symplectomorphism between them taking $L,D$ resp.~to $L_0,D_0$.
\end{lemma}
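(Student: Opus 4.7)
The plan is to combine Weinstein's Lagrangian neighbourhood theorem with a Moser-type argument adapted to the clean intersection along $\bd D$. First, construct a diffeomorphism $f : Op(L\cup D)\to Op(L_0\cup D_0)$ sending $L$ to $L_0$ and $D$ to $D_0$. Such an $f$ exists for purely topological reasons: $L$ and $L_0$ are tori, $D$ and $D_0$ are disks, and the boundary circles are primitive in the tori, so the stratified pairs $(L\cup D, \bd D)$ and $(L_0\cup D_0, \bd D_0)$ are diffeomorphic, and this diffeomorphism extends to tubular neighbourhoods in the ambient 4-manifolds. Pulling back via $f$, we obtain two symplectic forms $\omega$ and $\omega_1 := f^*\omega_0$ on a common neighbourhood of $L\cup D$ in $X$, both of which make $L$ and $D$ Lagrangian.

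The next step is to match the two symplectic forms as bilinear forms on $TX|_{L\cup D}$. Along $L$, both $\omega$ and $\omega_1$ canonically identify the normal bundle of $L$ with $T^*L$, and these identifications differ by a bundle automorphism that can be realised as the derivative of a diffeomorphism of $Op(L)$ fixing $L$; similarly along the interior of $D$. The delicate point is at a point $p\in\bd D$, where $T_pD$ is spanned by $T_p(\bd D)$ and a transverse Lagrangian line. The space of Lagrangian planes of this form, transverse to $T_pL$ and containing $T_p(\bd D)$, is contractible (it parametrises affine lines in the two-dimensional symplectic reduction by $T_p(\bd D)\subset T_p(\bd D)^\omega$), so the global matching data along $\bd D$ is well-defined up to contractible choice. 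After composing $f$ with such a diffeomorphism, we may assume $\omega = \omega_1$ pointwise on $TX|_{L\cup D}$.

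Finally, apply Moser's trick. The interpolation $\omega_t = (1-t)\omega + t\omega_1$ is a family of closed 2-forms, non-degenerate on a sufficiently small neighbourhood of $L\cup D$ since they now agree on the relevant tangent spaces. The difference $\omega_1 - \omega$ vanishes on $TX|_{L\cup D}$, so a relative Poincar\'e lemma yields a 1-form $\alpha$ with $d\alpha = \omega_1-\omega$ and $\alpha$ vanishing as a covector at each point of $L\cup D$. Solving $\iota_{V_t}\omega_t = -\alpha$ produces a time-dependent vector field $V_t$ vanishing on $L\cup D$, whose flow $\psi_t$ fixes $L\cup D$ pointwise and satisfies $\psi_1^*\omega_1 = \omega$. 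The composition $f\circ\psi_1^{-1}$ is the desired symplectomorphism of neighbourhoods.

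The main obstacle is the relative Poincar\'e lemma near $\bd D$, where $L\cup D$ fails to be a smooth submanifold and has instead a stratified singularity: the local model is two Lagrangian half-planes meeting cleanly along a line in $\R^4$. The cleanest resolution is to build $\alpha$ in Darboux charts adapted to this local model along $\bd D$ and to patch via a partition of unity subordinate to a retraction of $Op(L\cup D)$ onto $L\cup D$ that respects the stratification. Alternatively, one may invoke Po\'zniak's neighbourhood theorem for cleanly intersecting Lagrangians as a black box along $\bd D$, which directly provides the local symplectomorphism there and reduces the remainder to the standard Moser argument on the smooth strata.
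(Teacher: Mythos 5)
Your proposal takes the same broad Moser-theoretic route as the paper, but the order of operations differs in a way that matters. You construct a global diffeomorphism $f\co Op(L\cup D)\to Op(L_0\cup D_0)$, match $\omega$ and $f^*\omega_0$ linearly on all of $TX|_{L\cup D}$ at once, and then try to run Moser on the whole stratified set $L\cup D$. This forces you to confront the relative Poincar\'e lemma at the corner stratum $\bd D$, where $L\cup D$ is not a submanifold. You flag this as the main obstacle and sketch two repairs (adapted Darboux charts patched by a partition of unity; or Po\'zniak's clean-intersection neighbourhood theorem), but you do not carry either one through, and the first in particular is not routine: building a smooth deformation retraction of a neighbourhood onto $L\cup D$ that fixes the singular locus pointwise requires some care, and without it the usual integral formula for the primitive $\alpha$ is not immediately available.

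The paper's proof avoids ever running Moser near the corner. It first produces a symplectomorphism $\psi$ between tubular neighbourhoods of $L$ and $L_0$ (standard Weinstein, using a symplectic bundle map of $TX|_L$ that additionally respects $TD|_{\bd D}$), then uses a Hamiltonian isotopy (cf.~\cite[Lemma~3.3]{CEL10}) so that $\psi$ takes $D\cap Op(L)$ precisely to $D_0\cap Op(L_0)$, then extends $\psi$ smoothly along $D$ with symplectic differential on $TX|_D$, and only then applies relative Moser, constant on $Op(L)\cup D$. The key point is that $\Psi^*\omega_0-\omega$ already vanishes identically as a $2$-form on a full open neighbourhood of $L$ --- and hence of $\bd D$ --- so the remaining Moser correction is only nontrivial along the smooth interior stratum of $D$, where the relative Poincar\'e lemma is the standard statement for a closed submanifold. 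In effect, the Hamiltonian-isotopy step near $\bd D$ plays the role your proposal assigns to Po\'zniak's theorem, but it is packaged so that the Moser argument never sees the corner. Your approach is salvageable (the Po\'zniak alternative is essentially the same input), but as written it stops exactly at the hard point, whereas the paper's ordering of the steps makes that point disappear.
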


\begin{proof}
The homology classes  $[\bd D]\in H_1(L,\Z)$ and $[\bd D_0]\in H_1(L_0,\Z)$ are primitive because the disk boundaries are embedded.
So there exists a diffeomorphism $L\to L_0$ taking $\bd D\to \bd D_0$, and it has a lift to a symplectic bundle map $TX|_L\to TX_0|_{L_0}$ taking the subbundle $TL$ to $TL_0$, and also taking $TD|_{\bd D}$ to $TD_0|_{\bd D_0}$. As in the proof of the Weinstein neighbourhood theorem, such a diffeomorphism extends to a symplectomorphism $\psi$ between neighbourhoods $Op(L)\to Op(L_0)$ taking $L$ to $L_0$ and $D\cap Op(L)$ to a Lagrangian tangent to $D_0\cap Op(L_0)$ along the curve $\bd D_0$. After applying a Hamiltonian isotopy, we may assume $\psi$ sends $D\cap Op(L)$ precisely to $D_0\cap Op(L_0)$; compare~\cite[Lemma~3.3]{CEL10}. We may extend $\psi$ smoothly to a diffeomorphism $\Psi\co Op(L\cup D)\to Op(L_0\cup D_0)$ which takes  $D$ to $D_0$ and whose differential restricts to a symplectic bundle map $d\Psi|_D\co TX|_D\to TX_0|_{D_0}$. By the relative Moser theorem, see e.g.~\cite[Theorem~7.4]{CdS01}, there is a homotopy between $\Psi$ and a symplectomorphism $Op(L\cup D)\to Op(L_0\cup D_0)$, and this homotopy is constant on $Op(L)\cup D$; in particular the resulting symplectomorphism sends $L,D$ resp.~to $L_0,D_0$.
\end{proof}

\subsection{The model neighbourhood}
\label{subsec:model_nbhood}
We will now study the symplectic geometry of the neighbourhoods of $L\cup D$ appearing in the last lemma.
We will see that these neighbourhoods have a natural structure of the Liouville domain whose completion coincides with the Liouville completion of the following model Liouville domain: 
$$M=\C^2\setminus\{xy=1\},$$
with the symplectic form being the restriction of the standard form on $\C^2$. 

Recall that $M$ contains
two exact tori which are distinct up to compactly supported Hamiltonian isotopy, which
are  called the Clifford and the Chekanov torus. Let us recall their construction following 
 \cite{Au07,Au09}. Consider the projection $$\pi\co M\to \C\setminus \{1\},\quad \pi(x,y)=xy,\quad x,y\in \C.$$
The fibres of $\pi$ are affine quadrics, smooth except for $\pi^{-1}(0)$. Take any simple closed curve 
$$\gamma\subset \C\setminus\{0,1\}$$
and a parameter $m\in \R$, and define Lagrangian tori
$$
T_{\gamma,m}=\{x,y\in \C^2:\  \pi(x,y)\in\gamma,\  |x|-|y|=m\}\subset M.
$$
The following lemma is well known, but we prove it for completeness.

\begin{lemma}
	\label{lem:exactness_in_M_restrictns}
	Let $\theta$ be a 1-form on $M$, $d\theta=\omega$. There is a constant $A\in \R$ (depending on $\theta$) such that a torus $T_{\gamma,m}$ is exact with respect to $\theta$ only if:
	\begin{itemize}
		\item $\gamma$ encloses $1\in\C$;
		\item $m=0$;
		\item the disk in $\C$ bounded by $\gamma$ has area $A$. 
	\end{itemize}
\end{lemma}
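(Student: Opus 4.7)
The plan is to evaluate the periods of $\theta$ on a basis of $H_1(T_{\gamma,m})\cong\Z^2$ and translate their vanishing into the three conditions. As reference primitive I would use $\theta_0 = \tfrac12(r_1^2\,d\phi_1 + r_2^2\,d\phi_2)$ in polar coordinates $x=r_1 e^{i\phi_1}$, $y=r_2 e^{i\phi_2}$. Since $H_1(M,\Z)\cong\Z$ is generated by a small loop linking $\{xy=1\}$, the closed 1-form $\theta-\theta_0$ pairs with any 1-cycle through a constant $C=C(\theta)\in\R$ times the linking number with $\{xy=1\}$.

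First I would treat the fiber direction. Let $\alpha\subset T_{\gamma,m}$ be the loop in a single fiber of $\pi$, parametrized by $\phi\mapsto(r_1 e^{i\phi},\,r_2 e^{i(\arg z-\phi)})$ with $r_1-r_2=m$ and $r_1r_2=|z|$. Direct integration gives $\int_\alpha\theta_0=\pi(r_1^2-r_2^2)=\pi m(r_1+r_2)$. The loop $\alpha$ is null-homologous in $M$: letting $|z|\to 0$ along a path in the base avoiding $1$, $\alpha$ shrinks to a point if $m=0$, or to a circle of radius $m$ inside the complex line $\{y=0\}\subset M$ (contractible there) if $m>0$. Hence $\int_\alpha\theta=\int_\alpha\theta_0=\pi m(r_1+r_2)$, and exactness forces $m=0$.

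With $m=0$ one has $r_1=r_2=\sqrt{|z|}$ and $d\phi_1+d\phi_2=d(\arg z)$ along $T_{\gamma,0}$, giving $\theta_0|_{T_{\gamma,0}}=\pi^*\!\bigl(\tfrac12|z|\,d(\arg z)\bigr)$. For any lift $\beta\subset T_{\gamma,0}$ of $\gamma$, $\int_\beta\theta_0=\int_\gamma\tfrac12|z|\,d(\arg z)$, while the class $[\beta]\in H_1(M)$ is detected by the closed form $\tfrac{d(xy-1)}{2\pi i(xy-1)}$:
\begin{equation*}
\int_\beta\frac{d(xy-1)}{2\pi i(xy-1)}=\int_\gamma\frac{dz}{2\pi i(z-1)}=n,
\end{equation*}
where $n$ is the winding number of $\gamma$ around $1$. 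Exactness of $T_{\gamma,0}$ therefore reads $\int_\gamma\tfrac12|z|\,d(\arg z)+nC=0$. A short calculation shows $d\!\bigl(\tfrac12|z|\,d(\arg z)\bigr)=\tfrac{dA}{2|z|}$ (with $dA$ the standard area form on $\C$), and this 2-form is integrable at $0\in\C$. Applying Stokes on the disk $D$ bounded by $\gamma$ (excising a small disk around $0$ and passing to the limit if $0\in D$, using that $\int_{\gamma_\epsilon}\tfrac12|z|\,d(\arg z)=\pi\epsilon\to 0$), one obtains $\int_\gamma\tfrac12|z|\,d(\arg z)=\int_D\tfrac{dA}{2|z|}>0$. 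Since $\gamma$ is simple, $n\in\{0,\pm 1\}$; positivity rules out $n=0$, so $\gamma$ must enclose $1$, and the area $\int_D\tfrac{dA}{2|z|}$ is forced to equal the constant $A:=\mp C$.

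The only step I expect to need care is the identification of the period $\int_\beta\theta_0$ with a geometric area on the base: the 2-form $\tfrac{dA}{2|z|}$ is precisely the reduced symplectic form obtained by Hamiltonian reduction of $\omega$ along the $S^1$-action generated by $|x|^2-|y|^2$, and this reduction-theoretic interpretation is what produces the clean "area of the disk" formulation in the statement.
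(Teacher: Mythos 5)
Your proof is correct, and it follows a genuinely more explicit route than the paper's. The paper argues with $\omega$-areas of relative classes in $H_2(M,T_{\gamma,m})$ and $H_2(M,T_{\gamma,0}\cup T_{\gamma',0})$: exactness forces the areas of suitable relative cycles to vanish, and the paper reads off the three conditions from those vanishings. You instead evaluate the $\theta$-periods directly on a basis of $H_1(T_{\gamma,m})$, splitting $\theta=\theta_0+(\theta-\theta_0)$ for the explicit reference primitive $\theta_0=\frac12(r_1^2\,d\phi_1+r_2^2\,d\phi_2)$ and using that the closed correction $\theta-\theta_0$ is determined on $H_1(M)\cong\Z$ by a single constant $C$. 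The two arguments are dual (a $\theta$-period of a boundary loop equals the $\omega$-area of a capping chain), but yours has a concrete payoff: it makes visible that the invariant quantity is the reduced area $\int_D \frac{dA}{2|z|}$ --- the symplectic area in the base of the Lefschetz fibration obtained by Hamiltonian reduction along $|x|^2-|y|^2$ --- rather than the Euclidean area $\int_D dA$. The paper's phrase ``the disk in $\C$ bounded by $\gamma$ has area $A$'' is ambiguous on this point, but its own proof computes an $\omega$-area of a chain and therefore implicitly also refers to the reduced area, so your reading is the intended one. One could note in your write-up that the $m\neq 0$ case and the positivity of $\int_D\frac{dA}{2|z|}$ each hold for either orientation of $\gamma$, so no sign convention is being swept under the rug; but as written the argument is sound.
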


\begin{proof}
	Suppose $\gamma$ does not enclose the point $1\in \C$. Then the map
	\begin{equation}
	\label{eq:incl_T_M}
	H_1(T_{\gamma,m},\Z)\to H_1(M,\Z)\cong \Z
	\end{equation}
	vanishes and there are elements of
	$H_2(M,T_{\gamma,m},\Z)$ whose $\omega$-areas are $m$ and $A_\gamma$, where $A_\gamma$ is the area bounded by $\gamma$. Exactness implies $m=0$ and $A_\gamma=0$, but the latter is impossible.

	Suppose $\gamma$ encloses the point $1\in \C$, then the map (\ref{eq:incl_T_M}) 
	has 1-dimensional kernel. The area of an element of $$H_2(M,T_{\gamma,m},\Z)$$ bounding a generator of the kernel of (\ref{eq:incl_T_M}) is $\pm m$, so we must have $m=0$ for an exact torus.
	
	Now consider two tori $T_{\gamma,0}$ and $T_{\gamma',0}$, where $\gamma,\gamma'$ enclose $1\in \C$.
	Pick elements $a\in H_1(T_{\gamma,0})$,  $a'\in H_1(T_{\gamma;, 0})$ whose images under the inclusion map (\ref{eq:incl_T_M}) (and its analogue for $T_{\gamma',0}$) is the same generator of $H_1(M,\Z)$. Then there is an element of
	$$H_2(M,T_{\gamma,0}\cup T_{\gamma',0},\Z)$$
	with boundary $a\sqcup (-a')$ whose area is $\pm(A_\gamma-A_{\gamma'})$. This must vanish for exact tori, so the areas $A_\gamma$ must be the same for all of them.
\end{proof}

The converse of Lemma~\ref{lem:exactness_in_M_restrictns} is also true, moreover the primitive 1-form can always be chosen so as to be Liouville.

\begin{lemma}
	\label{lem:exactness_in_M}
	For each $A\in \R_+$, there is a Liouville 1-form $\theta$ on $M$ such that all Lagrangian tori satisfying the conditions of Lemma~\ref{lem:exactness_in_M_restrictns} are exact.
\end{lemma}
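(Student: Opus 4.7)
\emph{Proof plan.} I would construct $\theta$ by perturbing the standard Liouville form $\theta_0 = \tfrac{1}{2}(r_x^2\, d\phi_x + r_y^2\, d\phi_y)$ on $\C^2$ by a closed $1$-form representing a nontrivial class in $H^1(M;\R) \cong \R$:
\[
\theta = \theta_0 + k\, d\arg(xy - 1), \qquad k \in \R.
\]
The differential $d\arg(xy-1) = \Im(d(xy-1)/(xy-1))$ is globally defined and closed on $M$, so $\theta$ satisfies $d\theta = \omega$ and has period $2\pi k$ around the meridian of $\{xy=1\}$. A routine check at infinity confirms that the Liouville vector field of $\theta$ remains outward-pointing, so $\theta$ is a genuine Liouville form on $M$. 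I will show that the single choice $k = -A/(2\pi)$ does the job.

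Next I would exploit the Hamiltonian $S^1$-action $(x,y)\mapsto (e^{i\phi}x, e^{-i\phi}y)$ with moment map $\mu = \tfrac{1}{2}(|x|^2 - |y|^2)$, noting that every torus $T_{\gamma,0}$ in the statement lies in the coisotropic $\mu^{-1}(0) = \{|x|=|y|\}$. Choose a basis $\{a,b\}$ of $H_1(T_{\gamma,0})$ where $a$ is an $S^1$-orbit (the ``vanishing cycle'') and $b$ is a constant-phase lift of $\gamma$, say fixing $\phi_x = 0$. Then the $a$-period of $\theta$ vanishes: along $a$ one has $r_x = r_y$ and $d\phi_x + d\phi_y = 0$, so $\theta_0|_a = 0$, while $xy$ is constant on the $S^1$-orbit $a$, forcing $d\arg(xy-1)|_a = 0$ as well.

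For the $b$-period, the form $\theta_0$ is $S^1$-basic on $\mu^{-1}(0)$ and descends to the primitive $\tfrac{|z|}{2}\, d\arg z$ of the reduced form $\omega_{\mathrm{red}} = \tfrac{1}{2}\, d|z| \wedge d\arg z$ on the reduced space with coordinate $z = xy$. Hence $\int_b \theta_0 = \int_\gamma \tfrac{|z|}{2}\, d\arg z$, and a Stokes argument (with a negligible contribution from a small loop around $z=0$, where the primitive vanishes continuously) identifies this with the $\omega_{\mathrm{red}}$-area of the disk bounded by $\gamma$, i.e.\ precisely the quantity $A_\gamma$ from the statement. Meanwhile $\int_b d\arg(xy-1) = 2\pi$, since $z-1$ winds once around $0$ as $z$ traverses $\gamma$ (which encloses $1$). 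Therefore $\int_b \theta = A_\gamma + 2\pi k$, and setting $k = -A/(2\pi)$ makes $\theta|_{T_{\gamma,0}}$ exact for every admissible $\gamma$ simultaneously.

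The one point requiring care is the identification of $A_\gamma$ in the statement with the $\omega_{\mathrm{red}}$-area of the disk bounded by $\gamma$ (equivalently, with the $\omega$-area of a constant-phase lift of that disk into $M$), matching the convention implicit in the proof of Lemma~\ref{lem:exactness_in_M_restrictns}. This is a bookkeeping check rather than a genuine obstacle; once fixed, the two period computations above complete the argument.
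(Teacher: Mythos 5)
Your approach is genuinely different from the paper's. The paper views $M$ as a Weinstein $2$-handle attached to $T^*T^2$ along a chosen $T_{\gamma,0}$: the standard Weinstein form on $T^*T^2$ extends across the handle, makes that chosen torus exact, and then the homological rigidity already established in the proof of Lemma~\ref{lem:exactness_in_M_restrictns} forces all the other $T_{\gamma',0}$ with the same area to be exact as well. You instead posit a concrete primitive $\theta = \theta_0 + k\,d\arg(xy-1)$ and compute both periods directly. Your period computation is correct, and the bookkeeping point you flag is the right one: the quantity denoted $A_\gamma$ in the paper is the $\omega_{\mathrm{red}}$-area $\int \tfrac12\,d|z|\wedge d\arg z$, not the Euclidean area, which is exactly what makes $\int_b \theta_0 = A_\gamma$ come out via Stokes. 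The two approaches buy different things: the paper's is short and structural but leans on the Weinstein-handle picture being compatible with $\omega_{\mathrm{std}}$; yours is explicit and self-contained modulo the Liouville-ness of $\theta$, and it computes all periods rather than using one torus plus rigidity.

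However, the step you call a ``routine check at infinity'' is the genuine gap, and it is not routine. There are two ends of $M$, and you only address the Euclidean one. Near $\{xy=1\}$, write $w = xy-1$ and $\rho=|w|$; the component of the Liouville vector field coming from $k\,d\arg(xy-1)$ behaves like $\tfrac{k}{\rho}\partial_\rho$ in the normal direction. Your period computation forces $k = -A/(2\pi) < 0$, so $Z$ points toward the divisor (which is the outward direction, so a convex boundary can be found), but the forward flow satisfies $\tfrac{d}{dt}\rho^2 = 2k<0$ and hence reaches $\rho=0$ in \emph{finite} time. Thus the Liouville vector field of your $\theta$ is not complete, so $(M,\theta)$ is not a finite-type Liouville manifold, which is what is needed downstream (Lemma~\ref{lem:config_W} speaks of ``the completion of $M$''). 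Your periods do pin down the correct cohomology class $[\theta] \in H^1(M;\R)$, so the argument can be rescued, but you would have to produce a genuine Liouville primitive of $\omega_{\mathrm{std}}$ in that class (for instance one that grows like $-c\log\rho\,\cdot d\arg(xy-1)$ near the divisor), or else fall back on the paper's handle-attachment structure and verify exactness of a single torus, invoking Lemma~\ref{lem:exactness_in_M_restrictns} for the rest.
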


\begin{proof}
We can view $M$ as the result of a Weinstein 2-handle attachment to $T^*T^2$, where we take some torus satisfying the conditions of Lemma~\ref{lem:exactness_in_M_restrictns} to be the zero-section of $T^*T^2$. The standard Weinstein structure on $T^*T^2$ extends to one on $M$, keeping the chosen torus exact. All other tori satisfying the conditions of Lemma~\ref{lem:exactness_in_M_restrictns} are exact by for homological reasons, see the proof of Lemma~\ref{lem:exactness_in_M_restrictns}.
\end{proof}

\begin{lemma}
\label{lem:isotopy_in_M}
Two Lagrangian tori $T_{\gamma,0}$, $T_{\gamma',0}$ can be mapped to each other by a compactly supported Hamiltonian isotopy inside $M$ if and only if $\gamma$ and $\gamma'$ enclose disks of the same area inside $\C$, and $\gamma$ and $\gamma'$ are smoothly isotopic in $\C\setminus\{0,1\}$.
\end{lemma}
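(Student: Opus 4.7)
The plan is to treat the two directions separately. For necessity, the area condition follows from the fact that a compactly supported Hamiltonian isotopy $\phi\co M\to M$ with $\phi(T_{\gamma,0})=T_{\gamma',0}$ induces an isomorphism $H_2(M,T_{\gamma,0})\to H_2(M,T_{\gamma',0})$ preserving symplectic areas, so applying this to the class $\beta$ whose boundary generates the kernel of $H_1(T_{\gamma,0})\to H_1(M)$, whose symplectic area is $\pm A_\gamma$ (computed exactly as in the proof of Lemma~\ref{lem:exactness_in_M_restrictns}), yields $A_\gamma=A_{\gamma'}$. For the smooth isotopy in $\C\setminus\{0,1\}$, observe that the isotopy class of a simple closed curve in $\C\setminus\{0,1\}$ is determined by which subset of $\{0,1\}$ it encloses; one recovers this from $T_{\gamma,0}\subset M$ by projecting under $\pi$ a representative of the ``longitude'' class $[\alpha]\subset H_1(T_{\gamma,0})$ that generates the kernel of $\pi_*$, and then detecting linking with $\pi^{-1}(0)$. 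Transport of this data under $\phi$ shows the invariant is preserved.

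For sufficiency, the strategy is to build a Lagrangian isotopy from $T_{\gamma,0}$ to $T_{\gamma',0}$ through tori exact for a single Liouville form, and then upgrade it to a compactly supported Hamiltonian isotopy. First, apply a Moser-type argument on $\C$ to produce a smooth isotopy $\gamma_t$ of simple closed curves in $\C\setminus\{0,1\}$ from $\gamma$ to $\gamma'$ such that each $\gamma_t$ bounds a disk of area $A$ containing $1$: start from any smooth isotopy in $\C\setminus\{0,1\}$ guaranteed by hypothesis, and modify it by an auxiliary area-preserving ambient isotopy to keep the enclosed area constant throughout. Next, form the Lagrangian isotopy $L_t=T_{\gamma_t,0}\subset M$, and invoke Lemma~\ref{lem:exactness_in_M} to fix a Liouville form $\theta$ on $M$ for which every torus bounded by a curve of enclosed area $A$ containing $\{1\}$ is exact; in particular every $L_t$ is $\theta$-exact.

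Finally, a smooth Lagrangian isotopy through Lagrangians exact for the same Liouville form is automatically Hamiltonian: writing $L_t=\psi_t(L_0)$ with vector field $X_t=\dot\psi_t\circ\psi_t^{-1}$, the relation $\theta|_{L_t}=df_t$ combined with $d/dt$ shows $\iota_{X_t}\omega|_{L_t}$ is exact on $L_t$, so $X_t$ restricted near $L_t$ is generated by a Hamiltonian, which extends to a compactly supported $H_t$ on $M$ since $\bigcup_t L_t$ is compact (as $\bigcup_t\gamma_t$ is a bounded subset of $\C\setminus\{0,1\}$). The main obstacle in this plan is the necessity of the $\C\setminus\{0,1\}$ isotopy condition rather than merely $\C\setminus\{1\}$: distinguishing the ``Clifford-type'' ($\gamma$ encloses only $\{1\}$) and ``Chekanov-type'' ($\gamma$ encloses $\{0,1\}$) cases requires a careful topological/linking argument, since the two tori have the same image of $H_1(L)\to H_1(M)$ and the same enclosed area, and only a finer invariant coming from the projection $\pi$ separates them.
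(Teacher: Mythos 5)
Your proposal addresses both directions of the biconditional, whereas the paper's own proof is a two-line sketch that only treats sufficiency. For the ``if'' direction your approach is in the same spirit as the paper's but takes a slightly different route: the paper says that the area and isotopy hypotheses make $\gamma$ and $\gamma'$ Hamiltonian isotopic inside $\C\setminus\{0,1\}$, and then lifts this base Hamiltonian isotopy to $M$ via $\pi^*$ (which preserves $|x|=|y|$ and hence carries $T_{\gamma,0}$ to $T_{\gamma',0}$), while you instead build the Lagrangian isotopy $T_{\gamma_t,0}$ directly and invoke the general fact that a Lagrangian isotopy through Lagrangians exact for a fixed Liouville form is Hamiltonian, then cut off. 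Both are correct; the paper's is more constructive, yours leans on Lemma~\ref{lem:exactness_in_M} and the exact-implies-Hamiltonian criterion. Your Moser step (arranging each $\gamma_t$ to enclose area $A$) is a reasonable detail the paper leaves implicit.

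For necessity, your area argument via the isomorphism on $H_2(M,\cdot)$ is fine. However, the ``linking with $\pi^{-1}(0)$'' argument you sketch for the isotopy-class invariance has a genuine gap: $\pi^{-1}(0)=\{xy=0\}$ is a noncompact divisor that is \emph{not} preserved by a compactly supported Hamiltonian isotopy $\phi$, so you cannot simply compare linking numbers of a cycle on $T_{\gamma,0}$ with $\pi^{-1}(0)$ before and after applying $\phi$. Indeed the assertion that the Clifford-type and Chekanov-type tori (same enclosed area) are not compactly-supported Hamiltonian isotopic in $M$ is a substantial fact, and no soft topological invariant of this kind will detect it: the two tori have identical images of $H_1(L)\to H_1(M)$, identical areas of relative classes, and the same smooth type. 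The cleanest route to the ``only if'' for the isotopy class is to observe that the inclusion $M\hookrightarrow\C^2$ sends a compactly supported Hamiltonian isotopy in $M$ to one in $\C^2$, and then appeal to Chekanov's theorem that the Clifford and Chekanov tori in $\C^2$ are not Hamiltonian isotopic (alternatively one can use Floer-theoretic invariants in $M$ itself). You correctly flag this distinction as ``the main obstacle,'' but the mechanism you propose for overcoming it does not work as stated. Note that the paper does not prove this direction either; its proof is really a proof of sufficiency only.
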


\begin{proof}
The last two conditions imply that $\gamma,\gamma'$ are Hamiltonian isotopic inside $\C\setminus \{0,1\}$; this isotopy lifts to a Hamiltonian  isotopy between the tori if $m=m'$.
\end{proof}	

Now fix, once and for all, a Liouville 1-form $\theta_M$ and a corresponding area value $A\in \R_+$ as in
Lemmas~\ref{lem:exactness_in_M_restrictns} and~\ref{lem:exactness_in_M}; the value of $A$ will not matter.
It follows from Lemma~\ref{lem:isotopy_in_M} that there are precisely two classes of exact tori among the $T_{\gamma,0}$, up to compactly supported Hamiltonian isotopy:

\begin{itemize}
\item Clifford-type tori: $\gamma\subset \C$ encloses a disk of area $A$ which contains the points $0$ and $1$;
\item Chekanov-type tori: $\gamma\subset \C$ encloses a disk of area $A$ which contains the point $1$ but does not contain the point $0$. See Figure~\ref{fig:Cliff_Chek}~(left).
\end{itemize}

\begin{figure}
\includegraphics{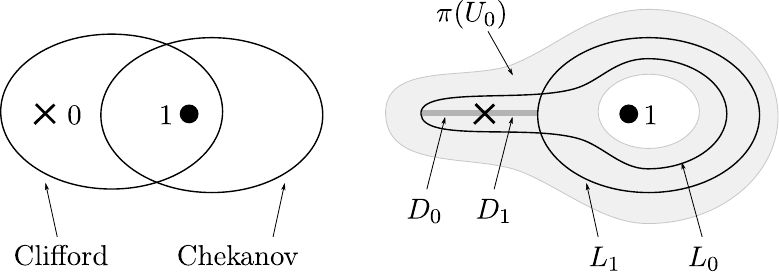}
\caption{Left: curves defining a Clifford-type and a Chekanov-type torus. Right: same tori after a Hamiltonian isotopy, adjusted in a such a way that $L_1$ lies in a small neighbourhood of $L_0\cup D_0$ and vice versa. The projections of $D_0,D_1$ are also shown.}
\label{fig:Cliff_Chek}
\end{figure}

Next, we claim that Clifford and Chekanov type tori bound Lagrangian disks, and therefore form mutation configurations. Consider the torus $T_{\gamma,0}$ of either class, then the desired Lagrangian disk is given by
$$
\{x,y\in \C:\ \pi(x,y)\in \delta, \ |x|=|y|\},
$$
where $\delta$ is any simple path connecting a point of $\gamma$ with the origin, avoiding $\gamma$ and $\{0,1\}$ in its interior; see Figure~\ref{fig:Cliff_Chek} (right). 

Finally, note that a Chekanov-type torus, together with a Lagrangian disk it bounds, can be placed in an arbitrarily small neighbourhood of the union of any Clifford-type torus and a Lagrangian disk with boundary on it, as shown in Figure~\ref{fig:Cliff_Chek} (right).
The lemma below summarises the  properties we have discussed.

\begin{lemma}
\label{lem:config_W}
Fix an exact Clifford-type torus $L_0$ and a Lagrangian disk $D_0$ so that $(L_0,D_0)\subset M$ so that $(L_0,D_0)$ is a mutation configuration. Then:
\begin{enumerate}
\item[(a)]
any neighbourhood of $L_0\cup D_0$ contains another mutation configuration $(L_1, D_1)$, where $L_1$ is an exact Chekanov-type torus;
\item[(b)]
there is an arbitrarily small neighbourhood $U_0$ of $L_0\cup D_0$ such that $(\theta_M)|_{U_0}$ is Liouville, and such that the completion of $U_0$ is isomorphic to the completion of $M$.\qed

\begin{comment}
\item[(c)]
there is a mutation configuration $(K_0,\tilde D_0)$ and an exact non-Hamiltonian symplectomorphism $f:U_0\to U_0$ taking $(K_0,\tilde D_0)$ to $(L_0,D_0)$.
\qed
\end{comment}
\end{enumerate}
\end{lemma}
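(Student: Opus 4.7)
My plan is to handle the two parts separately: part (a) by an explicit construction inside the projection $\pi \co M \to \C\setminus\{1\}$, and part (b) by recognising $L_0 \cup D_0$ as the Weinstein skeleton of $(M, \theta_M)$.

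For part (a), I would start by writing $L_0 = T_{\gamma_0, 0}$ where $\gamma_0$ bounds a disk of area $A$ containing both $0$ and $1$. By Lemma~\ref{lem:nbhood_for_T_D} (applied to the standard Clifford configuration in $M$, for which such a disk exists explicitly), after a compactly supported Hamiltonian isotopy of $D_0$ rel $\bd D_0$ we may assume that $D_0 = \{(x,y) \co \pi(x,y)\in\delta_0,\ |x|=|y|\}$ for a simple arc $\delta_0 \subset \C$ joining a point of $\gamma_0$ to $0$ and disjoint from $\gamma_0 \cup \{0,1\}$ in its interior. Given an open neighbourhood $V$ of $L_0\cup D_0$, I would construct a simple closed curve $\gamma_1 \subset \C\setminus\{0,1\}$ that agrees with $\gamma_0$ outside an $\epsilon$-neighbourhood of $\delta_0 \cup \{0\}$, then enters that neighbourhood along a thin tube of width $\epsilon$ running parallel to $\delta_0$ down to a small loop of radius $\epsilon$ around $0$, and returns along $\delta_0$ back to $\gamma_0$. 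Shrinking the tube width in a controlled way lets me arrange that $\gamma_1$ encloses area exactly $A$ while encircling $1$ but not $0$, so $L_1 := T_{\gamma_1,0}$ is of Chekanov type and is exact by Lemma~\ref{lem:exactness_in_M_restrictns}. For $\epsilon$ small, $L_1$ lies in $V$: over the portion of $\gamma_1$ close to $\gamma_0$ it is $C^0$-close to $L_0$, while over the thin tube it consists of two sheets which both limit $C^0$-onto $D_0$ as $\epsilon \to 0$. A short arc $\delta_1$ inside the small loop from the innermost point of $\gamma_1$ to $0$ then yields a Lagrangian disk $D_1 := \{\pi\in\delta_1,\ |x|=|y|\}$ contained in $V$, and $(L_1,D_1)$ is the required mutation configuration.

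For part (b), recall from the proof of Lemma~\ref{lem:exactness_in_M} that $\theta_M$ was obtained by a Weinstein 2-handle attachment to $T^*T^2$. The Weinstein skeleton of the resulting Liouville manifold $(M, \theta_M)$ is the union of the zero-section torus with the core disk of the 2-handle, which is manifestly a mutation configuration of Clifford type. By the uniqueness of symplectic neighbourhoods of mutation configurations (Lemma~\ref{lem:nbhood_for_T_D}), after a compactly supported Hamiltonian isotopy we may assume that $(L_0,D_0)$ coincides with this skeleton. For any prescribed open neighbourhood $V$ of $L_0 \cup D_0$, I would take $U_0 \subset V$ to be a regular sublevel set of a proper Lyapunov function for the Liouville vector field $Z_M$ dual to $\theta_M$, chosen so that $U_0$ is a Weinstein subdomain; then $\theta_M|_{U_0}$ is a Liouville form, and the positive $Z_M$-flow identifies $M \setminus \interior(U_0)$ with the symplectization collar $\bd U_0 \times [1,\infty)$, so the Liouville completion of $U_0$ is canonically isomorphic to $(M, \theta_M)$.

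The main obstacle I foresee lies in part (a): verifying rigorously that $L_1$ and $D_1$ sit inside an arbitrarily prescribed neighbourhood $V$ of $L_0\cup D_0$ as $\epsilon \to 0$, especially near the nodal fibre $\pi^{-1}(0) = \{xy=0\}$. The reason this works is that the ``vanishing circle'' $\{|x|=|y|\}\cap \pi^{-1}(c)$ shrinks to the node as $c\to 0$, so the two parallel sheets of $L_1$ above the arc $\delta_0$ fuse continuously at the origin and together $C^0$-converge to the single disk $D_0$. This is essentially a local-model computation at a nodal degeneration of conics and is the only technical point; everything else in the argument is geometric bookkeeping or a direct appeal to Lemmas~\ref{lem:nbhood_for_T_D}, \ref{lem:exactness_in_M_restrictns}, and~\ref{lem:exactness_in_M}.
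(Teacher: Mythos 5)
Your overall strategy is right and mirrors what the paper has in mind (the paper gives no written proof---the lemma is stated with a \qed as a summary of the preceding discussion and of Figure~\ref{fig:Cliff_Chek}). Part~(a) is essentially the picture in Figure~\ref{fig:Cliff_Chek} (right): pinch $\gamma_0$ along $\delta_0$, run a thin slit down to a small loop excising the origin. The only point you pass over is that carving out the slit and the small disk strictly decreases the enclosed area, so $\gamma_1$ does \emph{not} bound area $A$ unless you compensate by bulging $\gamma_1$ slightly outward along a stretch of $\gamma_0$ away from $\delta_0$; this compensating bulge can be made small enough to remain in $V$, so the fix is easy, but "shrinking the tube width" by itself does not achieve equality of areas, only convergence from below, and without area exactly $A$ Lemma~\ref{lem:exactness_in_M_restrictns} says $L_1$ is not exact.

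For part~(b) the core idea (identify $L_0\cup D_0$ with the Weinstein skeleton coming from the proof of Lemma~\ref{lem:exactness_in_M}) is the right one, but the step ``by Lemma~\ref{lem:nbhood_for_T_D}, after a compactly supported Hamiltonian isotopy we may assume $(L_0,D_0)$ coincides with this skeleton'' is not warranted: Lemma~\ref{lem:nbhood_for_T_D} only produces a symplectomorphism between \emph{neighbourhoods} of the two mutation configurations; it does not say that this local map extends to an ambient Hamiltonian isotopy of $M$, and Lemma~\ref{lem:isotopy_in_M} only moves the torus, not the attached disk. (Relatedly, Lemma~\ref{lem:exactness_in_M} does not actually specify whether the handle-attachment zero section is Clifford or Chekanov type; one can take it to be Clifford, or invoke the non-compactly-supported symplectomorphism swapping the two types used in Lemma~\ref{lem:reverse_mut}, but this deserves a word.) The cleaner route that does not need any ambient isotopy is: let $\psi\colon Op(L_0\cup D_0)\to Op(K_0\cup\tilde D_0)$ be the symplectomorphism of Lemma~\ref{lem:nbhood_for_T_D} onto a neighbourhood of the Weinstein skeleton. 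Then $\psi^*\theta_M$ is a Liouville form on $Op(L_0\cup D_0)$ whose completion is $(M,\theta_M)$. Both $\theta_M|$ and $\psi^*\theta_M$ make $L_0$ exact, and $H^1(Op(L_0\cup D_0);\R)\to H^1(L_0;\R)$ is injective (since $H_1(L_0)\to H_1(Op(L_0\cup D_0))$ is onto), so $\theta_M|-\psi^*\theta_M$ is exact; shrinking to a regular sublevel $U_0$ and flowing along the convex interpolation gives the stated conclusion directly. This avoids the isotopy claim altogether and is closer to what the paper implicitly intends.
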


\subsection{Seidel's local wall-crossing}
Finally, we recall an important computation due to Seidel~\cite[Proposition~11.8]{SeiBook13} which proves the \emph{local} wall-crossing formula for the pair $L_0,L_1$. (We are using the notation from the previous subsection.)

\begin{lemma}[Local wall-crossing]
	\label{lem:seidel_local}
Let $L_0,L_1\subset M$ be a Clifford-type resp.~a Chekanov-type torus in $M$, as above; then there exist bases of
$H_1(L_i,\Z)\cong \Z^2$ with the following property. Consider arbitrary local systems 
$$
\rho_{i}\in (\C^\x)^2,\quad i=0,1
$$
and denote $\bL_i=(L_i,\rho_{i})$. Then 
$$
HF^*_M(\bL_0,\bL_1)\neq 0
$$
if and only if
$$
\rho_{1}=\mu_{[\bd D_0]^\perp}(\rho_{0})\in (\C^\x)^2
$$
where $D_0$ is the Lagrangian disk described in the previous subsection, and $\mu_{[\bd D_0]}$ is defined in (\ref{eq:wall_cross_map}).\qed
\end{lemma}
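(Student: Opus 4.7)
The lemma is attributed to Seidel \cite[Proposition~11.8]{SeiBook13}, so my plan is not to re-derive it from scratch but to sketch the geometric set-up that I would use to verify it, highlighting the steps that make the wall-crossing map emerge in precisely the stated form. The strategy is a direct computation using the fibration $\pi\co M\to\C\setminus\{1\}$, $\pi(x,y)=xy$, which turns the problem into a two-dimensional one.

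First, by Lemma~\ref{lem:isotopy_in_M}, I would replace $L_0,L_1$ (up to compactly supported Hamiltonian isotopy) by concrete representatives $T_{\gamma_0,0}$ and $T_{\gamma_1,0}$, chosen so that $\gamma_0$ encloses both $0$ and $1$, $\gamma_1$ encloses only $1$, and $\gamma_0\cap\gamma_1$ consists of exactly two transverse points in $\C\setminus\{0,1\}$. Over each such point, the constraint $|x|=|y|$ cuts out two points in the fiber of $\pi$, so $|L_0\cap L_1|=4$. Next, I would fix bases of $H_1(L_i,\Z)\cong\Z^2$ adapted to the fibration: let $\alpha_i$ be the $\pi$-fiber class and $\beta_i$ the class of a lift of $\gamma_i$, and write $\rho_i=(x_i,y_i)$ in this basis. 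With this convention, $[\partial D_0]=\alpha_0$ is capped off by the Lagrangian disk of Subsection~\ref{subsec:model_nbhood}, so $[\partial D_0]^\perp=\beta_0$ and the cluster transformation reduces to the standard form \eqref{eq:wall_cross_map_stand}.

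Third, I would choose an almost complex structure for which $\pi$ is pseudoholomorphic; then any Floer strip projects to a holomorphic bigon in $\C\setminus\{1\}$ with boundary on $\gamma_0\sqcup\gamma_1$, and the moduli problem splits into a base part and a vertical part. The base has exactly two lunes between $\gamma_0$ and $\gamma_1$: one avoiding $0$, which contributes a single strip with holonomy weight a monomial in the $y_i$'s, and one enclosing $0$, which admits a one-parameter family of lifts whose weighted count sums (via a geometric series in the fiber variable) to the characteristic cluster factor $(1+x_0)$. Writing out the resulting $\mu^1$ on the four-dimensional Floer complex and imposing $HF^*\neq 0$ yields a pair of polynomial equations that collapse to $x_1=x_0$ and $y_1=y_0(1+x_0)^{-1}$, which is exactly $\rho_1=\mu_{[\partial D_0]^\perp}(\rho_0)$.

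The main obstacle, as always with such explicit Floer computations, is the bookkeeping of spin structures, orientations, and the identification of the basis of $H_1(L_1,\Z)$ transported across the Hamiltonian isotopy, so that the answer comes out in precisely the normalized form of \eqref{eq:wall_cross_map} rather than a $GL(2,\Z)$-conjugate of it. To handle this I would follow the sign conventions of \cite{SeiBook13} and the explicit local model of Auroux \cite{Au07,Au09}, where this exact calculation is carried out. Everything else is determined once the fibered moduli spaces have been identified.
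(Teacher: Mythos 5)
Your overall strategy — project via the Lefschetz fibration $\pi$, identify two lunes in the base, classify the strips by their planar shadows, and extract the wall-crossing map from the weighted count — is exactly what the paper sketches and what Seidel's \cite[Proposition~11.8]{SeiBook13} does. However, your account of where the factor $(1+x_0)$ comes from is wrong. You claim that the lune enclosing the critical value $0$ ``admits a one-parameter family of lifts whose weighted count sums (via a geometric series in the fiber variable)'' to $(1+x_0)$. This is not what happens. The correct and crucial point — stated explicitly in the paper — is that this lune lifts to exactly \emph{two} isolated rigid holomorphic strips in $M$, whose boundary homology classes differ by the fibre class; the factor $1+x_0$ is simply the sum of the two corresponding monomials, not the sum of a geometric series. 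A geometric series would give $(1-x_0)^{-1}$ and would require a different (infinite) moduli space; the mechanism you describe belongs to the Fukaya–Oh–Ohta–Ono / Auroux picture of wall-crossing via Maslov-$0$ disk contributions and their multiple covers, which the paper deliberately avoids (see Subsection~\ref{subsec:intro_wc}) precisely because of the regularity difficulties it raises. Seidel's computation is a direct finite count of strips for a fibration-adapted $J$, and the whole point of invoking it is that it sidesteps those issues.

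There is also a smaller slip: the constraint $|x|=|y|$ over a fixed base point $z$ cuts out a \emph{circle} in the fibre $\{xy=z\}\cong\C^*$, not two points, so $L_0$ and $L_1$ as written are not transverse over $\gamma_0\cap\gamma_1$. One first perturbs (e.g.\ by a small Hamiltonian tilting one of the fibre circles), after which each base intersection point contributes two transverse intersection points, giving the $|L_0\cap L_1|=4$ you use. This should be stated, since the entire strip count takes place after that perturbation.

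With these two corrections — perturb to achieve transversality, and replace the geometric series by the explicit pair of lifts of the lune through the node — your outline matches the paper's proof.
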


The lemma is proven  by computing the relevant holomorphic strips explicitly. Recall that under the Lefschetz fibration $\pi\co M\to \C\setminus\{1\}$, the $L_i$ project to circles, and there are two obvious holomorphic strips between those circles. The strip containing $0$ lifts to {\it two} different holomorphic strips in $M$ between the $L_0,L_1$, while the other strip lifts to a single strip in $M$.  Lemma~\ref{lem:seidel_local} follows by looking at the boundary homology classes of these three strips.

\begin{remark}
	\label{rem:std_Lag_isotopy}
Let us describe the choice of bases which makes Lemma~\ref{lem:seidel_local} hold. Consider the smooth isotopy from $L_0$ to $L_1$ obtained by lifting both $L_i$ $\pi$-fibrewise from the set $|x|-|y|=0$ to $|x|-|y|=\epsilon$, and subsequently isotoping them to each other by an isotopy lifted from one between the defining curves in $\C$. Any two bases for $H_1(L_i)$ related by this isotopy will be suitable for Lemma~\ref{lem:seidel_local}. In particular, one can pick a basis where
$(1,0)$ is the fibre class; note that this class equals $\pm[\bd D_0]$ depending on the choice of orientation. If we choose $[\bd D_0]=(-1,0)$, then $\mu_{[\bd D_0]^\perp}$  is precisely given by the formula appearing in \cite[Proposition~11.8]{SeiBook13}.
\end{remark}

\subsection{Mutation of Lagrangian tori}
We now combine the Weinstein neighbourhood theorem for mutation configurations with our knowledge about the model space $M$.

\begin{lemma}
\label{lem:mutation}
Let $(L,D)\subset X$ be a mutation configuration in a symplectic manifold. Then there exists a neighbourhood $U\subset X$ of $L\cup D$, and another  mutation configuration $(L',D')\subset U\subset X$ with following the property. There is a symplectomorphism $\phi\co U\to U_0$, where $U_0$ is a sufficiently small neighbourhood from Lemma~\ref{lem:config_W}, such that $\phi$ takes $L,D,L', D'$ resp.~to $L_0,D_0,L_1, D_1$. Moreover,
\begin{itemize}
\item[(a)] If $X$ and $L$ are monotone or exact, then $L'$ is also monotone or exact, respectively.
\item[(b)] If $X$ is Liouville and $L$ is exact, then $U\subset X$ can be arranged to be a Liouville embedding. 
\end{itemize}
\end{lemma}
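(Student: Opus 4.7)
The plan is to use the Weinstein-type uniqueness of mutation configurations (Lemma~\ref{lem:nbhood_for_T_D}) to identify a neighbourhood of $L\cup D$ in $X$ symplectically with a neighbourhood of the model Clifford configuration $(L_0,D_0)\subset M=\C^2\setminus\{xy=1\}$, and then to import the Chekanov configuration $(L_1,D_1)$ provided by Lemma~\ref{lem:config_W}(a). Concretely, fix a reference Clifford configuration $(L_0,D_0)\subset M$ as in Subsection~\ref{subsec:model_nbhood}, apply Lemma~\ref{lem:nbhood_for_T_D} to obtain a symplectomorphism $\phi\co U\to U_0$ with $\phi(L)=L_0$, $\phi(D)=D_0$, and shrink $U$ so that $U_0$ is small enough for both parts of Lemma~\ref{lem:config_W} to apply. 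Then set $L'=\phi^{-1}(L_1)$, $D'=\phi^{-1}(D_1)$; the first assertion of the lemma is then immediate.

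The arguments for the exactness half of (a) and for (b) share a single cohomological ingredient. A direct cellular computation for the 2-complex $L\cup D$ (a torus with a disk attached along $[\bd D]$) gives $H^1(L\cup D)=\Z$ and shows that the restriction $H^1(L\cup D)\to H^1(L)$ is injective, with image the annihilator of $[\bd D]\in H_1(L)$. Since $U$ deformation retracts onto $L\cup D$, the same injectivity holds for $H^1(U)\to H^1(L)$. Now suppose $X$ is Liouville with primitive $\theta_X$ and $L$ is exact in $X$. The closed 1-form $\alpha=\phi^*\theta_M-\theta_X|_U$ restricts to an exact form on $L$, since $L$ is exact with respect to $\theta_X$ by hypothesis and with respect to $\phi^*\theta_M$ by exactness of $L_0$ in $M$ (Lemma~\ref{lem:exactness_in_M}). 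Injectivity then forces $[\alpha]=0\in H^1(U)$, so $\alpha$ is globally exact. This says precisely that $\phi\co (U,\phi^*\theta_M)\hookrightarrow(X,\theta_X)$ is a Liouville embedding, proving (b). The exactness of $L'$ follows at once: $L_1$ is exact in $M$ with respect to $\theta_M$, so $L'$ is exact in $U$ with respect to $\phi^*\theta_M$, and hence with respect to $\theta_X|_U$ as the primitives differ by an exact form.

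For the monotonicity half of (a), fix a smooth Lagrangian isotopy $\{L_t\}_{t\in[0,1]}\subset U$ from $L_0=L$ to $L_1=L'$, obtained by pulling back via $\phi^{-1}$ the explicit isotopy inside $U_0$ that deforms the defining curve $\gamma\subset\C\setminus\{0,1\}$ through area-$A$ enclosing curves from the Clifford profile to the Chekanov profile. This induces an identification $\Psi\co H_2(X,L)\to H_2(X,L')$ sending $\beta\mapsto\beta+S_\beta$, where $S_\beta\subset U$ is the 2-chain traced out by $\bd\beta$ under the isotopy. The Maslov index is invariant, $\mu(\Psi(\beta))=\mu(\beta)$, being a topological invariant of the smoothly varying family of Lagrangian tangent spaces $TL_t$. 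For the symplectic area, both $L$ and $L'$ are exact in $U$ with respect to $\phi^*\theta_M$ (by the previous paragraph applied to the Liouville structure on $U_0$), so by Stokes' theorem
\begin{equation*}
\omega(S_\beta)=\int_{\bd S_\beta}\phi^*\theta_M=0,
\end{equation*}
since $\bd S_\beta$ is a disjoint union of closed loops in $L$ and $L'$ along which $\phi^*\theta_M$ restricts to an exact 1-form. Combining, $\omega(\Psi(\beta))=\omega(\beta)=\tau\mu(\beta)=\tau\mu(\Psi(\beta))$ for every $\beta$, so $L'$ is monotone in $X$ with the same constant $\tau$.

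The main obstacle I anticipate is the monotonicity step: the cohomological arguments for exactness and the Liouville embedding are essentially formal once the topology of $L\cup D$ is understood, but the assertion that $\Psi$ intertwines the Maslov index needs care across the critical moment of the isotopy where $L_t$ becomes immersed and bounds a Maslov zero disk. Dealing with this will require a careful examination of the Lagrangian tangent distribution along the sweeping surface $S_\beta$ through the immersion singularity.
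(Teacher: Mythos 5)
Your construction of $(L',D')$ and the symplectomorphism $\phi$ is exactly the one the paper uses: Lemma~\ref{lem:nbhood_for_T_D} to pass to the model, then Lemma~\ref{lem:config_W} to import the Chekanov configuration. Your argument for (b) is also structurally the same as the paper's, with one improvement: the paper asserts that $H^1(U)\to H^1(L)$ is \emph{surjective}, but the map $H^1(L\cup D;\R)\cong\R\to H^1(T^2;\R)\cong\R^2$ cannot be surjective; what the argument actually needs, and what your cellular computation correctly establishes, is \emph{injectivity}, so that $[\alpha]$ restricting to zero on $L$ forces $[\alpha]=0$ in $H^1(U)$. You have caught a (recurrent) misstatement in the text.

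For (a) the paper is terse: it reduces to comparing the area maps $\omega\co H_2(X,L)\to\R$ and $\omega\co H_2(X,L')\to\R$ and points back to the argument of Lemma~\ref{lem:exactness_in_M_restrictns}, implicitly using the identification coming from the deformation retraction of $U$ to $L\cup D$. Your Lagrangian-isotopy route is a reasonable concrete instantiation of this, but you are right to flag a gap, and the gap is in your choice of isotopy, not in the Maslov-theoretic principle. The family $T_{\gamma_t,0}$ with $\gamma_t$ sweeping through area-$A$ curves from the Clifford profile to the Chekanov profile must cross $\gamma_t\ni 0$, at which moment the torus is singular (not merely immersed: $\pi^{-1}(0)$ is the nodal fibre). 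The fix is the one recorded in Remark~\ref{rem:std_Lag_isotopy}: lift to $|x|-|y|=\epsilon$ before moving the curve, so that all the intermediate tori are smoothly embedded Lagrangians (though not all exact). With that modification, your Maslov argument is correct --- the bundle $TL_t$ extends the boundary Lagrangian data over the swept cylinder $S_\beta$ and shows its Maslov number is zero --- and your area argument via Stokes still works, since it only uses exactness of the two endpoints $L,L'$, not of the interpolants. After this repair the two proofs of (a) are parallel in content, with yours being more explicit and the paper's more efficiently packaged.
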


\begin{proof}
By Lemma~\ref{lem:nbhood_for_T_D}, we can find a symplectomorphism $\psi\co Op(L\cup D)\to Op(L_0\cup D_0)\subset M$, where $L_0\subset M$ is an exact torus of Clifford class. Then, by Lemma~\ref{lem:config_W}, we can find a smaller neighbourhood $U_0\subset Op(L_0\cup D_0)$ which is a Liouville subdomain of $M$ and contains another mutation configuration $(L_1,\tilde D_1)$ where $L_1$ is exact and of Chekanov type. We define $U=\psi^{-1}(U_0)$, $K=\psi^{-1}(L_1)$ and $\tilde D=\psi^{-1}(\tilde D_1)$. Then $\psi$ is the desired symplectomorphism, up to checking properties (a) and (b). 

We note that (a) is equivalent to the fact that the area maps
$$\omega\co H_2(X,L,\Z)\to\R,\quad H_2(X,K,\Z)\to \R$$
are equal. This is proved by restricting to $U$ and using an argument similar to the one in the proof of Lemma~\ref{lem:exactness_in_M_restrictns}.	

To prove (b), recall that $U_0\subset M$ has already been chosen to be a Liouville domain on its own. Checking that $U\subset X$ is a Liouville subdomain means checking that
$$[\theta_X|_U-\psi^*(\theta_M|_{U_0})]=0\in H^1(U).$$
This follows from the fact that $L$ is exact with respect to both 1-forms in the above expression, and that $H^1(U)\to H^1(L)$ is surjective.
\end{proof}

\begin{definition}
\label{def:mutation}
Let $(L,D)\subset X$ be a mutation configuration.
We say that the mutation configuration $(L',D')\subset X$ from Lemma~\ref{lem:mutation} is obtained from $(L,D)$ by {\it mutation  along} $D$, or {\it in the direction of} $D$. It is defined uniquely up to Hamiltonian isotopy.
If $L$ is a monotone (or exact) Lagrangian torus, then so is $L'$; we call $L'$ the \emph{mutated} torus.
We denote 
$$L'=\mu_{D}L,\quad D'=\mu_D D.$$
\end{definition}

Mutating along a single Lagrangian disk cannot give more than two different tori, as expressed by the following lemma (compare with the algebraic Remark~\ref{rem:one_dir}).

\begin{lemma}[Reverse mutation]
\label{lem:reverse_mut}
Two consecutive mutations of $(L,D)$ along $D$, and then of $(L',D')$ along $D'$ give a configuration which is Hamiltonian isotopic to the original $(L,D)$.
\end{lemma}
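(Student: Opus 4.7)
By Lemma~\ref{lem:mutation}, one can choose a Weinstein neighborhood $U\subset X$ of $L\cup D$ and a symplectomorphism $\phi\co U\to U_0\subset M$ sending $(L,D)$ and $(L',D')$ to the Clifford and Chekanov configurations $(L_0,D_0)$ and $(L_1,D_1)$ in the local model $M=\C^2\setminus\{xy=1\}$, as in Lemma~\ref{lem:config_W}. After shrinking $U$ if necessary, the further mutation $(L'',D'')$ also lies in $U$ and pulls back via $\phi$ to some configuration $(L''_0,D''_0)\subset M$. The claim thus reduces to showing that $(L''_0,D''_0)$ is Hamiltonian isotopic to $(L_0,D_0)$ in $M$.

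The plan is to exploit the evident symmetry of Figure~\ref{fig:Cliff_Chek}: just as the Chekanov configuration $(L_1,D_1)$ was constructed inside an arbitrarily small neighborhood of $L_0\cup D_0$ by pushing the planar curve $\pi(L_0)$ across the singular fibre at $0\in\C$ along the arc $\pi(D_0)$, one can reversely construct a Clifford-type mutation configuration $(\widetilde L,\widetilde D)$ inside any prescribed neighborhood of $L_1\cup D_1$ by pushing $\pi(L_1)$ across $0\in\C$ along $\pi(D_1)$. Concretely, $\widetilde L=T_{\widetilde\gamma,0}$ where $\widetilde\gamma\subset\C\setminus\{0,1\}$ is a small perturbation enclosing $\{0,1\}$ and bounding a disk of area $A$, and $\widetilde D$ is the evident lifted Lagrangian disk. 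By Lemma~\ref{lem:isotopy_in_M} applied to $\widetilde L$ and $L_0$ (together with a parallel Hamiltonian isotopy of the disks), the configuration $(\widetilde L,\widetilde D)$ is Hamiltonian isotopic to $(L_0,D_0)$ in $M$.

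It remains to identify $(\widetilde L,\widetilde D)$ with the mutation $(L''_0,D''_0)$. Apply Lemma~\ref{lem:nbhood_for_T_D} to a Weinstein neighborhood $V\supset\widetilde L\cup\widetilde D$ of $L_1\cup D_1$, obtaining a symplectomorphism $\psi\co V\to V_0$ onto a Weinstein neighborhood of the Clifford configuration $(L_0^{\mathrm{new}},D_0^{\mathrm{new}})$ in a fresh copy $M^{\mathrm{new}}$ of the local model. By Definition~\ref{def:mutation}, $(L''_0,D''_0)=\psi^{-1}(L_1^{\mathrm{new}},D_1^{\mathrm{new}})$. The image $\psi(\widetilde L,\widetilde D)$ is an exact mutation configuration in $M^{\mathrm{new}}$; since $\widetilde L\sim L_0$ in $M$, while $L_0$ is not Hamiltonian isotopic to $L_1=\psi^{-1}(L_0^{\mathrm{new}})$, the image $\psi(\widetilde L)$ is not Hamiltonian isotopic to $L_0^{\mathrm{new}}$ in $M^{\mathrm{new}}$. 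The classification of exact tori of type $T_{\gamma,0}$ (Lemmas~\ref{lem:exactness_in_M_restrictns} and~\ref{lem:isotopy_in_M}) then forces $\psi(\widetilde L)\sim L_1^{\mathrm{new}}$, so that $\widetilde L\sim L''_0$ in $V$. Combining with the previous paragraph gives $L''_0\sim L_0$ in $M$, and a parallel argument (extending the isotopy across the disks) handles $D''_0\sim D_0$.

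The main obstacle is the last identification: to invoke the classification of exact tori in $M^{\mathrm{new}}$ one must know that $\psi(\widetilde L)$ lies in the $T_{\gamma,0}$ family up to Hamiltonian isotopy, which is only indirectly ensured by Lemma~\ref{lem:nbhood_for_T_D}. The cleanest way to sidestep this is to upgrade Lemma~\ref{lem:nbhood_for_T_D} to a version that respects the Lefschetz fibration structure of the local model, making the $\Z/2$-symmetry that exchanges Clifford and Chekanov configurations explicit; the involutive nature of mutation is then automatic.
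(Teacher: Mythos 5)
Your reduction to the local model (paragraphs 1--2) is correct, and the geometric picture---pushing the planar curve $\pi(L_1)$ back across $0$ along the arc $\pi(D_1)$ to recover a Clifford-type configuration $(\widetilde L,\widetilde D)$---is the right intuition. But the third paragraph has the genuine gap that you yourself flag. The classification you appeal to (Lemmas~\ref{lem:exactness_in_M_restrictns} and~\ref{lem:isotopy_in_M}) only governs tori of the form $T_{\gamma,0}$, i.e.\ tori fibered for the Lefschetz projection $\pi$; the Weinstein symplectomorphism $\psi$ obtained from Lemma~\ref{lem:nbhood_for_T_D} has no reason to send $\widetilde L$ to such a torus in $M^{\mathrm{new}}$. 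Without a classification of \emph{all} exact Lagrangian tori in $M^{\mathrm{new}}$ up to Hamiltonian isotopy (a much harder result, of the kind proved by Dimitroglou~Rizell--Goodman--Ivrii in $\C^2$), the dichotomy ``Clifford-type or Chekanov-type'' cannot be invoked for $\psi(\widetilde L)$. There is also a secondary soft spot: you move Hamiltonian isotopies freely between $V$, $V_0$, $M$, and $M^{\mathrm{new}}$, but the isotopy in $M^{\mathrm{new}}$ witnessing $\psi(\widetilde L)\not\sim L_0^{\mathrm{new}}$ need not stay inside $V_0$, so the implication ``$\widetilde L\not\sim L_1$ in $M$ $\Rightarrow$ $\psi(\widetilde L)\not\sim L_0^{\mathrm{new}}$ in $M^{\mathrm{new}}$'' requires justification.

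The fix you suggest is exactly what the paper does, and it short-circuits the whole second application of Lemma~\ref{lem:nbhood_for_T_D}: one observes that there is a (non-compactly supported) symplectomorphism $\sigma$ of the model $M$ itself that exchanges the Clifford and Chekanov classes of tori and matches $D_0\leftrightarrow D_1$. Conjugating the definition of mutation by $\sigma$ gives $\mu_{D_1}L_1\sim\sigma(\mu_{D_0}L_0)=\sigma(L_1)\sim L_0$ directly, with the disks carried along; no appeal to uniqueness of exact tori is needed. In other words, your step~3 is replaced by an explicit $\Z/2$-symmetry of the local model rather than a recognition problem inside a second Weinstein neighborhood. So your sketch ends up converging to the paper's proof once the flagged gap is closed, but the paper's route avoids the detour through Lemma~\ref{lem:nbhood_for_T_D} altogether.
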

\begin{proof}
This follows from the fact that the roles of the Clifford and the Chekanov classes in Lemma~\ref{lem:mutation} may be swapped, by a non-compactly symplectomorphism of $M$.
\end{proof}

\begin{remark}
So far, we have not used the Liouville properties stated in Lemma~\ref{lem:mutation}, but they will be soon be used  in the proof of the wall-crossing formula.
\end{remark}

\subsection{Wall-crossing formula}
We are ready to prove the core statement of Theorem~\ref{th:lag_mut}, the wall-crossing formula.

\begin{theorem}[Wall-crossing formula]
	\label{th:wall_cross_4d}
	Let $X$ be a monotone del~Pezzo surface, and $(L,D)\subset X$ be a mutation configuration admitting a compatible Donaldson divisor. Let $L'=\mu_D L$ be the mutated torus. For any basis of $H_1(L,\Z)$, there exists a basis of $H_1(L',\Z)$ for which
	\begin{equation}
	\label{eq:wall_cross_4dim}
	W_{L',X}=\mu_{[\bd D]^\perp}W_{L,X}.
	\end{equation}
\end{theorem}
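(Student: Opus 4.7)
\medskip

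\noindent\textbf{Proof plan.}
The plan is to combine three ingredients that are already in place: the Weinstein-type normal form for a mutation configuration (Lemma~\ref{lem:mutation}), Seidel's local wall-crossing computation in the model $M=\C^2\setminus\{xy=1\}$ (Lemma~\ref{lem:seidel_local}), and the general wall-crossing formula (Theorem~\ref{th:wall_crossing}). The compatible Donaldson divisor hypothesis is what will allow us to put the general wall-crossing machine into gear.

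First, apply Lemma~\ref{lem:mutation} to obtain a Liouville subdomain $U\subset X$ containing $L\cup D$, a symplectomorphism $\phi\co U\to U_0\subset M$ onto a small neighbourhood of a Clifford-type configuration $(L_0,D_0)$, and a mutated configuration $(L',D')\subset U$ with $\phi(L',D')=(L_1,D_1)$ the corresponding Chekanov-type configuration. By part~(b) of the same lemma, $U\subset X$ is a Liouville embedding and both $L$ and $L'$ are exact in $U$. Next, let $D_X\subset X$ be the compatible Donaldson divisor (existing by Corollary~\ref{cor:donaldon_mutation_config}), chosen so that $U\subset X\setminus D_X$ is a Liouville subdomain and $L,L'$ are exact there. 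Note that $c_1(U)=0$ (since $U$ deformation-retracts onto the Weinstein skeleton $T^2\cup D$, whose ambient almost-complex structure has trivial determinant line, inherited from $M$), and $H_1(X)=0$ since $X$ is a simply connected del~Pezzo surface, so the images of $H_1(L),H_1(L')$ in $H_1(X)$ vanish.

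Second, fix the given basis of $H_1(L,\Z)$. Under $\phi_*$ it becomes a basis of $H_1(L_0,\Z)$; we may, if necessary, pre-compose $\phi$ by an ambient symplectomorphism of $M$ realising a $GL(2,\Z)$ change of basis so that the resulting basis of $H_1(L_0)$ is one for which Lemma~\ref{lem:seidel_local} holds (see Remark~\ref{rem:std_Lag_isotopy}). Now choose the basis of $H_1(L',\Z)$ by pulling back via $\phi_*$ the corresponding basis of $H_1(L_1,\Z)$ from Lemma~\ref{lem:seidel_local}. Transporting that lemma through $\phi$ and using the full-faithfulness of $\Fc(U_0)\to\Fc(M)$ (Theorem~\ref{thm:nonvanishing}), we obtain: for $\bL=(L,\rho)$ and $\bL'=(L',\rho')$,
\begin{equation*}
HF^*_U(\bL,\bL')\neq 0\quad\Longleftrightarrow\quad \rho'=\mu_{[\bd D]^\perp}(\rho).
\end{equation*}

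Third, we apply Theorem~\ref{th:wall_crossing} to the Liouville inclusion $U\subset X\setminus D_X$ with any pair $(\rho,\rho')$ related by $\rho'=\mu_{[\bd D]^\perp}(\rho)$. The hypotheses are met: $c_1(U)=0$, the $L_i$ become monotone in $X$ (by part~(a) of Lemma~\ref{lem:mutation}), the map $H_1(L)\oplus H_1(L')\to H_1(U)$ is surjective because already each summand surjects onto $H_1(U)\cong H_1(M)\cong\Z$ (both $L$ and $L'$ contain a loop that generates $H_1(U)$), and the images in $H_1(X)$ vanish. Conclusion: $W_L(\rho)=W_{L'}(\rho')$ for all $\rho$ in the open dense set where $\mu_{[\bd D]^\perp}$ is defined. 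Since both sides are Laurent polynomials, this is an identity of Laurent polynomials, which by Definition~\ref{def:mut_function} reads $W_L=\mu_{[\bd D]^\perp}W_{L'}$. Applying the same argument in reverse to the mutation $\mu_{D'}L'=L$ (Lemma~\ref{lem:reverse_mut}), and using the basis-change observation of Remark~\ref{rem:one_dir} to reconcile the direction of mutation, yields the stated equality $W_{L'}=\mu_{[\bd D]^\perp}W_L$.

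\medskip

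\noindent\textbf{Main obstacle.} The genuinely nontrivial step is the second one: matching the intrinsic basis coming from the given mutation configuration in $X$ with Seidel's normalised basis in $M$, so that the local wall-crossing transformation has the expected exponents. All the analytical content is packaged into the already-cited Lemma~\ref{lem:seidel_local} and Theorem~\ref{th:wall_crossing}; the rest is careful conventions and bookkeeping of signs and orientations (cf.\ Remark~\ref{rem:std_Lag_isotopy}).
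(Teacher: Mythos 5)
Your overall route is the same as the paper's: realise the mutation configuration inside a Liouville subdomain $U$ via Lemma~\ref{lem:mutation}, supply a Donaldson divisor via Corollary~\ref{cor:donaldon_mutation_config}, verify the hypotheses of Theorem~\ref{th:wall_crossing} (in effect, Proposition~\ref{prop:U-to-d-graded}), and then feed in Seidel's local computation Lemma~\ref{lem:seidel_local}. Up to the level of detail you spell out, that is exactly the paper's proof.

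There is, however, a genuine gap in the final step where you reconcile the direction of the wall-crossing map. Following Lemma~\ref{lem:seidel_local} as literally stated, i.e. $\rho'=\mu_{[\bd D]^\perp}(\rho)$, and combining with Theorem~\ref{th:wall_crossing} you obtain $W_L(\rho)=W_{L'}(\mu_{[\bd D]^\perp}(\rho))$, which is $W_L=\mu_{[\bd D]^\perp}W_{L'}$, the \emph{inverse} of the claimed relation. Your proposed fix --- applying the ``same argument in reverse'' to $\mu_{D'}L'=L$ and invoking Remark~\ref{rem:one_dir} --- does not close this gap. The reverse mutation has $[\bd D']^\perp=-[\bd D]^\perp$ (by Theorem~\ref{th:stw}), so the reverse argument produces $W_{L'}=\mu_{-v}W_L$, and $\mu_{-v}W_L$ is \emph{not} related to $\mu_v W_L$ by any $GL(2,\Z)$ reparametrisation of $H_1(L')$: from Remark~\ref{rem:one_dir}, $\mu_{-v}\circ\mu_v$ is a monomial shear $T$, so $\mu_v=\mu_{-v}^{-1}\circ T$ and the required change of variables would be $\mu_{-v}^{-1}\circ\mu_v=(\mu_{-v}^{-1})^2\circ T$, which is not monomial. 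Equivalently, one can test directly on the Clifford/Chekanov pair in $\R^4$: $W_L=x+y$, $W_{L'}$ is a monomial, and $\mu_{(1,1)}(x+y)=y$ while $\mu_{(1,1)}(y)=y^2/(x+y)$ is not even a Laurent polynomial. So the stated direction $W_{L'}=\mu_v W_L$ is correct, the derived direction $W_L=\mu_v W_{L'}$ is not, and no allowed basis change bridges the two. The actual resolution is that the relation of local systems in Lemma~\ref{lem:seidel_local} should be read with the roles of $\rho_0$ and $\rho_1$ interchanged (i.e. $\rho_0=\mu_{[\bd D_0]^\perp}(\rho_1)$, or equivalently with $\mu^{-1}$ on the written side), consistent with the $\R^4$ example and with the normalisation in Remark~\ref{rem:std_Lag_isotopy}; with that reading, your chain of reasoning yields $W_L(\mu_v(\rho'))=W_{L'}(\rho')$, i.e. $W_{L'}=\mu_{[\bd D]^\perp}W_L$, directly and without any appeal to reverse mutations.
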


\begin{proof}
Let $U$ be a neighbourhood of $L\cup D$ as in Lemma~\ref{lem:mutation},
and $\Sigma$ a Donaldson divisor provided by Corollary~\ref{cor:donaldon_mutation_config}.
Because $L\subset U$ is exact inside both $U$ and $X\setminus\Sigma$, and $H^1(U)\to H^1(L)$ is surjective, it follows that $U\subset X\setminus \Sigma$ is a Liouville embedding (compare the proof of Lemma~\ref{lem:mutation}).
Recall that $L'\subset U$ is also exact, by construction.
In view of Proposition~\ref{prop:U-to-d-graded}, we can apply Theorem~\ref{th:wall_crossing} to
$$
L,L'\subset U\subset X\setminus \Sigma.
$$
Together with 
 Lemma~\ref{lem:seidel_local} this yields the result.
\end{proof}

\subsection{Almost toric fibrations and Lagrangian seeds}
Almost toric fibrations on symplectic 4-manifolds were introduced by Symington \cite{Sym03}, and have recently been used by Vianna~\cite{Vi13,Vi14,Vi16} to construct infinitely many monotone Lagrangian tori in del~Pezzo surfaces.
We will assume that the reader is familiar with this notion and the related terminology. 

First, we wish to explain how to construct Lagrangian disks with boundary on Lagrangian tori with the help of toric and almost toric fibrations.

\begin{lemma}[Constructing Lagrangian seeds]
\label{lem:constr_lag_disks}
Let $\pi\co X\to B$ be an almost toric fibration over a base $B$, and $L=\pi^{-1}(b)$ be a monotone torus, for some point $b\in B$. 
Suppose there is a line segment in $B$ starting at $b$, going in the direction of a primitive integral vector $v\in T_bB$, and with endpoint on either:

\begin{enumerate}
	\item[(i)] a vertex of $\bd B$ (i.e.~a point whose $\pi$-preimage is a single point), or
	\item[(ii)] a nodal point of $B$, whose monodromy line also goes in the direction of $v$.
\end{enumerate}
Then there is a Lagrangian disk $D\subset X$ which projects onto that line segment, such that $(L,D)$ is a mutation configuration, and
$$
[\bd D]^\perp=v
$$
under the canonical identification between $H_1(L,\Z)$ and the integral lattice of $T_bB$. Repeating this construction for different segments results in a Lagrangian seed.

Conversely, if $(L,D)$ is a mutation configuration, then a neighbourhood of $L\cup D$ has an almost toric fibration over an open disk with one nodal fibre for which $L$ is the fibre over a point which lies on the monodromy line of the nodal point, and $D$ projects to the line segment as shown in Figure~\ref{fig:cp2}.
\end{lemma}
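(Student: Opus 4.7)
The plan is to handle the two forward cases by reducing to explicit local models of the almost toric fibration near the endpoint of the segment, construct the disk as the preimage (up to a choice of phase) of the segment in each model, and then glue this into a global Lagrangian disk with boundary on $L$. The converse will follow from the uniqueness of the model neighbourhood of a mutation configuration established in Lemma~\ref{lem:nbhood_for_T_D} and Lemma~\ref{lem:config_W}.

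For case~(i), the endpoint is a toric vertex. A neighbourhood of such a vertex is symplectomorphic to a standard toric chart $\C^2 \to \R^2_{\geq 0}$, $(z_1,z_2)\mapsto(\tfrac12|z_1|^2,\tfrac12|z_2|^2)$, with the vertex at the origin. After a $GL(2,\Z)$ change of coordinates (affecting both the torus action and the integral lattice of $T_bB$), I may assume the segment points in the direction $v=(0,1)$, so the vanishing toric circle along the segment is the one generated by rotation in the $z_1$-plane. The preimage of the segment under the projection to its direction, intersected with the real locus $\{\arg z_1 = 0\}$, is a closed half-disk whose boundary circle lies in $L$ and represents the class dual to the collapsing direction; this boundary class is exactly $v^\perp = (1,0) = [\bd D]$, giving $[\bd D]^\perp = v$. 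Embeddedness and the clean attachment condition are standard in the toric chart, and exactness inside this chart gives monotonicity of the pair.

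For case~(ii), I will use the standard local model of an almost toric fibration near a nodal fibre: a neighbourhood of the node is symplectomorphic to $(M,\theta_M) = (\C^2\setminus\{xy=1\}, \theta_M)$ equipped with the Lefschetz fibration $\pi(x,y)=xy$ together with the $S^1$-action $(x,y)\mapsto(e^{i\theta}x,e^{-i\theta}y)$; in the almost toric base, the monodromy line emanating from the node is precisely the image of the fixed locus $|x|=|y|$ under $xy$. After a $GL(2,\Z)$ change of basis aligning the monodromy direction with $v$, the Lagrangian thimble swept out by the vanishing cycle along the segment, namely $\{(x,y) : \pi(x,y)\in\delta,\ |x|=|y|\}$ for $\delta$ the line segment lifted to $\C$, is an embedded Lagrangian disk with boundary on $L$ (cf.\ Subsection~\ref{subsec:model_nbhood}). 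The boundary class is the vanishing cycle of the node, which is the $v^\perp$ direction in $H_1(L,\Z)$. Exactness follows from Lemma~\ref{lem:exactness_in_M_restrictns}, and monotonicity is preserved by the same area argument used in the proof of Lemma~\ref{lem:mutation}.

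Iterating this construction for a collection of segments produces a Lagrangian seed: clean intersections and minimality of the curves $\bd D_i$ on $L$ follow because distinct segments in $B$ meet only at $b$, their direction vectors $v_i\in T_bB$ are primitive and distinct, and the boundary curves are flat geodesics on $L$ in the directions $v_i^\perp$. The converse is essentially a tautology given the tools in Subsection~\ref{subsec:model_nbhood}: by Lemma~\ref{lem:nbhood_for_T_D} together with Lemma~\ref{lem:config_W}, a neighbourhood of $L\cup D$ is symplectomorphic to a neighbourhood $U_0\subset M$ of the standard Clifford-type configuration $L_0\cup D_0$; the fibration $\pi(x,y)=xy$ combined with the $S^1$-action above endows $U_0$ with an almost toric fibration over an open disk containing the single node at $\pi(0,0)=0$, and $D_0$ projects onto a segment from a regular value to this node along the monodromy line, which is what Figure~\ref{fig:cp2} depicts. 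The main subtlety to verify is the identification $[\bd D]^\perp = v$, i.e.\ getting the sign/duality right between $H_1(L,\Z)$ and the integral lattice of $T_bB$; this is the step where I expect to spend the most care, but it is fixed by the usual convention that the toric moment map identifies $T_bB$ with $H^1(L,\R)$, so that the vanishing/collapsing cycle $[\bd D]\in H_1(L,\Z)$ pairs trivially with the direction $v$ of the segment.
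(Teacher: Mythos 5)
Your strategy (construct the disk via explicit local models at the endpoint, and derive the converse from the Weinstein neighbourhood results of Lemmas~\ref{lem:nbhood_for_T_D} and~\ref{lem:config_W} together with the standard almost toric fibration on $M$) is the same as the paper's, and your treatment of case~(ii) via the Lefschetz thimble $D_\delta$ from Subsection~\ref{subsec:model_nbhood} and of the converse are correct.

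However, your construction in case~(i) contains a concrete error. First, the coordinate normalization is impossible: if the segment ends at a toric vertex, and we place the vertex at the origin of the standard chart $\C^2\to\R^2_{\ge 0}$, then $b$ lies in the open cone $\R^2_{>0}$ and the direction $v$ from $b$ toward the vertex has \emph{both} components negative. No $GL(2,\Z)$ change of chart coordinates that preserves $\R^2_{\ge 0}$ will make $v=(0,1)$, so your identification of a single ``vanishing toric circle $=z_1$-rotation'' along the segment is inconsistent with the hypothesis that the segment ends at a vertex; at a vertex both circles collapse simultaneously, and the collapsing cycle relevant to the construction is the ``diagonal'' class $v^\perp$, which necessarily has \emph{both} components nonzero. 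Second, the set you propose -- the preimage of the segment intersected with $\{\arg z_1 = 0\}$ -- is \emph{not} Lagrangian: if the segment is, say, the diagonal $\{(t,t)\}$, then on the set where $z_1\in\R_{\ge 0}$ and $|z_2|=|z_1|$ one computes $\omega|_D = r\,dr\wedge d(\arg z_2)\ne 0$. The correct disk imposes a condition on a \emph{linear combination} of the arguments, namely $\arg z_1 + \arg z_2 = \mathrm{const}$ (more generally, the phase combination dual to $v^\perp$), which is exactly the thimble of the holomorphic map $(z_1,z_2)\mapsto z_1 z_2$ over the image of the segment. This is the same as the paper's description (parallel transport of a $v^\perp$-cycle that collapses at the vertex) and would have also unified your cases~(i) and~(ii), since the toric corner is a degenerate Lefschetz critical point of the same local map $\pi=z_1z_2$.

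A minor further point: ``exactness inside this chart gives monotonicity of the pair'' is not needed (monotonicity of $L$ is a hypothesis; the conclusion only requires $D$ to be Lagrangian and cleanly attached), and the paper's convention is that $T_bB$ is identified with $H_1(L,\Z)$ (not $H^1(L,\R)$) via the rule in the remark following the lemma, although for a $2$-torus these identifications coincide up to the orientation/sign choices that the lemma leaves unspecified.
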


\begin{proof}
Denote the line segment by $l$. To construct the disk, pick a curve in $L=\pi^{-1}(b)$ in the homology class $v^\perp$. One can parallel transport the curve over $l$ in such a way that it traces a Lagrangian cylinder projecting to $l$. As $l$ approaches the node or the corner we can arrange that the cycle being transported collapses to a single point, and the total Lagrangian submanifold that was swept becomes a Lagrangian disk. In case (i) the collapse is automatic because the preimage of the vertex is a single point, and in case (ii) it is guaranteed by the condition on the monodromy line. In both cases,  one can verify that the collapse results in a smooth disk using the local form of the singularities. 

The last statement follows from the Weinstein neighbourhood theorem for $L\cup D$ (see Lemma~\ref{lem:nbhood_for_T_D}) and the well-known explicit construction of the desired almost toric fibration on $M$, see section~\ref{subsec:model_nbhood} and \cite{Au07,Au09}. 
\end{proof}	

\begin{remark}
Our convention for identifying $T_bB$ with $H_1(L,\Z)$ is such that if there is a {\it holomorphic} disk on $L$ which in the base diagram leaves the point $b$ in the direction $v$, then its boundary homology class equals $v$.
\end{remark}

\begin{figure}[h]
	\includegraphics[]{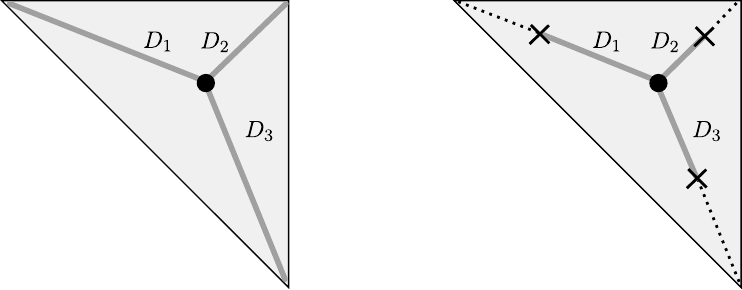}
\caption{The toric and an almost toric fibration on $\C P^2$ giving rise to the same Lagrangian seed. The line segments for Lemma~\ref{lem:constr_lag_disks} are shown in dark gray. The nodes are shown with a cross.}	
\label{fig:cp2}
\end{figure}

\begin{example}
	\label{ex:cp2}
Consider the standard toric fibration on $\C P^2$ over the triangle shown in Figure~\ref{fig:cp2}; the monotone torus projects to the barycentre of the trangle. We have chosen the triangle so that the superpotential equals $x+y+1/xy$ in the associated basis. The primitive integral directions pointing from the barycentre towards the three vertices are:
\begin{equation}
\label{eq:bdies_seed_cp2}
(1,1),\ (-2,1),\ (1,-2).
\end{equation}
Consequently, for the standard Clifford torus $L\subset \C P^2$ there is a Lagrangian seed $(L,\{D_1,D_2,D_3\})$ such that the classes $[\bd D_i]^\perp$ equal (\ref{eq:bdies_seed_cp2}). This matches with Table~\ref{tab:dp}.

Alternatively, there is an almost toric fibration on $\C P^2$ obtained by a procedure called {\it smoothing the corners} of the base. This fibration has three nodes whose monodromy lines have  directions (\ref{eq:bdies_seed_cp2}). The monodromy lines intersect at the barycentre, so one may apply Lemma~\ref{lem:constr_lag_disks}(ii) to get the same Lagrangian seed.

A simpler example is  the Clifford torus in $\R^4$ with potential $x+y$; it bounds a Lagrangian disk with boundary homology class $(1,1)$.
\end{example}

\begin{proposition}
	\label{prop:del_pezzo_seed}
For each del~Pezzo surface $X$, there exists a monotone Lagrangian torus $L\subset X$ included in a Lagrangian seed $(L,\{D_i\})$, such that the classes $[\bd D_i]^\perp\in \Z^2$ are the vectors shown in Table~\ref{tab:dp}, for some basis of $H_1(L,\Z)$.
\end{proposition}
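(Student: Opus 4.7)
The plan is to produce, for each del Pezzo surface $X$, an almost toric fibration $\pi\co X\to B$ containing an interior point $b\in B$ such that (i) the fibre $L=\pi^{-1}(b)$ is a monotone Lagrangian torus, and (ii) from $b$ there are line segments going in the directions listed in Table~\ref{tab:dp} and terminating either at vertices of $\bd B$ or at nodes whose monodromy line agrees with the segment direction. Lemma~\ref{lem:constr_lag_disks} then supplies the required Lagrangian seed and identifies $[\bd D_i]^\perp$ with the direction of the $i$-th segment in $T_bB\cong H_1(L,\Z)$.

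For the toric del~Pezzos $\C P^2$, $\C P^1\times\C P^1$, $\Bl_1\C P^2$, $\Bl_2\C P^2$, $\Bl_3\C P^2$, I would take the standard monotone moment polytope and let $b$ be its barycentre. The vertices of the polytope are exactly the points of type (i) in Lemma~\ref{lem:constr_lag_disks}, and a direct inspection (after normalising the polytope so that its edges have the standard outer normals used in Table~\ref{tab:dp}) shows that the primitive integral directions from $b$ to the vertices give precisely the lists in the first five rows. In this same basis the Clifford torus $L$ has the standard toric disk potential, which matches the table.

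For the non-toric del~Pezzos $\Bl_k\C P^2$, $k=4,\dots,8$, I would use almost toric fibrations obtained by applying nodal trades to the corresponding toric weak del~Pezzo moment polytopes (this is Vianna's construction \cite{Vi13,Vi14,Vi16}, building on Symington \cite{Sym03}). Each corner smoothing replaces a vertex by a node whose monodromy line points along the edge that was collapsed; by subsequent nodal slides one may arrange that all these monodromy lines pass through a single interior point $b$, still lying at the barycentre, making $L=\pi^{-1}(b)$ monotone. The segments from $b$ to the remaining vertices (type (i)) and from $b$ to the nodes along their monodromy lines (type (ii)) together produce a Lagrangian seed, and the combinatorial data of the diagram recovers the directions in Table~\ref{tab:dp}. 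That the disk potential of $L$ computed from this almost toric diagram equals the Laurent polynomial listed in the corresponding row of Table~\ref{tab:dp} is the content of the calculations in \cite{GU12,Vi16} (alternatively, it can be read off from the contributions of visible Maslov index $2$ disks going through each node or hitting each boundary edge, as in Auroux \cite{Au07,Au09}).

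The main step which requires care is the construction of the almost toric diagrams for $\Bl_k\C P^2$ with $k\in\{4,\dots,8\}$: one must check that enough nodal trades and nodal slides can be performed to realise the exact list of monodromy directions in Table~\ref{tab:dp} while keeping the central fibre monotone. This is handled case by case using the explicit diagrams of Vianna (up to $k=7$) and of Galkin--Usnich for the remaining cases; for $\Bl_8\C P^2$ one can start from the toric polytope with three $A_2$-type cusps and perform three nodal trades in the $(-1,0)$ direction, three in the $(0,-1)$ direction, and two in the $(1,1)$ direction, placing all nodes along the corresponding rays through the barycentre. All potentials obtained in this way agree with Table~\ref{tab:dp} (up to the $GL(2,\Z)$ ambiguity of \eqref{eq:change_coord_W} coming from the basis choice for $H_1(L,\Z)$).
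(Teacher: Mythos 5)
Your construction of the Lagrangian seed is essentially the paper's proof: for the toric del~Pezzos use barycentre--to--vertex segments with Lemma~\ref{lem:constr_lag_disks}(i), and for the non-toric ones use Vianna's almost toric fibrations whose monodromy lines all pass through the monotone fibre, together with Lemma~\ref{lem:constr_lag_disks}(ii), adjusting the diagram by nodal slides (equivalently by mutating the seed via Theorem~\ref{th:stw}) so as to realise the listed vectors. For what the proposition actually asserts --- namely the existence of a Lagrangian seed with the prescribed $[\partial D_i]^\perp$ --- this is sound and matches the paper's argument.

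However, you include an incorrect side claim. You assert that the disk potential of $L$ can be read off from the almost toric diagram by counting ``visible'' Maslov index~2 disks, citing \cite{GU12,Vi16,Au07,Au09} as establishing that $W_L$ equals the Laurent polynomial in Table~\ref{tab:dp}. This is not true, and the paper explicitly warns against it: Vianna's SFT argument only pins down the Newton polytope of $W_L$ and the coefficients at its vertices (and only for triangular bases), while \cite{GU12} writes down the algebraic LG polynomials without proving they coincide with the enumerative disk potentials. Establishing the full equality $W_L = W_0$ is the separate and substantially harder Theorem~\ref{th:lag_del_pezzo}, whose proof runs the wall-crossing formula together with the upper-bound argument of Proposition~\ref{prop:upper_bound}; it cannot be deduced by inspection of the diagram. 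Since Proposition~\ref{prop:del_pezzo_seed} says nothing about $W_L$, this over-claim does not invalidate your proof, but the parenthetical should be removed or rephrased as a forward reference. As a minor further point, your ad~hoc description of the $\Bl_8\C P^2$ diagram produces $8$ nodes, whereas Table~\ref{tab:dp} calls for $11$ directions (with multiplicity $6{+}3{+}2$); it is safer to simply cite the explicit almost toric diagrams of Vianna with the correct number of nodes, as the paper does.
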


\begin{proof}
For toric del~Pezzos, the vectors from Table~\ref{tab:dp} are precisely the directions pointing from the barycentre of the polytope to its vertices, which implies Proposition~\ref{prop:del_pezzo_seed} by Lemma~\ref{lem:constr_lag_disks}(i).

For a non-toric del~Pezzo surface $X$, Vianna~\cite{Vi16} constructed almost toric fibrations on $X$ that have as many nodes as there are vectors in the corresponding entry from Table~\ref{tab:dp}, with the property that all monodromy lines of these nodes intersect at the point corresponding to the monotone fibre. Using Lemma~\ref{lem:constr_lag_disks}(ii) one gets a Lagrangian seed with the desired number of Lagrangian disks. 
It is an easy exercise to show, starting with one of the fibrations Vianna provides, to bring the boundary homology classes of the Lagrangian disks to the ones listed in Table~\ref{tab:dp}, either by mutating the almost toric fibrations in the sense of~\cite{Vi16}, or by mutating the Lagrangian seed using Theorem~\ref{th:stw} below. (These two ways are equivalent.)

Finally, the Lagrangian seeds constructed this way belong to the complement of an anticanonical divisor (the preimage of the boundary of the almost toric fibration) where the torus becomes exact. This divisor is automatically compatible.
\end{proof}	

An important concept in the theory of almost toric fibrations is the notion of {\it nodal slide}; it was the main geometric tool in Vianna's construction of infinitely many Lagrangian tori. The next lemma explains that nodal slide is essentially equivalent to the mutation of Lagrangian tori from Definition~\ref{def:mutation}. 
\begin{figure}[h]
	\includegraphics[]{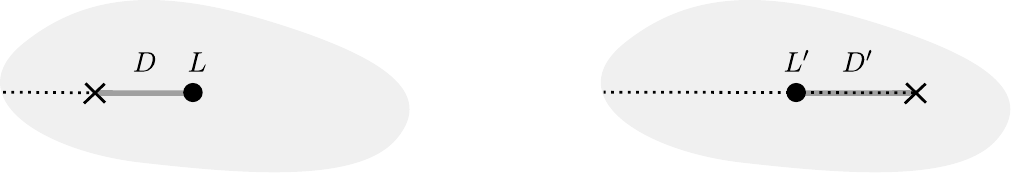}
	\caption{Mutation via nodal slide. The Lagrangian disks project to the dark gray segments. The monodromy line is dotted; the node is shown with a cross.}
	\label{fig:slide}
\end{figure}

\begin{lemma}[Mutation via nodal slide]
\label{prop:mut_and_nodal_slide}
In the setup of Lemma~\ref{lem:constr_lag_disks}(ii), let us slide the node considered therein past the point $b$. Denote by $\pi'\co X\to B'$ the resulting new almost toric fibration, and denote $L'=(\pi')^{-1}(b)$;  see Figure~\ref{fig:slide}.
The Lagrangian torus
$L'$ is also monotone and satisfies:
$$
L'=\mu_D L
$$ 
up to Hamiltonian isotopy. 
\end{lemma}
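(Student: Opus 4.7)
The plan is to reduce the claim to the local model $M = \C^2 \setminus \{xy=1\}$ of Section~\ref{subsec:model_nbhood}, where both nodal slide and Lagrangian mutation admit concrete descriptions that can be compared directly.

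First, I would invoke the converse statement in Lemma~\ref{lem:constr_lag_disks} to equip a neighbourhood $U \subset X$ of $L \cup D$ with an almost toric fibration over an open disk having a single nodal fibre, in which $L$ is the fibre over $b$ and $D$ projects to the segment from $b$ to the node. The Weinstein neighbourhood theorem for mutation configurations (Lemma~\ref{lem:nbhood_for_T_D}) then identifies $U$ symplectically with a neighbourhood $Op(L_0 \cup D_0) \subset M$, sending $L$ to a Clifford-type torus $L_0$ and $D$ to the corresponding Lagrangian thimble $D_0$. Because the local almost toric fibration produced in Lemma~\ref{lem:constr_lag_disks} is essentially unique, one can arrange this identification to intertwine the two almost toric structures.

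Second, I would use that nodal slide is a local operation: it modifies the almost toric fibration only in a neighbourhood of the monodromy line. Since $b$ lies on this line and $D$ projects to the portion of the line between $b$ and the node, the slide can be performed entirely inside $U$, and hence inside $M$ under the identification above. The key observation is that sliding the node past $b$ corresponds, in the $M$-picture, to moving the $\pi$-critical value $0 \in \C$ across the defining curve $\gamma$ of $L_0$ while keeping the enclosed area fixed at $A$. The resulting torus is of Chekanov type with enclosed area $A$, and by Lemma~\ref{lem:isotopy_in_M} is Hamiltonian isotopic in $M$ to the torus $L_1$ from Lemma~\ref{lem:config_W}. By Definition~\ref{def:mutation} this pulls back to $\mu_D L$, so $L' = \mu_D L$ up to Hamiltonian isotopy in $X$. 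Monotonicity of $L'$ then follows from Lemma~\ref{lem:mutation}(a).

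The hard step is the second one: making rigorous the correspondence between nodal slide in the almost toric base of $X$ and the Clifford-to-Chekanov exchange in $M$. This requires matching the two almost toric structures at the level of the base (not only the total space), checking that the monodromy line in $B$ corresponds to the relevant fibre in the $M$-picture, and verifying that the enclosed area in $\C$ is preserved under the slide so that exactness in $M$ is maintained. Once this area-matching is in place, the remaining comparison is a direct inspection of the two pictures exhibited in Figure~\ref{fig:Cliff_Chek}.
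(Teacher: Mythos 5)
Your proposal follows essentially the same route as the paper's proof: restrict to a neighbourhood $U$ of $L\cup D$, identify it symplectically with a neighbourhood of $L_0\cup D_0\subset M$ via the Weinstein neighbourhood theorem (Lemma~\ref{lem:nbhood_for_T_D}), and observe that the nodal slide is, by its local definition, precisely the change of almost toric fibration on $M$ that turns the Clifford-type fibre over $b$ into a Chekanov-type one. The only real difference is that you detour through the converse of Lemma~\ref{lem:constr_lag_disks} to endow $U$ with an ATF, which is unnecessary here since $U=\pi^{-1}(\text{nbd of }l)$ already inherits one from $X$; the area-matching you flag is handled automatically by the exactness/monotonicity bookkeeping in Lemmas~\ref{lem:exactness_in_M_restrictns} and~\ref{lem:mutation}(a).
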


\begin{proof}
Let $U\subset X$ be the $\pi$-preimage of a neighbourhood of the segment $l$. Then $U$ is a Weinstein neighbourhood of $L\cup D$, see Lemma~\ref{lem:nbhood_for_T_D}. By definition, the nodal slide is modelled on changing the almost toric fibration on the model space $M$ (see subsection~\ref{subsec:model_nbhood}) so that the given fibre over $b$ switches from being a Clifford-type torus to being a Chekanov-type torus.
\end{proof}

\subsection{Mutating Lagrangian disks}
Below is a slight refinement of a theorem of Shende, Treumann and Williams~\cite{STW15}.

\begin{theorem}[Shende-Treumann-Williams]
	\label{th:stw}
Suppose $(L,\{D_i\}_{i=1}^k)$ is a Lagrangian seed in the sense of Definition~\ref{def:Lag_seed}, and $L'=\mu_{D_j} L$ is the mutated torus in the sense of  Definition~\ref{def:mutation}. Then $L'$ also bounds $k$ Lagrangian disks denoted by $$D_i'=\mu_{D_j}D_i,\quad i=1,\ldots,k$$ such that
$
(L',\{D_i'\}_{i=1}^k) 
$
constitute a Lagrangian seed. Moreover, for any basis of $H_1(L,\Z)$ there exists a basis of $H_1(L',\Z)$ such that if we denote:
$$
v_i=[\bd D_i]^\perp\in H_1(L,\Z)= \Z^2,
\quad 
v_i'=[\bd D_i']^\perp\in H_1(L',\Z)= \Z^2,
$$
then:
$$
\begin{cases}
v_j'=-v_j,\\
v_i'=\mu_{v_j}v_i,\ i\neq j
\end{cases}
$$
where the latter mutation is understood as in Definition~\ref{def:mut_trop}.
The disk $D_j'$ coincides with the disk $\mu_{D_j}D_j$ from Definition~\ref{def:mutation}, and the above basis of $H_1(L',\Z)$ agrees with the one required for Lemma~\ref{lem:seidel_local} and Theorem~\ref{th:wall_cross_4d}. Moreover, if the former Lagrangian seed admits a compatible Donaldson divisor, so does the latter.
\qed
\end{theorem}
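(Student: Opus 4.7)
\medskip

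The plan is to reduce to an explicit local construction in the model space $M=\C^2\setminus\{xy=1\}$. By Lemma \ref{lem:nbhood_for_T_D} and Lemma \ref{lem:mutation}, I pick a Weinstein neighbourhood $U$ of $L\cup D_j$ together with a symplectomorphism $\phi\co U\to U_0\subset M$ carrying $(L,D_j)$ to the Clifford configuration $(L_0,D_0)$ and $(L',\mu_{D_j}D_j)$ to the Chekanov configuration $(L_1,D_1)$. Shrinking $U_0$ as needed, I may assume $U$ is as small as I wish with respect to $L$. The disk $D_j'$ is defined as $\phi^{-1}(D_1)$; by construction this coincides with $\mu_{D_j}D_j$ from Definition \ref{def:mutation} and has boundary class $-v_j$ in the basis of $H_1(L',\Z)$ singled out by Remark \ref{rem:std_Lag_isotopy}.

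Next I construct the $D_i'$ for $i\neq j$ by altering $D_i$ only inside $U$. After a small Hamiltonian isotopy of $D_i$ supported near $L$, I may assume that $D_i\cap U$ is a disjoint union of small embedded Lagrangian strips, one for each transverse intersection point of $\partial D_i$ with $\partial D_j$ on $L$, each strip having one boundary arc on $L$ (crossing $\partial D_j$) and two short ``radial'' arcs on $\partial U$. Under the projection $\pi\co M\to \C\setminus\{1\}$ from Section \ref{subsec:model_nbhood}, each such strip in $U_0$ lives over a short segment issuing from the curve $\gamma$ defining $L_0$; I produce a replacement strip bounded on $L_1$ instead of $L_0$ by parallel transporting the outer radial arc from $\partial U_0$ back to $L_1$, using the Liouville flow of $(M,\theta_M)$ to guarantee that the new strip is Lagrangian and embedded. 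For each intersection point there are two essentially distinct homotopy classes of replacement: one in which the new arc on $L_1$ stays on the same side as the original, and one in which it wraps once around the class $[\partial D_j]=-v_j$. The two choices differ by the Lagrangian $D_1$ itself (glued along a circle), so both are Lagrangian.

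A case-by-case analysis, governed by the sign of the local intersection of $\partial D_i$ with $\partial D_j$ at each point, determines which choice is forced: summing the positive crossings of $\partial D_i$ with $\partial D_j$ produces a total boundary shift of $\max(0,\ u_1v_2-u_2v_1)\cdot (-v_j)$ applied to $v_i^\perp=[\partial D_i]$, which after passing to perpendicular classes is exactly the tropical mutation rule (\ref{eq:mut_trop}). Smoothness of the gluing at $\partial U$ follows by tapering each replacement strip into the Weinstein normal form of $D_i$ near $\partial U$; embeddedness follows from the corresponding statement in $M$, where the Chekanov disk and its wraps are explicit; minimality of intersections of the $\partial D_i'$ on $L'$ follows from the flat geometry of the mutation on the torus. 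Finally, a compatible Donaldson divisor $\Sigma$ for the new seed is obtained by the same Corollary \ref{cor:donaldon_mutation_config}, or simply by choosing $\Sigma$ for the old seed to lie outside $U$ in the first place so that $\Sigma\cap(L'\cup\bigcup D_i')=\emptyset$ automatically; exactness of $L'$ in $X\setminus\Sigma$ is guaranteed by Proposition \ref{prop:monotone-to-d-graded}.

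The main obstacle will be the sign calculation in the second step, namely pinning down that the shift is $\max(0,\ u_1v_2-u_2v_1)$ and not, for example, $|u_1v_2-u_2v_1|$ or its negative. This requires reconciling the orientation conventions underlying Lemma \ref{lem:seidel_local} (as made explicit in Remark \ref{rem:std_Lag_isotopy}) with the orientation of intersection on $L$; the consistency check is that the induced change on potentials $W_{L'}=\mu_{[\partial D_j]^\perp}W_L$ from Theorem \ref{th:wall_cross_4d} must agree with the transformation of seeds dictated by the tropical rule, so the compatibility is forced by the wall-crossing formula itself. Once the signs are fixed at one crossing, the global formula follows by additivity over crossings and the $GL(2,\Z)$-equivariance from Remark \ref{rem:sl2_acts}.
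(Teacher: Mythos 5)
There is a genuine gap at the very first step of your construction of the $D_i'$, and it cascades through the rest of the argument. You assert that, after a small Hamiltonian isotopy, $D_i\cap U$ becomes a disjoint union of strips indexed by the transverse intersection points of $\partial D_i$ with $\partial D_j$. This cannot happen: $U$ is a neighbourhood of $L\cup D_j$, and $\partial D_i$ is a circle contained in $L\subset U$, so $D_i\cap U$ always contains a full annular collar of $\partial D_i$. No Hamiltonian isotopy preserving the mutation configuration can break this collar into strips localized near the crossings. The correct local picture is that $D_i\cap U$ is a single connected Lagrangian annulus $A_i$, with one boundary circle $\partial D_i\subset L$ and one boundary circle on $\partial U$, and the replacement must produce a new annulus $A_i'$ attached to $L'$, agreeing with $A_i$ in a collar of $\partial U$. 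Because the mutation replaces $L$ by a torus $L'$ that does not coincide with $L$ anywhere, the modification is inherently global along the whole collar of $\partial D_i$, not a per-crossing patch.

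Once one is forced into the annulus picture, the per-crossing bookkeeping you use to extract the tropical shift is no longer defined: there is nothing canonical picking out a local choice at each crossing, and the total homology shift of $\partial D_i'$ relative to $\partial D_i$ is a global monodromy phenomenon, not an additive sum over intersection points. (This is roughly why the paper works in the almost toric model: after a nodal slide, $A_i'$ is built by parallel transporting the same cycle along the same ray in the base but with the new fibration, and the class $[\partial D_i']$ differs from $[\partial D_i]$ by the monodromy around the node precisely when the ray crosses the nodal locus during the tracking isotopy, which happens if and only if $(v_i)_2>0$ in the normalized basis; this reproduces the $\max$ in Definition~\ref{def:mut_trop} cleanly.) Your proposed consistency check in the last paragraph is also not valid as a way to pin down the sign: Theorem~\ref{th:wall_cross_4d} constrains the potential $W_{L'}$ and hence the boundary classes of holomorphic disks on $L'$, but it says nothing about the boundary classes of the \emph{Lagrangian} disks $D_i'$, so it cannot "force" the tropical rule for the $v_i'$. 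Finally, a small misattribution: exactness of $L'$ in $X\setminus\Sigma$ does not follow from Proposition~\ref{prop:monotone-to-d-graded} (which is about $d$-gradability); it follows from the Liouville inclusion $U\subset X\setminus\Sigma$ together with exactness of $L'$ in $U$, as in the proof of Lemma~\ref{lem:mutation} and Theorem~\ref{th:wall_cross_4d}.
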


Observe that the vectors $v_i'$ are in agreement with the mutation of LG seeds, see Definition~\ref{def:mut_lag_seed}. 
As noted in \cite{STW15}, the above theorem fails for Lagrangian surfaces of higher genus (unlike the previous discussion in this section, which can be generalised to higher genus Lagrangians). 

There is one detail of Theorem~\ref{th:stw} which is not mentioned in \cite{STW15}: the fact that the choices of bases that make Theorem~\ref{th:stw} hold are the same as the ones making Lemma~\ref{lem:seidel_local} hold, if we identify $(L, D_j)$, $(L',D_j')$ with their local models $(L_0,D_0)$ and $(L_1,D_1)\subset M$ as in Lemma~\ref{lem:mutation} and Definition~\ref{def:mutation}. (Because the choice of basis in Theorem~\ref{th:wall_cross_4d} is derived from Lemma~\ref{lem:mutation}, consistency with it also follows.)
We provide an alternative proof of Theorem~\ref{th:stw} where this missing detail becomes  transparent.

\begin{figure}[h]
\includegraphics[]{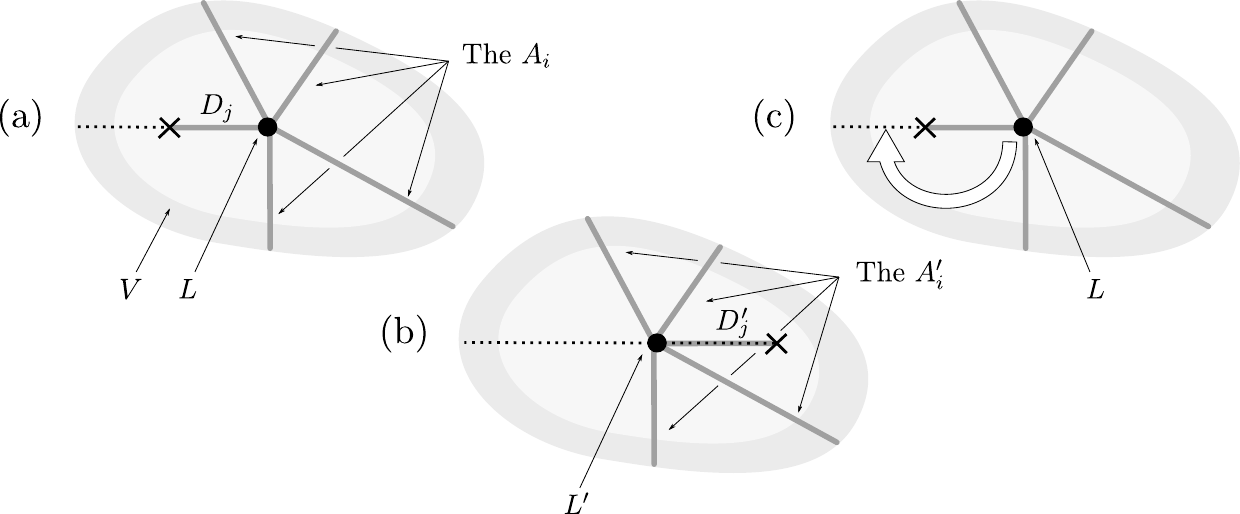}
\caption{A nodal slide perspective on mutating a Lagrangian seed.}
\label{fig:slide_manydisks}
\end{figure}

\begin{proof}[Proof of Theorem~\ref{th:stw}]
Let $U$ be a neighbourhood of $L\cup D_j$ where $D_j$ is the disk chosen for mutation. We mentioned that there is an almost toric fibration $\pi\co U\to B$ over a disk $B$ with one nodal fibre, see Figure~\ref{fig:slide} and the proof of Proposition~\ref{prop:mut_and_nodal_slide}. In this model, $L=\pi^{-1}(b)$ and $D_j$ projects to the line segment $l$. 

We now fit the other disks into this picture. One can assume that the other disks $D_i$ intersect $U$ only near their boundary (i.e.~near $L$), therefore $D_i\cap U=A_i$ is a Lagrangian annulus, $i\neq j$. One can arrange that in the almost toric fibration on $U$, the $A_i$ project to  straight line segments as shown in Figure~\ref{fig:slide_manydisks}(a), whose slopes are equal to the boundary homology classes $v_i=[\bd D_i]^\perp$ under the identification $H_1(L,\Z)=T_bB$. The annuli themselves are obtained by parallel transport of an appropriate circle $\gamma_i$ inside the toric fibre over the point where the line segment meets $\bd B$, as in the proof of Lemma~\ref{lem:constr_lag_disks}.

Pick a subset $V\subset U$ which is the preimage of the annulus shown by darker shade in Figure~\ref{fig:slide_manydisks}(a); then $V$ is a collar of $\bd U$. 
Now perform the nodal slide to get the new fibration $\pi'\co U\to B'$, see Figure~\ref{fig:slide_manydisks}(b).
The nodal slide can be performed so that the two fibrations match on $V$ identically: $\pi|_V\equiv \pi'|_V$. Define the Lagrangian annulus $A_i'$ to be transport of the circle $\gamma_i$ as above along the same line segment, but this time using the fibration $\pi'$ instead of $\pi$. We define
$$
D_i'=(D_i\setminus A_i)\cup A_i'.
$$
The sets $D_i$ are again smooth Lagrangian disks because the $A_i$ and $A_i'$  coincide in $V$, since $\pi$ and $\pi'$ match on $V$. 
The boundaries of new disks $D_i'$ are on $L'$; these are the desired disks.

It remains to compute the homology classes $v_i'=[\bd D_i']^\perp$.
First we must specify an identification between $H_1(L,\Z)$ with $H_1(L',\Z)$; it  comes from a specific isotopy between $L$ and $L'$ inside $U$ which we now describe. First, before performing the nodal slide, we isotop $L$ following the path shown by the bold white arrow in Figure~\ref{fig:slide_manydisks}(c). The endpoint of the path is on the same horizontal line, and the path itself goes around the node in the lower half-plane. Clearly, this gives a smooth isotopy from $L$ to another torus. We then compose this isotopy with an isotopy which moves the point (representing the torus) within the horizontal monodromy line to the right while {\it simultaneously} sliding the node to the right as well, so that we eventually end up with $L'$ without crossing the node. One can check that this isotopy is the same one as described in Remark~\ref{rem:std_Lag_isotopy} using the Lefschetz fibration setup.

The annuli $A_i$ represented by the rays in the {\it lower} half space in Figure~\ref{fig:slide_manydisks}(a) can be smoothly deformed through the process of this isotopy, just by deforming the rays making their common endpoint follow the move that we described above. (We allow the rays to curve, but keep their part inside $V$ fixed.) Given the choice of bases, it implies that
$v_i'=v_i$.  In the same basis we have that $v_j=(-1,0)$, which means that the max-term in the tropical mutation formula (\ref{eq:mut_trop}) becomes
$$
\max(0,(v_i)_2)
$$
where $(v_i)_2$ is the second co-ordinate of the vector $v_i$. 
Recall that we were working with rays in the lower half-space, meaning $(v_i)_2<0$, so (\ref{eq:mut_trop}) translates to $\mu_{v_j}v_i=v_i$. This agrees with our computation  $v_i'=v_i$.

Now take an annulus $A_i$ represented by a ray in the {\it upper} half space, that is, $(v_i)_2>0$. If we deform the ray making their common endpoint  follow the move described above, it will intersect the node once. This means that $v_i'$ differs from $v_i$ by the monodromy around the node, which is the Dehn twist around the vanishing cycle $v_j$. In agreement with this, the tropical mutation (\ref{eq:mut_trop}) has the effect of the same Dehn twist provided that 
$\max(0,(v_i)_2)=(v_i)_2>0$.
\end{proof}

\subsection{Proof of the  mutation theorem}
\label{subsec:proof_mut}
We put together  the previous discussion to prove Theorem~\ref{th:lag_mut}.

To exhibit the desired mutated Lagrangian seed 
$(L',\{D_i'\})$, we take the torus $L'=\mu_{D_j}L$ from Definition~\ref{def:mutation}, and the Lagrangian disks $\{D_i'\}=\{\mu_{D_j}D_i\}$ from Theorem~\ref{th:stw}.
The potential $W_{L'}$ is computed by Theorem~\ref{th:wall_cross_4d}. The Donaldson divisor compatible with $(L,\{D_i\})$ is also compatible with $(L',\{D_i'\})$.
 
 It remains to note that $(W_L,\{[\bd D_i]^\perp\})$ is indeed an LG seed. For this, we need to check that $\mu_{[\bd D_j]^\perp} (W_L)$ are Laurent polynomials for all $j$. This follows from the fact that they are the LG~potentials of the mutated tori, which are all monotone.
\qed

\begin{example}
Consider  the Lagrangian seed $(L,\{D\})$ in $\R^4$ where $L$ is the Clifford torus and $D$ is the Lagrangian disk with boundary homology class $(1,1)$, see Example~\ref{ex:cp2}. The corresponding LG~seed is
$$
(x+y,\ \{(1,1)\}).
$$
The mutated torus $\mu_D L$ is the well~known monotone Chekanov torus in $\R^4$, which bounds a single family of holomorphic Maslov index~2 disks. And indeed, we compute:
$$
\mu_{(1,1)}\co x\mapsto x(1+xy^{-1})^{-1} ,\ y\mapsto y(1+xy^{-1})^{-1},
$$
so
$$
\mu_{(1,1)}(x+y)=(x+y)(1+xy^{-1})^{-1}=y
$$
which is a monomial. The LG~seed associated with the Chekanov torus is therefore
$$
(y,\ \{(-1,-1)\}).
$$
Its mutation gives back the previous LG~seed for the Clifford torus up to the $SL(2,\Z)$-action, see Remark~\ref{rem:one_dir}.
\end{example}

\begin{example}
	Consider  the Lagrangian seed $(L,\{D_1,D_2,D_3\})$ in $\C P^2$ where $L$ is the Clifford torus, see Example~\ref{ex:cp2} or Proposition~\ref{prop:del_pezzo_seed}. The corresponding LG~seed is
	$$
	(x+y+1/xy,\ \{(1,1),\, (-2,1),\, (1,-2)\}).
	$$
Mutating it along the vector $(1,1)$, we obtain the following LG~seed:
$$
\left(y+\frac{(x+y)^2}{xy^3},\ \{(-1,-1),\, (-2,1),\, (4,1)\}\right).
$$	
The corresponding monotone torus  in $\C P^2$ is the Chekanov torus, see e.g.~\cite{Au07}.
 The potential appearing in the seed above is known: it matches \cite[Formula~(5.5)]{Au07} under the change of variables $x=uw$, $y=u$. The above three boundary homology classes of Lagrangian disks on the Chekanov torus can be read off from Vianna's almost toric fibration \cite[Figure~1]{Vi13}, see Lemma~\ref{lem:constr_lag_disks}. This LG~seed corresponds to the Markov triple $(1,1,2)$, and further mutations recover the seeds corresponding to all Markov triples.
 \end{example}

\subsection{Lagrangian tori in del Pezzo surfaces}

We are ready to show that the del~Pezzo LG~models from Table~\ref{th:lag_mut}
are all geometrically realised by Lagrangian seeds in the sense of Definition~\ref{def:Lag_seed}. 

\begin{theorem}
\label{th:lag_del_pezzo}
For each del~Pezzo LG seed from Table~\ref{tab:dp}, there exists a Lagrangian seed $(L,\{D_i\})$ in the corresponding del Pezzo surface $X$ which realises that LG seed, in the sense that $(W_L,\{[\bd D_i]^\perp\})$ equals that LG~seed.
\end{theorem}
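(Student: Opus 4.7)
The essential new content beyond Proposition~\ref{prop:del_pezzo_seed}, which already produces Lagrangian seeds whose direction vectors $[\bd D_i]^\perp$ match Table~\ref{tab:dp}, is to identify $W_L$ with the Laurent polynomial listed in the table. My plan splits into the toric and non-toric cases.

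\emph{Toric del Pezzos.} For the first five entries of Table~\ref{tab:dp}, take $L$ to be the monotone toric fibre, and take the disks $D_i$ from Lemma~\ref{lem:constr_lag_disks}(i), one at each vertex of the moment polytope. The potential $W_L$ is then the classical toric superpotential computed by Cho-Oh: for a monotone toric fibre with the normalization of the basis implicit in Example~\ref{ex:cp2}, it is the sum $\sum_F x^{v_F}$ of monomials indexed by facets $F$, where $v_F$ is the primitive inward normal. Entry-by-entry inspection confirms this matches the five toric rows of the table.

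\emph{Non-toric del Pezzos.} For $X=\Bl_k\C P^2$ with $4\le k\le 8$, I would use Vianna's ATF on $X$ from~\cite{Vi16} in which all nodal monodromy lines pass through a common point $b\in B$, take $L=\pi^{-1}(b)$, and build the disks $D_i$ via Lemma~\ref{lem:constr_lag_disks}, using case~(ii) at each node and case~(i) at each vertex of $\bd B$. The direction vectors are correct by Proposition~\ref{prop:del_pezzo_seed}. To pin down $W_L$, I would connect $(L,\{D_i\})$ by iterated nodal slides to another Lagrangian seed $(L',\{D_i'\})$ in $X$ whose potential is computable as a base case — for instance one in which a selected node has been slid close enough to the boundary that the monotone fibre sits inside a near-toric chart, reducing the disk count to a Cho-Oh-like enumeration corrected by a local model computation from Lemma~\ref{lem:seidel_local}. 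Each slide past the monotone fibre is a Lagrangian mutation by Lemma~\ref{prop:mut_and_nodal_slide}, and the wall-crossing formula Theorem~\ref{th:lag_mut} propagates the potential along the sequence by the algebraic mutations of Definition~\ref{def:mut_function}. Composing the inverse chain of algebraic mutations from the base potential yields $W_L$, which I would then verify agrees with the listed entry.

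\emph{Main obstacle.} The hard step is the non-toric case: for each $k\in\{4,\ldots,8\}$ one must exhibit an explicit mutation sequence to a tractable base seed, and verify that the iterated algebraic mutations recover the precise Laurent polynomial in Table~\ref{tab:dp} rather than just an $SL(2,\Z)$-equivalent representative (the basis tracking of Theorem~\ref{th:stw} is crucial here). An alternative parallel route, perhaps cleaner for the large-$k$ entries where disk counts become combinatorially heavy, is to invoke Theorem~\ref{th:donaldson_hyp} to produce a Donaldson divisor in the complement of $L\cup\bigcup D_i$, realize $L$ as exact in a Liouville subdomain, and then exploit the locality of compact branes from Section~\ref{sec:locality} together with the relative Floer theory of Section~\ref{sec:relative-floer} to reduce the computation to the local model $M$ of Subsection~\ref{subsec:model_nbhood}, where Lemma~\ref{lem:seidel_local} directly yields the wall-crossing contribution and hence each coefficient of $W_L$.
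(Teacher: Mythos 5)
Your treatment of the toric cases matches the paper. But the non-toric case has a genuine gap that the paper explicitly flags and circumvents in a way your proposal does not.

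Your plan for $X=\Bl_k\C P^2$, $k\ge 4$, is to slide a node ``close enough to the boundary that the monotone fibre sits inside a near-toric chart, reducing the disk count to a Cho-Oh-like enumeration,'' and then propagate the potential back via wall-crossing. The difficulty is that no such computable base case exists: since $X$ is not toric, no monotone torus in $X$ has a potential that can be directly enumerated, and a ``near-toric chart'' around a fibre close to the boundary does not control Maslov-$2$ disks that leave the chart. Indeed the paper states this obstruction in so many words: Vianna's SFT technique ``does not extend to a full computation of the potential beyond the three monomial coefficients,'' which is precisely why the authors do not attempt the direct route you propose. Your ``alternative parallel route'' has the same problem: the locality results of Section~\ref{sec:locality} and Lemma~\ref{lem:seidel_local} constrain the \emph{wall-crossing map} and the local Floer cohomology, not the curvature term $W_L$ itself, which counts disks in $X$.

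The missing idea is Proposition~\ref{prop:upper_bound}. The paper never computes $W_L$ directly; instead it uses Theorem~\ref{th:lag_mut} to conclude that $W_L$ lies in the cluster-algebraic upper bound $\mathcal{U}$ for the given direction vectors, and Proposition~\ref{prop:upper_bound} (via Gross--Hacking--Keel) to identify $\mathcal{U}=\C[W_0]$. This forces $W_L=f(W_0)$ for a one-variable polynomial $f$. Then Vianna's \emph{partial} information — the Newton polytope of $W_{L'}$ for a triangular-base seed, its minimality, and the vertex coefficients being $1$ — is exactly enough to force $f(u)=u$. So the paper's proof is indirect: an algebraic rigidity statement plus a small amount of SFT input, rather than any direct enumeration. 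Without the upper-bound step, the problem is underdetermined, and your mutation chain has nowhere to start.
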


Given the previous discussion, the remaining statement here is the computation of the Landau-Ginzburg potentials of the tori from Proposition~\ref{prop:del_pezzo_seed}.
For a proof, we need the following proposition.

\begin{proposition}
\label{prop:upper_bound}
Let $\{v_i\}$, $v_i\in \Z^2$, be one of the 10 collections of vectors appearing in Table~\ref{tab:dp}.
Consider the following subring, called the upper bound: 
$$
\mathcal{U}=\left\{W\in\C[x^{\pm 1},y^{\pm 1}]: (W,\{v_i\})\text{ is an LG seed}\, \right\}\subset \C[x^{\pm 1},y^{\pm 1}].
$$
Then 
$$
\mathcal{U}=\C[W_0],
$$
where $W_0$ is the corresponding potential from Table~\ref{tab:dp}. In other words, any $W\in \mathcal{U}$ has the form $W=f(W_0)$, where $f$ is a polynomial in one variable.
\end{proposition}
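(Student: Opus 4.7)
\medskip

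\noindent\emph{Proof plan.}

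The inclusion $\mathbb{C}[W_0]\subseteq\mathcal{U}$ is essentially formal. The set $\mathcal{U}$ is closed under addition and multiplication because each mutation $\mu_{v_i}^*$ is an algebra homomorphism on the fraction field $\mathbb{C}(x,y)$; if $W_1,W_2$ both mutate to Laurent polynomials along every $v_i$ (taken with the multiplicity convention of Definition~\ref{def:LG_seed}), then so do $W_1+W_2$ and $W_1 W_2$. By Proposition~\ref{prop:del_pezzo_seed} there is a Lagrangian seed in $X$ with potential $W_0$ and disk-boundary directions matching those in Table~\ref{tab:dp}; Theorem~\ref{th:lag_mut} (iterated appropriately when a direction has multiplicity) then gives $W_0\in\mathcal{U}$, so $\mathbb{C}[W_0]\subseteq\mathcal{U}$.

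For the reverse inclusion, the first step is to rewrite the mutation criterion in terms of Newton-polytope data. Fix a direction $v=(v_1,v_2)$ and put $z=x^{v_2}y^{-v_1}$. Group the monomials of a Laurent polynomial $W$ by the value of the linear form $\ell_v(u)=u\cdot v$, writing
\begin{equation*}
W=\sum_{k\in\mathbb{Z}} x^{a_k}y^{b_k}\,Q_k(z),
\qquad Q_k\in\mathbb{C}[z^{\pm1}],
\end{equation*}
where $(a_k,b_k)$ is any base point on the strip $\{\ell_v=k\}$. Substituting \eqref{eq:mutation_monomial} shows that $\mu_v W$ is Laurent if and only if $(1+z)^k\mid Q_k(z)$ for every $k>0$; equivalently, $Q_k$ vanishes to order $\geq k$ at $z=-1$. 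Under iteration (relevant whenever a direction appears with multiplicity in Table~\ref{tab:dp}), the analogous statement holds for the iterated mutation with the divisibility order increased accordingly.

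The main step is an induction on the Newton polytope. For each $v_i$ let $h_i(W)=\max_{u\in N(W)}\ell_{v_i}(u)$ and let $h_i=h_i(W_0)$; a direct inspection of Table~\ref{tab:dp} shows that the $v_i$ are exactly the outward primitive edge normals of $N(W_0)$ (with the listed multiplicities recording edge lengths) and that the edge of $N(W_0)$ normal to $v_i$ carries precisely $h_i+1$ lattice points. Hence on the extremal strip $\{\ell_{v_i}=k\}$ of $W$, the polynomial $Q_k^{(i)}(z)$ has at most $w_i(W)+1$ terms, where $w_i(W)$ is the width of $N(W)$ in the $v_i^\perp$-direction; divisibility by $(1+z)^k$ therefore forces $k\leq w_i(W)$, and repeating this for all $i$ bounds $N(W)$ inside $d\cdot N(W_0)$ for $d=\max_i \lceil h_i(W)/h_i\rceil$. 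On the top strip of a given $v_i$ the divisibility condition pins down the extremal coefficients uniquely up to scalar, and matching against the corresponding extremal strip of $W_0^d$ produces a constant $c\in\mathbb{C}$ such that $N(W-cW_0^d)$ has strictly smaller size. Induction on $d$ then yields $W\in\mathbb{C}[W_0]$.

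The main obstacle is verifying that the reduction step actually works in each of the ten cases of Table~\ref{tab:dp}: one must check, for each seed, that the strip-divisibility conditions imposed by the outward normals $v_i$ (with multiplicities) are simultaneously compatible with extremal coefficients of $W_0^d$, and that subtraction of $cW_0^d$ genuinely reduces the polytope rather than leaving a residue not covered by any $v_i$. This is a combinatorial case analysis, easiest in the toric cases where $N(W_0)$ is the polar-dual polygon, and more delicate in the cases $\mathrm{Bl}_k\mathbb{C}P^2$ with $k\geq 4$ where $W_0$ is not lattice-reflexive and some directions repeat; in the latter cases the higher multiplicities are precisely what is needed to trim the corresponding non-reflexive corners of $N(W_0^d)$.
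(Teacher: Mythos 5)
Your proof takes a genuinely different route from the paper's. The paper's sketch invokes the Gross–Hacking–Keel cluster variety $X^\vee$ associated to the seed $\{v_i\}$, identifies $\mathcal{U}$ with its ring of regular functions, and then uses the fact that for each of the ten del Pezzo seeds there is a \emph{proper} map $X^\vee \to \C$ (an elliptic fibration) to conclude $\C[X^\vee]$ is a polynomial ring in one variable. Your approach aims to bypass this machinery with a Newton-polytope induction. The contrast is instructive: the paper's argument is short but imports a substantial result, while yours would be elementary and self-contained if the inductive step worked. Unfortunately, the inductive step is exactly where the content of properness lives, and it is not supplied.

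Several concrete issues. First, the easy inclusion as written is circular: Proposition~\ref{prop:del_pezzo_seed} produces a Lagrangian seed with the stated $[\partial D_i]^\perp$ but does \emph{not} compute its potential; the identification $W_L = W_0$ is Theorem~\ref{th:lag_del_pezzo}, whose proof cites this very Proposition~\ref{prop:upper_bound}. The correct move (the one the paper makes) is to verify $W_0 \in \mathcal{U}$ by direct mutation computation; your closure-under-ring-operations argument then gives $\C[W_0] \subseteq \mathcal{U}$. Second, your combinatorial inspection is wrong as stated: it is not true that the edge of $N(W_0)$ normal to $v_i$ has $h_i + 1$ lattice points. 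For $\Bl_5 \C P^2$ each edge of the square $[-1,1]^2$ has $h_i = 1$ but $3$ lattice points; the correct count is $p_i h_i + 1$, where $p_i$ is the multiplicity with which $v_i$ appears. The multiplicities are precisely what force the higher divisibility order $(1+z)^{p_i k}$ in the iterated mutation, and you need that in the constraint, not the un-iterated $(1+z)^k$.

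The essential gap is in the reduction step. You assert that the divisibility condition ``pins down the extremal coefficients uniquely up to scalar'' on the top strip of each $v_i$ and that subtracting $cW_0^d$ strictly reduces the Newton polytope. Neither is established. The extremal strip polynomial $Q_{h_i(W)}(z)$ is forced to be divisible by $(1+z)^{p_i h_i(W)}$, but unless the face of $N(W)$ normal to $v_i$ has \emph{exactly} length $p_i h_i(W)$, there is a whole space of admissible strip polynomials, and no single constant $c$ will cancel them against $W_0^d$. Moreover, $N(W)$ could a priori have edges whose outward normals are not among the $\{v_i\}$ at all; on those edges the mutation criterion imposes no constraint, so there is no reason for $N(W)$ to be a dilate of $N(W_0)$, and subtracting $cW_0^d$ might leave a residue with an entirely new extremal configuration. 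Proving that no such residue can persist is exactly what the properness of the elliptic fibration encodes: the ``extra'' tori glued in by the wall-crossing maps compactify the fibres of $W_0$, and only functions pulled back from the base extend regularly. Your paragraph acknowledging the ``combinatorial case analysis'' is honest, but it is not a routine check to be filled in; it is the theorem.
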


\begin{remark}
Rephrasing the definition, $\mathcal{U}$ consists of Laurent polynomials $W$ such that $\mu_{v_i}(W)$ (see Definition~\ref{def:mut_function}) is also a Laurent polynomial, for all $i$.
\end{remark}

The upper bound  in the context of LG seeds was introduced by Cruz~Morales and Galkin \cite{CMG13}. Proposition~\ref{prop:upper_bound} appeared as \cite[Conjecture~4.1]{CMG13} but seems to be relatively well-understood by experts; we are grateful to Mark~Gross and Denis~Auroux for pointing this out. We explain it briefly.

\begin{proof}[Sketch of proof of Proposition~\ref{prop:upper_bound}]
Given the vectors $\{v_i\}$, Gross, Hacking and Keel \cite[Definition~2.8, Remark~2.9]{GHK15} explain how to construct the $\mathcal{A}$-cluster variety $X^\vee$ whose ring of regular functions $\C[X^\vee]$ equals the upper bound $\mathcal{U}$. It turns out that when $\{v_i\}$ is one of the ten collections from Table~\ref{tab:dp}, there is a proper map $X^\vee\to \C$, therefore $\C[X^\vee]$ is a polynomial ring in one variable. (In fact, that proper map is an elliptic fibration.) 

Finally, one can verify that $W_0\in \mathcal{U}$, and that $W_0$ does not have the form $f(W)$ for another Laurent polynomial $W$. Therefore $W_0$ is a generator of $\C[X^\vee]$.
\end{proof}	

\begin{remark}
For a better understanding of how the above proof works, it is useful to recall some details about the cluster variety $\check X$.
Suppose we are given an LG seed in which the vectors $v_i$ are pairwise distinct.	
To construct the mirror $X^\vee$ up to a possible complex codimension~2 error (which does not matter for our arguments), one starts by taking one initial copy of $(\C^*)^2$, and one additional copy of $(\C^*)^2$ for each vector $v_i$. Then one glues each additional copy of $(\C^*)^2$ to the original one via the wall-crossing map $\mu_{v_i}$ from (\ref{eq:wall_cross_map}). On a diagram, this looks as follows:
$$
{\small
	\xymatrixcolsep{2em}
	\xymatrixrowsep{2em}
\xymatrix{
	&(\C^*)^2\ar@{-->}^{\mu_{v_1}}[d]&
\\
	&(\C^*)^2&
\\
	(\C^*)^2\ar@{-->}^{\mu_{v_2}}[ur]&&(\C^*)^2\ar@{-->}_{\mu_{v_3}}[ul]
}
}
$$
For the purpose of the argument, one can simply define $X^\vee$ to be the scheme obtained by the above gluing. Because $W_0$ stays a Laurent polynomial under mutations along the $v_i$, it defines a regular function on $X^\vee$ denoted by the same letter:
$$
W_0\co X^\vee\to \C.
$$ 
A special property of the 10~del~Pezzo seeds is that the fibres of this function are compact (in fact, $W_0$ becomes a proper elliptic fibration); this is the map mentioned in the previous proof. The  point is that the fibres of $W_0$ on the original copy of $(\C^*)^2$ are elliptic curves with a number of punctures, and these punctures get filled in after gluing in the additional copies of $(\C^*)^2$. This can be verified by a direct computation.
It follows that $\C[X^\vee]$ is a polynomial ring in one variable. Moreover, it is obvious that any element of the upper bound $\mathcal{U}$ extends to a regular function on $X^\vee$.

When some vectors among the $\{v_i\}$ coincide (which is the case for most del Pezzos in Table~\ref{tab:dp}), the gluing construction of $X^\vee$ has to be modified as explained in \cite{GHK15}. The need for this can be bypassed as after a finite sequence of mutations, each of the 10 del~Pezzo seeds can be brought to a form where all vectors are different. The fact that upper bounds are preserved by mutations is well known, see e.g.~\cite{CMG13}.
\end{remark}

\begin{proof}[Proof of Theorem~\ref{th:lag_del_pezzo}]
Fix a del Pezzo surface $X$. By Proposition~\ref{prop:del_pezzo_seed}, there exists a Lagrangian seed $(L,\{D_i\})$ with $v_i=[\bd D_i]^\perp$ as in Table~\ref{tab:dp}.
It remains to show that the Landau-Ginzburg potential $W_L$ is equal to the corresponding Laurent polynomial $W_0$ from Table~\ref{tab:dp}.
When $X$ is toric, Table~\ref{tab:dp} shows the standard toric potential which has been computed by Cho and~Oh \cite{CO06}. For the cubic surface, it shows the potential computed by Fukaya, Oh, Ohta and Ono \cite{FO311b} and our argument will provide an alternative computation.
Let $X$ denote any non-toric del Pezzo surface.

By Theorem~\ref{th:lag_mut},  $W_L\in \mathcal{U}$ where $\mathcal{U}$ is the upper bound from Proposition~\ref{prop:upper_bound}. Therefore $W_L=f(W_0)$, for some one-variable polynomial $f(u)$ with integral coefficients. We need to show that $f(u)=u$. 

Vianna proved \cite{Vi14,Vi16} that unless $X=\Bl_k\C P^2$ for $k=1,2$, there exist almost toric fibrations on $X$ with {\it triangular} affine base. (Since $\Bl_k\C P^2$ is toric for $k=1,2$, we do not need to consider these cases.) Let $(L',\{D_i'\})$ be the Lagrangian seed obtained, using Lemma~\ref{lem:constr_lag_disks}, from an almost toric fibration on $X$ with a triangular affine base. Because all almost toric fibrations constructed by Vianna differ by a sequence of mutations, the Lagrangian seeds $(L',\{D_i'\})$ and $(L,\{D_i\})$ differ by a sequence of mutations.

Let $W_0'$, $W_{L'}$ be the Laurent polynomials obtained by applying the corresponding sequence of mutations to $W_0$ resp.~$W_L$, so that $W_{L'}=f(W_0')$. Observe that $W_{L'}$ is  the Landau-Ginzburg potential of the monotone torus $L'$.

\begin{remark}
The base being triangular is equivalent to the fact that there are exactly three distinct vectors $v_1,v_2,v_3$ among the $\{[\bd D_i']\}^\perp$ (they may appear with repetitions). For $\Bl_k\C P^2$, $k\in\{6,7,8\}$, the del Pezzo seed from Table~\ref{tab:dp} already comes from a triangular affine base, so for them we can take $(L',\{D_i'\})=(L,\{D_i\})$.
However when $k\in\{4,5\}$, the del~Pezzo seeds from Table~\ref{tab:dp} do not come from a triangular affine base, so one has to mutate them.
\end{remark}

By construction, $L'$ is the monotone fibre of an almost toric fibration on $X$  with triangular base. For such fibres, Vianna proved \cite{Vi14,Vi16} using SFT techniques that the Newton polytope of $W_{L'}$ equals the appropriately scaled affine dual to the triangle spanned by $v_1,v_2,v_3$; we denote this Newton polytope by $\mathcal{P}$. Moreover, he proved that the coefficients by the three monomials of $W_{L'}$ corresponding to the vertices  of  $\mathcal{P}$ are all equal to 1. The Newton polytope $\mathcal{P}$ turns out to be minimal, i.e.~$\lambda \mathcal{P}$ is not an affine translation of another integral triangle, for $0<\lambda<1$.

Assume however that $W_{L'}=f(W_0)$ and $f$ is a polynomial which is not linear. Then
$\mathcal{P}$ is equal to a non-trivial scaling of the integral Newton polytope of $W_0$, which contradicts the minimality. Therefore $f(u)=ku$ for some $k$; but then $k=1$ by the mentioned result about the monomial coefficients.
\end{proof}	

\begin{remark}
We recall that the technique of \cite{Vi14,Vi16} does not extend to a full computation of the potential beyond the three monomial coefficients, so an argument using the wall-crossing formula is necessary for a full computation. 
Recall that the arguments of  \cite{Vi14,Vi16}  do not apply to a monotone fibre $L\subset X$ of an almost toric fibration on $X$ whose base is not triangular.
\end{remark}

\begin{comment}
Consider
the quantum miltiplication by $c_1(X)$ as an endomorphism of the small quantum cohomology ring of $X$:
$$
-*c_1(X)\co QH^*(X)\to QH^*(X),
$$
and denote by $\{\lambda_i\}$ the set of its  eigenvalues.
Recall from Remark~\ref{rem:potential} that the lemma of Auroux, Kontsevich and Seidel \cite[Proposition~6.8]{Au07} implies that the critical values of $W_L$ must be contained among the $\{\lambda_i\}$. 
The small quantum cohomology of $X$ was computed in \cite{CM95}, see also~\cite{She13} for the cubic surface. One can check by a direct computation that the critical values of $W_0$  match the $\{\lambda_i\}$.

 In order for the critical values of $W_L=f(W_0)$ to match the same eigenvalues, it is necessary that
$$
f(\lambda)=\lambda,\quad \forall \lambda\in \Spec(-*c_1(X))
$$
and
$$
f'(u)\neq 0,\quad \forall u\notin \Spec(-*c_1(X)).
$$
{\red This seems to imply $f=u$, but I couldn't prove it so far.}
\end{comment}
\subsection{Application: infinitely many tori}
We  provide two  applications of Theorem~\ref{th:lag_mut} to Lagrangian tori in del~Pezzo surfaces.
For the first application, recall a theorem of Vianna.

\begin{theorem}[\cite{Vi16}]
	If $X$ is a del Pezzo surface (equipped with a  monotone symplectic form) other than $\Bl_2\C P^2$, then $X$ contains infinitely many monotone Lagrangian tori which are pairwise not~Hamiltonian isotopic.\qed
\end{theorem}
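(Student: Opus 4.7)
The plan is to construct, for each del~Pezzo surface $X\neq \Bl_2\C P^2$, an infinite family of monotone Lagrangian tori $L_0,L_1,L_2,\ldots\subset X$ by iterated mutation, and to distinguish them by their Landau-Ginzburg potentials. I would begin with the Lagrangian seed $(L_0,\{D_i^{(0)}\})$ provided by Theorem~\ref{th:lag_del_pezzo}, whose associated LG seed is the one listed in Table~\ref{tab:dp}. Applying Theorem~\ref{th:lag_mut} inductively, at each step I pick a disk $D_{j_n}^{(n)}$ to mutate along and obtain a new Lagrangian seed $(L_{n+1},\{D_i^{(n+1)}\})$ whose underlying LG~seed is the algebraic mutation $\mu_{j_n}(W_{L_n},\{v_i^{(n)}\})$; in particular $L_{n+1}$ is again monotone and its potential is $\mu_{v_{j_n}}W_{L_n}$.

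By Proposition~\ref{prop:potential_invariance} the Laurent polynomial $W_L$ is an invariant of a monotone Lagrangian torus $L$ up to the $GL(2,\Z)$-action~\eqref{eq:change_coord_W}. Since this action permutes exponent vectors by a linear automorphism of $\Z^2$, two potentials in the same orbit have Newton polytopes that differ by a lattice automorphism; in particular they have the same lattice area and the same number of lattice vertices. Hence it suffices to exhibit a sequence of mutation choices $(j_n)$ for which the Newton polytopes of the $W_{L_n}$ are pairwise $GL(2,\Z)$-inequivalent, e.g.~have unbounded lattice area.

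The main obstacle is this combinatorial statement about iterated tropical mutation~\eqref{eq:mut_trop}. For $X=\C P^2$, mutations along the three directions of Table~\ref{tab:dp} follow the Markov tree: the vertices of the Newton polytope end up indexed by triples $(a,b,c)$ of Markov numbers, and a mutation replaces one entry of the triple by $3ab-c$, producing strictly larger triples and hence polytopes of unbounded area. For the remaining non-toric del~Pezzo surfaces, I would invoke Vianna's almost toric constructions \cite{Vi14,Vi16} together with Lemma~\ref{lem:constr_lag_disks} and Lemma~\ref{prop:mut_and_nodal_slide} to realise his infinite families of monotone tori as iterated mutations of the initial Lagrangian seed, transferring his Newton-polytope growth estimates to our setting; alternatively, one can argue directly by tropical mutation using the directions listed in Table~\ref{tab:dp}. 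The surface $X=\Bl_2\C P^2$ is excluded precisely because its mutation directions $\{(1,-1),(-1,1),(-1,0),(0,-1),(1,1)\}$ do not force Newton polytopes to grow under iterated mutation; distinguishing tori there requires the full wall-crossing computation provided by Theorem~\ref{th:wall_cross_4d}, which is the content of Corollary~\ref{cor:bl2cp2}.
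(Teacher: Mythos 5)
The theorem you are asked to prove is stated in the paper with a \qed and a citation to \cite{Vi16}: the paper offers no independent proof, it merely cites Vianna. Your attempt is therefore not directly comparable to a proof that the paper supplies; rather, it is an attempt to realize the claim made in the introduction that the paper's machinery ``allows us to reprove Vianna's theorem.'' Your sketch is indeed the natural way to try to do that, and it is structurally the same argument that the paper \emph{does} carry out for $\Bl_2 \C P^2$ in Corollary~\ref{cor:bl2cp2}: start from the seed of Theorem~\ref{th:lag_del_pezzo}, iterate mutations via Theorem~\ref{th:lag_mut}, and distinguish the resulting potentials by Newton polytopes using the invariance statement of Proposition~\ref{prop:potential_invariance}.

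The gap is in the combinatorial step. You reduce to showing that some sequence of iterated mutations produces Newton polytopes which are pairwise $GL(2,\Z)$-inequivalent, and you propose to see this via unbounded lattice area, but for the non-toric del~Pezzos you then fall back on ``invoking Vianna's almost toric constructions \ldots transferring his Newton-polytope growth estimates to our setting, [or] alternatively, one can argue directly by tropical mutation.'' Neither of these is carried out. The first is circular as a \emph{reproof}: Vianna's growth estimates on Newton polytopes are precisely the technical content of his theorem, so quoting them is not reproving the result but re-deriving its conclusion from its proof. The second (a direct combinatorial analysis of tropical mutation using the $v_i$ from Table~\ref{tab:dp}) would be a genuine reproof, but it is nontrivial and omitted. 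Even the paper, in Corollary~\ref{cor:bl2cp2}, cites Vianna for the fact that $\Bl_2\C P^2$ has infinitely many $SL(2,\Z)$-inequivalent affine bases; the new content there is how to convert that combinatorics into actual potential data via Theorem~\ref{th:wall_cross_4d}, which the Newton-polytope estimates of \cite{Vi14,Vi16} cannot do in that case. For the other del~Pezzo surfaces, Vianna's SFT estimates already suffice, so your reproof would be an alternative route, but only once the mutation combinatorics is established independently.

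A minor correction: for $\C P^2$ the cleaner invariant is the $SL(2,\Z)$-class of the Newton triangle, indexed by the unordered Markov triple (equivalently the weights of the degenerate fiber $\mathbb{P}(a^2,b^2,c^2)$), rather than raw lattice area. Your area-growth claim appears numerically correct --- the normalized area of the Newton polytope jumps from $3$ for the Clifford torus to $7$ for the Chekanov torus, and keeps growing --- but it is not the argument Vianna uses and one should verify the growth rather than assume it. Also note that your initial sentence quantifies over all del~Pezzos ``other than $\Bl_2\C P^2$,'' but the paper's Proposition~\ref{prop:del_pezzo_seed} and Theorem~\ref{th:lag_del_pezzo} supply Lagrangian seeds and full potentials for \emph{every} del~Pezzo including $\Bl_2\C P^2$; what fails for $\Bl_2 \C P^2$ in Vianna's original argument is only the SFT control of Newton polytopes, which is exactly what Theorem~\ref{th:wall_cross_4d} supersedes.
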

 Although Vianna constructed infinitely many tori in $\Bl_2\C P^2$ which were expected to be all different, it was impossible to prove that they are different without knowing their Landau-Ginzburg potentials. As mentioned in the previous proof and remark, Vianna's estimates on the LG potential  only work in the case of triangular base; this is the reason why $\Bl_2\C P^2$ is not covered by the theorem above.

 Because Theorem~\ref{th:lag_del_pezzo} together with Theorem~\ref{th:lag_mut} provides a complete computation of the LG~potentials all the tori,
we can fill in the missing case in Vianna's theorem.

\begin{corollary}
  \label{cor:bl2cp2}
	The del Pezzo surface
$\Bl_2\C P^2$ (equipped with a  monotone symplectic form) contains infinitely many monotone Lagrangian tori which are pairwise not~Hamiltonian isotopic.
\end{corollary}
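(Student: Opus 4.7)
The plan is to apply Theorem~\ref{th:lag_mut} iteratively to the initial Lagrangian seed on $\Bl_2\C P^2$ guaranteed by Theorem~\ref{th:lag_del_pezzo}, producing an infinite family of monotone Lagrangian tori whose Landau–Ginzburg potentials are computable by the algebraic wall-crossing formula, and then to distinguish these tori via a $GL(2,\Z)$-invariant of their potentials.

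Concretely, Theorem~\ref{th:lag_del_pezzo} furnishes a Lagrangian seed $(L,\{D_1,\ldots,D_5\})$ in $\Bl_2\C P^2$ whose associated LG seed is the one in Table~\ref{tab:dp}. Theorem~\ref{th:lag_mut} then allows us to mutate this seed in any of its five directions, yielding a new monotone Lagrangian torus whose potential is the algebraic mutation (Definition~\ref{def:mut_function}) of the previous one. By Theorem~\ref{th:Laurent_phen} the result is again a Laurent polynomial and the process iterates indefinitely along any infinite mutation sequence $\sigma$; call the resulting tori $L_\sigma$. By Proposition~\ref{prop:mut_and_nodal_slide} these tori agree, up to Hamiltonian isotopy, with the infinite family obtained by Vianna via iterated nodal slides of an almost toric fibration on $\Bl_2\C P^2$. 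By Proposition~\ref{prop:potential_invariance}, the potential modulo the $GL(2,\Z)$-action on $(\C^\x)^2$ is a Hamiltonian-isotopy invariant, so it suffices to extract an infinite subfamily of the $W_{L_\sigma}$ that are pairwise inequivalent under $GL(2,\Z)$.

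The hard part is precisely this distinction. Vianna's Newton-polytope argument for the other del Pezzo surfaces relied on the affine base being a triangle, so that the Newton polygon of the potential had a tight three-vertex structure and was easy to classify up to $GL(2,\Z)$; this is unavailable for $\Bl_2\C P^2$, whose initial Newton polygon is a pentagon and whose mutated polygons have no such rigid shape. With the algebraic mutation rule of Theorem~\ref{th:lag_mut} in hand, however, one can compute $W_{L_\sigma}$ explicitly via (\ref{eq:mutation_monomial}) and (\ref{eq:mut_trop}) and then apply any finer $GL(2,\Z)$-invariant of Laurent polynomials. A plausible route is to fix an infinite ray of mutations along a pair of directions of infinite order in the cluster exchange groupoid and to track a simple invariant of $W_{L_\sigma}$ such as the number of lattice points in the Newton polygon, its normalized lattice area, or the multiset of coefficients at vertex monomials.

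The remaining step — the bulk of the work — is an inductive verification that, along this chosen sequence, the selected invariant takes infinitely many values. This is a finite bookkeeping task once the invariant is chosen, since the algebraic mutation is entirely explicit; each new value detected gives a new Hamiltonian-isotopy class of monotone torus in $\Bl_2\C P^2$, completing the proof.
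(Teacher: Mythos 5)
Your general framework — apply Theorem~\ref{th:lag_mut} iteratively to the seed of Theorem~\ref{th:lag_del_pezzo}, compute the resulting potentials by algebraic mutation, and then distinguish them up to the $GL(2,\Z)$-action using Proposition~\ref{prop:potential_invariance} — matches the paper. However, you have mischaracterized the distinguishing step, and this is where the proposal fails to close.

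You assert that the Newton-polygon argument is ``unavailable for $\Bl_2\C P^2$, whose initial Newton polygon is a pentagon,'' and therefore propose to search for ``any finer $GL(2,\Z)$-invariant,'' leaving its choice and the ensuing induction as ``the bulk of the work.'' But the Newton polygon is not a weaker invariant for pentagons; it is precisely the invariant the paper uses. The obstacle Vianna faced was different: his SFT techniques estimate only part of the potential, and those estimates determine the Newton polygon only when the affine base of the almost toric fibration is a triangle. Once the wall-crossing formula supplies the full potential, the Newton polygon is computed exactly in every case. The paper then observes, first, that the Newton polygon of $W_L$ is dual to the affine base polygon of the fibration producing the seed, second, that this duality is preserved under mutation (using the known action of algebraic mutation on Newton polytopes, \cite[Section~3]{ACGK12}), and third, that Vianna already exhibited infinitely many $SL(2,\Z)$-inequivalent affine bases for $\Bl_2\C P^2$ (one for each Markov triple of the form $(1,a,b)$). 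Thus the Newton polygon alone, read off from the fully computed potentials, distinguishes infinitely many tori, and no finer invariant or fresh induction is required. Your proposal not only looks for the wrong invariant but also defers the essential verification, so it does not constitute a proof even modulo ``bookkeeping''; the actual argument is shorter and uses only ingredients you already invoked, assembled in the way just described.
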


\begin{proof}
A look at Table~\ref{tab:dp} shows that the Newton polytope of any of the 10 potentials  is dual to an appropriate scaling of the polygon spanned by the vectors $\{v_i\}$, or equivalently to the polygon which is the affine base of the almost toric fibration giving rise (via Lemma~\ref{lem:constr_lag_disks} and Proposition~\ref{prop:del_pezzo_seed}) to the corresponding Lagrangian seed. The same property is true for arbitrary mutations of those Lagrangian seeds, by the wall-crossing formula and the known way in which mutation of an LG~potential acts on its Newton polytope \cite[Section~3]{ACGK12}.

 It therefore suffices to know that $\Bl_2\C P^2$ admits infinitely many almost toric fibrations with pairwise distinct affine bases, up to the action of $SL(2,\Z)$, which are obtained by mutations from the standard fibration appearing in Proposition~\ref{prop:del_pezzo_seed}. But this has already been shown by Vianna: he proved \cite[Figure~7]{Vi16} that there are at least as many distinct almost toric fibrations on $\Bl_2\C P^2$ as there are Markov triples of the form $(1,a,b)$, which means an infinite number.
\end{proof}

\begin{remark}
For a del~Pezzo Lagrangian seed, consider its (infinite) mutation graph whose vertices correspond to all possible mutated seeds, and 
the valency of a vertex equals the number of \emph{distinct} vectors in the corresponding LG seed.  One expects that each vertex of the mutation graph, up to a finite group quotient of the graph, corresponds to a different monotone torus in the del~Pezzo surface.

The statement is known for $\C P^2$ because the vertices of the infinite trivalent mutation graph are in bijection with ordered Markov triples. The vertices whose underlying \emph{unordered} Markov triples are different correspond to non-isotopic tori in $\C P^2$ by \cite{Vi14}: Markov triples determine the Newton polytopes of the superpotentials of the tori. In this case the finite group in question is  $S_3=\Z/(3)\oplus \Z/(2)$.

 For other del Pezzo surfaces, we are unaware of a complete answer to this question, compare the discussion in \cite[Section~1.2.3]{STW15}.
\end{remark}

\subsection{Application: Fukaya categories and HMS}
\label{sec:application_hms}
We now provide the second application  of the computation of the  potentials of tori in del Pezzo~surfaces. It is related to Sheridan's proof~\cite{She13} of Homological Mirror Symmetry for projective Fano hypersurfaces. We will be interested in  surfaces $X=\Bl_k\C P^2$ where $k=6,7,8$; note that $\Bl_6\C P^2$ is the cubic surface. Our aim is in particular to give an answer to \cite[Conjecture~B.2]{She15}

Recall that for a monotone symplectic manifold $X$ and a number $w\in\C$, one defines the monotone Fukaya $\F(X)_w$ whose objects are weakly unobstructed Lagrangian branes $\bL$ whose obstruction number equals
$$
\mu^0_\bL=w.
$$
This Fukaya category is non-trivial only if $w$ is an eigenvalue of the operator on $QH^*(X,\C)$ acting by the quantum multiplication by the first Chern class:
$$
-*c_1(X)\co QH^*(X)\to QH^*(X).
$$
For the cubic surface $X$, this operator has two eigenvalues:
$$
w=-6,\ w=21.
$$
Moreover, the eigenvalue $w=21$ is simple, see \cite{She15}. One can check that for $\Bl_k\C P^2$, $k=7,8$, there are simple eigenvalues $69$ and $669$, respectively. 

\begin{prop}
	\label{prop:hms_dp}
Consider the monotone torus in $X=\Bl_k\C P^2$, $k=6,7,8$, whose potential $W$ is shown in the corresponding entry of Table~\ref{tab:dp}, see
 Theorem~\ref{th:lag_del_pezzo}. 
 The point $(1,1)$ is a Morse critical point of $W$ with critical value $$w\coloneqq W(1,1)=
 \begin{cases}
21,& k=6,\\
69,&k=7,\\
669,&k=8.
 \end{cases}$$  
 Consider the brane $\bL$ which is the above torus with the trivial local system, so that $\mu^0_\bL=w$. It follows that $HF^*(\bL,\bL)\cong Cl_2$, and $\bL$ split-generates $\F(X)_w$.
\end{prop}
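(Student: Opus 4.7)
The plan is to proceed in four steps: (i) verify that $(1,1)$ is a Morse critical point of $W$ with the claimed value, (ii) translate this into the curvature $\mu^0_\bL$, (iii) identify $HF^*(\bL,\bL)$ via intrinsic formality of the Clifford algebra, and (iv) deduce split-generation. The last step will be the main obstacle.

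\textbf{Step 1.} For each $k \in \{6,7,8\}$ one directly computes the partial derivatives of the Laurent polynomial $W$ (factoring the numerators out when possible) and verifies by substitution that both $\partial_x W$ and $\partial_y W$ vanish at $(x,y) = (1,1)$. One then computes the Hessian of $W$ at $(1,1)$ and verifies that its determinant is non-zero, so the critical point is Morse. A direct substitution yields the critical values $W(1,1) = 21, 69, 669$ respectively. Since by Remark~\ref{rem:potential} the critical values of $W_L$ lie among the eigenvalues of the operator $-*c_1(X)$ on $QH^*(X,\C)$, one should cross-check against the known quantum cohomology of $\Bl_k\C P^2$ that these values are indeed the simple eigenvalues appearing in the statement.

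\textbf{Step 2.} The brane $\bL = (L,\mathrm{triv})$ with trivial local system corresponds, under the chosen basis of $H_1(L,\Z)$, to the point $\rho = (1,1) \in (\C^*)^2$. By Lemma~\ref{lem:muzero} specialized to the monotone Fukaya category (setting $r=1$), one obtains $\mu^0_\bL = W_L(\rho) \cdot 1_\bL = w \cdot 1_\bL$, as claimed. Since $\rho = (1,1)$ is a Morse critical point of $W_L$, the intrinsic formality of the Clifford algebra at Morse critical points, as discussed in Remark~\ref{rem:potential}, pins down the quasi-isomorphism type of $\hom^*(\bL,\bL)$ uniquely as $Cl_2$. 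In particular $HF^*(\bL,\bL) \cong Cl_2$ as graded algebras, and $\bL$ is a non-zero object of $\F(X)_w$.

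\textbf{Step 3 (the main obstacle).} Split-generation should follow from the monotone analogue of Abouzaid's split-generation criterion (as developed in the monotone setting by Sheridan \cite{She15} and others): a non-zero object $\bL \in \F(X)_w$ split-generates $\F(X)_w$ provided the closed-open map $\CO^0 \co QH^*(X,\C) \to HH^*(\hom(\bL,\bL))$ hits the unit in Hochschild cohomology. Because $w$ is a simple eigenvalue of $-*c_1(X)$, the summand $QH^*(X,\C)_w$ is a one-dimensional field factor with idempotent $e_w$, and this idempotent must act as the identity on every object of $\F(X)_w$, in particular on $\bL$. Therefore $\CO^0(e_w) = 1_{\hom(\bL,\bL)} \in HH^0$, and this class is non-zero by Step 2. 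The split-generation criterion then applies, completing the proof.
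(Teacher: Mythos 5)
Your proposal is correct and takes essentially the same route as the paper, which compresses everything into one sentence: ``the statement about the critical point is verified by an explicit computation, and the Floer-theoretic implications are well known given the fact that $w$ is a simple eigenvalue, see Sheridan for details.'' Your Steps 2 and 3 usefully unpack what ``well known'' amounts to (the identification $HF^*(\bL,\bL)\cong Cl_2$ via formality of the Clifford algebra at a Morse critical point, and automatic split-generation when $QH^*(X)_w$ is a field); the one place you could tighten is the claim that $\CO^0(e_w)=1_{\hom(\bL,\bL)}$, which is most cleanly seen from $\CO^0(c_1)=W(\bL)\cdot 1 = w\cdot 1$ (cf.\ Remark~\ref{rem:potential}) together with the fact that for a simple eigenvalue $e_w$ is a polynomial in $c_1$ taking value $1$ at $w$.
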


When $k=6$, this answers Conjecture~B.2 from Sheridan's paper \cite{She15}. In that paper, he also sketched the construction of the relevant torus and gave a heuristic computation of the value of its superpotential at the point $(1,1)$.

\begin{proof}
The statement about the critical point is verified by an explicit computation, and the Floer-theoretic implications are well known given the fact that $w$ is a simple eigenvalue, see \cite{She15} for details.	
\end{proof}	

Let $W$ be as in Lemma~\ref{prop:hms_dp} for $k=6,7$ or $8$. Then $W$ has one-dimensional critical locus
$$
1+x+y=0
$$
with critical value resp.~$-6,-12$ or $-60$. These are the non-simple eigenvalues of the quantum multiplication for the first Chern class for $\Bl_k\C P^2$.

\begin{proposition}
  \label{prop:pairwise_disjoint_family}
Let $X=\Bl_k\C P^2$ where $k=6,7,8$ and $w=-6,-12,-60$ respectively. Then $\F(X)_w$ contains an infinite family of pairwise non-quasi-isomorphic toric objects parametrised by the locus
$$
\{x,y\in\C:x+y+1=0\},
$$ which are the branes $\bL$ supported on the  monotone Lagrangian torus provided by Theorem~\ref{th:lag_del_pezzo}, equipped with local systems belonging to the above locus.
\end{proposition}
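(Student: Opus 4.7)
First, I would verify that $(L, \rho)$ is a valid object of $\F(X)_w$ for every $\rho \in L_w := \{x+y+1=0\} \cap (\C^*)^2$. By Lemma~\ref{lem:muzero} this reduces to checking $W_L(\rho) = w$: each of the three potentials in Table~\ref{tab:dp} has the form $(1+x+y)^s/(x^a y^b) - C$ with $C \in \{6, 12, 60\}$ and $s \geq 3$, which manifestly takes the value $-C = w$ on $L_w$. Differentiating, one sees that $\partial_x W_L$ and $\partial_y W_L$ are each divisible by $(1+x+y)^{s-1}$ with $s-1 \geq 2$, so every point of $L_w$ is in fact a critical point of $W_L$; by Remark~\ref{rem:potential}, this already guarantees non-vanishing of self-Floer cohomology. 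Since $L_w$ is a one-dimensional complex variety it contains infinitely many points, so the family is infinite.

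To establish pairwise non-quasi-isomorphism, I would prove the stronger vanishing
$$HF^*((L,\rho_1),(L,\rho_2)) = 0 \quad \text{whenever} \quad \rho_1 \ne \rho_2 \in L_w,$$
which immediately rules out any isomorphism in degree zero. The tool is a Biran--Cornea-type energy-filtration spectral sequence: after perturbing one copy of $L$ by a $C^\infty$-small Morse function, the intersection points coincide with critical points of that function, and the Floer differential is a perturbation of the twisted Morse differential by corrections coming from holomorphic disks with boundary on $L$. Filtering by disk area, the first page of the spectral sequence is the Morse cochain complex of $L$ twisted by the local system $\eta := \rho_2 \otimes \rho_1^{-1}$, and the $E_2$ page is the twisted cohomology $H^*(T^2; \eta)$. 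When $\rho_1 \neq \rho_2$ the character $\eta$ is non-trivial on $H_1(T^2, \Z) = \Z^2$, so $H^*(T^2; \eta) = 0$, forcing $E_r = 0$ for all $r \geq 2$ and hence $HF^* = 0$.

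The main technical point is to set up the spectral sequence carefully when the two branes share the same underlying Lagrangian but carry distinct $\C^*$-local systems. The local systems enter as weights on strips counted by $\mu^1$: a strip contributes a factor measuring the total holonomy along its boundary components on $L_1$ and $L_2$ under $\rho_1^{-1}$ and $\rho_2$ respectively. In the Morse degeneration, a strip splits into a gradient trajectory plus a holomorphic disk bubble with boundary on $L$, and on the lowest filtration level only constant-disk contributions survive, giving the twisted Morse differential. Higher differentials count configurations with positive total Maslov index on the disk side and respect the filtration, so they cannot revive vanishing cohomology. The main obstacle will be performing this setup rigorously in the presence of non-unitary $\C^*$-local systems; however, since the framework of $\C^*$-local systems is used throughout the paper (e.g.~in the statement of Theorem~\ref{th:wall_crossing}), the necessary machinery is already in place and the spectral sequence construction is a routine adaptation of the classical monotone case.
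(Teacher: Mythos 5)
Your proposal follows essentially the same route as the paper's proof: verify the potential value, observe the high-order vanishing on the locus $\{x+y+1=0\}$, and deduce pairwise non-quasi-isomorphism via Oh's energy-filtration spectral sequence, whose Morse page is the twisted cohomology $H^*(T^2;\eta)$, which vanishes for nontrivial $\eta = \rho_2\otimes\rho_1^{-1}$. The paper in fact uses exactly this spectral-sequence bound (it phrases it as $\dim HF^*(\bL,\bL') < 4$, comparing with $\dim HF^*(\bL,\bL) = 4$, but the content is the same vanishing of $H^*(T^2;\eta)$).

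The one place you stop short of the paper is the claim that the branes are \emph{toric} objects: the paper takes this to mean $HF^*(\bL,\bL) \cong \Lambda^*(\C^2) \cong H^*(T^2;\C)$, and proves it by noting that all second partial derivatives of $W_L$ vanish on the locus (this uses $s\geq 3$), so the Clifford algebra degenerates to the exterior algebra. You correctly observe that $\partial_x W_L$ and $\partial_y W_L$ vanish to order $s-1\geq 2$, which is exactly the computation needed, but you only invoke Remark~\ref{rem:potential} to get non-vanishing of $HF^*(\bL,\bL)$ rather than carrying your own observation forward to identify the full exterior algebra. That said, the additional step is one line from what you already wrote, so the gap is minor; the core argument (vanishing twisted torus cohomology forces $HF^*(\bL,\bL')=0$ for $\rho_1\neq\rho_2$) is the same as the paper's.
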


\begin{proof}
	Because $W$ has order of vanishing $3,4$ resp.~$6$ over $\{x+y+1=0\}$, see Table~\ref{tab:dp}, its second partial derivatives all vanish on this locus. It follows that for any Lagrangian brane $\bL$ as above, its Floer cohomology is isomorphic  to the exterior algebra in two variables:
	$$
	HF^*(\bL,\bL)\cong \Lambda^*(\C^2)\cong H^*(T^2,\C).
	$$
So $\bL$ is a toric object.	
For $\bL,\bL'$ corresponding to two different points of $\{x+y+1=0\}$, we have:
$$
HF^*(\bL,\bL')=0.
$$
This is true because $\bL,\bL'$  are supported on the same torus $L$ but correspond to different local systems on it. In this case the Morse part of Oh's spectral sequence ensures that $\dim HF^*(\bL,\bL')<4$ hence $\bL$, $\bL'$ are not quasi-isomorphic.
\end{proof}

\section{Higher-dimensional toric mutations}
\label{sec:higher_dim}

\subsection{Monotone polytopes}
Let 
$$M=\Z^n$$ 
be a lattice and $N=M^\vee\cong \Z^n$ be its dual. 
We denote by $M_\R=\R^n$ the vector space spanned by $M$, and use the similar notation for $N_\R$.
Let $\Delta\subset M_\R$ be a \emph{lattice} polytope, i.e.~a polytope with vertices in $M$. Codimension~1 faces of $\Delta$ are called  \emph{facets}. Following \cite[Definition~3.1]{MD11} and  \cite{EnPo09}, we recall the definition of  monotone  polytopes: they are the bases of toric fibrations on toric Fano  manifolds, or~monotone toric symplectic manifolds. We point out that we are considering \emph{moment} polytopes which are dual to Fano polytopes commonly used in algebraic geometry.

\begin{defn}
$\Delta$ is called \emph{simple} if exactly $n$ of its facets meet at each vertex. It is called \emph{smooth} if for each vertex $v$, the primitive integral normals to the facets meeting at $v$  form an integral basis of $M$. Equivalently, one can demand that the primitive integral vectors pointing along the edges containing $v$ form an integral basis.
\end{defn}

\begin{defn}
	\label{def:monot_1}
$\Delta$ is called \emph{monotone} if it is simple, smooth, contains $0$ in its interior and is given by the equations
\begin{equation}
\label{eq:delta}
\Delta=
\{x\in\R^n:\langle \eta_j,x \rangle\le 1,\ j=1,\ldots,m\}
\end{equation}
where $\{\eta_j\}_{j=1,\ldots,m}$ is the set of primitive integral outward-pointing normals to the facets of $\Delta$. It is more convenient to view the $\eta_j$ as elements of the dual lattice: $$\eta_j\in N,$$ 
so that the brackets $\langle -,-\rangle$ from (\ref{eq:delta}) become the pairing.
\end{defn}

For example, the polytope given by $\{x_i\ge -1,\ \sum_{i=1}^n x_i\le 1\}$ is monotone; it is a moment polytope of $\C P^n$.
Below is an equivalent definition of monotonicity; the equivalence was proved in  \cite{MD11,EnPo09}.

\begin{defn}
	\label{def:monot_corner}
$\Delta$ is called \emph{monotone} if it is simple, smooth, contains $0$ in its interior and for every vertex $v\in M$ of $\Delta$, there exists an $SL(n,\Z)$-transformation after applying which we obtain: 
$$
v=(-1,\ldots,-1)
$$
and the facets meeting $v$ belong to the planes
$$
\{x_i=-1\},\quad i=1,\ldots,n.
$$
\end{defn}

We note that $SL(n,\Z)=Aut(M)$.
Let $\Delta$ be a monotone polytope, and
$X$ be the toric Fano manifold whose moment polytope is $\Delta$. We consider $X$ as a monotone symplectic $2n$-manifold, and denote the moment map by 
$$p\co X\to\Delta.$$ 

It is proven in \cite[Section~3]{MD11} that $0\in\Delta$ is the unique interior lattice point of $\Delta$, and in \cite[Section~1.4]{EnPo09} that 
$$
L=p^{-1}(0)\subset X
$$
is a monotone Lagrangian torus, which we call the \emph{monotone torus fibre}. The origin of $\Delta$ is sometimes called its \emph{baricentre}.

We remind that the condition of monotonicity for polytopes is preserved by $SL(n,\Z)$-action. Polytopes differing by such action correspond to symplectomorphic manifolds, and the origin always corresponds to the same monotone torus fibre, see Figure~\ref{fig:cp2_forms}.

\begin{figure}[h]
	\includegraphics[]{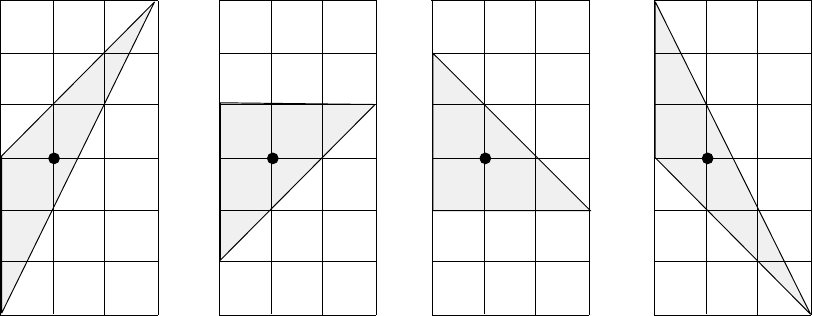}
	\caption{Several moment polytopes of $\C P^2$, differing by $SL(2,\Z)$-action.}
	\label{fig:cp2_forms}
\end{figure}

\subsection{Toric potential and mutation configurations}
Let $\Delta\subset M_\R$ be a monotone polytope, $X$  the corresponding toric manifold, and $L=p^{-1}(0)$ the monotone toric fibre.
 There is a canonical isomorphism $$N=H_1(L,\Z),$$ 
which, fixing the standard basis of $N\cong \Z^n$, provides a basis of $H_1(L,\Z)$.
By a theorem of Cho and~Oh~\cite{CO06},
the Landau-Ginzburg potential of $L$ (see Definition~\ref{def:pot_LG}) is given in this basis by:
 \begin{equation}
 \label{eq:toric_pot}
 W_L=\sum_{j=1}^m \mathbf{x}^{\eta_j}
\end{equation}
 where the sum is taken over the normal vectors $\eta_j\in N$ from Definition~\ref{def:monot_1}.
We are using the following standard notation:
$$
\mathbf{x}^{u}=x_1^{u_1}\ldots x_n^{u_n},\quad u=(u_1,\ldots,u_n)\in N,\quad u_i\in\Z.
$$
Here we consider $W_L$ as a function naturally defined on the algebraic torus $(\C^*)^n$ associated with the lattice $N$: $(\C^*)^n=\C_N/N$.

\begin{defn}
	\label{def:mut_config}	
Let $F\subset \Delta$ be a face of dimension 
$$
\dim F\le n-2
$$
(i.e.~any face except a facet) and 
$$w\in F\cap M$$ 
be an \emph{interior} lattice point of $F$. We say that $(F,w)$ is a \emph{mutation configuration}.
\end{defn}

\begin{remark}
A vertex of $\Delta$ is considered to be an interior lattice point of itself, so it always provides a mutation configuration $F=\{w\}$. However, higher-dimensional faces do not necessarily contain interior lattice points.
\end{remark}

For a face $F$ of $\Delta$, we shall denote
$$
\Pi_F=\mathrm{Span}_{M_\R}\{F,\{0\}\}\subset M_\R,
$$
which is a linear subspace of dimension $\dim F+1$.
One can consider its annihilator in the dual space: 
$\Pi_F^\perp\subset N_\R$. Observe that $N\cap \Pi_F^\perp$ is a full rank lattice in $\Pi_F^\perp$, so one can speak of an integral basis of $\Pi_F^\perp$. See Figure~\ref{fig:std_corner}.

\begin{figure}[h]
	\includegraphics[]{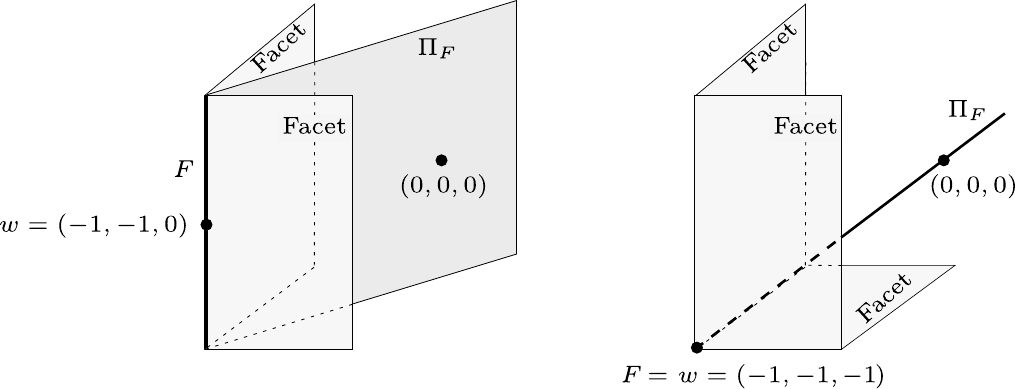}
	\caption{Pieces of polytopes exhibiting a mutation configuration in standard form. Left: a mutation configuration where $F$ is an edge. Right: a mutation configuration where $F=\{w\}$ is a vertex.}
	\label{fig:std_corner}
\end{figure}

It is useful to record the next simple lemma.

\begin{lemma}
	\label{lem:std_w}
Suppose $(F,w)$ is a mutation configuration for a monotone polytope $\Delta$, and $\dim F=n-k$. Then there is an $SL(n,\Z)$-transformation after applying which, $F$ belongs to the subspace
	$$
	\{x_1=\ldots=x_k=-1\},
	$$
the point $w$ becomes 
$$
w=(\underbrace{-1,\ldots,-1}_{k \textit{ times}},0,\ldots,0),
$$	
and in a neighbourhood of $w$, the polytope $\Delta$ becomes  given by the equations:
$$
\{x_1\ge -1,\ldots, x_k\ge -1\}.
$$
Once  $\Delta$ is brought to this form, the following is an integral basis of $\Pi_F^\perp$:
\begin{equation}
\begin{array}{ll}
u_1=&(-1,0,\ldots,0,1,0,\ldots,0),\\
u_{2}=&(0,-1,\ldots,0,1,0,\ldots,0),\\
\vdots\\
u_{k-1}=&(0,0,\ldots,-1,1,0,\ldots,0)
\end{array}
\end{equation}
\end{lemma}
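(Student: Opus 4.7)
The plan is to reduce everything to the standard corner at a vertex of $F$ supplied by Definition~\ref{def:monot_corner}, and then to read off $\Pi_F^\perp$ by direct linear algebra.

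First we pick any vertex $v$ of the face $F$ and apply Definition~\ref{def:monot_corner} at $v$: there is an $SL(n,\Z)$-transformation after which $v=(-1,\ldots,-1)$ and the $n$ facets through $v$ are $\{x_i=-1\}$, $i=1,\ldots,n$. Since $\Delta$ is simple and $\dim F=n-k$, the face $F$ is the intersection of exactly $k$ of these $n$ facets; a coordinate permutation---taken to lie in $SL(n,\Z)$ by composing with a further transposition among the first $k$ or the last $n-k$ coordinates if its parity requires---brings those $k$ facets to $\{x_i=-1\}$ for $i=1,\ldots,k$, so $F\subset\{x_1=\cdots=x_k=-1\}$ as required. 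Because $w$ lies in the relative interior of $F$, the only facets of $\Delta$ touching $w$ are these same $k$, and hence in a small neighbourhood of $w$ the polytope $\Delta$ is cut out precisely by $\{x_1\ge -1,\ldots,x_k\ge -1\}$.

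Next we normalize $w$. In the current coordinates, $w=(-1,\ldots,-1,a_{k+1},\ldots,a_n)$ for some integers $a_j$. Composing with the integer translation $x\mapsto x-(0,\ldots,0,a_{k+1},\ldots,a_n)$---a lattice automorphism that preserves each hyperplane $\{x_i=-1\}$ for $i\le k$, and hence the local structure of $\Delta$ near $w$---sends $w$ to $(-1,\ldots,-1,0,\ldots,0)$. The translation vector lies inside the coordinate subspace $\{x_1=\cdots=x_k=0\}\subset\Pi_F$, so the linear span $\Pi_F=\mathrm{Span}_{\R}(F\cup\{0\})$ is unchanged by this translation, and in particular the asserted formula for $\Pi_F^\perp$ remains valid in the final coordinates.

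Finally, a direct computation gives $\Pi_F=\{(t,\ldots,t,y_{k+1},\ldots,y_n):t,y_j\in\R\}$, of dimension $n-k+1$, and therefore $\Pi_F^\perp=\{(a_1,\ldots,a_k,0,\ldots,0):\sum_{i=1}^{k}a_i=0\}$, of rank $k-1$. The vectors $u_j=-e_j+e_k$ for $j=1,\ldots,k-1$ lie in $N\cap\Pi_F^\perp$ and are $\Z$-linearly independent; an arbitrary element $(a_1,\ldots,a_{k-1},-a_1-\cdots-a_{k-1},0,\ldots,0)$ of $N\cap\Pi_F^\perp$ is the unique combination $\sum_{j=1}^{k-1}(-a_j)u_j$, so the $u_j$ form the desired integral basis. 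The main point requiring care will be combining the linear $SL(n,\Z)$-normalization near $v$ with the integer translation along $F$ used to place $w$ at the distinguished lattice point; once one notes that the translation vector lies in $\Pi_F$ and so preserves $\Pi_F^\perp$, the remaining steps reduce to the routine linear algebra above.
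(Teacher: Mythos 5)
Your argument has a genuine gap at the step where you normalize $w$. You move $w=(-1,\ldots,-1,a_{k+1},\ldots,a_n)$ to $(-1,\ldots,-1,0,\ldots,0)$ by the \emph{translation} $x\mapsto x-(0,\ldots,0,a_{k+1},\ldots,a_n)$, and you call this ``a lattice automorphism.'' It is not: a translation does not fix the origin, so it is not an element of $\mathrm{Aut}(M)=GL(n,\Z)$, let alone of $SL(n,\Z)$ as the Lemma requires. More to the point, the whole framework of monotone polytopes requires the origin to remain fixed (it is the barycentre over which the monotone torus fibre sits, and the defining inequalities $\langle\eta_j,x\rangle\le 1$ are anchored at the origin); translating $\Delta$ destroys this structure. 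Your later observation that the translation vector lies in $\Pi_F$, so that $\Pi_F$ and $\Pi_F^\perp$ are preserved, is correct linear algebra, but it does not repair the problem: you have not produced an $SL(n,\Z)$-transformation bringing $\Delta$ to standard form.

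The fix is small and is essentially what the paper does: instead of a translation, use a unipotent shear $\phi\in SL(n,\Z)$ that is the identity on the first $k$ coordinates and adjusts the last $n-k$ by an integer-linear function of the first $k$. For instance, $\phi(x_1,\ldots,x_n)=(x_1,\ldots,x_k,\ x_{k+1}+a_{k+1}x_1,\ \ldots,\ x_n+a_nx_1)$ sends $(-1,\ldots,-1,a_{k+1},\ldots,a_n)$ to $(-1,\ldots,-1,0,\ldots,0)$ because the first coordinate of $w$ is $-1$. This $\phi$ has determinant $1$, has integer entries, fixes the origin, preserves each hyperplane $\{x_i=-1\}$ for $i\le k$ and the corresponding half-spaces, and dually fixes the normals $-e_i^*$ for $i\le k$; hence the local form of $\Delta$ near $w$ and the computation of $\Pi_F^\perp$ go through unchanged. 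With that substitution, the rest of your argument (the reduction to a vertex of $F$ via Definition~\ref{def:monot_corner}, the simplicity argument identifying the $k$ facets through $w$, the parity fix to stay inside $SL(n,\Z)$, and the explicit identification $\Pi_F^\perp=\{(a_1,\ldots,a_k,0,\ldots,0):\sum a_i=0\}$ with the basis $u_j=-e_j+e_k$) is correct and matches the paper's approach, down to the detail that the $\Pi_F^\perp$ computation is the ``obvious'' part.
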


\begin{proof}
If follows from the monotonicity condition (Definition~\ref{def:monot_corner}) that there exists an $SL(n,\Z)$-transformation taking $F$ to the subspace above, and $\Delta$ to the form specified above in a neighbourhood of any interior point of $F$. Such transformation takes $w$ to a point of the form
$$
(-1,\ldots,-1,r_{k+1},\ldots,r_n),\quad r_i\in \Z.
$$
Next consider an $SL(n,\Z)$-transformation $\phi$ taking 
$$
(-1,\ldots,-1,r_{k+1},\ldots,r_n)\mapsto (-1,\ldots,-1,0,\ldots,0),
$$
thus bringing $w$ to the desired point,
which additionally preserves the linear subspace $\R^k\times \{0\}$ and its orthogonal complement, and acts by the identity on the orthogonal complement. Then $\phi$ preserves the affine plane
$\{x_1=\ldots=x_k=-1\}$, so $F$ stays within this plane under $\phi$. Also, $\phi$ preserves the normals to the facets meeting at $F$, so $\Delta$  stays in the form specified in the statement.
The clause about the basis of $\Pi_F^\perp$ is obvious.
\end{proof}

\begin{definition}
	\label{dfn:std_form}
Once $\Delta$ is brought to the form specified in Lemma~\ref{lem:std_w}, we say that $\Delta$ is in \emph{standard form} with respect to $(F,w)$.
\end{definition}

\subsection{Toric mutation}

\begin{theorem}[Higher-dimensional toric mutation]
	\label{th:mut_higher_dim_toric}
	Suppose $\Delta$ is a  monotone polytope and $(F,w)$ is its mutation configuration where $\dim F=n-k$. Then the corresponding toric manifold $X$ contains a monotone Lagrangian torus denoted by
	$$
	L'=\mu_{F,w}L
	$$
	whose LG~potential (see Definition~\ref{def:pot_LG}), written in some basis of $H_1(L',\Z)$, is equal to the function obtained from $W_L$ by substituting each monomial in (\ref{eq:toric_pot}) according to the rule below:
\begin{equation}
\label{eq:wc_map_higher}
\mathbf{x}^v\mapsto \mathbf{x}^v(1+\mathbf{x}^{u_1}+\ldots+\mathbf{x}^{u_{k-1}})^{-\langle v,w\rangle}.
\end{equation}
Here $v\in N$, $\langle v,w\rangle\in \Z$ is the pairing (recall that $w\in M$), and $u_1,\ldots, u_{k-1}$ is an integral basis of  $\Pi_F^\perp$.
\end{theorem}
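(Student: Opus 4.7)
\medskip

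The plan is to reduce the theorem to the general wall-crossing formula (Theorem~\ref{th:wall_crossing}) via a local model computation. First I would bring $\Delta$ into standard form with respect to $(F,w)$ using Lemma~\ref{lem:std_w}. In this normal form, a neighbourhood of $p^{-1}(w)$ in $X$ symplectically embeds into $\C^k \times (\C^*)^{n-k}$, with the moment map being the standard one $(|z_1|^2-1,\ldots,|z_k|^2-1,\arg\text{-coords on }(\C^*)^{n-k})$. Crucially, the image of $w$ is the origin on the $\C^k$ factor, and $L = p^{-1}(0)$ corresponds, in this chart, to the product of a Clifford-type torus $L_0 \subset \C^k$ (the unit torus $\{|z_i|=1\}$) with the standard torus on $(\C^*)^{n-k}$. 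Since the interior point $w$ lies over the face $F$, the model hypersurface $\{z_1z_2\cdots z_k = \epsilon\} \subset \C^k$ (for $\epsilon$ small) lies outside $L_0$, and one may perform a higher-dimensional Chekanov-type construction: take a Lagrangian torus $L_1 \subset \C^k \setminus \{z_1\cdots z_k = \epsilon\}$ sitting over a small circle $\gamma$ around $\epsilon$ under the map $\pi(z) = z_1\cdots z_k \colon \C^k \to \C$ (a Lefschetz fibration whose singular fibre is the coordinate cross), analogous to the two-dimensional construction of Section~\ref{subsec:model_nbhood}. I would define $L' = L_1 \times T^{n-k}$ inside the chart; monotonicity and exactness in the complement of a suitable divisor follow as in Lemma~\ref{lem:mutation}.

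Second, I would perform the local Floer computation that plays the role of Seidel's local wall-crossing (Lemma~\ref{lem:seidel_local}) in this higher-dimensional setting. The Lefschetz fibration $\pi \colon \C^k \to \C$ has $k$ vanishing thimbles sitting over $\gamma$, corresponding to the $k$ coordinate axes; their boundary cycles on $L_0$ span a rank-$(k-1)$ sublattice of $H_1(L_0,\Z)$ whose image in $H_1(L_0 \times T^{n-k})$ is exactly the dual lattice spanned by $u_1,\ldots,u_{k-1}$ appearing in Lemma~\ref{lem:std_w}. Holomorphic strips in $\C^k$ between $L_0$ and $L_1$ are counted by lifts of the two strips in $\C$ between the two circles $\{|z|=1\}$ and $\gamma$: the ``outer'' strip (missing $0$) contributes one term, while the ``inner'' strip (containing $0$) covers $k$ different thimbles and contributes $k$ lifts with boundary classes differing by the $u_i$. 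Running this computation with local systems $(\rho,\rho')$, the nonvanishing condition $HF^*_U((L,\rho),(L',\rho')) \neq 0$ is equivalent to the equality $\rho' = \mu_{F,w}(\rho)$ where $\mu_{F,w}$ is the birational map on $(\C^*)^n$ whose action on characters is given precisely by (\ref{eq:wc_map_higher}), with $w$ entering through the exponent via the pairing $\langle v,w\rangle$. The factor $(1+\mathbf{x}^{u_1}+\cdots+\mathbf{x}^{u_{k-1}})$ arises from the $k$ lifts of the inner strip (after a coordinate change that normalises one lift to $1$).

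Third, to promote this local identity to the global statement, I would find a Donaldson divisor $D \subset X$ containing the toric boundary divisor, or at least a compatible Donaldson divisor disjoint from $L \cup L'$ and their connecting local chart, as in Theorem~\ref{th:donaldson_hyp} and Corollary~\ref{cor:donaldon_mutation_config}, arranged so that $U \subset X \setminus D$ is a Liouville embedding and the maps $H_1(L) \to H_1(X)$, $H_1(L') \to H_1(X)$ vanish (which is automatic for toric fibres). Then Theorem~\ref{th:wall_crossing} (applied with $d=1$ or using the compatible trivialisations case, depending on $D$) combined with the local nonvanishing gives $W_L(\rho) = W_{L'}(\mu_{F,w}(\rho))$ for all local systems $\rho$ on $L$ in the open locus of definition of the wall-crossing map. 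This is equivalent to the substitution rule (\ref{eq:wc_map_higher}) applied monomial-by-monomial to (\ref{eq:toric_pot}), upon choosing a basis of $H_1(L',\Z)$ compatible with the isotopy from $L$ to $L'$ through the Lefschetz fibration.

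I expect the main obstacle to be the second step: correctly carrying out the higher-dimensional analogue of Seidel's strip count and, in particular, identifying the homology classes of the boundaries of the $k$ inner-strip lifts with the dual basis $\{u_i\}$ of $\Pi_F^\perp$ so that the formula comes out exactly as (\ref{eq:wc_map_higher}). Once the product structure of the local model reduces the problem to the $(\C^k \setminus \{z_1\cdots z_k = \epsilon\})$-factor via a Künneth-type argument for $HF^*$, the key input is understanding the moduli of holomorphic strips for the higher Lefschetz fibration and verifying transversality (for instance using an integrable complex structure and branched-cover arguments, as in Seidel's approach); the bookkeeping of $H_1$-classes then produces the non-binomial $(1+\mathbf{x}^{u_1}+\cdots+\mathbf{x}^{u_{k-1}})$ factor.
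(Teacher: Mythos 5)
Your overall strategy matches the paper's: reduce to a local model $\C^k\setminus\{z_1\cdots z_k=\epsilon\}$ times $(\C^*)^{n-k}$, prove a higher-dimensional analogue of Seidel's local wall-crossing (the paper's Lemma~\ref{lem:seidel_local_higher}, whose strip count you describe correctly), and then invoke Theorem~\ref{th:wall_crossing}. But there is a genuine gap at the point where you write that ``monotonicity and exactness in the complement of a suitable divisor follow as in Lemma~\ref{lem:mutation}.'' Lemma~\ref{lem:mutation} rests on the four-dimensional Weinstein neighbourhood theorem for $L\cup D$ (Lemma~\ref{lem:nbhood_for_T_D}), and there is no clean analogue for the singular skeleton $T\cup D$ in higher dimensions: the cone $D$ over $T^{k-1}$ has a conical singularity whose Legendrian link carries a symplectic invariant, so one cannot simply transport the local model into $X$ by a Weinstein-type argument. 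This is exactly the point flagged in Remark~\ref{rmk:no_wstein}. The paper's route around it is Proposition~\ref{prop:higher_conf_nbhd}: constructing an explicit K\"ahler potential $\Phi$ on $M$ whose skeleton is $T\cup D$, and then running a symmetry-preserving Moser argument to identify a neighbourhood $(U,\omega_{std})$ of $T\cup D$ with a genuine Liouville subdomain $(V,dd^c\Phi)$ that preserves both $T$ and $D$ and respects the Clifford/Chekanov dichotomy. Without some replacement for this, you have no Liouville domain $U$ in which to compute $HF^*_U$, and no justification that your chosen Chekanov-type torus $L'$ is exact and monotone.

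A second, smaller discrepancy: you propose to produce the auxiliary divisor via Theorem~\ref{th:donaldson_hyp} or Corollary~\ref{cor:donaldon_mutation_config}. Corollary~\ref{cor:donaldon_mutation_config} is stated only for two-tori with a two-disk attached, and the AGM-style argument behind it is for pairs of cleanly intersecting smooth isotropics, which $T\cup D$ is not (the cone is singular at the apex). The paper sidesteps this by taking $\Sigma$ to be a smoothing of the toric boundary divisor of $X$ (an anticanonical divisor, so $d=1$) and arranging in the chart that $\Sigma$ matches $\{z_1\cdots z_k=\epsilon\}$, which is both more elementary and exactly compatible with the local model. Your local Floer computation and the basis bookkeeping via $\Pi_F^\perp$ are on target and agree with Lemmas~\ref{lem:one_basis_enough} and~\ref{lem:seidel_local_higher}, but the neighbourhood construction and divisor choice need to be repaired along the lines above.
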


There is a birational map $(\C^*)^n\dashrightarrow(\C^*)^n$ whose action on monomials is given by (\ref{eq:wc_map_higher}), and this map is called the \emph{wall-crossing map}. So the statement of the theorem says that $W_L$, $W_{L'}$ differ by the action of the wall-crossing map.

Similarly to the discussion in Section~\ref{sec:mut_4d}, a Laurent polynomial does not a~priori remain a Laurent polynomial under the substitutions (\ref{eq:wc_map_higher}). But in the setting of Theorem~\ref{th:mut_higher_dim_toric}, $W_L$ has to remain a Laurent polynomial under the specified mutation because it becomes the LG~potential of another monotone Lagrangian torus.

\begin{example}
	Consider a moment polytope of a toric del~Pezzo surface, and let  $F=w=(u_1,u_2)$ be its vertex. In the statement of Theorem~\ref{th:mut_higher_dim_toric}, this means $k=n=2$. Next, $\Pi_F$ is the line spanned by $w$ and the origin, so we can take the vector 
	$(u_2,-u_1)$ to be the basis of the 1-dimensional space $\Pi_F^\perp$. Hence (\ref{eq:wc_map_higher}) becomes the wall-crossing map (\ref{eq:mutation_monomial}) studied in Section~\ref{sec:mut_4d}. So in dimension~four, Theorem~\ref{th:mut_higher_dim_toric} is a particular case of Theorem~\ref{th:lag_del_pezzo}.
\end{example}

\begin{remark}
The case $k=1$ (i.e.~taking $F$ to be a facet) is disallowed in Theorem~\ref{th:mut_higher_dim_toric}, but it is also natural to put (\ref{eq:wc_map_higher}) to be the identity map for $k=1$.
\end{remark}

\begin{example}
	\label{ex:wc_std}
Suppose $\Delta$ is in standard form with respect to $(F,w)$, see Definition~\ref{dfn:std_form}. For the choice of a basis of $\Pi_F^\perp$ from Lemma~\ref{lem:std_w}, the wall-crossing map is given by:
	\begin{equation}
	\label{eq:wc_std_corner}
	\begin{array}{ll}
		x_i\mapsto x_i x_k(x_1^{-1}+\ldots+x_k^{-1}),& 
		i=1,\ldots,k,\\
		x_i\mapsto x_i,&
		i=k+1,\ldots,n.
	\end{array}
	\end{equation}		
\end{example}

\begin{example}
	Let $\Delta$ be the standard moment polytope of $\C P^n$:
	$$
	\{x_i\ge -1,\ \textstyle\sum_i x_i\le 1\}
	$$
	Let $F$ be the $(n-k)$-dimensional face of $\Delta$ which belongs to the plane
	$$
	\{x_1=\ldots=x_k=-1\},
	$$
	and consider its interior lattice point 
	$$w=(\underbrace{-1,\ldots,-1}_{k \textit{ times}},0,\ldots,0).$$ 
	Here $k$ is any integer between $2$ and $n$.
	The toric potential (\ref{eq:toric_pot}) equals:
	$$
	W_L=x_1^{-1}+\ldots+x_{n}^{-1}+x_1\ldots x_n.
	$$ Using Theorem~\ref{th:mut_higher_dim_toric} and the wall-crossing computed in (\ref{eq:wc_std_corner}), the potential of the torus mutated along $(F,w)$ equals:
	\begin{equation}
	\label{eq:pot_mut_cpn}
	W_{\mu_{F,w}L}=\textstyle\sum_{i=k}^n x_i^{-1}+(x_k)^k\cdot (\textstyle\sum_{i=1}^k x_i^{-1})^k\cdot \textstyle\prod_{i=1}^n x_i
	\end{equation}
	When $k=1$, the above formula simply yields $W_L$.
\end{example}

\begin{corollary}
  \label{cor:cpn_tori}
	For each $1\le k\le n$, $\C P^n$ contains a monotone Lagrangian torus whose potential is given by (\ref{eq:pot_mut_cpn}).\qed
\end{corollary}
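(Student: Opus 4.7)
\smallskip
\noindent\textbf{Proof proposal.} The plan is to exhibit an explicit mutation configuration on the standard monotone polytope of $\C P^n$ for each $k$, and then invoke Theorem~\ref{th:mut_higher_dim_toric} together with the substitution recorded in Example~\ref{ex:wc_std}. Take $\Delta\subset M_\R$ to be the standard simplex $\{x_i\ge -1,\ \sum_i x_i\le 1\}$; it is a monotone polytope and the toric fibre over $0$ is the Clifford torus $L\subset \C P^n$ with potential $W_L=\sum_{i=1}^n x_i^{-1}+\prod_{i=1}^n x_i$. For each $k$ with $2\le k\le n$, let $F\subset \Delta$ be the face cut out by $\{x_1=\dots=x_k=-1\}$, which has dimension $n-k$, and let $w=(-1,\dots,-1,0,\dots,0)\in M$ with $k$ copies of $-1$. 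Then $w$ lies in the relative interior of $F$, so $(F,w)$ is a mutation configuration in the sense of Definition~\ref{def:mut_config}. The case $k=1$ is trivial: the formula (\ref{eq:pot_mut_cpn}) reduces to $W_L$ itself, so one can take $\mu_{F,w}L=L$.

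First I would observe that $\Delta$ is already in standard form with respect to $(F,w)$ in the sense of Definition~\ref{dfn:std_form}, because the $k$ facets meeting $F$ are exactly $\{x_i=-1\}$ for $i=1,\dots,k$ and $w$ already has the required shape. Hence Lemma~\ref{lem:std_w} supplies the integral basis $u_j=e_k-e_j$ (for $j=1,\dots,k-1$) of $\Pi_F^\perp$, for which $\mathbf{x}^{u_j}=x_j^{-1}x_k$ and so
\[
1+\mathbf{x}^{u_1}+\dots+\mathbf{x}^{u_{k-1}}=x_k\bigl(x_1^{-1}+\dots+x_k^{-1}\bigr).
\]
Applying Theorem~\ref{th:mut_higher_dim_toric} then produces a monotone Lagrangian torus $L'=\mu_{F,w}L\subset\C P^n$ whose potential, in the appropriate basis, is obtained from $W_L$ by the substitution (\ref{eq:wc_std_corner}), i.e.\ $x_i\mapsto x_ix_k(x_1^{-1}+\dots+x_k^{-1})$ for $i\le k$ and $x_i\mapsto x_i$ for $i>k$.

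The remaining step is the direct calculation. For $i\le k$ the monomial $x_i^{-1}$ maps to $x_i^{-1}x_k^{-1}(x_1^{-1}+\dots+x_k^{-1})^{-1}$, and summing over $i=1,\dots,k$ the factor $(x_1^{-1}+\dots+x_k^{-1})$ cancels, yielding $x_k^{-1}$. The monomials $x_i^{-1}$ for $i>k$ are fixed, and the top monomial $\prod_i x_i$ acquires the factor $\bigl(x_k(x_1^{-1}+\dots+x_k^{-1})\bigr)^{k}$. Collecting terms gives exactly (\ref{eq:pot_mut_cpn}), completing the proof.

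No step here is genuinely hard: the content sits entirely in Theorem~\ref{th:mut_higher_dim_toric}, whose hypotheses were arranged precisely so this kind of example can be handled by bookkeeping. The only place where one must be mildly careful is in checking that $(F,w)$ really is a mutation configuration (in particular that $w$ is an \emph{interior} lattice point of $F$, which holds because $F$ is an $(n-k)$-dimensional face of the simplex containing the vertex $(-1,\dots,-1,0,\dots,0)$ in its relative interior for $k\le n$) and that $\Delta$ is already in standard form, so that the explicit form of the wall-crossing map from Example~\ref{ex:wc_std} may be used directly.
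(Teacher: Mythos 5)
Your proposal is correct and follows exactly the route the paper itself takes: the corollary is proved by the example immediately preceding it, which identifies the face $F=\{x_1=\dots=x_k=-1\}$ with interior lattice point $w=(-1,\dots,-1,0,\dots,0)$, observes the polytope is already in standard form, applies Theorem~\ref{th:mut_higher_dim_toric} via the substitution of Example~\ref{ex:wc_std}, and collects terms. (One small imprecision: you refer to $w$ as ``the vertex'' of $\Delta$, but for $k<n$ it is merely a lattice point in the relative interior of $F$, not a vertex; the conclusion you draw is nevertheless correct.)
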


In the rest of the section we shall prove Theorem~\ref{th:mut_higher_dim_toric}, and finish off by an example illustrating that  it is essential to consider \emph{interior} lattice points in the definition of a mutation configuration.

\begin{lemma}
	\label{lem:one_basis_enough}
If Theorem~\ref{th:mut_higher_dim_toric} holds for a single choice of a basis $u_1,\ldots,u_{k-1}$ of $\Pi_F^\perp$, then it holds for all such choices.
\end{lemma}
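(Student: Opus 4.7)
Any two integral bases of $\Pi_F^\perp$ differ by an element $A\in GL(k-1,\Z)$, say $u_i' = \sum_j A_{ij}\, u_j$. My goal is to produce a matrix $\tilde A\in GL(n,\Z)$ extending $A$, whose associated monomial automorphism of $(\C^*)^n$ (via \eqref{eq:change_coord_W}) identifies the Laurent polynomial output of the substitution rule \eqref{eq:wc_map_higher} for the basis $\{u_i\}$ with that for $\{u_i'\}$. Since a $GL(n,\Z)$-change of coordinates is precisely a change of basis of $H_1(L',\Z)$, this shows that both outputs represent the LG potential of the same mutated torus $L'$, just in two different bases of $H_1(L',\Z)$; hence Theorem~\ref{th:mut_higher_dim_toric} for one basis of $\Pi_F^\perp$ implies it for the other.

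First, since $\Pi_F^\perp\subset N$ is the kernel of the rational linear map $N\to \Z$ given by $v\mapsto \langle v, w\rangle$ restricted to the $\Q$-span of $\Pi_F$, it is a saturated sublattice of $N$ and therefore admits an integral direct-sum complement $N=\Pi_F^\perp\oplus N''$. Define $\tilde A := A\oplus \mathrm{id}_{N''}\in GL(n,\Z)$, and let $\psi\co (\C^*)^n\to (\C^*)^n$ denote the induced monomial automorphism so that $\psi^*(\mathbf{x}^v)=\mathbf{x}^{\tilde A v}$.

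Next, I verify two compatibilities. (a) $\tilde A$ \emph{preserves the pairing with $w$}: for $v=v^\perp+v''$ with $v^\perp\in\Pi_F^\perp$ and $v''\in N''$, we have $\langle v,w\rangle=\langle v'',w\rangle$ because $w\in\Pi_F$ annihilates $\Pi_F^\perp$, and $\tilde A v=Av^\perp+v''$ so $\langle \tilde A v,w\rangle=\langle v'',w\rangle$. (b) $\psi^*$ \emph{carries $P$ to $P'$}: since $\tilde A u_i=u_i'$, we get $\psi^*(\mathbf{x}^{u_i})=\mathbf{x}^{u_i'}$ and hence $\psi^* P = P'$. Combining (a) and (b), I obtain the pointwise intertwining
\begin{equation*}
\psi^*\!\left[\mathbf{x}^v\, P^{-\langle v,w\rangle}\right] = \mathbf{x}^{\tilde A v}\,(P')^{-\langle v,w\rangle} = \mathbf{x}^{\tilde A v}\,(P')^{-\langle \tilde A v, w\rangle},
\end{equation*}
i.e. the substitution rule for $\{u_i\}$ post-composed with $\psi^*$ equals the substitution rule for $\{u_i'\}$ applied to $\psi^*$-pulled-back monomials. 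Summing over the monomials of $W_L$ and using that $\psi^* W_L$ is simply $W_L$ re-expressed in the basis of $H_1(L,\Z)$ obtained by $\tilde A$ (again a toric potential for the same fiber $L$ in a new basis), the assumed conclusion for $\{u_i\}$ yields the analogous conclusion for $\{u_i'\}$, with the basis of $H_1(L',\Z)$ modified by $\tilde A$.

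\textbf{Main obstacle.} The only non-trivial bookkeeping is to interpret the substitution rule as an intrinsic operation on a toric brane rather than as an algebraic recipe in a fixed coordinate system: one must simultaneously transport the $GL(n,\Z)$-change of basis $\tilde A$ across both $H_1(L,\Z)$ and $H_1(L',\Z)$, so that the formula \eqref{eq:wc_map_higher} for $\{u_i'\}$ is read off as the formula for $\{u_i\}$ expressed in the new coordinates. Note also that a different choice of complement $N''$ would alter $\tilde A$ by a further element of $GL(n,\Z)$ acting trivially on $\Pi_F^\perp$, which only changes the resulting basis of $H_1(L',\Z)$ by another $GL(n,\Z)$-factor; hence the conclusion of the lemma is insensitive to this auxiliary choice.
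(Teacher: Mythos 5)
Your proposal takes essentially the same approach as the paper: both proofs reduce the problem to exhibiting an element of $GL(n,\Z)$ extending $A\in GL(k-1,\Z)$ on $\Pi_F^\perp$, trivial on a complement, that fixes the pairing with $w$ and conjugates one wall-crossing map to the other. The only difference is organizational—the paper first applies an $SL(n,\Z)$-transformation putting $\Pi_F^\perp$, $w$ in a standard position (so that the extension is the obvious block matrix $A\oplus\mathrm{id}$), whereas you construct $\tilde A$ directly by choosing a saturated complement $N''$ of $\Pi_F^\perp$; your explicit verification of the two compatibilities (a) and (b) makes precise the "commutes with" statement that the paper leaves terse.
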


\begin{proof}
After an $SL(n,\Z)$-transformation, one can assume that  $\Pi_F$ is spanned by the last $n-k+1$ basis vectors in $M$, $\Pi_F^\perp$ is spanned by the first $k-1$ basis vectors in $N$, and $w\in M$ is the $k$th basis vector. Choosing the basis of $\Pi_F^\perp$ to consist of the first $k-1$ basis vectors, (\ref{eq:wc_map_higher}) becomes:
\begin{equation}
\label{eq:wall_cross_simple}
\begin{array}{ll}
y_i\mapsto y_i,\quad  i=1,\ldots, k-1,\\
y_k\mapsto y_k(1+y_1+\ldots+y_{k-1}),\\
y_i\mapsto y_i,\quad i=k+1,\ldots,n.
\end{array}
\end{equation}
In this presentation it is clear that $SL(k-1,\Z)$ acting on the first $k-1$ co-ordinates (both on the source and the target) commutes with (\ref{eq:wall_cross_simple}), and the claim follows.
\end{proof}

\subsection{Local model}
In order to define toric mutation and prove Theorem~\ref{th:mut_higher_dim_toric}, we shall use a higher-dimensional version of the local model discussed in Section~\ref{sec:mut_4d}.
The technical implementation of this discussion will be somewhat harder than the 4-dimensional; this is mostly to do with constructing a neighbourhood of a mutation configuration which is convex with respect to the standard $J$.

We use the following symplectic manifold as the model: 
\begin{equation}
\label{eq:M_higher}
M=\C^k\setminus\{z_1\ldots z_k=\epsilon \}.
\end{equation}
We begin with
the symplectic form $\omega_{std}$ which is the restriction of the standard one from $\C^n$. The form $\omega_{std}$ is \emph{not} Liouville on $M$ because is does not `blow up' along $\{z_1\ldots z_k=\epsilon\}$. There exists a Liouville structure on $M$ and we will use it later, but right now we shall work with $\omega_{std}$.

Consider the holomorphic fibration
\begin{equation}
\label{eq:pi}
\pi\co M\to \C\setminus \{\epsilon\},\quad \pi(z_1,\ldots,z_k)=z_1\ldots z_k,\quad z_i\in \C.
\end{equation}
The number $\epsilon\in\R_{>0}$ shall be fixed throughout the discussion.
Proceeding as in Section~\ref{sec:mut_4d}, for any simple closed curve 
$\gamma\subset \C\setminus\{0,\epsilon\}$ which encloses the point $\{\epsilon\}$, one defines a Lagrangian torus
\begin{equation}
\label{eq:T_gamma_higher}
T_{\gamma}=\{(x_1,\ldots,x_k)\in \C^k:\  \pi(x_1,\ldots,x_k)\in\gamma,\  |x_1|=\ldots=|x_k|\}\subset M.
\end{equation}
(In the language of Section~\ref{sec:mut_4d}, this torus would have been called $T_{\gamma,0}$. Also, in Section~\ref{sec:mut_4d} we had $\epsilon=1$.)
Once again, the tori $T_\gamma$ are divided into two Hamiltonian isotopy classes, depending on whether or not $\gamma$ encloses the origin. We call the tori of the first type \emph{Clifford-type} tori, and tori of the second type \emph{Chekanov-type}.

The role of the Lagrangian disk appearing in Lemma~\ref{lem:config_W}
is now played by a singular Lagrangian $D_\delta$ which is topologically a cone over $T^{n-1}$. This singular Lagrangian $D_\delta$ is attached to the torus $T_{\gamma}$ along an $(n-1)$-dimensional subtorus of $T_\gamma$. In a formula:
$$
D_\delta=\{(x_1,\ldots,x_k)\in \C^k:\ \pi(x_1,\ldots,x_k)\in \delta, \ |x_1|=\ldots=|x_k|\},
$$
where $\delta\subset \C\setminus\{1\}$
is any simple path connecting a point of $\gamma$ with the origin. 
Different choices of $\delta$ produce Hamiltonian isotopic configurations $T_\gamma\cup D_\delta$.

Ultimately, we are interested in the standard choice of a Clifford-type torus with an attached Lagrangian cone:
\begin{equation}
\label{eq:def_T_D}
T=T_\gamma,\quad D=D_\delta
\end{equation}
where
$$
\gamma=\{z:|z|=1\},\quad \delta=[-1,0].
$$
One expects that $T\cup D\subset M$ has a neighbourhood $U$ such that:
\begin{itemize}
	\item $U$ is a Liouville domain with respect to $\omega_{std}$,
	\item the standard complex structure $J$ is cylindrical near $\bd U$.
\end{itemize}
In the next subsection, we are going to establish a  weaker version of this statement sufficient for the purpose of defining toric mutations and proving the wall-crossing formula.

\begin{remark}
	\label{rmk:nearby_chek}
Any given neighbourhood $U$ of $T\cup D$ contains  a (Hamiltonian isotopic copy of) Chekanov-type torus. Indeed,	
by an appropriate choice of $\gamma$, a Chekanov-type torus $T_\gamma$ can be arranged to lie arbitrarily close to the standard configuration $T\cup D$.
\end{remark}

\begin{remark}
\label{rmk:no_wstein}
It is slightly easier to show that $M$ has a Liouville structure (with a symplectic form other than $\omega_{std}$) whose skeleton is $T\cup D$. We are going to prove this as an intermediate step, but here is an explanation why we eventually need to show the existence of a neighbourhood $U$ of $T\cup D$ which is convex (or Liouville) with respect to $\omega_{std}$. Recall that in dimension~4, we proved Lemma~\ref{lem:nbhood_for_T_D} which is a version
of the Weinstein neighbourhood theorem for a mutation configuration. In the present situuation, we will avoid using and proving a version of the Weinstein neighbourhood theorem for the Lagrangian skeleton $T\cup D$, as our toric models (studied later) will be immediately symplectomorphic to neighbourhoods of the form $(U,\omega_{std})$---see Lemma~\ref{lem:toric_T_D} below.

The Weinstein neighbourhood theorem for the Lagrangian CW complex formed by the union of an $n$-torus and an attached Lagrangian cone over an $(n-1)$-subtorus has a subtlety when $n>1$. Such neighbourhoods have a symplectic invariant, namely the Legendrian isotopy class of the Legendrian $(n-1)$-torus in $S^{2n-1}$ which is the link of the conical singularity. 
The correct version of the Weinstein neighbourhood theorem must require that
the link invariants coincide for the two  configurations in question. While it is possible to prove this version of the theorem, we bypass the need for it.
\end{remark}

\begin{remark}
We announced that $T\cup D\subset U$ will be used as a local model for defining toric mutations. Let us make it precise: this is the model which shall be used for Theorem~\ref{th:mut_higher_dim_toric} in the case $k=n$. When $k<n$, it has to be additionally multiplied by $(\C^*)^{n-k}$ with the trivial Lagrangian $(n-k)$-torus therein.
\end{remark}

We conclude this subsection by an obvious lemma summarising the relevance of our local model to toric geometry.

\begin{lemma}
	\label{lem:toric_T_D}
Consider the standard toric fibration $p\co \C^k\to (\R_{\ge -1})^k$, and  consider the line segment
$$I=\{(l,\ldots, l):-1\le l\le 0\}\subset (\R_{\ge -1})^k.$$
(Compare with Figure~\ref{fig:std_corner}, right.) Then, for the standard $T$ and $D$ from (\ref{eq:def_T_D}), we have
$$
T=p^{-1}(0,\ldots,0),\quad p(D)=I.
$$
\end{lemma}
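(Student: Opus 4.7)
The statement is a direct unwinding of definitions once we fix the conventions for the moment map of the standard toric fibration $p\co \C^k\to (\R_{\ge -1})^k$. The plan is to use (up to the usual factor that sends facets to $\{x_i=-1\}$) the moment map of the form
\begin{equation*}
p(z_1,\ldots,z_k)=(|z_1|^2-1,\ldots,|z_k|^2-1),
\end{equation*}
so that the facets $\{x_i=-1\}$ correspond to the coordinate hyperplanes $\{z_i=0\}$.

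First I would compute $p^{-1}(0,\ldots,0)$. This is the set $\{|z_1|=\cdots=|z_k|=1\}$. For the Clifford-type torus $T=T_\gamma$ with $\gamma=\{|z|=1\}$, the conditions $|x_1|=\cdots=|x_k|=r$ and $x_1\cdots x_k\in\gamma$ force $r^k=1$ hence $r=1$. Conversely, any point of $\{|z_1|=\cdots=|z_k|=1\}$ lies on $T_\gamma$. Thus $T=p^{-1}(0,\ldots,0)$.

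Second I would compute $p(D)$ for $D=D_\delta$ with $\delta=[-1,0]$. A point of $D$ satisfies $|x_1|=\cdots=|x_k|=r$ and $x_1\cdots x_k\in[-1,0]$; this forces $r^k\in[0,1]$, i.e.\ $r\in[0,1]$, and its image under $p$ is $(r^2-1,\ldots,r^2-1)\in I$. Conversely, for every $r\in[0,1]$ there exist phases $\theta_1,\ldots,\theta_k$ with $\theta_1+\cdots+\theta_k\equiv\pi\pmod{2\pi}$ so that $x_j=re^{i\theta_j}$ lies in $D_\delta$ and maps to $(r^2-1,\ldots,r^2-1)$. Letting $r$ range over $[0,1]$ sweeps out the full segment $I$. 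Hence $p(D)=I$.

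There is no substantive obstacle: the lemma simply records the fact that the standard local configuration $T\cup D$ in $M$ (with the standard symplectic form $\omega_{std}$) coincides with the preimage of the vertex $\mathbf{0}$ together with the segment $I$ under the standard toric moment map on $\C^k$. This identification is the reason why the model from Section~\ref{sec:mut_4d} can be recognized inside toric manifolds near a vertex of the moment polytope, cf.\ the right-hand picture in Figure~\ref{fig:std_corner}.
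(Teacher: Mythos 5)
Your proof is correct and follows essentially the same approach as the paper: both write the moment map as $p(z_1,\ldots,z_k)=(|z_1|^2-1,\ldots,|z_k|^2-1)$ and verify the two claims by direct computation from the defining conditions of $T_\gamma$ and $D_\delta$, the only cosmetic difference being that the paper displays an explicit parametrization of $D$ while you argue from the conditions $|x_1|=\cdots=|x_k|$ and $x_1\cdots x_k\in[-1,0]$.
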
	

\begin{proof}
The fibration $p$ is given by
$$
(z_1,\ldots,z_k)\mapsto (|z_1|^2-1,\ldots,|z_k|^2-1).
$$
The torus $T$ is given by the equations
$$
\{|z_1|=\ldots=|z_k|=1\},
$$
so it coincides with $p^{-1}(0)$. The Lagrangian cone $D$ is explicitly given by
$$
\{(re^{i\phi_1},re^{i(\phi_2-\phi_1)},\ldots,re^{i(\phi_{k-1}-\phi_{k-2})},re^{i(\pi-\phi_{k-1})}):0\le r\le 1,\ \phi_j\in[0,2\pi]\}
$$
and we see that  $p(D)=I$.
\end{proof}	

\subsection{A convex neighbourhood}
The aim of this subsection is to prove a weaker form (Proposition~\ref{prop:higher_conf_nbhd}) of the statement that there are neighbourhoods $(U,\omega_{std})$ of $T\cup D$ which are Liouville domains, and such that the standard $J$ is convex.
 
Recall that $M=\C^k\setminus\{z_1\ldots z_k=\epsilon\}$.
Consider the function
\begin{equation}
\label{eq:Phi_0}
\Phi_0=\sum_{i=1}^k|z_k|^2
\end{equation}
on $\C^k$, restricted to $M$. Then
$$
\omega_{std}=dd^c\Phi_0.
$$
Recall that for any function $f$, $d^cf=df\circ J$ where $J$ is the standard complex structure on $\C^k$. 

The existence of a Liouville structure on $M$ is well known, see e.g.~\cite[Section~4]{Au17}; let us remind it. One starts with a K\"ahler potential $\tilde\Phi$ on $M$ blowing up near the `boundary-at-infinity' of $M$: for example the most standard choice could be
$$
\tilde \Phi=\textstyle\sum_i |z_i^2|+(\log(|z_1\ldots z_k-\epsilon|))^2.
$$
One takes the following symplectic form on $M$:
$$
\omega=dd^c\tilde\Phi,
$$
then its has the Liouville form  $d^c\tilde\Phi$. Recall that different choices of K\"ahler potentials give symplectomorphic  symplectic structures on $M$.

It is possible, but not very easy, to compute the skeleton of $M$ with respect to above Liouvile structure. It will turn out that the skeleton is of the form $T_\gamma\cup D_\delta$. The vanishing locus of $d^c\tilde\Phi$ is composed of two isotropic $(k-1)$-tori within $T_\gamma$, and the origin as an isolated point. To compute the whole skeleton, one has to determine all flowlines of $d^c\tilde\Phi$ which do not escape to infinity.

We now will modify the potential $\tilde\Phi$ to make the determination of the skeleton easier.
Denote 
$$
F=(|z_1|-|z_2|)^2+(|z_2|-|z_3|)^2\ldots+(|z_{k-1}|-|z_k|)^2+(|z_k|-|z_1|)^2.
$$

\begin{figure}[h]
\includegraphics[]{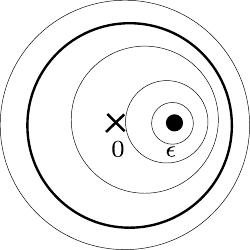}
\caption{The level sets of a function $g\co \C\setminus\{\epsilon\}\to\R$. The minimum is achieved at the unit circle (bold).}
\label{fig:kahler}
\end{figure}
Next, let 
$$
g(z)\co\C\setminus\{\epsilon\}\to \R
$$
be a subharmonic function whose critical locus coincides with the circle $|z|=1$ where it achieves its minimum,  which tends to $+\infty$ as $z\to\epsilon$ or $z\to\infty$, and which is conjugation-invariant. Figure~\ref{fig:kahler} shows how the level sets of $g$ should look like.
Set 
$$
G=g(z_1\ldots z_k)
$$
and 
\begin{equation}
\label{eq:Phi}
\Phi=F+G\co M\to\R.
\end{equation}
This is the K\"ahler potential that we are going to use for determining the skeleton of $M$; we use the symplectic form $\omega=dd^c\Phi$ on $M$.

Observe that
$$
\mathrm{Crit}\, F=\{(z_1,\ldots,z_k): |z_1|=\ldots=|z_k|\}.
$$
Next, by restricting to the fibres of $\pi$, we see
that $\mathrm{Crit}\,\Phi\subset \mathrm{Crit}\, F$.
A further computation shows that
$$
\mathrm{Crit}\,\Phi=\{0\}\cup\{(z_1,\ldots,z_k):|z_1|=\ldots=|z_k|=1\}=\{0\}\sqcup T.
$$
We remind that the holomorphic map $\pi$ was introduced in (\ref{eq:pi}), and the torus $T$ in (\ref{eq:def_T_D}).
\begin{corollary}
It holds that $T$ is $d^c\Phi$-exact.
\end{corollary}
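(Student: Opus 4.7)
The corollary follows almost immediately from the computation of $\mathrm{Crit}\,\Phi$ carried out in the preceding paragraph. My plan is simply to observe that exactness of $T$ with respect to the Liouville form $d^c\Phi$ is a consequence of $T$ being contained in the critical locus of $\Phi$.

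More precisely: since $T\subset \mathrm{Crit}\,\Phi$, at every point $p\in T$ the differential $d\Phi|_p\in T^*_pM$ vanishes. Because $d^c\Phi=d\Phi\circ J$ (pointwise in $p$), the covector $d^c\Phi|_p\in T^*_pM$ also vanishes for every $p\in T$. Pulling back along the inclusion $T\hookrightarrow M$, we conclude that $d^c\Phi|_T\equiv 0$ as a $1$-form on $T$, and in particular it is exact (equal to $d(0)$). Hence $T$ is an exact Lagrangian with respect to the Liouville form $d^c\Phi$.

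There is no genuine obstacle here; the content of the corollary is entirely absorbed into the verification $T\subset \mathrm{Crit}\,\Phi$ that has just been recorded. I would present this as a one- or two-line argument: ``By the previous calculation, $T\subset \mathrm{Crit}\,\Phi$, so $d\Phi$ vanishes pointwise along $T$, and hence so does $d^c\Phi=d\Phi\circ J$; in particular $d^c\Phi|_T=0$ is exact.''
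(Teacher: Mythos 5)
Your argument is correct and is exactly the paper's proof: since $T\subset\mathrm{Crit}\,\Phi$, the differential $d\Phi$ vanishes pointwise along $T$, hence so does $d^c\Phi=d\Phi\circ J$, and the restriction $d^c\Phi|_T=0$ is trivially exact. You have merely spelled out the paper's one-line remark in slightly more detail.
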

\begin{proof}
	Indeed, $d^c\Phi=d\Phi\circ J$ vanishes on $T$.
\end{proof}	

The skeleton of $M$ is the union of $\mathrm{Crit}\,\Phi$ with all flowlines of the Liouville field $v$, $\iota_v\omega=d^c\Phi$, which do not escape to infinity, meaning that they  converge to $\{0\}$ at time $+\infty$ and to a point in $T$ at time $-\infty$. 
For further analysis, let us point that both $\Phi$ and $\Phi_0$ are invariant under:
\begin{itemize}
	\item the $T^{k-1}$ action generated by the $S^1$-actions of the form $(z_i\mapsto z_ie^{i\phi},\, z_j\mapsto z_je^{-i\phi})$,
	\item the $\Z/k\Z$-action cyclically permuting the co-ordinates,
	\item the complex conjugation of all co-ordinates simultaneously.
\end{itemize}
\begin{lemma}
The union of all non-constant non-escaping trajectories of the Liouville vector field with respect to the Liouville form $d^c\Phi$, where $\Phi$ is given by (\ref{eq:Phi}), is precisely 
$$
D=\{(z_1,\ldots,z_k): |z_1|=\ldots=|z_k|, \ z_1\ldots z_k\in[-1,0]\subset \C\}.
$$
The skeleton of $M$ is therefore
the standard configuration $T\cup D$.
\end{lemma}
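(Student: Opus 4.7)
The Liouville field $v$ (defined by $\iota_v\omega=d^c\Phi$) is the Kähler gradient of $\Phi$; with the sign convention making $\Phi$ strictly increase along non-constant orbits, a forward orbit is non-escaping iff it converges to a critical point. Combined with $\mathrm{Crit}(\Phi)=T\sqcup\{0\}$, the fact that $T$ is the global minimum of $\Phi$, and $\Phi(0)=g(0)>\min\Phi$, one has $W^s(T)=T$, so
\[
\mathrm{skel}(M)\;=\;T\cup W^s(0),
\]
and the task reduces to showing $W^s(0)=D$.

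\textbf{Step 1: $D\subset W^s(0)$.} The flow is equivariant under the three symmetries of $\Phi$ listed before the lemma: the $T^{k-1}$-action $z_i\mapsto z_i e^{i\phi_i}$ with $\sum\phi_i\equiv 0$, cyclic permutation of coordinates, and simultaneous conjugation, all of which are Kähler isometries preserving $\Phi$; and $D$ is invariant under these. It therefore suffices to verify $D\subset W^s(0)$ on the single transversal $\gamma(t)=(t,\ldots,t,-t),\ t\in(0,1]$. On $\gamma$ the moduli are equal, so $\nabla F=0$; and since $\pi(\gamma(t))=-t^k\in\R$ and $g$ is conjugation-symmetric, $\nabla g(\pi)\in\R$. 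A direct computation then gives
\[
\nabla\Phi\big|_{\gamma(t)}\;=\;-g'(-t^k)\,t^{k-1}\,(1,\ldots,1,-1),
\]
which is tangent to $\gamma$ and directed toward the origin, since $g'(-t^k)>0$ for $t\in(0,1]$ (as $-t^k\in[-1,0)$ lies in the increasing region of $g|_{\R}$). Hence $\gamma\subset W^s(0)$, and its $T^{k-1}$-orbit together with the origin is all of $D$.

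\textbf{Step 2: $W^s(0)\subset D$ via center-manifold reduction.} At $0$ the Hessian of $\Phi$ coincides with that of $F$, since $g\circ\pi$ vanishes to order $k\ge 2$. In the radial coordinates $(r_i)$ this Hessian is twice the cyclic graph Laplacian on $k$ vertices: positive semidefinite with one-dimensional kernel along the diagonal $r_1=\cdots=r_k$, and $(k-1)$ strictly positive eigenvalues transverse to it. Together with the $k$ angular null directions the center subspace is $(k+1)$-dimensional, and the $(k-1)$ positive-eigenvalue directions are strictly expanding under $v=+\nabla\Phi$, forcing $W^s_M(0)$ to lie on the $T^{k-1}$-invariant center manifold
\[
C\;=\;\{|z_1|=\cdots=|z_k|\}\cup\{0\}
\]
of real dimension $k+1$. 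Parametrising $C/T^{k-1}$ by $\pi=Re^{i\Theta}$ with $R=r^k$, a computation of the induced metric on $C$ and its push-down yields the reduced flow
\[
\dot\pi\;=\;k\,|\pi|^{2(k-1)/k}\,\nabla g(\pi),
\]
a time reparametrization of the Euclidean gradient flow of $g$ on $\C\setminus\{\epsilon\}$ whose rescaling factor vanishes at $\pi=0$ with divergent proper-time integral, making $\pi=0$ an asymptotic fixed point. Since $\nabla g(0)\ne 0$ is real by conjugation symmetry of $g$, the unique $\nabla g$-integral curve through $0$ is the real axis; its upstream portion runs from the nearest critical point $\pi=-1$ (a minimum of $g|_{\R}$) up to $0$, so the stable set of $0$ in the reduced dynamics is exactly the segment $(-1,0]\subset\R$. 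Lifting via the $T^{k-1}$-orbits in $C$ reproduces $D$.

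\textbf{Main obstacle.} The delicate point is Step 2: the critical point $0$ is highly degenerate (no strictly stable linearized directions, only a $(k-1)$-dimensional expanding subspace and a $(k+1)$-dimensional center subspace), so the center-manifold reduction must be executed with care; and within the reduced two-dimensional flow one must identify the specific integral curve of $\nabla g$ landing at the origin — namely $[-1,0]\subset\R$ — rather than the symmetric segment $[0,1]$ or any off-axis trajectory.
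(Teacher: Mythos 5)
Your overall strategy matches the paper's: exploit the $T^{k-1}$-, $\Z/k\Z$-, and conjugation-invariance of $\Phi$ to reduce the tangency computation to a one-dimensional real transversal of $D$, and then handle uniqueness separately. The paper's proof is quite compressed on the first point and \emph{literally} leaves the second to the reader, so most of your Step 2 is new material rather than a reproduction. Your Step 1 is actually more careful than the paper's sketch: the paper checks tangency at ``the real points $(r,\ldots,r)\in D$'', but $\pi(r,\ldots,r)=r^k\geq 0$ while $D$ lies over $[-1,0]$, so those are not points of $D$ at all — and it is crucial that they are not, since on the segment $\pi\in(0,\epsilon)$ the field $\nabla g$ points \emph{away} from $0$ and the corresponding radial set is composed of escaping trajectories. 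Your transversal $\gamma(t)=(t,\ldots,t,-t)$ is the correct one, and your explicit sign computation of $\nabla\Phi|_{\gamma}$ both fixes this and supplies the direction of the flow (from $T$ to $0$), which the paper's tangency-only sketch does not give. Your Step 2 — parametrising the invariant set $C=\{|z_1|=\cdots=|z_k|\}$, pushing the flow down to $\dot\pi=k|\pi|^{2(k-1)/k}\nabla g(\pi)$ and observing the divergent time integral — is a clean way to identify $W^s(0)\cap C$, and the computation of the conformal factor is correct.

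The one genuine soft spot is the reduction $W^s(0)\subset C$ via the Hessian. The function $F=\sum(|z_i|-|z_{i+1}|)^2$ involves terms $|z_i||z_{i+1}|$ that are $C^1$ but not $C^2$ along the coordinate hyperplanes, in particular at the origin, so the ``Hessian of $F$ at $0$'' used to identify expanding directions is not defined in the usual sense (the expression you give is the Hessian of $F$ as a function of the radial coordinates $(r_i)$, which are themselves singular at $0$). This is a defect of the paper's choice of $\Phi$ that your argument inherits, not one you introduced — but since the paper leaves this part to the reader and you are filling it in, it would be cleaner to avoid the Hessian entirely. One route that bypasses it: since $C$ is flow-invariant and $F\geq 0$ vanishes exactly on $C$, show that $\tfrac{d}{dt}F = |\nabla F|^2 + \langle\nabla F,\nabla G\rangle>0$ off $C$ near the origin, using that $|\nabla G|=O(|z|^{k-1})$ (because $d\pi$ vanishes to order $k-1$ at $0$) while $|\nabla F|$ is of lower order; this shows no trajectory off $C$ can have $0$ in its forward limit set, which is the conclusion your center-manifold language is aiming for, without invoking a Hessian that does not exist.
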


\begin{proof}	
Let us check that the Liouville field is tangent to $D$. Using the invariance of $\Phi$ under the $T^{k-1}$-action, it suffices to check the tangency at the real points
$$
(r,\ldots,r)\in D,\quad 0\le r\le 1.
$$ 
By the $\Z/k\Z$-invariance,
the Liouville field belongs to the diagonal complex line $\{(\zeta,\ldots,\zeta):\zeta\in\C\}$. By conjugation-invariance,
the Liouville field is pointing in the direction $(\bd_r,\ldots,\bd_r)$ which is tangent to $D$. Therefore $D$ is composed of non-escaping trajectories.
It is not hard to check that there are no other non-escaping trajectories; we leave it to the reader.
\end{proof}

\begin{remark}
One can check that within the plane $(\zeta,\ldots,\zeta)$, the Liouville field is the $\pi$-lift of the Liouville field of the function $g$, which is simply the planar gradient of $g$. Conjugation-invariance implies that $(-1,0)$ is a flowline of $g$.
\end{remark}

\begin{corollary}
\label{cor:higher_skel}
The skeleton of $M$ with respect to the K\"ahler potential $\Phi$ (\ref{eq:Phi}) is $T\cup D$.\qed 	
\end{corollary}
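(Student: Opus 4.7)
The plan is to assemble the skeleton by combining the critical set of $\Phi$ (the zero locus of the Liouville vector field $v$, where $\iota_v\omega = d^c\Phi$) with the non-escaping trajectories identified by the preceding lemma. Recall that by definition the skeleton of a Liouville domain consists of all points whose backward-time Liouville flow remains in a compact subset of $M$; equivalently, it is the union of $\mathrm{Crit}\,\Phi$ with the union of all non-constant non-escaping Liouville trajectories.

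First I would record what has already been computed. The critical locus of $\Phi = F + G$ was identified as
\begin{equation*}
\mathrm{Crit}\,\Phi = \{0\}\sqcup T,
\end{equation*}
which is the contribution to the skeleton coming from zeros of $v$. The preceding lemma identifies the complementary contribution: the union of all non-constant Liouville trajectories that do not escape to infinity is exactly the set
\begin{equation*}
D = \{(z_1,\dots,z_k) : |z_1|=\cdots=|z_k|,\ z_1\cdots z_k \in [-1,0]\}.
\end{equation*}
Taking the union of these two contributions gives $\{0\}\cup T\cup D$. But observe that $\{0\}$ is the tip of the cone $D$ and $T$ is its boundary circle of $(k-1)$-tori (the $S^1$ fibre $\pi^{-1}(-1)$ intersected with $\{|z_1|=\cdots=|z_k|\}$ lies in $T$), so $\{0\}\cup T \subset T\cup D$, and the skeleton simplifies to $T\cup D$.

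The only conceptual step worth commenting on is that one must confirm that no trajectory from the critical locus $T$ flows out (which is automatic, since $T$ consists of fixed points) and that the non-escaping trajectories from the lemma do accumulate onto $\{0\}$ and onto $T$ at their respective ends, so no new limit points need to be added. Both checks follow from the explicit description of the flow on $D$: within the diagonal complex line $\{(\zeta,\ldots,\zeta)\}$ the Liouville field is the $\pi$-lift of the planar gradient of $g$ restricted to the segment $[-1,0]$, which flows from the circle $|z|=1$ (i.e.\ from $T$) at time $-\infty$ to the origin at time $+\infty$, and the $T^{k-1}$-invariance propagates this picture to all of $D$. Thus the skeleton is precisely $T\cup D$, as claimed. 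This step is essentially bookkeeping, so I do not expect any serious obstacle beyond the verification already contained in the preceding lemma.
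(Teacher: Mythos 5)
Your argument is correct and matches the paper's implicit reasoning: the corollary is stated with a \qed because it follows immediately by combining the computation $\mathrm{Crit}\,\Phi=\{0\}\sqcup T$ with the preceding lemma identifying $D$ as the union of all non-escaping non-constant trajectories, and noting that $\{0\}$ is the cone point of $D$ so the union is just $T\cup D$. Your additional check on the asymptotics of the flow on $D$ is a reasonable sanity check but is already contained in the lemma, so no separate argument is needed.
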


It follows  that $D$ is Lagrangian with respect to $\omega=dd^c\Phi$. Next we show  all Clifford and Chekanov-type tori $T_\gamma$ introduced in (\ref{eq:T_gamma_higher})
are Lagrangian with respect to $\omega$ as well as $\omega_{std}$.

\begin{lemma}
	\label{lem:lag_t}
Every torus $T_\gamma\subset M$ is Lagrangian with respect to $\omega=dd^c\Phi$. 
\end{lemma}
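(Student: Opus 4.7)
My plan is to decompose the K\"ahler potential $\Phi = F + G$ and show that each summand contributes zero to $\omega|_{T_\gamma}$ separately.

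First, since $dd^c$ is linear, $\omega = dd^c F + dd^c G$, and it suffices to prove that $(dd^c F)|_{T_\gamma} = 0$ and $(dd^c G)|_{T_\gamma} = 0$. The handling of $G$ is the easy half: because $\pi\co M \to \C \setminus \{\epsilon\}$ is holomorphic and $G = g \circ \pi$, we have $dd^c G = \pi^*(dd^c g)$. But $\pi|_{T_\gamma}$ takes values in the real curve $\gamma \subset \C \setminus \{\epsilon\}$, so $\pi|_{T_\gamma}$ factors through a $1$-dimensional target; any $2$-form pulled back along such a map vanishes. Hence $(dd^c G)|_{T_\gamma} = 0$.

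The more interesting half is $F$. The point is that $F = \sum_i (|z_i| - |z_{i+1}|)^2 \ge 0$ attains its global minimum value $0$ precisely on the diagonal locus
\[
\Delta = \{(z_1,\ldots,z_k) : |z_1| = \cdots = |z_k|\},
\]
which contains $T_\gamma$ by definition. Consequently $dF$ vanishes as a $1$-form at every point of $\Delta$, and therefore so does $d^c F = dF \circ J$. In particular $d^c F$ vanishes pointwise along $T_\gamma$, so $(d^c F)|_{T_\gamma} = 0$ as a $1$-form on $T_\gamma$. Applying $d$ on $T_\gamma$ and using $\iota^* \circ d = d \circ \iota^*$ for the inclusion $\iota \co T_\gamma \hookrightarrow M$ gives $(dd^c F)|_{T_\gamma} = d\bigl((d^c F)|_{T_\gamma}\bigr) = 0$.

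Combining the two halves yields $\omega|_{T_\gamma} = 0$, and since $T_\gamma$ has real dimension $k$ inside the $2k$-dimensional $M$, it is Lagrangian. There is no serious obstacle; the only substantive observation is that $T_\gamma$ sits inside the minimum locus of $F$, which forces $dF$ (and therefore $d^c F$) to vanish along $T_\gamma$ as a pointwise identity on the ambient cotangent bundle, not merely after restriction. This is stronger than what one gets from $F|_{T_\gamma} \equiv 0$ alone, and it is exactly what is needed to kill $dd^c F$ on $T_\gamma$.
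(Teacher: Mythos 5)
Your proof is correct, and it takes a genuinely different route from the paper's. You decompose $\omega = dd^c F + dd^c G$ and kill each piece separately: for $G$ you use that $\pi$ is holomorphic, so $dd^c G = \pi^*(dd^c g)$, and that $\pi|_{T_\gamma}$ factors through the $1$-dimensional curve $\gamma$, so any $2$-form pulls back to zero; for $F$ you observe that $T_\gamma$ lies in the global minimum locus $\Crit F = \{|z_1| = \cdots = |z_k|\}$, so $d^c F = dF \circ J$ vanishes pointwise as an ambient $1$-form along $T_\gamma$, giving $(dd^c F)|_{T_\gamma} = d\bigl((d^cF)|_{T_\gamma}\bigr) = 0$. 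The paper instead works with $d^c\Phi$ undecomposed: it first shows $d^c\Phi$ restricts to zero on each $(k-1)$-torus fibre $S$ of $\pi$ (using that the $\pi$-fibres are complex and that $dF = 0$ on $S$), and then invokes $T^{k-1}$-invariance of $d^c\Phi$ to conclude that $d^c\Phi|_{T_\gamma}$ descends to a $1$-form on the base $S^1$, which is automatically closed. Your version avoids the explicit appeal to $T^{k-1}$-invariance and is more self-contained; the paper's version leans on the same symmetry it exploits later in the section (e.g.\ in Proposition~\ref{prop:higher_conf_nbhd}), so the two arguments run in parallel but emphasize different mechanisms. One small point worth noting explicitly in your write-up: $F$ involves the non-smooth functions $|z_i|$, so the assertion $dF = 0$ on $\Crit F$ needs $T_\gamma$ to avoid the coordinate hyperplanes; this holds because $\gamma$ avoids $0 \in \C$, which forces all $|z_i|$ equal and nonzero on $T_\gamma$.
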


\begin{proof}
To begin with, we claim that the restriction of $d^c\Phi$  to the 
$(k-1)$-torus
$$
S=\{(z_1,\ldots,z_k):|z_1|=\ldots=|z_k|,\ z_1\ldots z_k=p\}
$$
for any fixed $p\in \C$, vanishes. The torus lies in the fibre of $\pi$ which is complex, therefore, 
$$d^c\Phi|_S=d^c(\Phi|_{fibre})|_S=d^c(F|_{fibre})|_S.$$
Recall that $dF=0$ at every point of $S$. So $d^cF=dF\circ J$ also vanishes at every point of $S$.

By definition, $T_\gamma$ is a union of an $S^1$-family of $(k-1)$-tori of the above form, fibering over $\gamma$. It follows from what we just proved and the $T^{k-1}$-invariance of $d^c\Phi$ that there is a quotient map $p\co T_\gamma\to S^1$ (which can be seen as the projection by $\pi$ onto $\gamma$) such that $d^c\Phi|_T$ is the $p$-pullback of some 1-form on $S^1$. Because all 1-forms on $S^1$ are closed, $dd^c\Phi$ vanishes on $T$.
\end{proof}

We proceed to the main result of the subsection.

\begin{proposition}
	\label{prop:higher_conf_nbhd}
Let $T,D\subset M$ be the standard configuration from (\ref{eq:def_T_D}). For any  sufficiently small neighbourhood $V$ of $T\cup D$ there exists another neighbourhood $U$ (which is arbitrarily small, when $V$ is arbitrarily small), and a symplectomorphism
$$
\phi\co(U,\omega_{std})\to (V,dd^c\Phi)
$$
such that 
$$
\phi(T)=T,\quad \phi(D)=D,
$$
and $\phi$ takes Clifford- or~Chekanov-type tori to Clifford resp.~Chekanov-type tori, i.e.~$\phi(T_\gamma)=T_{\gamma'}$ where $\gamma$ and $\gamma'$ both enclose (resp.~do not enclose) the origin. Here $\Phi$ is taken from (\ref{eq:Phi}).
\end{proposition}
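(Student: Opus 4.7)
The plan is a Moser-type argument applied to the Kähler interpolation $\omega_t = dd^c((1-t)\Phi_0 + t\Phi)$. Since $\Phi_0$ and $\Phi$ are both strictly plurisubharmonic for the standard complex structure $J$, so is any convex combination, and hence every $\omega_t$ is Kähler (in particular symplectic) on a fixed neighbourhood of $T \cup D$. The submanifolds $T$, $D$, and each $T_\gamma$ are Lagrangian for all $\omega_t$ by convex combination, using Lemma~\ref{lem:lag_t} for the tori $T_\gamma$.

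The technical heart of the argument is the construction of a primitive $\beta$ of $\omega - \omega_{std}$ satisfying $\beta|_T = 0$ and $\beta|_D = 0$, so that the resulting Moser vector field is automatically tangent to both. I start from the natural primitive $\alpha = d^c(\Phi - \Phi_0)$. A direct computation gives $d^c\Phi|_T = 0$ (since $T \subset \mathrm{Crit}\,\Phi$) and $d^c\Phi_0|_T = -2\sum_j d\theta_j$, so $[\alpha|_T] = 2[\sum_j d\theta_j]$ is a \emph{nontrivial} class in $H^1(T)$ and cannot be killed by any exact correction. The fix is the observation that the globally defined closed $1$-form $\eta = -2\,d\arg(\pi - \epsilon)$ on $M$ satisfies $[\eta|_T] = -[\alpha|_T]$ (for $|\epsilon|<1$, a loop around each $\theta_j$-factor in $T$ wraps $\pi - \epsilon$ once around the origin of $\C$), while $\eta|_D \equiv 0$ because $\pi - \epsilon$ is negative real on $D$, so $\arg(\pi - \epsilon) = \pi$ is constant there. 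Setting $\alpha'' = \alpha + \eta$, both $\alpha''|_T$ and $\alpha''|_D$ are exact, vanish along $\bd D$, and are $T^{k-1}$-invariant; I glue primitives $h_T$, $h_D$ into a smooth $T^{k-1}$-invariant function $f$ on a neighbourhood of $T \cup D$ that vanishes to second order at the origin, and set $\beta = \alpha'' - df$.

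Solving $\iota_{X_t}\omega_t = -\beta$ produces a time-dependent vector field that vanishes at the origin, is tangent to $T$, and is tangent to $D$ (since $\beta|_T = 0$ and $\beta|_D = 0$ and $T, D$ are Lagrangian for $\omega_t$). Moreover $\beta$ is $T^{k-1}$-invariant, so $X_t$ is $T^{k-1}$-equivariant; using that $\Phi_0$ and $\Phi = F + G$ are both cyclically symmetric in the radial variables $\rho_j = |z_j|$, the zero-level set of the $T^{k-1}$-moment map for every $\omega_t$ is the slice $S = \{|z_1| = \cdots = |z_k|\}$, and one checks $\beta(X_i)|_S = 0$ for each infinitesimal generator $X_i$ of the $T^{k-1}$-action, so $X_t$ is tangent to $S$ as well. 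Integrating from $t = 0$ to $t = 1$ on a small enough neighbourhood $U$ of $T \cup D$ yields $\phi\co (U, \omega_{std}) \to (V, \omega)$ with $\phi(T) = T$, $\phi(D) = D$, and $\phi(S) = S$. Equivariance lets $\phi|_S$ descend via $\pi|_S$ to a homeomorphism $\bar\phi$ of a neighbourhood of the unit circle in $\C \setminus \{\epsilon\}$ fixing $0$, so $\phi(T_\gamma) = T_{\bar\phi(\gamma)}$ with $\bar\phi(\gamma)$ enclosing $0$ if and only if $\gamma$ does, preserving the Clifford/Chekanov dichotomy.

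The main obstacle is the cohomological one: naively $[\alpha|_T]\neq 0$ would forbid the Moser flow from preserving $T$ even setwise, and no exact correction can help. The right correction is the generator of $H^1(M)\cong\Z$ realised as $d\arg(\pi - \epsilon)$; what makes everything work is that this particular global closed $1$-form simultaneously cancels the class on $T$ and vanishes identically on $D$, so a single correction handles both constraints at once.
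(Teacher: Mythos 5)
Your proof is correct and follows the same overall strategy as the paper's (linear Kähler interpolation $\omega_t = dd^c((1-t)\Phi_0+t\Phi)$ plus a Moser argument with a non-exact correction to make the vector field tangent to $T\cup D$), but you make the key technical step explicit where the paper argues abstractly. The paper establishes the existence of a closed $1$-form $\vartheta$ on $V$ agreeing with $d^c(\Phi_0-\Phi)$ on $T\cup D$ by a cohomological argument: the class of $d^c(\Phi_0-\Phi)|_T$ extends by zero over $D$, so it lies in the image of $H^1(V,\R)\to H^1(T,\R)$ because $V\simeq T\cup D$, and one then corrects by an exact form and averages. You instead exhibit the generator of $H^1(M)$ concretely as $\eta=-2\,d\arg(\pi-\epsilon)$ and verify directly that it cancels the class $[d^c(\Phi-\Phi_0)|_T]=2[\sum d\theta_j]$ on $T$ (winding number $1$ for $0<\epsilon<1$) while vanishing identically on $D$ (since $\pi-\epsilon$ is negative real there); the remaining discrepancy $\alpha''=\alpha+\eta$ is exact on $T\cup D$ and killed by a $T^{k-1}$-invariant $df$. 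This is a genuine sharpening: it gives a computable Moser vector field and makes transparent why a single global closed form takes care of both constraints. For the Clifford/Chekanov preservation you argue via $\beta(X_i)|_S=0$ and coisotropic reduction along $\pi|_S$, where the paper argues via $T^{k-1}\rtimes\Z/k\Z$-equivariance of $v_t$; both use the cyclic symmetry of $\Phi_0,\Phi$ in the radii and reduce to the same tangency statement along $S=\{|z_1|=\cdots=|z_k|\}$, so this is a difference of packaging rather than substance. One small caution: the assertion that $S$ equals $\mu_t^{-1}(0)$ for all $t$ is a little stronger than needed and not obviously true for $\omega_1$; what your argument actually uses, and what you verify, is only $S\subset\mu_t^{-1}(0)$ (so $d\mu_t^i|_{TS}=0$) together with $\beta(X_i)|_S=0$, which is sufficient for tangency away from the origin, while at the origin $\beta$ vanishes.
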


\begin{proof}[Proof of Proposition~\ref{prop:higher_conf_nbhd}]
Recall that $\Phi_0=\sum |x_i|^2$ denotes the standard K\"ahler potential on $\C^n$ so that $dd^c\Phi_0=\omega_{std}$. 
Fix a neighbourhood $V$ of $T\cup D$.
Consider an linearly interpolating family of symplectic forms on $V$,
$$
\omega_t=tdd^c\Phi+(1-t)dd^c\Phi_0,\quad t\in[0,1].
$$
We wish to apply Moser's trick to this family intended to provide a symplectomorphism 
$$
\psi\co
(V,dd^c\Phi)\xrightarrow{\cong}(U,\omega_{std}),
$$
the inverse to the desired map $\phi$.
Under Moser's trick, $U$ would be defined as the image of $V$ under the time-1 flow of  a vector field $v_t$. The main point is to make sure that the flow defining $\psi$ preserves $T$ and $D$; if it does, then $U$ will also be a neighbourhood of $T\cup D$.

Because $\bd \omega_t/\bd t=d(-d^c\Phi+d^c\Phi_0)$,
a naive attempt to use Moser's trick is to define the vector field $v_t$ by
$$
\iota_{v_t}\omega_t
=-d^c\Phi+d^c\Phi_0.
$$ 
This vector field is tangent to $D$ but not to $T$, so it is not readily suitable. 
Before we fix this issue, let us explain why $v_t$ is tangent to $D$. (At the origin which is singular point of $D$, this simply means that $v_t$ vanishes.)
Since $D$ is Lagrangian with respect to $\omega_t$ for all $t$, the fact that $v_t$ is tangent to $D$ is equivalent to the fact that $(-d^c\Phi+d^c\Phi_0)|_D=0$. 
It is obvious that $d^c\Phi_0|_D=0$, and one checks that $d^c\Phi|_D=d^cG|_D=0$.

We claim that there exists a \emph{closed} 1-form $\vartheta$ on $V$
such that
$$
(-d^c\Phi+d^c\Phi_0)|_{T\cup D}=\vartheta|_{T\cup D}.
$$
To see this, first note that since $T$ is Lagrangian with respect to $dd^c(-\Phi+\Phi_0)$, the form $d^c(-\Phi+\Phi_0)|_T$ is closed. Its cohomology class lies in the image of
\begin{equation}
\label{eq:incl_nbhd}
\R\cong H^1(V,\R)\to H^1(T,\R)
\end{equation}
because $d^c(-\Phi+\Phi_0)|_T$ extends as a closed (in fact, vanishing) form to $T\cup D$, and $T\cup D$ is homotopy equivalent to $V$.
We take $\vartheta$ to be a preimage of $-d^c\Phi+d^c\Phi_0$ under (\ref{eq:incl_nbhd}). Additionally, because $-d^c\Phi+d^c\Phi_0$ vanished on $D$, we can arrange that $\vartheta$ vanishes on $D$ as well.
As a final adjustment, we can arrange $\vartheta$ to be $T^{k-1}$-and $\Z/k\Z$-invariant by averaging, since $-d^c\Phi+d^c\Phi_0$ is
already invariant under these actions.

Next, we define the adjusted vector field $v_t$ by 
\begin{equation}
\label{eq:def_v_t}
\iota_{v_t}\omega_t
=-d^c\Phi+d^c\Phi_0-\vartheta.
\end{equation}
It is now tangent to $T\cup D$ as the right hand side of the above equation vanishes on $T\cup D$.
Its time time-1 flow takes $\omega$ to $\omega_{std}$ and maps a neighbourhood of $T\cup D$ to another neighbourhood of the same set, so provides the desired symplectomorphism $\psi$, and we take $\phi$ to be its inverse.

It remains to verify that the flow of $v_t$ takes Clifford- or Chekanov-type tori to Clifford- resp.~Chekanov-type tori; then the time-1 flow $\psi$ will have the same property. All ingredients in the defining equation (\ref{eq:def_v_t}) for $v_t$ are $T^{k-1}$- and $\Z/k\Z$-invariant; consequently, so is $v_t$. It follows from the $\Z/k\Z$-invariance that the set
$$
\{(z_1,\ldots,z_k):|z_1|=\ldots=|z_k|\}
$$ 
is preserved by the flow of $v_t$. Next, it follows by $T^{k-1}$-invariance that the $(k-1)$-tori of the form
$$
\{(z_1,\ldots,z_k):|z_1|=\ldots=|z_k|,\ z_1\ldots z_k=p\}
$$
are taken under the flow of $v_t$ to the tori of the same form. Since every torus $T_\gamma$ is a union of such $(k-1)$-tori fibering over $\gamma$, the flow of $v_t$  takes any such torus to a torus $T_{\gamma'}$ for an isotopic curve $\gamma'$. Because $v_t$ vanishes at $0\in\C^k$, the isotopy of curves from $\gamma$ to $\gamma'$  never crosses the point $0\in\C$.
\end{proof}

\subsection{Local wall-crossing}
The final ingredient for Theorem~\ref{th:mut_higher_dim_toric} is  a higher-dimensional version of the local wall-crossing formula. 
We continue using $T,D$ from (\ref{eq:def_T_D}) and $\Phi$ from (\ref{eq:Phi}). We begin with a simple lemma.

\begin{lemma}
	\label{lem:conv_J}
There exists an arbitrarily small neighbourhood $V$ of $T\cup D$ such that $(V,d^c\Phi)$ is a Liouville domain, and $T$ is exact.
Moreover:
\begin{itemize}
	\item 
	all other Clifford- and Chekanov-type tori $T_\gamma\subset V$ are exact, provided that $\gamma$ encloses a disk of area $\pi$ (same area as the unit circle defining the torus $T$);
	\item the standard complex structure $J$ is convex at the boundary of $V$.
\end{itemize}
\begin{proof}
	We already know that $T$ is exact and all tori $T_\gamma$ are Lagrangian. A torus $T_\gamma$ from the first clause admits a Lagrangian (non-Hamiltonian) isotopy to $T$ with zero total flux, therefore it is also exact. The second clause is automatic from the definition of the Liouville form as $d^c\Phi$.
\end{proof}

\end{lemma}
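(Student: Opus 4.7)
The plan is to take $V$ to be a regular sub-level set of the K\"ahler potential $\Phi = F+G$, shrunk via the backward Liouville flow to achieve the smallness requirement. The main inputs are: (i) $\omega = dd^c\Phi$ is the K\"ahler form compatible with the standard complex structure $J$, so $\Phi$ is strictly plurisubharmonic and $d^c\Phi$ is a Liouville primitive for $\omega$ whose Liouville vector field $v$ is the K\"ahler gradient of $\Phi$; (ii) $\Phi$ is proper on $M$, tending to $+\infty$ both at $\{z_1\cdots z_k=\epsilon\}$ and at infinity in $\C^k$; (iii) Corollary~\ref{cor:higher_skel}, identifying the skeleton of $(M, d^c\Phi)$ with $T\cup D$.

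Setting $c_0 = \max_{T\cup D}\Phi$, finite by compactness of $T\cup D$, for any regular value $c>c_0$ the sub-level set $V_c = \{\Phi\le c\}$ is a compact Liouville domain: $v(\Phi) = |\nabla_g\Phi|^2>0$ on the regular level $\{\Phi=c\}$, so $v$ points strictly outward on $\partial V_c$, and strict plurisubharmonicity of $\Phi$ gives $J$-convexity of this boundary. To obtain an arbitrarily small $V$, I would apply the backward Liouville flow $\phi^{-t}_v$ to $V_c$: the skeleton identification ensures the nested family $\phi^{-t}_v(V_c)$ retracts onto $T\cup D$ as $t\to\infty$ and hence eventually sits inside any prescribed open neighborhood of $T\cup D$; each $\phi^{-t}_v(V_c)$ inherits the Liouville property since $v$ remains outward-transverse to its boundary.

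Exactness of $T$ is immediate: since $T\subset\mathrm{Crit}\,\Phi$, one has $d\Phi|_T=0$ and hence $d^c\Phi|_T = (d\Phi)\circ J|_T = 0$, so $T$ is in fact strongly exact. For a torus $T_\gamma\subset V$ with $\gamma$ bounding area $\pi$, build a smooth Lagrangian isotopy from $T=T_{\gamma_0}$ to $T_\gamma=T_{\gamma_1}$ by lifting a family of simple closed curves $\gamma_s\subset\C\setminus\{0,\epsilon\}$ of constant type (Clifford or Chekanov) and constant enclosed area $\pi$. Each $T_{\gamma_s}$ is Lagrangian by Lemma~\ref{lem:lag_t}, and a flux computation modelled on the proof of Lemma~\ref{lem:exactness_in_M_restrictns} shows the fluxes along all generators of $H_1(T,\R)$ vanish: the $\pi$-fibre directions contribute no flux because the defining condition $|x_1|=\cdots=|x_k|$ is preserved throughout, and the $\pi$-base direction contributes no flux because the disk area bounded by $\gamma_s$ is constant in $s$. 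Since $T$ is exact and the flux vanishes, $T_\gamma$ is exact as well.

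The main obstacle is verifying $J$-convexity of $\partial V$ after shrinking by the backward Liouville flow: the flow is not holomorphic, so Levi-positivity of boundary hypersurfaces is not automatically preserved. A clean workaround is to use that $T\cup D$ is compact and totally real (using Lemma~\ref{lem:lag_t} for $T$ and the explicit description of $D$), so it admits arbitrarily small Stein neighborhoods, each obtainable as a regular sub-level set of a strictly plurisubharmonic function $\Phi'$ chosen to agree with $\Phi$ on $T\cup D$; the exactness properties of $T$ and $T_\gamma$ are insensitive to such a replacement because the Lagrangian isotopies used to prove them have vanishing flux. All other steps---the identification of $v$ with $\nabla_g\Phi$, outward transversality, and the construction of the Lagrangian isotopy $T_{\gamma_s}$---are routine consequences of the preceding subsection.
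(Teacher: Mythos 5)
Your sub-level-set construction and the identification of the skeleton are the right starting point, and your exactness argument for $T$ via $T\subset\mathrm{Crit}\,\Phi$ agrees with the paper. Your concern that backward Liouville flow does not preserve Levi-positivity is legitimate --- this is a real subtlety that the paper's one-sentence assertion ``the second clause is automatic'' elides. But your proposed workaround and your flux argument both have problems.

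First, the workaround: if you replace $\Phi$ by a new strictly plurisubharmonic $\Phi'$ whose sub-level sets shrink to $T\cup D$, you change the symplectic form to $dd^c\Phi'\neq dd^c\Phi$. The lemma as stated (and as used later in Proposition~\ref{prop:higher_conf_nbhd} and Lemma~\ref{lem:seidel_local_higher}) is specifically about the Liouville domain $(V,d^c\Phi)$, because Proposition~\ref{prop:higher_conf_nbhd} interpolates by Moser between $\omega_{std}$ and $dd^c\Phi$ and Lemma~\ref{lem:seidel_local_higher} computes Floer cohomology using the standard $J$ which is cylindrical for $d^c\Phi$. Replacing $\Phi$ by $\Phi'$ is not free; you would need to re-run those arguments with $\Phi'$, which is exactly the kind of work you were hoping the lemma would encapsulate. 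Your assertion that ``the exactness properties \ldots\ are insensitive to such a replacement'' is circular, since flux is computed with respect to the new form $dd^c\Phi'$.

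Second, the flux argument: you claim to build a Lagrangian isotopy from $T=T_{\gamma_0}$ to $T_\gamma$ ``by lifting a family of simple closed curves $\gamma_s\subset\C\setminus\{0,\epsilon\}$ of constant type (Clifford or Chekanov).'' But $T$ is Clifford-type (the unit circle encloses both $0$ and $\epsilon$), while the lemma explicitly covers Chekanov-type $T_\gamma$ as well; and a family of curves in $\C\setminus\{0,\epsilon\}$ of constant type connecting a Clifford curve to a Chekanov curve does not exist, since the two types lie in different isotopy classes of $\C\setminus\{0,\epsilon\}$. The Chekanov case is precisely the one the paper needs (the mutated torus is Chekanov-type), so this is not an optional refinement. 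The correct argument must leave the family $T_\gamma$, either by going to non-balanced tori $T_{\gamma,m}$ with $m\neq 0$ so that the base curve may cross $0$ without producing a singular Lagrangian (as in the four-dimensional Clifford--Chekanov picture), or by substituting a direct relative-homology computation in the style of Lemma~\ref{lem:exactness_in_M_restrictns}.
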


\begin{lemma}[Local wall-crossing]
	\label{lem:seidel_local_higher}
	Let $L_0,L_1\subset (V,d^c\Phi)$ be an exact Clifford-type resp.~Chekanov-type torus as in the previous lemma. 
	
	Recall that $L_0$ is (Hamiltonian isotopic to) the fibre of the standard toric fibration $p\co \C^n\to (\R_{\ge -1})^k$, see Lemma~\ref{lem:toric_T_D}. 
	Fix a basis of $H_1(L_0,\Z)$ induced by the standard basis of $\R^k$.
	
	 There exists a basis of
	$H_1(L_1,\Z)\cong \Z^k$ with the following property. 
	If 
	$$
	\rho_{i}\in (\C^\x)^k,\quad i=0,1
	$$
	are arbitrary local systems on the $L_i$ and $\bL_i=(L_i,\rho_i)$ then 
	$$
	HF^*_V(\bL_0,\bL_1)\neq 0
	$$
	if and only if
	$$
	\rho_{1}=\mu  (\rho_{0})
	$$
	where the wall-crossing map $\mu\co (\C^*)^k\dashrightarrow(\C^*)^k$ is given by
\begin{equation}
\label{eq:wall_cross_higher_corner}
x_i\mapsto x_i x_k(x_1^{-1}+\ldots+x_k^{-1}),\quad
i=1,\ldots,k.	
\end{equation}
\end{lemma}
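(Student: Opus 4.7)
The proof parallels Seidel's original argument in \cite[Proposition~11.8]{SeiBook13} for complex dimension two, enhanced in higher dimensions by a $T^{k-1}$-symmetry analysis. The plan is to classify all rigid holomorphic Floer strips between $L_0$ and $L_1$ via the Lefschetz-type projection $\pi\co M\to\C\setminus\{\epsilon\}$, and to extract the wall-crossing map from the vanishing locus of the resulting Floer differential.

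First I would set up the geometry under $\pi$. Since both tori are exact, they project to smooth simple closed curves $\gamma_0, \gamma_1\subset\C\setminus\{\epsilon\}$ bounding disks of equal area; $\gamma_0$ encloses both $0$ and $\epsilon$ (Clifford type) while $\gamma_1$ encloses only $\epsilon$ (Chekanov type), so $\gamma_0\cap\gamma_1=\{p_+,p_-\}$, and the two lunes between them are disks $D_+$ (containing $0$) and $D_-$ (avoiding it). A direct computation shows that in every fiber $\pi^{-1}(c)$, both tori restrict to the common $(k-1)$-subtorus $T^{k-1}_c=\{|z_1|=\cdots=|z_k|,\ z_1\cdots z_k=c\}$, so $L_0\cap L_1=T^{k-1}_{p_+}\sqcup T^{k-1}_{p_-}$ is Morse--Bott and I would compute $HF^*_V(\bL_0,\bL_1)$ in the Morse--Bott framework.

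Next I would classify the rigid holomorphic strips using the compact $T^{k-1}\subset(\C^*)^k$ subgroup preserving $\pi$, which acts by isometries of the K\"ahler metric fixing both $L_0$ and $L_1$. Any rigid strip therefore projects to a base strip between $p_+$ and $p_-$, of which there are only the two $D_+$ and $D_-$. Over $D_-$, $\pi$ is a trivial $(\C^*)^{k-1}$-bundle and the lifts form a single $T^{k-1}$-orbit represented by the diagonal section $z_1=\cdots=z_k$; denote this strip $\beta_0$. Over $D_+$, the lifts must degenerate through the singular fiber $\pi^{-1}(0)=\bigcup_{i=1}^k\{z_i=0\}$, and a coordinate analysis near each irreducible component produces exactly $k$ distinct $T^{k-1}$-orbits of rigid lifts, one through each hyperplane $\{z_i=0\}$; call these $\beta_1,\ldots,\beta_k$. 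Altogether we obtain $k+1$ rigid strip classes.

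Finally I would compute their relative boundary classes and read off the wall-crossing map. Take the basis $e_1,\ldots,e_k$ of $H_1(L_0,\Z)$ induced by the standard coordinate circles on $\C^k$, and transport it to $H_1(L_1,\Z)$ along the Lagrangian isotopy lifted from a base isotopy $\gamma_t$ between $\gamma_0$ and $\gamma_1$ in $\C\setminus\{0,\epsilon\}$ that does not wind around $0$ (the direct higher-dimensional analogue of Remark~\ref{rem:std_Lag_isotopy}). A direct parametrization then yields $\partial\beta_0=0$ and $\partial\beta_i=e_i$ for $i=1,\ldots,k$ in the identified $H_1$: the base loop $\partial D_-$ has winding number $0$ around $0\in\C$ while $\partial D_+$ has winding number $1$, concentrated in the coordinate $z_i$ for the strip degenerating through $\{z_i=0\}$. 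Assembling the Morse--Bott differential weighted by the local systems $\rho_0,\rho_1$, the condition $HF^*_V(\bL_0,\bL_1)\neq 0$ reduces to the $k$ scalar identities
\[
\rho_1(e_i)=\rho_0(e_i)\rho_0(e_k)\bigl(\rho_0(e_1)^{-1}+\cdots+\rho_0(e_k)^{-1}\bigr),\qquad i=1,\ldots,k,
\]
which is precisely the wall-crossing map \eqref{eq:wall_cross_higher_corner}. The main obstacle will be the branch analysis at the nodal fiber in Step 3: verifying that exactly $k$ rigid lifts of $D_+$ exist, each transversally cut out modulo the $T^{k-1}$-symmetry with consistent signs, by a local calculation around each hyperplane $\{z_i=0\}$ that generalizes Seidel's 4-dimensional analysis at the unique node.
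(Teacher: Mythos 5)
Your proposal follows the same overall strategy as the paper's sketch: using the holomorphic projection $\pi(z)=z_1\cdots z_k$, exploiting the $T^{k-1}$-symmetry to organize the intersection locus and rigid strips, and counting $k+1$ horizontal strips (one lift over $D_-$, and $k$ lifts over $D_+$ distinguished by which component $\{z_i=0\}$ of the nodal fibre they pass through). The Morse--Bott framework you adopt for the $(k-1)$-torus intersections is an explicit formulation of what the paper refers to as ``fibre strips.''

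There is, however, a concrete gap in Step~3 where you assemble the wall-crossing formula. You identify $\partial\beta_i = e_i$ as the winding-number vector of the boundary loop, i.e.\ the intersection numbers of $\beta_i$ with the coordinate hyperplanes $\{z_j=0\}$. That is a correct invariant of the strip, but it is \emph{not} the quantity that enters the Floer differential. The weight of $\beta_i$ depends on the homology classes $[\partial_0\beta_i]\in H_1(L_0)$ and $[\partial_1\beta_i]\in H_1(L_1)$ of the two boundary arcs separately (after fixing reference paths between the intersection points), and hence on the local systems $\rho_0$ and $\rho_1$ asymmetrically. The paper's argument instead works with the \emph{differences} between the boundary classes of the $k$ lifts over $D_+$, which in the adapted $y$-basis are the fibre classes $y_1,\ldots,y_{k-1}$, i.e.\ $e_k-e_1,\ldots,e_k-e_{k-1}$ in your standard basis. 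If one naively plugs your classes $\partial\beta_0=0$, $\partial\beta_i=e_i$ into a differential of the form $\sum_j \rho(\partial\beta_j)$, one obtains the relation $1+\sum_i\rho(e_i)=0$, which is not equivalent to the stated wall-crossing map $x_i\mapsto x_ix_k\sum_j x_j^{-1}$. To close the gap you would need to fix reference arcs on $L_0$ and $L_1$, separate the $L_0$- and $L_1$-boundary contributions to each weight, and show that after imposing the Morse--Bott (fibre) constraint $\rho_1(e_i)/\rho_0(e_i)=\lambda$ the horizontal differential vanishes exactly when $\lambda=\rho_0(e_k)\sum_j\rho_0(e_j)^{-1}$; the winding-number vector alone does not produce this.

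Everything else --- the parametrization under $\pi$, the identification of the Morse--Bott strata as the sub-tori $T^{k-1}_{p_\pm}$, the count of lifts via the components of the nodal fibre, and the transversality modulo $T^{k-1}$ --- matches the paper's intended argument.
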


\begin{remark}
	This lemma is essentially equivalent to Theorem~\ref{th:mut_higher_dim_toric} where we take $\Delta$ to be $(\R_{\ge -1})^k$, and $F=\{w\}=\{(-1,\ldots,-1)\}$. (This polytope $\Delta$ is not compact, but the compactness requirement for Theorem~\ref{th:mut_higher_dim_toric} is in fact not crucial.)  See also Example~\ref{ex:wc_std}.
\end{remark}

\begin{proof}
	We provide a sketch of proof and leave the details to the reader, since this is a relatively straightforward analogue of the 4-dimensional case from \cite[Proposition~11.8]{SeiBook13}, compare Lemma~\ref{lem:seidel_local}. The computation heavily relies on the map
	$\pi\co M\to \C\setminus\{\epsilon\}$,
	$\pi(z_1,\ldots,z_k)=z_1,\ldots z_k$ holomorphic with respect to the standard $J$. We denote the restriction of $\pi$ to the neighbourhood $V$ by the same symbol. 
Since the standard $J$ is cylindrical on the Liouville domain $(V,d^c\Phi)$ by  Lemma~\ref{lem:conv_J}, we can use it to compute the Floer cohomology in question.

	It is more convenient to perform  the computation in different bases of $H_1(L_i,\Z)$ than those appearing in the statement. The more convenient bases are the ones  introduced in the proof of Lemma~\ref{lem:one_basis_enough}; let us explain their gemetric meaning. For each of the two Lagrangians $L_0$, $L_1$, we choose a basis so that the first $(k-1)$ elements are a basis for the homology of the $(k-1)$-subtorus which is the fibre of $\pi$. We claim that for a suitable choice of the last elements of the bases, the statement of Lemma~\ref{lem:seidel_local_higher} holds with the following wall-crossing map taken instead of (\ref{eq:wall_cross_higher_corner}):
\begin{equation}
\label{eq:wall_cross_corner_higher_seidel}
\begin{array}{ll}
y_i\mapsto y_i,\quad  i=1,\ldots, k-1,\\
y_k\mapsto y_k(1+y_1+\ldots+y_{k-1}).
\end{array}
\end{equation}
This is precisely as in (\ref{eq:wall_cross_simple}), except that we do not have the extra `trivial' co-ordinates $y_{k+1},\ldots, y_n$.
The two formulas (\ref{eq:wall_cross_corner_higher_seidel}) and (\ref{eq:wall_cross_simple}) are intertwined by an $SL(n,\Z)$-transformation taking the standard basis from the statement of this lemma (which is also the basis used in Example~\ref{ex:wc_std}) to the one we are currently using (the one appearing in the proof of Lemma~\ref{lem:one_basis_enough}). Explicitly, this change of bases is expressed multiplicatively by its action on monomials as follows:
$$
\begin{array}{l}
y_i=x_k/x_i,\ i=1,\ldots,k-1,\\
y_k=x_k.
\end{array}
$$

A proof of (\ref{eq:wall_cross_corner_higher_seidel}) is very similar to \cite[Proposition~11.8]{SeiBook13}. The holomorphic strips between $L_0,L_1$ can be divided into two groups:
\begin{itemize}
	\item Fibre strips, projecting to one of the two intersection points between the curves defining $L_0$, $L_1$. They are responsible for the part of the wall-crossing map $y_i\mapsto y_i$, $i=1,\ldots,k-1$;
	\item Horizontal strips, projecting to one of the two  planar strips in $\C$ between the curves defining $L_0$, $L_1$. One of the two planar strips does not contain the origin and has a unique holomorphic lift; the other strip contains the origin and has $k$ holomorphic lifts, due to the fact that $\pi$ is singular over the origin. 
\end{itemize}
By analysing the differences between the boundary homology classes of those $k$ lifts of the strip, one establishes the last part of (\ref{eq:wall_cross_corner_higher_seidel}).
\end{proof}	

\subsection{Proof of the toric mutation theorem}

\begin{proof}[Proof of Theorem~\ref{th:mut_higher_dim_toric}]

For simplicity, first assume that $k=n$, so that $F=\{w\}$ is a vertex of $\Delta$.
Also assume that $\Delta$ is in standard form with respect to $(F,w)$, see Definition~\ref{dfn:std_form}.
Consider the line segment $I\subset \Delta$ connecting 
the vertex $w=(-1,\ldots,-1)$  with the origin.
By standard toric geometry, the fibration $p$ restricted to a neighbourhood of $I$ in the base,
is fibrewise symplectomorphic to the standard fibration
$$
p\co \C^k\to (\R_{\ge -1})^k
$$
from Lemma~\ref{lem:toric_T_D} restricted to a neighbourhood
$$
W\subset \C^k
$$
of the set 
$$
\{(z_1,\ldots,z_k):0\le|z_1|=\ldots=|z_k|\le 1\}.
$$
In this model, $L$ is identified with $T$.
By a slight abuse of notation, we shall also see the same model as sitting inside $X$, i.e.~shall write:
$$
W\subset X.
$$
By  Lemma~\ref{lem:toric_T_D}, the standard Lagrangian cone $D$ from (\ref{eq:def_T_D}) lives in this neighbourhood. We denote by the same letter the same Lagrangian cone seen inside $X$ and attached to the monotone fibre $L$.

Let $\Sigma\subset X$ a smoothing of the toric boundary divisor; then $\Sigma$ is a smooth anticanonical divisor, and $L$ is exact  in the complement $X\setminus \Sigma$. 
We can choose the smooth divisor $\Sigma\subset X$ in such a way that in the model neighbourhood $W$, the intersection $\Sigma\cap W$ is identified with
$$
\{z_1\ldots z_k=\epsilon\} \subset \C^k.
$$
By construction, $T\cup D$ has a neighbourhood $U'$ avoiding $\{z_1\ldots z_k=\epsilon\}$ and sitting within $W$. We can find a smaller neighbourhood $U$ of $T\cup D$ within $U'$ such that
\begin{equation}
\label{eq:U_Liouv}
(U,\omega_{std})\cong (V,dd^c\Phi)
\end{equation}
for a Liouville domain $(V,d^c\Phi)$, using Proposition~\ref{prop:higher_conf_nbhd}. 

We define the mutated torus
$$
L'=\mu_{F,v}L\coloneqq T_\gamma,
$$
where $T_\gamma\subset (U,\omega_{std})$ is any Chekanov-type torus sitting inside $U$ which is $d^c\Phi$-exact, see Lemma~\ref{lem:conv_J}.
By construction, we have a family of inclusions
$$
L,L'\subset U\subset X\setminus \Sigma.
$$
By (\ref{eq:U_Liouv}), $U$ is a Liouville domain;
by Lemma~\ref{lem:conv_J}, $L,L'$ are exact in $U$.
The inclusion
$U\subset X\setminus \Sigma
$
is an inclusion of Liouville domains because $L\subset X\setminus \Sigma$ is also exact and  $H^1(L)\to H^1(U)$ is surjective, compare with the proof of Theorem~\ref{th:lag_mut}.
Now Theorem~\ref{th:mut_higher_dim_toric} follows from Theorem~\ref{th:wall_crossing} and Lemma~\ref{lem:one_basis_enough}.

Finally, consider the case $k<n$. We again assume $\Delta$ is in standard form with respect to $(F,w)$. By standard toric geometry, the restriction of the fibration on $X$ onto the segment connecting $w$ to the origin contains a copy of  
$$W\times \left(\{z\in \C:1-\epsilon<|z|<1+\epsilon\}\right)^{n-k}$$
where $W$ is the $k$-dimensional model introduced above. The monotone torus $L\subset X$ is identified the product of the standard Clifford torus $T$  and the standard exact $(n-k)$-torus
$$
\{|z_i|=1,\ i=k+1,\ldots,n\}\subset \left(\{z\in \C:1-\epsilon<|z|<1+\epsilon\}\right)^{n-k}
$$
The mutated torus $\mu_{F,p}L$ is defined to be the product of a Chekanov-type torus and the same standard $(n-k)$-torus above. The rest of the proof is analogous and is left to the reader; the new factors correspond to the identity terms in the wall-crossing formula (\ref{eq:wc_std_corner}): $x_i\mapsto x_i$, $i=k+1,\ldots,n$.
\end{proof}	

\subsection{Non-mutability}

We finish by an example showing that the condition that $w\in F$ be an \emph{interior} lattice point in Definition~\ref{def:mut_config} and Theorem~\ref{th:mut_higher_dim_toric} is important. 

We begin with the following combinatorial observation. 
Suppose $\Delta$ is a monotone polytope defining a toric Fano $n$-manifold $X$. If $F$ is a facet of $\Delta$, $w\in F$ is its interior lattice point, $\eta'$ is the outward-pointing normal to a different facet $F'$ of $\Delta$, then 
$$\langle\eta', w\rangle\le 0.$$
Indeed, the equality $\langle\eta', w\rangle= 0$ is achieved at the intersection $w\in F\cap F'$, but then $w$ would not be an interior point of $F$. The converse is also true: if $w\in F$ is not an interior point, then there is an $\eta'$ such that $\langle\eta', w\rangle= 1$.

Now suppose $F$ is a facet of $\Delta$ and $w\in F$ is its lattice point, not necessarily an interior one. 
Consider the polytope $$\Gamma\times [-1,1]\subset \R^{n+1}$$ 
defining the manifold $X\times \C P^1$, its codimension~2 face 
$$G=F\times\{1\},$$ and the point
$$q=w\times\{1\}.$$
Then $q$ is an interior point of $G$ if and only if $w$ is an interior point of $F$.

\begin{proposition}
In the above setup, regardless of whether or not $q$ is an interior point of $G$, consider $(G,q)$ as a mutation configuration to formally define the associated wall-crossing transformation by (\ref{eq:wc_map_higher}). The toric potential of $X\times \C P^1$ remains a Laurent polynomial under this wall-crossing if and only if $q$ is an interior point of $G$. 

Therefore,  the statement of Theorem~\ref{th:mut_higher_dim_toric} does not generally hold without the requirement that the lattice point from the definition of a mutation configuration is an interior point of its face.
\end{proposition}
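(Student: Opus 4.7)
The plan is to apply the formal wall-crossing map of (\ref{eq:wc_map_higher}) to the Cho--Oh potential monomial-by-monomial and to identify exactly which terms prevent the result from being Laurent. The moment polytope is $\Delta\times[-1,1]$, and its potential is
\[
W=\sum_{j}\mathbf{x}^{\eta_j}+t+t^{-1},
\]
where $t$ denotes the $\C P^1$-coordinate, $\{\eta_j\}$ runs over the outward normals to facets of $\Delta$, and we write $\eta:=\eta_0$ for the normal to $F$. Since $G=F\times\{1\}$ has $\dim G=n-1$ inside the $(n+1)$-dimensional polytope, we are in the case $k=2$, so $\Pi_G^\perp$ is one-dimensional. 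A short computation shows $\Pi_G=\{(x,y)\in M_{\R}\oplus\R:\langle\eta,x\rangle=y\}$, hence $\Pi_G^\perp$ is spanned by $u_1=(\eta,-1)\in N\oplus\Z$; writing $\xi:=\mathbf{x}^{u_1}=\mathbf{x}^{\eta}t^{-1}$, the wall-crossing acts by $\mathbf{x}^v\mapsto\mathbf{x}^v(1+\xi)^{-\langle v,q\rangle}$.

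Next, we examine $\langle v,q\rangle$ for each monomial of $W$. For $v=(\eta_j,0)$ the pairing equals $\langle\eta_j,w\rangle$, which is an integer at most $1$ (since $w\in\Delta$), with equality to $1$ iff $w$ lies on the facet $F_j$; for $v=(0,\pm1)$ the pairing equals $\pm 1$. Consequently only the monomials $\mathbf{x}^{\eta_0},\mathbf{x}^{\eta_1},\dots,\mathbf{x}^{\eta_r}$---where $\eta_1,\dots,\eta_r$ are the normals of the facets of $\Delta$ other than $F$ passing through $w$---together with the monomial $t$ can contribute non-Laurent terms after the substitution. By definition $r=0$ iff $w$ is interior to $F$, and $r\ge 1$ otherwise.

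Summing the suspect terms gives
\[
\bigl(t+\mathbf{x}^{\eta_0}+\mathbf{x}^{\eta_1}+\cdots+\mathbf{x}^{\eta_r}\bigr)(1+\xi)^{-1}=t+\frac{t\bigl(\mathbf{x}^{\eta_1}+\cdots+\mathbf{x}^{\eta_r}\bigr)}{t+\mathbf{x}^{\eta_0}},
\]
while every other monomial of $W$ individually transforms into a Laurent polynomial. The interior case $r=0$ thus reproduces Theorem~\ref{th:mut_higher_dim_toric}. In the non-interior case $r\ge 1$, the remainder fails to be Laurent: the denominator $t+\mathbf{x}^{\eta_0}$ is prime in $\C[M\oplus\Z]$ (being a binomial with primitive exponent difference), and specialising $t\mapsto-\mathbf{x}^{\eta_0}$ in the numerator produces the nonzero Laurent polynomial $-\mathbf{x}^{\eta_0}(\mathbf{x}^{\eta_1}+\cdots+\mathbf{x}^{\eta_r})$, so the fraction is already in lowest terms. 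The step requiring the most care is this last non-divisibility; once it is established, the proposition follows at once.
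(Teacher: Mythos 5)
Your proof is correct. The overall strategy coincides with the paper's: apply the formal wall-crossing map monomial-by-monomial to the Cho--Oh potential of $X\times\C P^1$ and test for Laurent-ness; but your decomposition of the terms is different and the final non-divisibility step is tighter. The paper passes to the ``standard form'' coordinates of Definition~\ref{dfn:std_form}, isolates the three monomials $x_1^{-1}+z+z^{-1}$ (whose sum transforms to a Laurent polynomial by an explicit cancellation), and then inspects each remaining $\mathbf{x}^{\eta'}$ individually, observing a pole along $\{z=-x_1\}$ whenever $\eta'_1<0$. You instead work with the basis-free form~(\ref{eq:wc_map_higher}), compute $\Pi_G^\perp=\spn(\eta,-1)$ directly, sort the monomials of $W$ by the sign of $\langle v,q\rangle$, and sum the suspect ones (exactly those with pairing $1$: $t$, $\mathbf{x}^{\eta_0}$, and the $\mathbf{x}^{\eta_j}$ with $w\in F_j$) into the single expression $t+t(\mathbf{x}^{\eta_1}+\cdots+\mathbf{x}^{\eta_r})/(t+\mathbf{x}^{\eta_0})$. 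This buys two things: it avoids the coordinate normalization entirely, and it concentrates the whole obstruction to the Laurent phenomenon in one rational function whose failure to be Laurent when $r\ge 1$ is transparent. Your justification of that last step is also more careful than the paper's one-line remark (``has a pole over $\{z=-x_1\}$''): you note the binomial $t+\mathbf{x}^{\eta_0}$ is prime because $(-\eta_0,1)$ is primitive, and that the numerator survives the specialization $t\mapsto-\mathbf{x}^{\eta_0}$, so the fraction really is in lowest terms; the paper implicitly relies on the fact that poles contributed by distinct $\eta'$ cannot cancel but does not spell it out. One small slip: since the normals $\eta_j$ live in $N=M^\vee$, the Laurent polynomial ring should be written $\C[N\oplus\Z]$ rather than $\C[M\oplus\Z]$.
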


\begin{proof}
To begin with, we return to the polytope $\Delta$ defining $X$. Applying an $SL(n,\Z)$-transformation, we can arrange
that the facet $F$  belongs to the hyperplane $\{x_1=-1\}$, and that
$w=(-1,0,\ldots,0)$. Then $w$ is \emph{not} an interior point of $F$ (equivalently, $q$ is not an interior point of $G$) if and only if there exists a facet $F'\neq F$ of $\Delta$ whose outward-pointing normal $\eta'$ has negative first co-ordinate:
$$
\eta'_1\le -1.
$$
This follows from the previous discussion.

Then up to a reordering of the co-ordinates, $\Gamma=\Delta\times [-1,1]$ is in standard form with respect to $(G,q)$, in the sense of Definition~\ref{dfn:std_form} but ignoring the fact that $q$ need not be an interior point.
This allows us to write down the wall-crossing map with respect to $(G,q)$ using (\ref{eq:wc_std_corner}):
$$
\begin{array}{l}
x_1\mapsto x_1z(x_1^{-1}+z^{-1})=x_1(1+z/x_1),
\\
z\mapsto z^2(x_1^{-1}+z^{-1})=z(1+z/x_1),\\
x_i\mapsto x_i,\ i\neq 1
\end{array}
$$ 
We are using the co-ordinates $x_1,\ldots,x_n,z$ where $z$ corresponds to the extra $\R$-factor from the definition of $\Gamma=\Delta\times [-1,1]$.
In these co-ordinates, the toric potential (\ref{eq:toric_pot}) of $X\times\C P^1$ equals:
$$
W_L(x_1,x_2,\ldots,x_n,z)=
x_1^{-1}+\sum_{\eta'}\mathbf{x}^{\eta'}+z+z^{-1}.
$$
Here $x_1^{-1}$ is the term corresponding to the facet $F\times [-1,1]$ of $\Gamma$, the summation is taken over the normals $\eta'$ to the facets of $\Delta$ other than $F$, and the remaining two terms $z+z^{-1}$ come from the two ``horizontal'' facets of $\Gamma$. 

The sum of the three terms
$$
x_1^{-1}+z^{-1}+z
$$
transforms into a Laurent polynomial under the above wall-crossing, therefore the remaining summands of $W_L$, namely:
$
\sum_{\eta'}\mathbf{x}^{\eta'}
$
must also transform into a Laurent polynomial. On the other hand, expanding co-ordinatewise, they transform into the following expression:
$$
\sum_{\eta'}x_1(1+z/x_1)^{\eta'_1}x_2^{\eta'_2}\ldots x_n^{\eta'_n}
$$
Clearly, this is a Laurent polynomial if and only if $\eta'_1\ge 0$ for all $\eta'$ appearing in the sum. Otherwise this expression, considered as a meromorphic function, has a pole over $\{z=-x_1\}$; whereas Laurent polynomials do not have poles away from co-ordinate hyperplanes.
\end{proof}

\bibliography{Symp_bib}{}
\bibliographystyle{plain}

\end{document}